\theoremstyle{plain}
\newtheorem{thm}{Theorem}[section]
\newtheorem*{thm*}{Theorem}
\newtheorem*{cor*}{Corollary}
\newtheorem{prop}[thm]{Proposition}
\newtheorem{lem}[thm]{Lemma}
\newtheorem{cor}[thm]{Corollary}
\newtheorem{claim}[thm]{Claim}
\newtheorem{fact}[thm]{Fact}
\newtheorem*{claim*}{Claim}
\theoremstyle{definition}
\newtheorem{defn}[thm]{Definition}
\newtheorem{ex}[thm]{Example}
\newtheorem{setup}[thm]{Setup}
\theoremstyle{remark}
\newtheorem{rem}[thm]{Remark}
\numberwithin{equation}{thm}
\newtheorem*{ac}{Acknowledgments}
\def\xx{\text{{\boldmath$x$}}}
\def\Ext{\operatorname{Ext}}
\def\Im{\operatorname{Im}}
\def\Ker{\operatorname{Ker}}
\def\bbZ{\mathbb{Z}}
\def\Assh{\operatorname{Assh}}
\def\Proj{\operatorname{Proj}}
\def\mod{\mathrm{mod}}
\def\rank{\operatorname{rank}}
\def\e{\mathrm{e}}
\def\m{\mathfrak m}
\def\n{\mathfrak n}
\def\p{\mathfrak p}
\def\q{\mathfrak q}
\newcommand{\rma}{\mathrm{a}}
\newcommand{\rmc}{\mathrm{c}}
\newcommand{\rme}{\mathrm{e}}
\newcommand{\rmf}{\mathrm{f}}
\newcommand{\rmr}{\mathrm{r}}
\newcommand{\rmI}{\mathrm{I}}
\newcommand{\rmK}{\mathrm{K}}
\newcommand{\rmQ}{\mathrm{Q}}
\newcommand{\calB}{\mathcal{B}}
\newcommand{\calF}{\mathcal{F}}
\newcommand{\calR}{\mathcal{R}}
\newcommand{\calS}{\mathcal{S}}
\newcommand{\calT}{\mathcal{T}}
\newcommand{\fka}{\mathfrak{a}}
\newcommand{\fkc}{\mathfrak{c}}
\newcommand{\fkm}{\mathfrak{m}}
\newcommand{\fkp}{\mathfrak{p}}
\newcommand{\fkq}{\mathfrak{q}}
\newcommand{\mapright}[1]{%
\smash{\mathop{%
\hbox to 1cm{\rightarrowfill}}\limits^{#1}}}
\newcommand{\mapleft}[1]{%
\smash{\mathop{%
\hbox to 1cm{\leftarrowfill}}\limits_{#1}}}
\def\depth{\operatorname{depth}}
\def\Supp{\operatorname{Supp}}
\def\Ass{\operatorname{Ass}}
\def\height{\mathrm{ht}}
\def\Spec{\operatorname{Spec}}
\def\gr{\mbox{\rm gr}}
\def\red{\operatorname{red}}
\def\Cl{\operatorname{Cl}}
\def\cl{\operatorname{cl}}
\def\yy{\text{\boldmath $y$}}
\title[Goto rings]{Goto rings}
\author[Naoki Endo]{Naoki Endo}
\address{School of Political Science and Economics, Meiji University, 1-9-1 Eifuku, Suginami-ku, Tokyo 168-8555, Japan}
\email{endo@meiji.ac.jp}
\urladdr{https://www.isc.meiji.ac.jp/~endo/}
\thanks{2020 {\em Mathematics Subject Classification.} 13H10, 13A15, 13C14.}
\thanks{{\em Key words and phrases.} Gorenstein ring, almost Gorenstein ring, Sally module, canonical ideal}
\thanks{The author was partially supported by JSPS Grant-in-Aid for Young Scientists 20K14299 and JSPS Grant-in-Aid for Scientific Research (C) 23K03058.}
\begin{document}

\maketitle

\setlength{\baselineskip}{15.3pt}

\begin{center} 
Dedicated to the memory of Shiro Goto
 \end{center}

\begin{abstract}
As part of stratification of Cohen-Macaulay rings, we introduce and develop the theory of Goto rings, generalizing the notion of almost Gorenstein rings originally defined by V. Barucci and R. Fr{\"o}berg in 1997. 
What has dominated the series of researches on almost Gorenstein rings is the fact that the reduction numbers of extended canonical ideals are at most $2$; we define Goto rings as Cohen-Macaulay rings admitting such extended canonical ideals. We provide a characterization of Goto rings in terms of the structure of Sally modules and determine the Hilbert functions of them.
Various examples of Goto rings that come from numerical semigroups, idealizations, fiber products, and equimultiple Ulrich ideals are explored as well.
\end{abstract}

%We provide a necessary and sufficient condition for a one-dimensional Cohen-Macaulay ring to be Goto in terms of minimal free presentations of canonical fractional ideals. %This gives an explicit system of generators of defining ideals in numerical semigroup rings possessing canonical ideals with reduction number $2$. we need a suitable assumption

% Adv in Math K. Smith

% Math. Res. Lett.
%H\UTF{00E9}l\UTF{00E8}ne Esnault (Freie Universit\UTF{00E4}t Berlin)

{\footnotesize \tableofcontents}

%%%%%%%%%%%%%%%%%%%%%%%%%%%%%%%%%%%%%%%%%%%%%%%%%%%%%%%%%%%%%%%%%%%%%%%%%%%%%%%%%%%%%%%%%%%%%%%%%%%%%%%%%%%%%%%%%%%%%%%%%%%%%%%%%%%%%%%%%%%%%%%%

\section{Introduction}\label{sec1}

%Based on this observation, 
%The destination of this research is to find a good notion of Cohen-Macaulay rings  which naturally generalizes Gorenstein rings. 
%This paper aims at, 
The objective of the present paper is, so as to stratify Cohen-Macaulay rings, to establish the theory of Goto rings as a new class of rings which fills in a gap between Gorenstein and Cohen-Macaulay properties. 
Underlying this we have a natural and naive query of why there are so many Cohen-Macaulay rings which are not Gorenstein. Although Cohen-Macaulay and Gorenstein rings are the two wings that have led to the rapid advancement of modern commutative ring theory, there had been little analysis on the difference between them. Yet the above question is attractive and has a significance, for a ring theoretic viewpoint in the sense of responding to the strong desire to classify local rings in detail, and for the development of other fields closely related to commutative algebras. 
%この第一歩として, 提唱されたのが, AG環の概念である。歴史的には, BFに始まり, GMPで1次元CM環に拡張され, GTTにより, 一般次元のCM環に対して, 定義が拡張された。定義を述べる。

The first step to attack the question should be considered to find a new class of rings, which may not be Gorenstein, but sufficiently good next to Gorenstein rings. The notion of almost Gorenstein rings is one of the candidates for such classes. 
Historically the theory of almost Gorenstein rings was introduced by V. Barucci and R. Fr{\"o}berg \cite{BF} in the case where the local rings are analytically unramified and of dimension one, e.g., numerical semigroup rings over a field. In 2013, their work inspired S. Goto, N. Matsuoka, and T. T. Phuong to extend the notion of almost Gorenstein rings for arbitrary one-dimensional Cohen-Macaulay local rings. 
More precisely, a Cohen-Macaulay local ring $R$ with $\dim R=1$ is called {\it almost Gorenstein} if $R$ admits a canonical ideal $I$ such that $\rme_1(I) \le \rmr(R)$, where $\rme_1(I)$ denotes the first Hilbert coefficients of $R$ with respect to $I$ and $\rmr(R)$ is its Cohen-Macaulay type (\cite[Definition 3.1]{GMP}). Two years later, Goto, R. Takahashi, and the author of this paper defined almost Gorenstein graded/local rings of arbitrary dimension. 
Let $R$ be a Cohen-Macaulay local ring with maximal ideal $\m$. Then $R$ is said to be an {\it almost Gorenstein ring} if $R$ admits a canonical module $\rmK_R$ and there exists an exact sequence
$$
0 \to R \to \rmK_R \to C \to 0
$$ 
of $R$-modules such that $\mu_R(C) = \rme^0_\m(C)$ (\cite[Definition 3.3]{GTT}). Here, $\mu_R(-)$ (resp. $\rme^0_\m(-)$) denotes the number of elements in a minimal system of generators (resp. the multiplicity with respect to $\m$). When $\dim R=1$, if $R$ is an almost Gorenstein local ring in the sense of \cite{GTT}, then $R$ is almost Gorenstein in the sense of \cite{GMP}. The converse does not hold in general (\cite[Remark 3.5]{GTT}, see also \cite[Remark 2.10]{GMP}); however it does when $R/\m$ is infinite (\cite[Proposition 3.4]{GTT}).
Since then and up to the present, the question of when various Cohen-Macaulay rings, including Rees algebras, determinantal rings, Stanley-Reisner rings, and others are almost Gorenstein has been studied (\cite{CELW, GMTY1, GMTY2, GMTY3, GMTY4, GRTT, Higashitani, MM, T}). 
Among them, we encounter non-almost Gorenstein rings, but some of which still have good structures; time has come to generalize almost Gorenstein rings. 

%and sometimes we feel that the almost Gorenstein property is a little bit strong.

%その後, 現在までに, Rees代数や行列式環, SR環, Hibi環, Arf環, ファイバー積など, 様々なCM環がいつalmost Gorensteinとなるかという問いが解析されている。その中で, 十分に良い構造を持つようなCM環であっても, AGでないものが多数発見されるに至った。これは, 「CM環の階層化」の可能性を示唆すると共に, 「CM環の階層化」の視点では, AG環だけではもはや不十分であると判断せざるを得ない。つまり, AG環の拡張が求められている, その時期が来たのだ。

The notions of generalized Gorenstein rings (\cite[Definition 3.3]{GK}) and 2-almost Gorenstein rings (\cite[Definition 1.3]{CGKM}) have been proposed as the generalizations of almost Gorenstein rings. In the research on a series of almost Gorenstein rings starting from \cite{BF} of Barucci and Fr{\"o}berg, what has dominated these theories is the fact that the reduction numbers of extended canonical ideals are at most $2$ (\cite[Theorem 3.7]{CGKM}, \cite[Theorem 1.2]{GIKT}, \cite[Corollary 5.3]{GTT}). 
%get a bird's-eye view of these theories, 
In this paper we focus on the reduction numbers of extended canonical ideals and aim to build a new theory that can be understood these theories in a unified manner. As it turns out, the present research has been strongly inspired by \cite{CGKM, GIKT}, but is not just routine generalization and requires a good deal of technical development.

\vspace{0.3em}

Let $(A, \m)$ be a Cohen-Macaulay local ring with $d = \dim A>0$ admitting a canonical module $\rmK_A$. 
Let $I~(\ne A)$ be an ideal of $A$ such that $I \cong \rmK_A$ as an $A$-module. 
An ideal $J$ is called an {\it extended canonical ideal} of $A$ if $J = I+Q$ for some parameter ideal $Q=(a_1, a_2, \ldots, a_d)$ satisfying that $a_1 \in I$ and $Q$ is a reduction of $J$.

%$J = I+Q$ the {\it extended canonical ideal} of $A$. 

\begin{defn}\label{1.1}
For an integer $n\ge 0$, we say that $A$ is an {\it $n$-Goto ring} if there exists a parameter ideal $Q=(a_1, a_2, \ldots, a_d)$ of $A$ such that $a_1 \in I$, $\rank \calS_Q(J) = n$, and $\calS_Q(J)$ is generated by the homogeneous elements of degree one, where $J=I+Q$ and $\calS_Q(J) = \bigoplus_{i \ge 1} J^{i+1}/JQ^i$ denotes the Sally module of $J$ with respect to $Q$. 
\end{defn}

The condition that $\calS_Q(J)$ is generated by the homogeneous elements of degree one holds if and only if $J^3 = QJ^2$, i.e., the reduction number of $J$  is at most 2 (\cite[Lemma 2.1 (5)]{GNO}). Thus the ideal $J$ appeared in Definition \ref{1.1} forms an extended canonical ideal of $A$, while the rank of Sally modules coincides with the length of $J^2/QJ$ as an $A$-module (Lemma \ref{2.4} (2)). Hence, Goto rings are none other than Cohen-Macaulay rings admitting extended canonical ideals whose reduction numbers are at most 2, and the class of Goto rings provides meticulous stratification of those kind of Cohen-Macaulay rings in terms of the rank of Sally modules. 
Indeed, every $0$-Goto (resp. $1$-Goto) is Gorenstein (resp. non-Gorenstein almost Gorenstein) and the converse holds if $d=1$ (see Section 4), or the field $A/\m$ is infinite (Theorems \ref{11.1}, \ref{11.3}). When $d=1$, every $2$-Goto ring is $2$-almost Gorenstein (\cite[Theorem 3.7]{CGKM}). By \cite[Theorem 1.2]{GIKT} if $A$ is a generalized Gorenstein ring with respect to an $\m$-primary ideal $\fka$, then $A$ is an $n$-Goto ring, where $n$ is the length of the $A$-module $A/\fka$, provided that $A/\m$ is infinite and $A$ is not Gorenstein. Note that the notion of Goto rings in our sense is different from that in \cite[Definition 4.4]{GGHHV}, and  there are no implications between Goto rings and nearly Gorenstein rings in the sense of \cite[Definition 2.2]{HHS}; see Example \ref{10.5e}.

The reduction numbers of ideals are closely related to the coefficients of Hilbert polynomials and blow-up algebras. For an $\m$-primary ideal $J$, we assume $J$ contains a parameter ideal $Q$ as a reduction. Classically one has the inequality
$$
\rme_1(J) \ge \rme_0(J) - \ell_A(A/J)
$$
(\cite[Theorem 1]{Northcott}), where $\rme_i(J)$ is the $i$-th Hilbert coefficients of $A$ with respect to $J$, and the equality $\rme_1(J) = \rme_0(J) - \ell_A(A/J)$ holds if and only if $J^2 = QJ$ (\cite[Theorem 2.1]{Huneke}, \cite[Theorem 4.3]{Ooishi}, see also \cite[Theorem 1.9]{L}). When this is the case,  both of the graded rings $\gr_J(A) = \bigoplus_{n \ge 0}J^n/J^{n+1}$ and $\calF(J) = \bigoplus_{n \ge 0} J^n/\m J^n$ are Cohen-Macaulay; moreover the Rees algebra $\calR(J)$ is Cohen-Macaulay, provided $d \ge 2$. As next border, in \cite{S3} J. Sally characterized the ideals $J$ with $\rme_1(J) = \rme_0(J) - \ell_A(A/J)+1$ and $\rme_2(J)\ne 0$. 
W. V. Vasconcelos introduced Sally modules, recovered Sally's results, and made further progress in this direction, e.g., the rank of $\calS_Q(J)$ is given by $\rme_1(J) - \rme_0(J) + \ell_A(A/J)$; see \cite[Corollary 3.3]{V1}. Thereafter, in \cite{GNO, GNO2} Goto, K. Nishida, and K. Ozeki finally brought fruit to fruition for the theory of Sally modules of rank one.
%In general, increasing the rank of Sally modules makes the structure complicated. 
Whereas they have considered general $\m$-primary ideals, we concentrate in this paper on extended canonical ideals, raise the rank of the Sally modules, and attempt to understand the relation with the structure of base rings more deeply.

We state our results explaining how this paper is organized. In Section \ref{sec2} we summarize some preliminaries on extended canonical ideals and their Sally modules, both of which play an important role in our argument. 
Section \ref{sec3} is devoted to define Goto rings and to explore basic properties. %Among them, Theorem \ref{3.7} guarantees that the Goto property of rings is preserved under flat base changes. This provides, in particular, for each integer $n \ge 1$ the ring $A$ is $n$-Goto if and only if so is its completion $\widehat{A}$ with respect to the maximal ideal $\m$, and concrete examples of Goto rings. 
As shown in Theorem \ref{3.5}, the Goto property is preserved by taking modulo super-regular sequences. This observation suggests us that the importance of analysis in the one-dimensional case for Goto rings. 

Section \ref{sec4-1} focuses on Goto rings of dimension one. For a one-dimensional Cohen-Macaulay local ring $R$ with maximal ideal $\m$, the condition that $R$ contains a canonical ideal possessing a parameter ideal as a reduction is equivalent to the existence of a {\it fractional canonical ideal} $K$, i.e., $K$ is an $R$-submodule of $\rmQ(R)$ such that $R \subseteq K \subseteq \overline{R}$ and $K \cong \rmK_R$ as an $R$-module, where $\overline{R}$ denotes the integral closure of $R$ in its total ring $\rmQ(R)$ of fractions  and $\rmK_R$ stands for the canonical module of $R$ (\cite[Corollary 2.8]{GMP}). Using this, the ring $R$ is $n$-Goto if and only if $K^2=K^3$ and $\ell_R(K^2/K) = n$, where $\ell_R(-)$ denotes the length as an $R$-module. %The equivalence is really practical to produce concrete examples. 
Theorem \ref{4.9a} shows that, for each $n \ge 2$, the ring $R$ is $n$-Goto if and only if the blow-up $B=\bigcup_{n\geq 0}\left[\m^n: \m^n\right]$ of $\m$ is $(n-1)$-Goto, provided that $R$ has minimal multiplicity, the field $R/\m$ is algebraically closed, and $\overline{R}$ is a local ring. %In addition we provide an upper bound of $\rme_1(I)$ when $R$ is $n$-Goto and the embedding dimension of $R/\fkc$ is $1$, 
%このとき, $R$が$n$-Gotoであることは$K^2=K^3$かつ$\ell_R(K^2/K)=n$で特徴付けられる(Prop 4.2)ことを用いて, 豊富な具体例の提供が可能となる。
%また, 極大イデアルのblow-up algebraを取ることで$n$-Goto性の$n$を小さくできる, つまり, mildな仮定の下, 極小重複度を持つ環$R$が$n$-Gotoと$\m:\m$が$(n-1)$-Gotoは同値である（Theorem 4.9）。%従って, 極大イデアルのblow-upを取ることで, Gorensetein環に近づけることができる。
%加えて, 1次元Goto環が$R/\fkc$のembedding dimensionが1であるときには, $K/R$が$R/I_i$たちの直和として表現でき, そこから, $e_1(I)$の値の上限を得ることができる(Cor 4.14)。%これは, GMPの意味でのAGLの定義と見比べたとき, Goto環の定義の妥当性を保証するものである。
In Sections \ref{sec4} and \ref{sec6}, we investigate the question of when idealizations and fiber products are Goto rings, which allows us to enrich the theory and produce concrete examples of such rings. We prove that if $R$ is $n$-Goto and the $R$-module $K^2/K$ is cyclic, both of the trivial extension $R \ltimes \fkc$ and the fiber product $R \times_{R/\fkc} R$ are $n$-Goto rings for every $n \ge 1$, where $\fkc = R:R[K]$; see Corollary \ref{5.4}. 
Let us remark here that the cyclicity of $K^2/K$ is satisfied if $R$ is almost Gorenstein, $2$-almost Gorenstein, and generalized Gorenstein rings (Lemma \ref{4.5}, \cite[Theorem 3.11]{GMP}, \cite[Proposition 3.3]{CGKM}, \cite[Theorem 4.11]{GK}). 
% The converse holds with suitable assumptions. 
When $R$ and $S$ are one-dimensional Cohen-Macaulay local rings with common residue class field $k$, for each $n \ge 2$, the fiber product $R\times_k S$ of the canonical homomorphisms $R \longrightarrow k \longleftarrow S$ is $(n+1)$-Goto provided that $R$ is $n$-Goto and $S$ is $2$-Goto (Theorem \ref{6.1}). 

In Section \ref{sec7-1} we aim to characterize the Goto property for semigroup rings $k[[H]]$ over a field $k$, especially, of three-generated numerical semigroups $H$. Besides we determine all the numerical semigroups $H$ corresponding to the Goto ring $k[[H]]$ with minimal multiplicity $3$ (Corollary \ref{8.1}).
In Section \ref{sec7}, we provide a necessary and sufficient condition for a one-dimensional Cohen-Macaulay ring $R$ of the from $R=T/\fka$ to be a Goto ring, in terms of minimal free presentations of fractional canonical ideals, where $T$ is a regular local ring and $\fka$ is an ideal of $T$. Our results Corollary \ref{10.4a} and Theorem \ref{10.9} correspond to those about almost Gorenstein (resp. $2$-almost Gorenstein) rings given by \cite[Theorem 7.8]{GTT} (resp. \cite[Theorems 2.2, 2.9]{GIT}).   
Even though we need an extra assumption on rings, Corollary \ref{10.4a} provides an explicit system of generators of defining ideals in numerical semigroup rings possessing canonical ideals with reduction number $2$. In addition Theorem \ref{10.9} leads us to obtain, for given integers $n \ge 2$ and $\ell \ge 3$, an example of $n$-Goto rings of dimension $\ell$; see Example \ref{10.14}. 

The notion of Ulrich ideals is defined for $\m$-primary ideals and is one of the modifications of that of stable maximal ideals. The definition (\cite[Definition 1.1]{GOTWY}) was brought about by S. Goto, K. Ozeki, R. Takahashi, K.-i. Watanabe, and K.-i. Yoshida in 2014, where they developed the basic theory, revealing that the behavior of Ulrich ideals has ample information about the singularities of rings. In Section \ref{sec9} we extend the notion to equimultiple ideals and provide a construction of Goto rings of higher dimension; see Theorem \ref{10.4}. 
In Section \ref{sec10}, we investigate the relation between the structure of Sally modules of extended canonical ideals and Goto rings. Theorem \ref{11.8} is a generalization of \cite[Corollary 5.3 (2)]{GTT} and \cite[Theorem 3.7]{CGKM}. As an application, we determine the Hilbert function of Goto rings as well; see Corollary \ref{10.8ab}. In the final section, we explore the existence of a certain embedding of Goto rings into its canonical module. Theorem \ref{11.2a} shows that the cokernel of such embedding is a direct sum of Ulrich modules.

\if0
\begin{ex}[non-AGL rings]
\begin{enumerate}
\item[$(1)$] $k[[t^3, t^{3n+1}, t^{3n+2}]]~(n \ge 2)$; in particular, $k[[t^3, t^7, t^8]]$
%\item[$(2)$] $k[[t^4, t^{11}, t^{13}, t^{14}]]$
\item[$(2)$] $k[[t^3, t^7, t^8]] \times_k k[[t]]$
\item[$(3)$] $k[[t^3, t^7, t^8]] \ltimes k[[t]]$
\item[$(4)$] $k[[X, Y, Z]]/{\rmI_2}
\left(\begin{smallmatrix}
\textcolor{red}{X^2} & \textcolor{red}{Y^2} & \textcolor{red}{Z} \\
Y^4 & Z & X^3
\end{smallmatrix}
\right)$
\end{enumerate}
\end{ex}
\fi

%%%%%%%%%%%%%%%%%%%%%%%%%%%%%%%%%%%%%%%%%%%%%%%%%%%%%%%%%%%%%%%%%%%%%%%%%%%%%%%%%%%%%%%%%%%%%%%%%%%%%%%%%%%%%%%%%%%%%%%%%%%%%%%%%%%%%%%%%%%%%%%%%%%%%%%%%%%%%%%%%%%%%%%%%%%%

%最後に記号の設定を述べる。

%以下, 本論文においては特に断らない限り, 次の設定を固定する。

%In what follows, unless otherwise specified, 
Throughout this paper, unless otherwise specified, we fix the terminology and notation as follows. 
For an arbitrary commutative ring $A$, an ideal $I$ of $A$ is called {\it regular} if it contains a non-zerodivisor on $A$. We denote by $\rmQ(A)$ the total ring of fractions of $A$. For $A$-submodules $X$ and $Y$ of $\rmQ(A)$, let $X:Y = \{a \in \rmQ(A) \mid aY \subseteq X\}$. If we consider ideals $I, J$ of $A$, we set $I:_AJ =\{a \in A \mid aJ \subseteq I\}$; hence $I:_AJ = (I:J) \cap A$.
A {\it fractional ideal} $I$ of $A$ is a finitely generated $A$-submodule of $\rmQ(A)$ satisfying $\rmQ(A)\cdot I = \rmQ(A)$. 
For an integer $t > 0$, let $\rmI_t(X)$ be the ideal of $A$ generated by all the $t \times t$ minors of a matrix $X$ with entries in $A$. 
%For an ideal $J$ of $A$ with $J \ne A$, we set 
%$$
%\calR(J) = A[Jt] \subseteq A[t], \ \ \ \gr_J(A) = \calR(J)/J\calR(J) \cong \bigoplus_{n \ge 0}J^n/J^{n+1}
%$$
%where $t$ is an indeterminate over $A$. We call them the Rees algebra of $J$ and the associated graded ring of $J$, respectively. For ideals $I$ and $Q$, we define
%$$
%\calS_Q(J) = J\calR/J\calT \cong \bigoplus_{i \ge 1} J^{i+1}/JQ^i
%$$
%and call it the {\it Sally module} of $J$ with respect to $Q$. 
When $(A, \m)$ is a Cohen-Macaulay local ring with $d = \dim A$,  for an $A$-module $M$, let $\ell_A(M)$ denote the length of $M$ and $\mu_A(M)$ stand for the number of elements of a minimal system of generators for $M$. We set $v(A) = \mu_A(\m)$ and $\rmr(A) = \ell_A(\Ext^d_A(A/\m, A))$.  
For an $\m$-primary ideal $I$ in $A$, let $\rme_i(I)$ be the $i$-th Hilbert coefficient of $A$ with respect to $I$. In particular, we denote by $\rme(A) = \rme_0(\m)$ the multiplicity of $A$. 
%$\overline{A}$は1次元のときは, integral closureを表し, 高次元の場合は, 剰余環を表すこともあるが, 混乱の無いようにしてある。
Let $\overline{A}$ be the integral closure of $A$ in $\rmQ(A)$ when $d=1$; otherwise it may denote the residue class ring of $A$ when there is no confusion. 
%For a Cohen-Macaulay graded ring $A=\bigoplus_{n\ge 0}A_n$ such that $A_0$ is a local ring, we set $\rma (A)=\operatorname{max} \{n \in \Z \mid [\rmH^d_\fkM({A})]_n \ne (0) \}$ which is called the {\it $\rma$-invariant} of $A$ (\cite[Definition (3.1.4)]{GW}). Here, $\fkM$ denotes the unique graded maximal ideal of $A$, $d=\dim A$, and $\{[\rmH_\fkM^d(A)]_n\}_{n \in \Bbb Z}$ is the homogeneous components of the $d$-th graded local cohomology module $\rmH_\fkM^d(A)$ of $A$ with respect to $\fkM$. %When $A$ admits the graded canonical module $\rmK_A$, one has $\rma(A) = -\operatorname{min} \{ n \in \Z \mid [\K_A]_n \ne (0) \}$.
For a graded module $M$ over a graded ring $R$ and for an integer $\ell$, let $M(\ell)$ denote the graded $R$-module whose underlying $R$-module is the same as that of the $R$-module $M$ and the grading is given by $[M(\ell)]_m = M_{\ell + m}$ for all $m \in \Bbb Z$, where $[-]_m$ denotes the $m$-th homogeneous component. 

 %When $M$ is finitely generated, we denote by $\mu_R(M)$ the minimal number of generators of $M$. 

%$\ell_A(-)$でlengthを表す。

%$\overline{A}$は$A$が2次元以上のときは, 剰余環を表し, $1$次元のときは, integral closureを表している。

%$v(-)$でembedding dimensionを表す。

%$\rme_1(J)$の定義。

%$\rme(R)$の定義。

%$\rmr(R)$でCM typeを表す。

%シフトの定義。

%コロンの定義。分数イデアルのやつとイデアルのやつ

%GGL, AGL, 2-AGLの説明を述べる。

%$\rmI_2(M)$の定義
%%%%%%%%%%%%%%%%%%%%%%%%%%%%%%%%%%%%%%%%%%%%%%%%%%%%%%%%%%%%%%%%%%%%%%%%%%%%%%%%%%%%%%%%%%%%%%%%%%%%%%%%%%%%%%%%%%%%%%%%%%%%%%%%%%%%%%%%%%%%%%%%%%%%%%%%%%%%%%%%%%%%%%%%%%%%%%%%%%%%%%%%

\section{Preliminaries}\label{sec2}

%parameter ideal with condition $(*)$の基礎理論

Let $(A, \m)$ be a Cohen-Macaulay local ring with $d=\dim A>0$ admitting a canonical module $\rmK_A$. An ideal $I$ of $A$ is called a {\it canonical ideal} of $A$ if $I \ne A$ and $I \cong \rmK_A$ as an $A$-module. 
Recall that $A$ possesses a canonical ideal if and only if the local ring $A_{\p}$ is Gorenstein for every $\p \in \Spec A$ with $\dim A/\p =d$; see \cite[Proposition 2.3]{EGGHIKMT}. Hence, the canonical ideal exists if the total ring of fractions $\rmQ(A)$ of $A$ is Gorenstein. 
%Recall that $\rmK_A$ has rank one if and only if $A$ is {\it generically Gorenstein}, i.e., $A_{\p}$ is a Gorenstein local ring for every $\p \in \Min A$. 
%When one of the equivalent conditions of \cite[Proposition 3.3.18]{BH} is satisfied, the canonical module can be identified with an ideal of $A$ (see also \cite[Satz 6.21]{HK}). 
%the condition that the canonical module is an ideal is equivalent to the $\m$-adic completion $\widehat{A}$ of $A$ being generically Gorenstein, i.e., $\widehat{A}_P$ is a Gorenstein ring for every $P \in \Min \widehat{A}$ (see e.g., \cite[Satz 6.21]{HK}). 
%\textcolor{red}{
%ここで, reductionとreduction numberの定義。幾何学的には, スキームの射$\Proj(\calR(I))\to \Proj \calR(Q)$が有限であることに他ならない（Vasconcelos, Integral closure, Prop 1.44）。
%}
Introduced by D. G. Northcott and D. Rees (\cite{NR}), for ideals $J$ and $Q$ of $A$ with $Q \subseteq J$, we say that $Q$ is a {\it reduction} of $J$ if $J^{r+1} = QJ^r$ for some $r \ge 0$; the least such integer $r$, denoted by $\red_Q(J)$, is called the {\it reduction number} of $J$ with respect to $Q$.  Geometrically, $Q$ is a reduction of $J$ if and only if the canonical morphism $\Proj(A[Jt])\to \Proj (A[Qt])$ is finite, provided that $J$ is an $\m$-primary ideal of $A$, where $t$ denotes an indeterminate over $A$ (\cite[Proposition 1.44]{Vasconcelos}).

We assume that $A$ admits a canonical ideal $I$ of $A$. Let us begin with the following.

\begin{defn}
We say that a parameter ideal $Q =(a_1, a_2, \ldots, a_d)$ of $A$ satisfies the {\it condition $(\sharp)$} if $a_1 \in I$ and $Q$ is a reduction of $Q+I$. An ideal $J$ of $A$ is called an {\it extended canonical ideal} of $A$ if $J=I+Q$ for some parameter ideal $Q$ with condition $(\sharp)$. 
\end{defn}

When $\dim A=1$, a parameter ideal $Q$ satisfying the condition $(\sharp)$ is nothing but a parameter ideal which forms a reduction of $I$. 
The parameter ideals exist if the field $A/\m$ is infinite, or $A$ is {\it analytically irreducible}, i.e., the $\m$-adic completion $\widehat{A}$ of $A$ is an integral domain. Thus the numerical semigroup rings over a field always possess the parameter ideals. There are examples in higher dimensions as well.
%1次元の場合は, condition $(\sharp)$を満たすparameter idealとは単に, canonical idealのminimal reductionとなるもののことに他ならず, これは剰余体が無限, または解析的既約であれば, 常に満たされる。例えば, 数値半群環であればいつでも満たされる。高次元においては, 次のような例がある。
%高次元のときの例。

\begin{ex}
Let $\ell \ge 3$ be an integer and $T=k[[X_1, X_2, \ldots, X_{\ell}, V_1, V_2, \ldots, V_{\ell-1}]]$ the formal power series ring over a field $k$. 
For given integers $m \ge n \ge 2$, we set
$$
A=T/
\rmI_2
\begin{pmatrix}
X_1^n & X_2 + V_1 & \cdots & X_{\ell-1}+ V_{\ell -2} & X_{\ell} + V_{\ell-1}\\[3pt]
X_2 & X_3 & \cdots & X_{\ell} & X_1^m
\end{pmatrix}.
$$
Let $I = (x_1^n, x_2, \ldots, x_{\ell-1})$ and $Q = (x_1^n, v_1, \ldots, v_{\ell-1})$, where $x_i$ and $v_i~(1 \le i < \ell)$ denote the images of $X_i$ and $V_i$ in $A$, respectively. Then $A$ is a Cohen-Macaulay local ring with $\dim A= \ell$ admitting the canonical ideal $I$, and $Q$ is a parameter ideal of $A$ satisfying the condition $(\sharp)$. See Example \ref{10.14} for the proof. 
\end{ex}

We choose a parameter ideal $Q =(a_1, a_2, \ldots, a_d)$ with condition $(\sharp)$ and set $J=I+Q$. When $d \ge 2$, the ideal $\fkq = (a_2, a_3, \ldots, a_d)$ forms a parameter ideal in a Gorenstein local ring $A/I$, we see that $\fkq \cap I = \fkq I$. Hence, by setting $\overline{A}=A/\fkq$, we get $J \overline{A} \cong I/\fkq I \cong \rmK_{\overline{A}}$, so that $J \overline{A}$ forms a canonical ideal of $\overline{A}$. 
Let
$$
\calR =\calR(J) = A[Jt] \subseteq A[t]\ \ \ \text{and} \ \ \ \calT = \calR(Q) = A[Qt] \subseteq A[t]
$$
be the Rees algebras of $I$ and $Q$, respectively, where $t$ denotes an indeterminate over $A$.  Following the terminology of W. V. Vasconcelos \cite{V1}, we define
$$
\calS_Q(J) = J\calR/J\calT \cong \bigoplus_{i \ge 1} J^{i+1}/JQ^i
$$
and call it the {\it Sally module} of $J$ with respect to $Q$. Since $\calR$ is a module-finite extension of $\calT$, the Sally module $\calS_Q(J)$ is a finitely generated graded $\calT$-module. Note that $J^2 =QJ$ if and only if $\calS_Q(J) = (0)$, and $J^3 = QJ^2$ is equivalent to saying that $\calS_Q(J) = \calT\left[\calS_Q(J)\right]_1$ (\cite[Lemma 2.1 (3), (5)]{GNO}).

We summarize some basic facts on Sally modules that we will use throughout this paper.  
Let $\gr_J(A) = \calR/J\calR\cong \bigoplus_{n \ge 0}J^n/J^{n+1}$ be the associated graded ring of $J$ and set $\p = \m \calT \in \Spec \calT$. In this paper, we consider the module $(0)$ as Cohen-Macaulay.

\begin{fact}[{\cite[Lemma 2.1 (1), Proposition 2.2]{GNO}}]\label{2.3}
The following assertions hold true. 
\begin{enumerate}
\item[$(1)$] $\m^{\ell}\!\cdot\! \calS_Q(J)=(0)$ for all $\ell \gg 0$.
\item[$(2)$] $\Ass_{\calT}\calS_Q(I) \subseteq \{\p\}$; hence $\dim_{\calT} \calS_Q(J) = d$, provided $\calS_Q(J) \ne (0)$. 
\item[$(3)$] $\ell_A(A/J^{n+1}) = \rme_0(J)\binom{n+d}{d} - \left[\rme_0(J) - \ell_A(A/J)\right]\binom{n+d-1}{d-1} - \ell_A(\left[\calS_Q(J)\right]_n)$ for all $n \ge 0$. 
\item[$(4)$] $\rme_1(J) = \rme_0(J) - \ell_A(A/J) + \ell_{\calT_\p}(\left[\calS_Q(I)\right]_\p)$.
\item[$(5)$] %Suppose that $\calS_Q(J) \ne (0)$. Then 
$\calS_Q(J)$ is a Cohen-Macaulay graded $\calT$-module if and only if $\depth \gr_J(A) \ge d-1$. 
\end{enumerate}
\end{fact}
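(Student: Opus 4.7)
The plan hinges on the defining exact sequence
\[
0 \to J\calT \to J\calR \to \calS_Q(J) \to 0
\]
of finitely generated graded $\calT$-modules, whose $n$-th graded piece reads $0 \to JQ^n \to J^{n+1} \to J^{n+1}/JQ^n \to 0$. Module-finiteness of $\calR$ over $\calT$ follows from $J^{r+1}=QJ^r$ for $r \gg 0$, which holds since $Q$ is a reduction of $J$. For (1), the ideal $J$ is $\m$-primary (being integral over the $\m$-primary parameter ideal $Q$), so each graded piece $[\calS_Q(J)]_n = J^{n+1}/JQ^n$ has finite length over $A$. Any homogeneous generator is then annihilated by some power of $\m$, and a finite generating set is killed by a common $\m^\ell$; this $\ell$ works uniformly on all of $\calS_Q(J)$ because $\m \subseteq \calT_0$ commutes with the $\calT$-action. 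For (2), every $P \in \Ass_\calT \calS_Q(J)$ satisfies $\m \subseteq P$ by (1), so $\p = \m\calT \subseteq P$; I would argue that $\p$ is in fact the unique such prime by combining the identification $\calT/\p \cong (A/\m)[T_1,\dots,T_d]$ (where $T_i = a_it$) with a Valabrega--Valla-type intersection property $JQ^n \cap Q^{n+1} = QJQ^{n-1}$, which forces $a_1t, \dots, a_d t$ to behave as a regular sequence on $\calS_Q(J)$ modulo $\m$, excluding embedded primes.

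For (3), the regularity of a parameter system gives $\ell_A(A/Q^{n+1}) = \ell_A(A/Q)\binom{n+d}{d}$ and freeness of $Q^n/Q^{n+1}$ over $A/Q$ of rank $\binom{n+d-1}{d-1}$, so $\ell_A(JQ^n/Q^{n+1}) = [\ell_A(A/Q)-\ell_A(A/J)]\binom{n+d-1}{d-1}$. Decomposing
\[
\ell_A(A/J^{n+1}) = \ell_A(A/Q^{n+1}) - \ell_A(JQ^n/Q^{n+1}) - \ell_A(J^{n+1}/JQ^n)
\]
and substituting $\rme_0(Q) = \rme_0(J)$ (since $Q$ is a reduction of $J$) yields (3) with the last term recognized as $\ell_A([\calS_Q(J)]_n)$. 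Then (4) is obtained by comparing (3) with the usual expansion $\ell_A(A/J^{n+1}) = \rme_0(J)\binom{n+d}{d} - \rme_1(J)\binom{n+d-1}{d-1} + O(n^{d-2})$: the Hilbert polynomial of the graded $\calT$-module $\calS_Q(J)$ has degree $d-1$ by (2), and its leading coefficient is $\ell_{\calT_\p}([\calS_Q(J)]_\p)$; this last identification is standard for graded modules whose associated primes are contained in $\{\p\}$, obtained via a filtration by copies of $\calT/\p$ modulo torsion and additivity of multiplicities.

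For (5), I would combine the defining sequence with
\[
0 \to J\calR \to \calR \to \gr_J(A) \to 0
\]
and invoke the depth lemma, using that $\calT$ is Cohen-Macaulay of dimension $d+1$ so $\depth_\calT J\calT = d+1$. From the defining sequence, $\calS_Q(J)$ being Cohen-Macaulay of dimension $d$ is equivalent to $\depth_\calT J\calR \ge d+1$, i.e., $\calR$ having depth $\ge d+1$ at the relevant maximal graded ideal. Translating this through the second sequence and the standard dictionary between depths of $\calR$ and of $\gr_J(A)$ yields $\depth \gr_J(A) \ge d-1$. The main obstacle across the entire proof is part (2): ruling out embedded primes for $\calS_Q(J)$ requires careful control of how $J$ and $Q$ intersect, and the cleanest route is a reduction to $d=1$ via a superficial element of $J$ followed by a direct length computation; parts (3)--(5) are then essentially formal consequences of (1), (2), the defining sequence, and standard Hilbert-function manipulations.
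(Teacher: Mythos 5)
The paper states this as a Fact cited from [GNO] and does not give its own proof, so your attempt can only be judged against the standard arguments. Parts (1), (3), (4), and (5) of your outline are essentially correct and follow the expected route: in (3) the key point, which you should state, is that $Q^n/Q^{n+1}$ is $A/Q$-free so that $\ell_A(JQ^n/Q^{n+1})=[\ell_A(A/Q)-\ell_A(A/J)]\binom{n+d-1}{d-1}$; and in (5) your assertion $\depth_\calT J\calT=d+1$ is indeed correct, because $\calT/J\calT\cong(A/J)[T_1,\dots,T_d]$ is Cohen--Macaulay of dimension $d$, so the depth lemma applied to $0\to J\calT\to\calT\to\calT/J\calT\to0$ makes $J\calT$ maximal Cohen--Macaulay.

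Part (2) is where there is a genuine gap. Since (1) already places every associated prime above $\p$, the actual content of (2) is the absence of embedded primes, equivalently that every homogeneous element of $\calT\setminus\p$ is a nonzerodivisor on $\calS_Q(J)$. Your sketch does not establish this. The displayed identity $JQ^n\cap Q^{n+1}=QJQ^{n-1}$ is false as written: its right-hand side is just $JQ^n$, which properly contains $Q^{n+1}$ whenever $J\neq Q$. The assertion that $a_1t,\dots,a_dt$ ``behave as a regular sequence on $\calS_Q(J)$ modulo $\m$'' cannot be meant literally, since an honest regular sequence of length $d$ would make $\calS_Q(J)$ Cohen--Macaulay and, by (5), force $\depth\gr_J(A)\ge d-1$, which is not automatic. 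Even knowing that each $a_it$ is a nonzerodivisor would not rule out embedded primes whose images in $(A/\m)[T_1,\dots,T_d]$ contain no $T_i$; such primes exist once $d\ge2$. Your fallback---reduce to $d=1$ and do a ``direct length computation''---also misses the target, because length computations determine Hilbert coefficients, not associated primes. What actually works in dimension one is the structural observation that $a_1t$ is a nonzerodivisor on $\calS_Q(J)=\bigoplus_{n\ge1}J^{n+1}/a_1^nJ$ because $a_1$ is a nonzerodivisor on $A$, which rules out the graded maximal ideal as an associated prime; the passage to $d\ge2$ then requires a genuine reduction argument rather than the bare observation that the $a_it$ generate.
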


%で, Sally moduleのrankの定義などをやる。

We define
$$
\rank \calS_Q(J) = \ell_{\calT_{\p}}([\calS_Q(J)]_{\fkp})% = \rme_1(J) - [\rme_0(J) - \ell_A(A/J)] 
$$
and call it the {\it rank} of the Sally module $\calS_Q(J)$.  
Then  %which is an invariant of $A$. Then
$\rme_1(J) = \rme_0(J) - \ell_A(A/J) + \rank \calS_Q(J)$.

%\vspace{0.3em}
%super-regular sequenceの定義。

A sequence $x_1, x_2, \ldots, x_{\ell}~(\ell >0)$ of elements in $A$ is called a {\it super-regular sequence} of $A$ with respect to $J$ if $x_1t, x_2t, \ldots, x_{\ell}t \in \calR$ is a regular sequence on $\gr_J(A)$. 
We then have the following.

\begin{lem}\label{2.4}
Suppose that $\calS_Q(J) = \calT\left[\calS_Q(J)\right]_1$. Then the following assertions hold true.
\begin{enumerate}
\item[$(1)$] If $d \ge 2$, then $a_2, a_3, \ldots, a_d \in \fkq$ forms a super-regular sequence of $A$ with respect to $J$. In particular, $\depth \gr_J(A) \ge d-1$.  
\item[$(2)$] The Sally module $\calS_Q(J)$ is Cohen-Macaulay and $\rank \calS_Q(J) = \ell_A(J^2/QJ)$. 
\end{enumerate}
\end{lem}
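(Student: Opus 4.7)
The plan is to prove in one stroke the Valabrega--Valla intersection equalities $\fkq \cap J^{n+1} = \fkq J^n$ for all $n \geq 0$, where $\fkq = (a_2, \ldots, a_d)$; this yields the super-regularity needed for (1), and reduces the rank computation in (2) to the one-dimensional case. The driving observation is that the hypothesis $\calS_Q(J) = \calT[\calS_Q(J)]_1$ is equivalent to $J^3 = QJ^2$, which iterates to $J^{n+1} = QJ^n$ for all $n \geq 2$.

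I would prove the intersection equality by induction on $n$, with $n=1$ as the critical base case. Since $a_1 \in I$, we have $J = I + \fkq$, so $J^2 = I^2 + I\fkq + \fkq^2$; any $x \in J^2 \cap \fkq$ decomposes as $x = x_1 + x_2$ with $x_1 \in I^2$ and $x_2 \in I\fkq + \fkq^2 \subseteq \fkq J$, whence $x_1 \in I^2 \cap \fkq \subseteq I \cap \fkq$. The crux is the equality $I \cap \fkq = \fkq I$: since $I \cong \rmK_A$, the ring $A/I$ is Cohen--Macaulay of dimension $d-1$; and since $J = I + \fkq$ is $\m$-primary, the image of $\fkq$ in $A/I$ is a parameter ideal, so $a_2, \ldots, a_d$ is $A/I$-regular. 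Combined with its $A$-regularity, the Koszul complex on $a_2,\ldots,a_d$ remains acyclic after tensoring with $A/I$, so $\Tor_1^A(A/\fkq, A/I) = 0$, which gives $\fkq \cap I = \fkq I$ and closes the case $n=1$. For $n \geq 2$, we use $J^{n+1} = a_1 J^n + \fkq J^n$, so $\fkq \cap J^{n+1} = \fkq J^n + (\fkq \cap a_1 J^n)$; if $a_1 y \in \fkq$ with $y \in J^n$, then since $a_1$ is regular modulo $\fkq$, we have $y \in \fkq \cap J^n = \fkq J^{n-1}$ by induction, whence $a_1 y \in a_1 \fkq J^{n-1} \subseteq \fkq J^n$.

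The Valabrega--Valla criterion then yields that $a_2, \ldots, a_d$ is super-regular with respect to $J$, proving (1) and hence $\depth \gr_J(A) \geq d-1$; by Fact \ref{2.3}(5), $\calS_Q(J)$ is Cohen--Macaulay. For the rank formula, I pass to $\overline{A} = A/\fkq$: since super-regular sequences preserve the Hilbert coefficients, Fact \ref{2.3}(4) gives $\rank \calS_Q(J) = \rank \calS_{(\overline{a}_1)}(J\overline{A})$, reducing to the one-dimensional case. There, $\calT \cong \overline{A}[T]$ via $T \mapsto \overline{a}_1 t$, and the hypothesis descends to $(J\overline{A})^3 = \overline{a}_1(J\overline{A})^2$; multiplication by $\overline{a}_1^{i-1}$ induces isomorphisms $[\calS_{(\overline{a}_1)}(J\overline{A})]_1 \cong [\calS_{(\overline{a}_1)}(J\overline{A})]_i$ for all $i \geq 1$, while $\overline{a}_1$ itself acts as zero, exhibiting $\calS_{(\overline{a}_1)}(J\overline{A})$ as (up to a degree shift) a free module of the form $[\calS]_1 \otimes_{\overline{A}} \overline{A}[T]$. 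A composition series for $[\calS]_1$ over $\overline{A}$ thus lifts to one for $\calS_\fkp$ over $\calT_\fkp$, giving $\rank \calS_Q(J) = \ell_{\overline{A}}([\calS_{(\overline{a}_1)}(J\overline{A})]_1)$; invoking the intersection property once more to identify $(J\overline{A})^2/\overline{a}_1 J\overline{A}$ with $J^2/QJ$, this equals $\ell_A(J^2/QJ)$. The main obstacle is the base step $n=1$, where the Cohen--Macaulay structure of $A/I$ (forced by the canonical-ideal hypothesis) enters essentially; without it the induction cannot start.
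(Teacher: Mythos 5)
Your proof is correct, and for part (1) it runs along the same lines as the paper's: an induction establishing $\fkq \cap J^{m+1} = \fkq J^m$, then an appeal to Valabrega--Valla. The paper's base case leans on the equality $\fkq \cap I = \fkq I$, which it had already asserted in the preliminaries (from $\fkq$ being a parameter ideal in the Gorenstein ring $A/I$); you instead re-derive it via the vanishing of $\Tor_1^A(A/\fkq, A/I)$, which is a sound (if slightly more machinery-heavy) justification of the same fact. Your explicit $x = x_1 + x_2$ decomposition for $n = 1$ is the modular-law step of the paper written out elementwise.

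For part (2), you depart genuinely from the paper. After establishing Cohen--Macaulayness of $\calS_Q(J)$ via Fact \ref{2.3}(5), the paper quotes a formula of Ghezzi--Goto--Hong--Vasconcelos, namely $\rank \calS_Q(J) = \sum_{i=1}^{r-1}\ell_A(J^{i+1}/QJ^i)$ with $r = \red_Q(J) \le 2$, and is done. You instead reduce to dimension one using invariance of $\rme_0, \rme_1, \ell_A(A/J)$ under the super-regular (hence superficial) sequence, identify $\calR(\overline{Q}) \cong \overline{A}[T]$, show multiplication by $\overline{a}_1^{i-1}$ gives $[\calS]_1 \cong [\calS]_i$, and exhibit $\calS$ as $[\calS]_1 \otimes_{\overline{A}} \overline{A}[T](-1)$; then a flat local base change gives $\rank \calS_Q(J) = \ell_{\overline{A}}([\calS]_1) = \ell_A(J^2/QJ)$, with the last identification using the $n = 1$ intersection equality (compare the proof of Theorem \ref{3.5}). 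Your route is more self-contained, trading the external citation for a direct structural computation; the bookkeeping (degree shift, localization of $\overline{A}[T]$ at $\overline{\m}\overline{A}[T]$, composition series lifting under the flat local extension) is all sound.
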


\begin{proof}
$(1)$ Thanks to \cite[Corollary 2.7]{VV}, it suffices to show that $\fkq \cap J^{m+1} = \fkq J^m$ for all $\m \in \Bbb Z$. We may assume $m \ge 1$. Suppose $m=1$. Since $J^2 = \fkq J + I^2$, we get
$$
\fkq \cap J^2 = \fkq \cap (\fkq J + I^2) = \fkq J + \fkq \cap I^2 \subseteq \fkq J  
$$
because $\fkq \cap I = \fkq I$. Suppose that $m \ge 2$ and our assertion holds for $m-1$. As $J^3 = QJ^2$, we have $J^{m+1} = QJ^m = a_1J^m + \fkq J^m$. Hence
\begin{eqnarray*}
\fkq \cap J^{m+1} &=& \fkq \cap(a_1J^m + \fkq J^m) = \fkq J^m + \fkq \cap (a_1J^m) = \fkq J^m + a_1 (\fkq \cap J^m) \\
& = & \fkq J^m + a_1 (\fkq J^{m-1}) = \fkq J^m
\end{eqnarray*} 
where the third and fourth equalities follow from $\fkq :_A a_1 = \fkq$ and the induction hypothesis on $m$, respectively. Therefore $a_2, a_3, \ldots, a_d \in \fkq$ forms a super-regular sequence of $A$ with respect to $J$.

$(2)$ By Fact \ref{2.3} (3), the Sally module $\calS_Q(J)$ is Cohen-Macaulay as a $\calT$-module. This shows, by \cite[Theorem 2.1]{GGHV2} the equality 
$$
\rank \calS_Q(J) = \sum_{i=1}^{r-1} \ell_A(J^{i+1}/QJ^i)
$$
holds where $r = \red_Q(J)$. 
% denotes the {\it reduction number} of $J$ with respect to $Q$, i.e., the minimum integer $r \ge 0$ satisfying the equality $J^{r+1} = QJ^r$. 
Since $\calS_Q(J) = \calT\left[\calS_Q(J)\right]_1$, we get $r \le 2$ and hence 
$\rank \calS_Q(J) = \ell_A(J^2/QJ)$.  
\end{proof}

%\begin{cor}
%Suppose that $\calS_Q(J) = \calT\left[\calS_Q(J)\right]_1$.  Then $\calS$ is a Cohen-Macaulay graded $\calT$-module and the equality $\rank \calS = \ell_A(J^2/QJ)$ holds. 
%\end{cor}

%\begin{proof}
%\end{proof}

%ここで, 一般論から少し準備しておく。

%%%%%%%%%%%%%%%%%%%%%%%%%%%%%%%%%%%%%%%%%%%%
\if0
\begin{lem}
Let $(A, \m)$ be a Cohen-Macaulay local ring with $d = \dim A \ge 2$, $Q=(a_1, a_2, \ldots, a_d)$ a parameter ideal of $A$. We set $\fkq = (a_2, a_3, \ldots, a_d)$. Then the equality $(a_1) \cap \fkq^m = a_1 \fkq^m$ holds for all $m \in \Bbb Z$. 
\end{lem}
\fi
%%%%%%%%%%%%%%%%%%%%%%%%%%%%%%%%%%%%%%%%%%%%

%以上の準備の下, NZDとSally modulesの関係について考える。

We investigate the relation between the Sally modules and super-regular elements. 

%Sally modulesの復習と割る話

\begin{prop}
Let $(A, \m)$ be a Cohen-Macaulay local ring with $d = \dim A \ge 2$ and $J$ an $\m$-primary ideal of $A$ admitting a parameter ideal $Q$ as a reduction. Let $a \in Q\setminus \m Q$ be a super-regular element of $A$ with respect to $J$. Then one has an exact sequence
$$
0 \to \calS_Q(J)(-1) \overset{\widehat{at}}{\longrightarrow} \calS_Q(J) \to \calS_{Q/(a)}(J/(a)) \to 0
$$
of graded $\calT$-modules, where $\calT=\calR(Q) = A[Qt]$ and $t$ is an indeterminate over $A$. Hence we have an isomorphism
$$
\calS_Q(J)/(at)\calS_Q(J) \cong \calS_{Q/(a)}(J/(a))
$$
of graded $\calT$-modules. 
\end{prop}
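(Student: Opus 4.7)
The plan is to construct both maps explicitly and verify exactness in three stages; the main difficulty lies in injectivity of $\widehat{at}$, whereas surjectivity and exactness at the middle come from a direct manipulation using super-regularity.

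In degree $n$ the map $\widehat{at}$ sends $y + JQ^{n-1} \in J^n/JQ^{n-1}$ to $ay + JQ^n \in J^{n+1}/JQ^n$; this is well defined because $a \in Q$ gives $aJQ^{n-1} \subseteq JQ^n$. The projection $\pi \colon \calS_Q(J) \to \calS_{Q/(a)}(J/(a))$ is induced by reduction modulo $a$, after identifying the $n$-th graded component of the target with $(J^{n+1} + (a))/(JQ^n + (a))$. Surjectivity of $\pi$ is immediate since every class admits a representative in $J^{n+1}$. For exactness at the middle, take $x + JQ^n \in \ker \pi$ and write $x = u + av$ with $u \in JQ^n$ and $v \in A$; then $av = x - u \in J^{n+1}$, and the super-regularity hypothesis (which via Valabrega--Valla gives $aA \cap J^{n+1} = aJ^n$, hence $J^{n+1} :_A a = J^n$) forces $v \in J^n$, so $x \equiv av \pmod{JQ^n}$ lies in the image of $\widehat{at}$ applied to $v + JQ^{n-1}$; the reverse inclusion is immediate.

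The heart of the argument is injectivity of $\widehat{at}$, which amounts to proving $(JQ^n :_A a) \cap J^n \subseteq JQ^{n-1}$ for every $n \ge 1$. I would extend $a$ to a minimal generating set $a, a_2, \ldots, a_d$ of $Q$, which forms a regular sequence by the CM hypothesis, and set $Q' = (a_2, \ldots, a_d)$. The binomial-type expansion $Q^n = \sum_{k=0}^{n} (a)^k (Q')^{n-k}$ yields the decomposition $JQ^n = aJQ^{n-1} + J(Q')^n$, so for any $y \in J^n$ with $ay \in JQ^n$ one writes $ay = au + v$ with $u \in JQ^{n-1}$ and $v \in J(Q')^n \subseteq (Q')^n$, whence $a(y - u) \in (Q')^n$. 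The key input is that $a$ is a non-zerodivisor on $A/(Q')^n$: since $a_2, \ldots, a_d$ is a regular sequence in the CM ring $A$, one has $\gr_{Q'}(A) \cong (A/Q')[T_2, \ldots, T_d]$, so each quotient $(Q')^i/(Q')^{i+1}$ in the finite filtration of $A/(Q')^n$ is a free $A/Q'$-module on which $\bar{a} \in A/Q'$ acts as a non-zerodivisor (by permuting the regular sequence in $A$), and a standard filtration argument yields that $a$ is NZD on $A/(Q')^n$. Therefore $y - u \in (Q')^n \subseteq Q^n \subseteq JQ^{n-1}$ (using $Q \subseteq J$), so $y = u + (y - u) \in JQ^{n-1}$ as required. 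The isomorphism $\calS_Q(J)/(at)\calS_Q(J) \cong \calS_{Q/(a)}(J/(a))$ then drops out of the short exact sequence.
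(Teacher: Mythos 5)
Your argument is correct and follows essentially the same route as the paper's proof: both hinge on the decomposition $Q^n J = aQ^{n-1}J + \fkq^n J$ with $\fkq = (a_2,\ldots,a_d)$, on the Valabrega--Valla consequence $(a) \cap J^{n+1} = aJ^n$ of super-regularity, and on the regular-sequence fact $(a) \cap \fkq^n = a\fkq^n$. The paper packages these into one computation of the kernel $X$ of the natural surjection $J^{n+1}/Q^nJ \to \overline{J}^{n+1}/\overline{Q}^n\overline{J}$, showing $X \cong aJ^n/aQ^{n-1}J \cong J^n/Q^{n-1}J$, while you unpack the same facts into separate surjectivity, middle-exactness, and injectivity verifications, and additionally spell out the filtration argument for $a$ being a non-zerodivisor on $A/\fkq^n$, which the paper quotes as a known fact.
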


\begin{proof}
Let $\overline{J} = J/(a)$ and $\overline{Q} = Q/(a)$. For each $n \ge 1$, we consider the exact sequence
$$
0 \to X \to J^{n+1}/Q^nJ \to \overline{J}^{n+1}/\overline{Q}^n\overline{J} \to 0
$$
of $A$-modules. Then, because $a \in Q$ is super-regular, we get
$$
X = \frac{\left(Q^nJ + (a)\right) \cap J^{n+1}}{Q^nJ} = \frac{Q^nJ + (a) \cap J^{n+1}}{Q^nJ} = \frac{Q^nJ + a J^n}{Q^nJ} \cong \frac{aJ^n}{Q^nJ \cap aJ^n}.
$$
By choosing an ideal $\fkq$ of $A$ with $Q = (a) + \fkq$, we obtain the equalities
$$
Q^nJ \cap aJ^n = aJ^n \cap (aQ^{n-1} + \fkq^n)J = aQ^{n-1}J + (aJ^n \cap \fkq^nJ) = aQ^{n-1}J
$$
where the last comes from the fact that $(a) \cap \fkq^n = a\fkq^n$. Therefore
$$
[\calS_Q(J)]_{n-1} = J^n/Q^{n-1}J \overset{\widehat{a}}{\stackrel{\sim}{\longrightarrow}} a J^n/a Q^{n-1}J = X
$$
as desired. 
\end{proof}

We note the following. 

\begin{prop}\label{2.6}
Let $\varphi : (A, \m) \to (A_1, \m_1)$ be a flat local homomorphim of Noetherian local rings and $J$ an $\m$-primary ideal of $A$ admitting a parameter ideal $Q$ as a reduction. Then one has isomorphisms 
$$
A_1\otimes_A \calS_Q(J)  \cong \calS_{QA_1}(JA_1)  \ \ \ \text{and} \ \ \ A_1\otimes_A \calT\left[\calS_Q(J)\right]_1  \cong \calT_1\left[\calS_{QA_1}(JA_1)\right]_1
$$
of graded $\calT$-modules, where  $\calT =\calR(Q)$ and $\calT_1 = \calR(QA_1)$. 
\end{prop}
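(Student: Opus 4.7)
The plan is to reduce everything to the formal consequences of flatness applied degree-by-degree. First I will establish the compatibility of ideal powers and colon/intersection operations under base change: since $A_1$ is $A$-flat, for every $n \ge 0$ one has $J^n \otimes_A A_1 = J^n A_1 = (JA_1)^n$ and $(JQ^i)\otimes_A A_1 = (JA_1)(QA_1)^i$, and consequently
$$
A_1 \otimes_A \bigl(J^{i+1}/JQ^i\bigr) \;\cong\; (JA_1)^{i+1}/(JA_1)(QA_1)^i
$$
for every $i \ge 1$. This is the content of the first assertion in each graded degree.

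Next I will assemble these pieces into a graded isomorphism. Recall that $\calS_Q(J) = \bigoplus_{i\ge 1} J^{i+1}/JQ^i$ is a graded $\calT$-module via the action of $\calT = \calR(Q)$, and similarly for $\calS_{QA_1}(JA_1)$ over $\calT_1 = \calR(QA_1)$. Flatness of $\varphi$ yields a natural graded-ring isomorphism $A_1 \otimes_A \calT \cong \calT_1$ (because $\calT = \bigoplus_{n\ge 0} Q^n$ as graded $A$-modules, and base change commutes with direct sums and preserves the ideal powers). Applying $A_1\otimes_A -$ to the defining exact sequence
$$
0 \to J\calT \to J\calR \to \calS_Q(J) \to 0
$$
and using the identifications $A_1\otimes_A J\calT \cong (JA_1)\calT_1$ and $A_1\otimes_A J\calR \cong (JA_1)\calR(JA_1)$, we obtain the desired graded isomorphism $A_1\otimes_A \calS_Q(J) \cong \calS_{QA_1}(JA_1)$, with the multiplication by elements of $\calT$ corresponding via the isomorphism $A_1\otimes_A \calT \cong \calT_1$.

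For the second isomorphism, I will use the functorial description of $\calT[\calS_Q(J)]_1$ as the image of the multiplication map
$$
\mu : \calT \otimes_A [\calS_Q(J)]_1 \longrightarrow \calS_Q(J).
$$
Applying the exact functor $A_1\otimes_A -$ gives that $A_1\otimes_A \calT[\calS_Q(J)]_1$ is the image of $A_1\otimes_A \mu$; under the identifications established above (and using $A_1 \otimes_A [\calS_Q(J)]_1 \cong [\calS_{QA_1}(JA_1)]_1$ from the first part in degree $1$), this image is precisely $\calT_1[\calS_{QA_1}(JA_1)]_1$.

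The argument is essentially formal and the only obstacle is bookkeeping: one must confirm that the natural maps commute with the graded $\calT$-structures, and that the identification $A_1\otimes_A \calT \cong \calT_1$ is compatible with the actions on both Sally modules. Since these compatibilities follow directly from the naturality of the multiplication maps $Q^n \otimes_A (J^{i+1}/JQ^i) \to J^{i+n+1}/JQ^{i+n}$ under base change, no additional hypothesis beyond flatness is needed.
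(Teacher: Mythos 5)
Your argument is correct and follows essentially the same route as the paper's proof: both rest on flatness commuting with ideal powers, the base-change isomorphisms $A_1\otimes_A\calR(Q)\cong\calR(QA_1)$ and $A_1\otimes_A\calR(J)\cong\calR(JA_1)$, and the defining exact sequence $0\to J\calT\to J\calR\to\calS_Q(J)\to 0$ to transfer the Sally module. Your explicit handling of the second isomorphism via the image of the multiplication map $\calT\otimes_A[\calS_Q(J)]_1\to\calS_Q(J)$ fills in what the paper compresses into a single ``similarly,'' but it is the same underlying mechanism.
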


\begin{proof}
Let $J_1 = JA_1$ and $Q_1 = QA_1$. The isomorphism 
$A_1 \otimes_A A[t] \overset{\alpha}{\stackrel{\sim}{\longrightarrow}} A_1[t]$ of $A_1$-algebras induces the isomorphisms $A_1 \otimes_A\calR(J) \cong \calR(J_1)$ and $A_1 \otimes_A\calR(Q) \cong \calR(Q_1)$. Then, because $A_1 \otimes_A J\calR(J) \cong J_1\calR(J_1)$ and $A_1 \otimes_A J\calR(Q) \cong J_1\calR(Q_1)$, the flatness of $A_1$ provides a commutative diagram
$$
\xymatrix{
0 \ \ar[r] &A_1 \otimes_A J\calR(Q)  \ar[r]\ar[d]^{\cong} & A_1 \otimes_A J\calR(J)  \ar[r]\ar[d]^{\cong} &A_1 \otimes_A \calS_Q(J)  \ar[r] &  \ 0  \\
0 \ \ar[r] &J_1\calR(Q_1) \ar[r] &  J_1\calR(J_1) \ar[r] & \calS_{Q_1}(J_1) \ar[r] & \ 0
}
$$
of graded $\calT$-modules. 
This yields that $A_1\otimes_A \calS_Q(J)  \cong \calS_{QA_1}(JA_1)$. Similarly we have $A_1\otimes_A \calT\left[\calS_Q(J)\right]_1  \cong \calT_1\left[\calS_{QA_1}(JA_1)\right]_1$. 
\end{proof}

Closing this section we discuss the existence of parameter ideals with condition $(\sharp)$.

%次に, parameter ideal with condition $(\sharp)$の存在性について議論しよう。
%parameter ideal with condition $(*)$の存在性

%\textcolor{red}{
%A sequence $a_1, a_2, \ldots, a_{\ell} \in J~(\ell >0)$ of elements in 
%superficial sequenceの定義。
%}
%super-regular sequences と superficial sequencesの関係

\begin{lem}\label{2.7}
Let $(A, \m)$ be a Noetherian local ring with $d = \dim A>0$ and $J$ an $\m$-primary ideal of $A$. If $g_1, g_2, \ldots, g_d \in J$ forms a superficial sequence of $A$ with respect to $J$, then the ideal of $A$ generated by the sequence $g_1, g_2, \ldots, g_d$ is a parameter ideal which is a minimal reduction of $J$. 
\end{lem}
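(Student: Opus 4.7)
The plan is to reduce the claim to a graded Nakayama argument on the fiber cone $\calF(J) = \calR(J)/\m\calR(J) \cong \bigoplus_{n \ge 0} J^n/\m J^n$, a Noetherian standard graded algebra over $k = A/\m$ whose Krull dimension equals the analytic spread of $J$. Since $J$ is $\m$-primary, this dimension equals $d$. The crux of the argument is to show that the images $g_1^*, \ldots, g_d^* \in \calF(J)_1 = J/\m J$ of the superficial sequence generate an ideal primary to the irrelevant ideal $\calF(J)_+$; once this is in hand, everything else is formal.

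I would establish the primary property by induction on $d$. For the base case $d = 1$, the defining colon condition $(J^{n+1}:g_1) \cap J^c = J^n$ for $n \gg 0$ translates into $g_1^*$ being a filter-regular element of the one-dimensional ring $\calF(J)$, which suffices to make it a parameter. For the inductive step, the recursive definition of superficial sequence ensures that $g_1$ is superficial for $J$ in $A$ (so $\dim A/(g_1) = d-1$) and that the images of $g_2, \ldots, g_d$ in $A/(g_1)$ form a superficial sequence for the $\m/(g_1)$-primary ideal $J/(g_1)$. Applying the inductive hypothesis in $A/(g_1)$ and comparing $\calF(J/(g_1))$ with $\calF(J)/(g_1^*)$ in high degree completes the step, so $(g_1^*, \ldots, g_d^*)\calF(J)$ is $\calF(J)_+$-primary.

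Consequently $\calF(J)/(g_1^*, \ldots, g_d^*)\calF(J)$ is Artinian, yielding $\calF(J)_{n+1} = (g_1^*, \ldots, g_d^*)\calF(J)_n$ for $n \gg 0$; this pulls back to the relation $J^{n+1} = (g_1,\ldots,g_d)J^n + \m J^{n+1}$, and Nakayama's lemma applied to the finitely generated $A$-module $J^{n+1}/(g_1,\ldots,g_d)J^n$ then gives $J^{n+1} = (g_1, \ldots, g_d)J^n$. Thus $Q := (g_1, \ldots, g_d)$ is a reduction of the $\m$-primary ideal $J$, hence itself $\m$-primary, and being generated by exactly $d = \dim A$ elements it is a parameter ideal. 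Minimality among reductions of $J$ is automatic, since any reduction must have at least $\dim \calF(J) = d$ generators in any minimal generating set, matching the generator count of $Q$.

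The main obstacle will be the base case of the induction, where the passage from the colon ideal characterization of a superficial element to the filter-regularity (hence parameter) property in $\calF(J)$ requires a careful analysis of how $g_1$ interacts with the associated primes of $\gr_J(A)$ that do not contain $\gr_J(A)_+$; in most references this is proved via a generic-position argument presupposing an infinite residue field, so to stay in full generality I would argue directly from the colon condition, exploiting that such associated primes of $\gr_J(A)$ correspond under contraction to the minimal primes of $A$ of maximal dimension.
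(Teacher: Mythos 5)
Your route through the fiber cone is a genuinely different strategy from the paper's. The paper never leaves $A$: for $d=1$ it combines $J^{\ell} \subseteq (g_1)$ with the colon identity $J^{n+1}:_A g_1 = J^n + \bigl[(0):_A g_1\bigr]$ (for $n \gg 0$) to get $J^{n+1} = g_1 J^n$ outright, and the inductive step lifts a reduction from $A/(g_1)$ via the intersection identity $(g_1)\cap J^{n+1} = g_1 J^n$, again a direct consequence of the colon condition. Both proofs share the same skeleton (induction on $d$ passing to $A/(g_1)$), but the paper's stays entirely at the level of ideals in $A$.

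There is, however, a genuine gap at the very step you flag as "the main obstacle." Superficiality in the colon form — whether $(J^{n+1}:g_1)\cap J^c = J^n$ or the paper's variant — is a statement about the $J$-adic filtration, hence about the initial form of $g_1$ in $\gr_J(A)$, and it does not translate directly into filter-regularity of the image of $g_1$ in $\calF(J) = \gr_J(A)/\m\gr_J(A)$. Filter-regularity in $\calF(J)$ would say: for $n\gg 0$, $x\in J^n$ and $g_1 x \in \m J^{n+1}$ force $x\in \m J^n$; that is a condition involving $\m$-contractions which the colon identity does not deliver. What the colon condition actually gives you is that $g_1^*$ avoids the minimal primes of $\gr_J(A)$; to infer that the image in $\calF(J)$ is a parameter you need the separate observation that the coheight-$d$ minimal primes of $\calF(J)$ pull back to minimal primes of $\gr_J(A)$ under $\gr_J(A)\twoheadrightarrow \calF(J)$ (using $\dim\calF(J)=\dim\gr_J(A)=d$), and your high-degree identification $\calF(J)/(g_1^*)\cong\calF(\bar J)$ in the inductive step requires precisely the identity $(g_1)\cap J^n = g_1 J^{n-1}$ for $n\gg 0$ that the paper isolates and uses. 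None of these bridges is made explicit in your sketch. The approach is completable along these lines, but as written it omits the technical content that does the real work, whereas the paper's direct colon-ideal argument reaches the conclusion without ever introducing $\gr_J(A)$ or $\calF(J)$.
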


\begin{proof}
Set $Q = (g_1, g_2, \ldots, g_d)$.
Suppose $d=1$. As $J$ is $\m$-primary, we note that $g_1 \in J$ is a system of parameter of $A$. Choose an integer $\ell > 0$ such that $J^{\ell} \subseteq (g_1)$. Since $g_1$ is a superficial element of $A$ with respect to $J$, there exists an integer $N \ge 0$ satisfying $J^{n+1}:_Ag_1 = J^n + \left[(0):_Ag_1\right]$ for all $n \ge N$. Hence the  equality 
$
J^{n+1} = g_1 J^n
$
holds for every $n \ge \max\{N, \ell-1\}$. 

We assume that $d \ge 2$ and our assertion holds for $d-1$. Let $\overline{A} = A/(g_1)$, $\overline{J} = J\overline{A}$, and $\overline{Q} = Q\overline{A}$. Note that $g_1 \in J$ is a subsystem of parameters of $A$ and $\dim \overline{A} = d-1$. For each $2 \le i \le d$, we denote by $\overline{g_i}$ the image of $g_i$ in $\overline{A}$. Since $\overline{g_2}, \overline{g_3}, \ldots, \overline{g_d} \in \overline{J}$ forms a superficial sequence of $\overline{A}$ with respect to $\overline{J}$, the hypothesis of induction on $d$ shows that $\overline{Q}$ is a reduction of $\overline{J}$, i.e., $\overline{J}^{r+1} = \overline{Q}\cdot\overline{J}^r$ for some $r \ge 0$. Besides, we can choose an integer $N \ge 0$ such that 
$J^{n+1}:_Ag_1 = J^n +\left[(0):_Ag_1\right]$ for all $n \ge N$, because $g_1 \in J$ is a superficial element of $A$ with respect to $J$. Therefore, for every $n \ge \max \{N, r\}$, we have
$$
J^{n+1} \subseteq (g_2, g_3, \ldots, g_d)J^n + (g_1)
$$
which yields that
$$
J^{n+1} \subseteq [ (g_2, g_3, \ldots, g_d)J^n + (g_1)]\cap J^{n+1} = (g_2, g_3, \ldots, g_d)J^n + (g_1) \cap J^{n+1} = QJ^n
$$
because $(g_1) \cap J^{n+1} = g_1 J^n$. Hence $J^{n+1} = QJ^n$, so that $Q$ is a reduction of $J$. 
\end{proof}

\begin{rem}
%Let $(A, \m)$ be a Noetherian local ring, $J ~(\ne A)$ an ideal of $A$. 
If $a_1, a_2, \ldots, a_{\ell} \in J~(\ell >0)$ forms a super-regular sequence of $A$ with respect to $J$, then the sequence $a_1, a_2, \ldots, a_{\ell}$ forms superficial of $A$ with respect to $J$. 
\end{rem}

\begin{prop}\label{2.9}
%Let $(A, \m)$ be a Cohen-Macaulay local ring with $d = \dim A>0$, $I$ a canonical ideal of $A$. 
Suppose that $A$ has an infinite residue class field. Then there exists a parameter ideal $Q = (a_1, a_2, \ldots, a_d)$ of $A$ which satisfies the condition $(\sharp)$. In particular, the extended canonical ideal exists. 
\end{prop}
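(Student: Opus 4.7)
The plan is to reduce the problem to finding, for a suitable $\m$-primary ideal $L$ containing $I$, a minimal reduction $Q = (a_1, \ldots, a_d)$ of $L$ whose first generator $a_1$ lies in $I$. Once such a $Q$ has been produced, the chain $Q \subseteq I + Q \subseteq L$ combined with the integrality of $L$ over $Q$ forces $I + Q$ to be integral over $Q$ as well, so $Q$ is a reduction of $I + Q$ and condition $(\sharp)$ holds. For the auxiliary ideal I would take $L = I + P$, where $P$ is any parameter ideal of $A$ (which exists since $A$ is a $d$-dimensional Noetherian local ring); then $L \supseteq P$ is automatically $\m$-primary.

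The construction of $Q$ proceeds by producing $a_1, a_2, \ldots, a_d \in L$ as a superficial sequence for $L$ with the additional constraint $a_1 \in I$. By Lemma \ref{2.7}, any such superficial sequence automatically generates a parameter ideal $Q = (a_1, \ldots, a_d)$ of $A$ which is a minimal reduction of $L$, and the opening observation then delivers $(\sharp)$. The crucial step is producing $a_1$. Classically, when $A/\m$ is infinite, the non-superficial elements of $L$ form the preimage of a finite union of proper $(A/\m)$-subspaces of $L/\m L$, those subspaces being cut out by the associated primes of $\gr_L(A)$ different from the irrelevant ideal. The vector-space form of prime avoidance (valid because $A/\m$ is infinite) then supplies the desired $a_1 \in I$, provided the image of $I$ in $L/\m L$ is not contained in any of these bad subspaces. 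Once $a_1$ is chosen, one extends to a full superficial sequence by passing to $A/(a_1)$, which is Cohen-Macaulay of dimension $d-1$ with infinite residue field, and invoking the standard simultaneous-existence result for superficial sequences.

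The main obstacle is verifying that the image of $I$ in $L/\m L$ avoids all the bad subspaces. Here one would use that $I \cong \rmK_A$ is a maximal Cohen-Macaulay $A$-module: $I$ contains an $A$-regular sequence of length $d$ and so is not contained in any associated prime of $A$. This handles those associated primes of $\gr_L(A)$ that contract to associated primes of $A$; any remaining embedded primes of $\gr_L(A)$ can be disposed of by refining the initial choice of $P$ (for example, replacing $P$ by $\m^N P$ for $N \gg 0$) so that the associated graded ring of $L$ is sufficiently clean. Carrying out this technical accounting, under the infinite residue field hypothesis, is the heart of the argument; the rest — passage to $A/(a_1)$, application of Lemma \ref{2.7}, and the transition from reduction of $L$ to reduction of $I+Q$ — is routine.
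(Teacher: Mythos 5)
The step you flag as the ``heart of the argument''---producing a superficial element of $L$ that lies in $I$---is precisely where the proof breaks down, and your sketch of how to fill it does not work. Superficiality of $a_1$ for $L$ is governed by associated primes of $\gr_L(A)$ (or of the fiber cone), not by associated primes of $A$; the observation that $I\cong\rmK_A$ is maximal Cohen--Macaulay and therefore contains a regular sequence tells you that $I$ is not contained in any $\p\in\Ass A$, but says nothing about whether the image of $I$ in $L/L^2$ avoids the degree-one pieces of the relevant primes of $\gr_L(A)$. For this you would need genuine control over $\Ass\gr_L(A)$, which you do not have. The proposed remedy of ``refining $P$, e.g.\ replacing $P$ by $\m^N P$'' is not even available in this setting, since $\m^N P$ is not a parameter ideal, and in any case there is no reason to expect the associated graded ring of $I+\m^N P$ to become ``cleaner'' as $N$ grows. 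So the existence of a superficial $a_1\in I$ is asserted, not established.

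The paper avoids this obstruction entirely by not insisting in advance that a superficial element be found inside $I$. It picks a parameter ideal $Q'=(f_1,\dots,f_d)$ with $f_1\in I$, so that $J=I+Q'=I+(f_2,\dots,f_d)$ and $J/I$ is a parameter ideal of the $(d-1)$-dimensional Gorenstein ring $A/I$. It then chooses an \emph{arbitrary} superficial sequence $g_1,\dots,g_d$ for $J$; by Lemma~\ref{2.7}, $Q=(g_1,\dots,g_d)$ is a parameter ideal that is a minimal reduction of $J$. Passing to $A/I$, the image of $Q$ is a reduction of the parameter ideal $J/I$, hence equals it; since $J/I$ needs only $d-1$ generators, one $g_i$ is redundant modulo $I$, so $J=I+(g_1,\dots,\widehat{g_i},\dots,g_d)$. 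Writing $g_i=\xi+\eta$ with $\xi\in I$, $\eta\in\fkq=(g_1,\dots,\widehat{g_i},\dots,g_d)$, one gets $Q=(\xi)+\fkq$ with $\xi\in I$, and this $Q$ satisfies $(\sharp)$. This ``choose freely, then massage'' maneuver is the idea you are missing; without it, constraining the superficial element to lie in $I$ is a genuinely unsolved difficulty in your outline.
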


\begin{proof}
Since $\dim A/I = d-1$, we choose a parameter ideal $Q'=(f_1, f_2, \ldots, f_d)$ of $A$ such that $f_1 \in I$. We consider the $\m$-primary ideal $J=I+Q'=(f_2, f_3, \ldots, f_d) + I$. Since the field $A/\m$ is infinite, we can choose a superficial sequence $g_1, g_2, \ldots, g_d \in J$ of $A$ with respect to $J$. Thanks to Lemma \ref{2.7}, the ideal $Q=(g_1, g_2, \ldots, g_d)$ is a reduction of $J$. By setting $\overline{A}=A/I$ and $\overline{J} = J\overline{A}$, the ideal $(\overline{g_1}, \overline{g_2}, \ldots, \overline{g_d})$ forms a reduction of $\overline{J}$, where $\overline{(-)}$ denotes the image in $\overline{A}$. Hence, because $\overline{J}$ is a parameter ideal of $\overline{A}$, we get the equality
$$
(\overline{g_1}, \overline{g_2}, \ldots, \overline{g_d}) = (\overline{f_2}, \overline{f_3}, \ldots, \overline{f_d})
$$
inside of the ring $\overline{A}$. Thus 
$$
J=I+(g_1, g_2, \ldots, g_{i-1}, g_{i+1}, \ldots, g_d)
$$ for some $1 \le i \le d$. We set $\fkq = (g_1, g_2, \ldots, g_{i-1}, g_{i+1}, \ldots, g_d)$ and write $g_i = \xi + \eta$ with $\xi \in I$ and $\eta \in \fkq$. Then $Q=(g_1, g_2, \ldots, g_d) = (\xi) + \fkq$ is a parameter ideal of $A$ which satisfies the condition $(\sharp)$. 
\end{proof}

%AGL（やGGL）についての既知の話。

%fractional canonical ideal

%%%%%%%%%%%%%%%%%%%%%%%%%%%%%%%%%%%%%%%%%%%
\section{Definition of Goto rings}\label{sec3}

%高次元の定義

Let $(A, \m)$ be a Cohen-Macaulay local ring with $d=\dim A>0$ admitting a canonical ideal $I$. Namely, $I$ is an ideal of $A$ such that $I \ne A$ and $I \cong \rmK_A$ as an $A$-module. 
Here, we remark that the existence of canonical ideals implicitly assumes the existence of the canonical module $\rmK_A$.

\begin{defn}\label{3.1}
For each integer $n\ge 0$, we say that $A$ is an {\it $n$-Goto ring} if there exists a parameter ideal $Q=(a_1, a_2, \ldots, a_d)$ of $A$ such that the following conditions are satisfied, where $J=I+Q$ and $\calT=\calR(Q)$ denotes the Rees algebra of $Q$.
\begin{enumerate}
\item[$(1)$] $a_1 \in I$.
\item[$(2)$] $\rank \calS_Q(J) = n$.
\item[$(3)$] $\calS_Q(J) = \calT \left[\calS_Q(J)\right]_1$. 
\end{enumerate}
\end{defn}

In Definition \ref{3.1}, the condition $(3)$ is satisfied if and only if $J^3 =QJ^2$; hence $Q$ is a reduction of $J$, so that the parameter ideal $Q$ of $A$ satisfies the condition $(\sharp)$. By Lemma \ref{2.4} (2), the condition $(2)$ ensures that $\ell_A(J^2/QJ) = n$. Thus, roughly speaking, the notion of Goto rings attempts to analyze and understand the structure of Cohen-Macaulay rings admitting extended canonical ideals whose reduction numbers are at most $2$, using the difference between $J^2$ and $QJ$ as a clue. 
%つまり, Goto ringというのは, canonical idealのreduction numberが2である環の構造をその一つ前の差で持って分類するというものである。%即ち, n-almost Gorenstein 環の理論は, I の Q に関する節減数が 2(以下)であるような環の構造を, I2 と QI の差を手掛かりに, 一手に解析・理 解しようと試みるものである。
The key to the series of researches on almost Gorenstein rings lies in the fact that 
the reduction numbers of extended canonical ideals are $2$. In fact, 
%そして, この一連のAGLの理論の一番の鍵はcanonical idealのreduction numberが2であること事実にあった。実際, 
%\begin{rem}
every $0$-Goto (resp. $1$-Goto) ring is Gorenstein (resp. non-Gorenstein almost Gorenstein) and the converse holds if $d=1$ (see Section 4), or the residue class field $A/\m$ of $A$ is infinite (Theorems \ref{11.1}, \ref{11.3}). When $d=1$, every $2$-Goto ring is nothing but $2$-almost Gorenstein (\cite[Theorem 3.7]{CGKM}). Besides, by \cite[Theorem 1.2]{GIKT} if $A$ is a generalized Gorenstein ring with respect to an $\m$-primary ideal $\fka$, then $A$ is an $n$-Goto ring with $n=\ell_A(A/\fka)$, provided that $A/\m$ is infinite and $A$ is not Gorenstein. 
%Let $(A, \m)$ be a Cohen-Macaulay local ring with $\dim A>0$, $I$ a canonical ideal of $A$, and 
%Let $\fka$ be an $\m$-primary ideal of $A$. Suppose that the field $A/\m$ is infinite and $A$ is not a Gorenstein ring. If $A$ is a generalized Gorenstein ring with respect to $\fka$ in the sense of \cite{GK}, then $A$ is $n$-Goto where $n = \ell_A(A/\fka)$ (see \cite[Theorem 1.2]{GIKT}). 
%GGLはGotoである。nearly GorensteinとGotoはどちらもimplicationはない。
%\end{rem}

%When we fix a parameter ideal with condition $(\sharp)$, the Goto property is defined as follows. 
%$(\sharp)$を満たすparameter idealを一つfixした場合は, Goto性を次のように定義する。

We define Goto rings with fixed parameter ideals as follows.

\begin{defn}
Let $Q =(a_1, a_2, \ldots, a_d)$ be a parameter ideal which satisfies the condition $(\sharp)$. We set $J=I+Q$ and $\calT=\calR(Q)$. For each integer $n\ge 0$, we say that $A$ is an {\it $n$-Goto ring with respect to $Q$} if $\calS_Q(J) = \calT\left[\calS_Q(J)\right]_1$ and $\rank \calS_Q(J)=n$, i.e., $J^3 =QJ^2$ and $\ell_A(J^2/QJ) = n$. 
\end{defn}

Every almost Gorenstein ring is $1$-Goto; hence so are two-dimensional rational singularities and one-dimensional Cohen-Macaulay local rings of finite CM representation type (\cite[Corollary 11.4, Theorem 12.1]{GTT}). 
We provide a list of examples of $n$-Goto rings with $n \ge 2$ that follow from the results proved later; see Examples \ref{4.3}, \ref{5.6a}, \ref{6.3a}. %\textcolor{red}{7.2も？}

\begin{ex}
Let $n \ge 1$ be an integer. 
Let $k[[t]]$ stand for the formal power series ring over a field $k$.
\begin{enumerate}
\item[$(1)$] The semigroup ring $k[[t^3, t^{3n+1}, t^{3n+2}]]$ is $n$-Goto and it is an integral domain. 
\item[$(2)$] The fiber product $k[[t^3, t^{3n+1}, t^{3n+2}]] \times_k k[[t]]$ is $n$-Goto and reduced, but not an integral domain. The ring is not generalized Gorenstein when $n \ge 2$. 
\item[$(3)$] The idealization $k[[t^3, t^{3n+1}, t^{3n+2}]] \ltimes k[[t]]$ is $n$-Goto and it is not a reduced ring. 
%\item[$(4)$] The ring $k[[X, Y, Z]]/\rmI_2()$
\end{enumerate}
\end{ex}

%$d=1$のときは, $n$-Goto性はparameter idealの取り方に依存しない（\cite[Theorem 2.5]{CGKM}）。
%$d \ge 2$のときは, $n$-Goto性はparameter idealの取り方に依存する。

\begin{rem}\label{3.4r}
As each parameter ideal in a Gorenstein ring is an extended canonical ideal, every $n$-Goto ring with $n \ge 1$ is not Gorenstein. Note that, when $\dim A=1$, the Goto property does not depend on the choice of parameter ideals (\cite[Theorem 2.5]{CGKM}). Yet it does when $\dim A \ge 2$.  
%We note that, when A=1, the Goto property does not depend on the choice of B. Yet, it depends on when A=2.  
%Aが$n$-Goto ring with $n \ge 1$のときは, $A$はGorensteinではない。しかし, $d\ge 2$のとき, $n$-Goto ringは$(n-1)$-Gotoでないとは言えない。
For example, we consider the ring $A = T/\rmI_2
\left(\begin{smallmatrix}
X^2 & Y+W & Z \\
Y & Z & X^3
\end{smallmatrix}\right)
$, where $T=k[[X, Y, Z, W]]$ stands for the formal power series ring over an infinite field $k$. 
We denote by $x, y, w$ the images of $X, Y, W$ in $A$, respectively. The ring $A$ admits a canonical ideal $I=(x^2, y)$ and a parameter ideal $Q = (x^2, w)$. Then, by setting $J=I+Q$, we get $J^3 = QJ^2$ and $\ell_A(J^2/QJ) = 2$. Hence $A$ is a $2$-Goto ring with respect to $Q$. In contrast the exact sequence 
$$
0  \to A \overset{\varphi}{\longrightarrow} I \to C \to 0
$$
of $A$-modules where $\varphi(1) = x^2-y$ yields that $A$ is almost Gorenstein, because $\m C = x C$. Here $\m$ denotes the maximal ideal of $A$. Thus %if $A/\m$ is infinite, 
we can choose a parameter ideal $Q'=(f_1, f_2)$ of $A$ such that $f_1 \in I$ and $\m(I+Q') = \m Q'$ (\cite[Theorem 5.1]{GTT}). Hence $A$ is $1$-Goto with respect to $Q'$ (\cite[Corollary 5.3]{GTT}, see also Theorem \ref{11.3}). 
%$A=k$とすると, $A$は$2$-Goto with respect to $Q$であるが, $A$はAGLでもある。つまり, $A/^\m$が無限のとき, $A$は$1$-Goto with respect to $Q'$である。
\end{rem}

Let us investigate basic properties of Goto rings. 

\begin{thm}\label{3.5}
Suppose that $d \ge 2$. Let $n \ge 0$ be an integer and $Q=(a_1, a_2, \ldots, a_d)$ a parameter ideal of $A$ satisfying the condition $(\sharp)$. We set $\fkq =(a_2, a_3, \ldots, a_d)$ and $J=I+Q$. Let $x \in \fkq \setminus \fkm \fkq$ be a super-regular element of $A$ with respect to $J$. Then the following conditions are equivalent. 
\begin{enumerate}
\item[$(1)$] $A$ is an $n$-Goro ring with respect to $Q$. 
\item[$(2)$] $A/(x)$ is an $n$-Goto ring with respect to $Q/(x)$.
\end{enumerate}
\end{thm}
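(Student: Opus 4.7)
The plan is to transfer the $n$-Goto property between $A$ and $\ol A := A/(x)$ via the short-exact-sequence package of Proposition~2.5 (the proposition immediately preceding Proposition~\ref{2.6}). First I would check its hypotheses: since $Q=(a_1)+\fkq$ is minimally generated by $d$ elements, one has $\fkq\cap\fkm Q=\fkm\fkq$, so $x\in\fkq\setminus\fkm\fkq$ forces $x\in Q\setminus\fkm Q$. As $xt$ is regular on $\gr_J(A)$ and $x\in J$, the element $x$ is $A$-regular, hence $\ol A$ is Cohen--Macaulay of dimension $d-1\ge 1$. Because $\fkq$ is a parameter ideal modulo $I$, $x\in\fkq$ is also regular on $A/I$, which gives $(x)\cap I=xI$ and therefore $I\ol A\cong I/xI\cong \rmK_{\ol A}$ is a canonical ideal of $\ol A$. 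After replacing one generator of $\fkq$ by $x$ (permissible because $x\notin\fkm\fkq$), the parameter ideal $Q\ol A = (a_1,\ldots,a_{d-1})$ satisfies condition $(\sharp)$ in $\ol A$ with associated extended canonical ideal $J\ol A$.

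With the set-up in place, Proposition~2.5 provides the exact sequence
$$
0 \to \calS_Q(J)(-1) \overset{xt}{\longrightarrow} \calS_Q(J) \to \calS_{Q/(x)}(J/(x)) \to 0
$$
of graded $\calT$-modules and the isomorphism $\calS_Q(J)/(xt)\calS_Q(J)\cong\calS_{Q/(x)}(J/(x))$. I would also record the length identity $(J\ol A)^2/(Q\ol A)(J\ol A)\cong J^2/QJ$, obtained by the modular-law calculation $J^2\cap(QJ+(x))=QJ+(J^2\cap(x))=QJ+xJ=QJ$, where the super-regularity of $x$ yields the Valabrega--Valla consequence $(x)\cap J^2=xJ$ and $xJ\subseteq QJ$ follows from $x\in Q$.

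For $(1)\Rightarrow(2)$, generation of $\calS_Q(J)$ in degree one over $\calT$ descends to generation in degree one of the quotient $\calS_{Q/(x)}(J/(x))$ over $\calR(Q\ol A)$. Lemma~\ref{2.4}(2) is then available on the $\ol A$ side and, combined with the length identity, yields
$$
\rank\calS_{Q/(x)}(J/(x)) = \ell_{\ol A}\bigl((J\ol A)^2/(Q\ol A)(J\ol A)\bigr) = \ell_A(J^2/QJ) = \rank\calS_Q(J) = n.
$$

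For $(2)\Rightarrow(1)$, the main obstacle, I would use a graded Nakayama argument. Setting $M = \calS_Q(J)/\calT[\calS_Q(J)]_1$, the hypothesis that $\calS_{Q/(x)}(J/(x))$ is generated in degree one translates via the isomorphism above into $M=(xt)M$. Since $M$ is a nonnegatively graded $\calT$-module with $M_0=M_1=0$ and $xt\in\calT_1$, the identity $M_n=(xt)M_{n-1}$ can be iterated to give $M_n\subseteq(xt)^nM_0=0$ for every $n$, so $M=0$ and hence $\calS_Q(J)=\calT[\calS_Q(J)]_1$. Lemma~\ref{2.4}(2) together with the length identity then produces $\rank\calS_Q(J)=\ell_A(J^2/QJ)=n$, completing the equivalence. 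Everything beyond this Nakayama-style descent through the homogeneous non-zerodivisor $xt$ of positive degree is a matter of chasing the base-change isomorphisms and exploiting super-regularity to match $(J\ol A)^2/(Q\ol A)(J\ol A)$ with $J^2/QJ$.
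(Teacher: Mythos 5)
Your proof is correct, and it follows a genuinely different route from the paper's. The paper does not invoke the exact sequence of Proposition~2.5 at all: instead, for every $m$ it computes directly, by the modular law together with the Valabrega--Valla consequence $(x)\cap J^{m+1}=xJ^m$, that the kernel of the reduction map $J^{m+1}/QJ^m\to \ol J^{\,m+1}/\ol Q\,\ol J^{\,m}$ vanishes (here $xJ^m\subseteq QJ^m$ is immediate since $x\in Q$), so this map is an isomorphism for all $m$. The equivalence of ``generated in degree one'' on the two sides is then read off through the characterization $\calS_Q(J)=\calT[\calS_Q(J)]_1\Leftrightarrow J^3=QJ^2$ of \cite[Lemma 2.1(5)]{GNO} applied to the $m=2$ isomorphism, and the rank equality comes from the $m=1$ isomorphism together with Lemma~\ref{2.4}(2). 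Your argument instead feeds the super-regular element $xt$ into Proposition~2.5, obtains $\calS_Q(J)/(xt)\calS_Q(J)\cong\calS_{\ol Q}(\ol J)$, handles $(1)\Rightarrow(2)$ by noting that a quotient of a degree-one-generated module is degree-one-generated, and handles the harder $(2)\Rightarrow(1)$ by graded Nakayama on $M=\calS_Q(J)/\calT[\calS_Q(J)]_1$ with $M=(xt)M$; the length identity is then recovered by the same $m=1$ modular-law computation the paper uses. Your route is more systematic in that it reuses the paper's Proposition~2.5 rather than repeating a kernel computation, while the paper's is more self-contained and sidesteps the Nakayama step; both are complete. One minor remark: the preliminary observation $\fkq\cap\fkm Q=\fkm\fkq$ (hence $x\in Q\setminus\fkm Q$) that you need to invoke Proposition~2.5 is correct, but is worth writing out since it is exactly what licenses $xt$ as a degree-one element of $\calT$ that is part of a minimal generating set.
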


\begin{proof}
Let $\overline{A} = A/(x)$, $\overline{J} = J\overline{A}$, $\overline{I} = I\overline{A}$, and $\overline{Q} = Q\overline{A}$. Then $\overline{I} = (I+(x))/(x) \cong I/xI \cong \rmK_{\overline{A}}$, so that $\overline{I}$ is a canonical ideal of $\overline{A}$. Since $\overline{Q}$ is a reduction of $\overline{J} = \overline{I} + \overline{Q}$ and $\overline{x} \in \overline{I}$, the parameter ideal $\overline{Q}$ satisfies the condition $(\sharp)$ for $\overline{A}$, where $\overline{x}$ denotes the image of $x$ in $\overline{A}$. Note that $x \in \fkq$ is a part of a minimal basis of $Q$. For each $m \in \Bbb Z$, let
$$
0 \to X \to J^{m+1}/Q^nJ \to \overline{J}^{m+1}/\overline{Q}^m\overline{J} \to 0
$$
be the exact sequence of $A$-modules. Then
$$
X = \frac{\left[QJ^m+(x)\right] \cap J^{m+1}}{QJ^m} = \frac{QJ^m + (x) \cap J^{m+1}}{QJ^m} = \frac{QJ^m + xJ^m}{QJ^m} = (0)
$$
because $x \in \fkq$ is a super-regular element of $A$ with respect to $J$. Therefore we get an isomorphism 
$$
J^{m+1}/QJ^m \cong \overline{J}^{m+1}/\overline{Q}^m\overline{J}
$$
of $A$-modules. This shows $\calS_Q(J) = \calT\left[\calS_Q(J)\right]_1$ if and only if $\calS_{\overline{Q}}(\overline{J}) = \calT'\left[\calS_{\overline{Q}}(\overline{J})\right]_1$, where $\calT=\calR(Q)$ and $\calT' = \calR(\overline{Q})$ stand for the Rees algebras of $Q$ and $\overline{Q}$, respectively. Hence, we may assume $\calS_Q(J) = \calT\left[\calS_Q(J)\right]_1$. Then, by Lemma \ref{2.4} (2) the equalities $\rank \calS_Q(J) = \ell_A(J^2/QJ) = \ell_{\overline{A}}(\overline{J}^2/\overline{Q}\overline{J}) = \rank \calS_{\overline{Q}}(\overline{J})$ hold, which complete the proof. 
\end{proof}

\begin{rem}\label{3.5a}
We maintain the notation as in the proof of Theorem \ref{3.5}. Suppose that $\calS_Q(J) = \calT\left[\calS_Q(J)\right]_1$. Let $\overline{\m} = \m \overline{A}$ be the maximal ideal of $\overline{A}$. Then, for each $\ell \in \Bbb Z$, we see that $\m^{\ell} \calS_Q(J) \ne (0)$ if and only if 
$\overline{\m}^{\ell} \calS_{\overline{Q}}(\overline{J}) \ne (0)$. 
\end{rem}

%任意の整数$n \ge 2$と$\ell \ge 3$に対して, $\ell$次元$n$-Goto ringの例が存在する。
As a consequence of Theorem \ref{3.5}, for given integers $n \ge 2$ and $\ell \ge 3$, there exists an example of $n$-Goto rings of dimension $\ell$. See Example \ref{10.14} for the proof.

\begin{ex}
Let $\ell \ge 3$ be an integer and $T=k[[X_1, X_2, \ldots, X_{\ell}, V_1, V_2, \ldots, V_{\ell-1}]]$ the formal power series ring over a field $k$. 
For given integers $m \ge n \ge 2$, 
$$
A=T/
\rmI_2
\begin{pmatrix}
X_1^n & X_2 + V_1 & \cdots & X_{\ell-1}+ V_{\ell -2} & X_{\ell} + V_{\ell-1}\\[3pt]
X_2 & X_3 & \cdots & X_{\ell} & X_1^m
\end{pmatrix}
$$
is an $n$-Goto ring with $\dim A=\ell$ and $\rmr(A) = \ell -1$. 
\end{ex}

%\section{Flat base changes}

%\subsection{Flat base changes}

We study the question of how the Goto property is inherited under flat local homomorphisms. 
Let $(A_1, \m_1)$ be a Cohen-Macaulay local ring and $\varphi : A \to A_1$ a flat local homomorphism such that $A_1/\m A_1$ is an Artinian Gorenstein ring. 
Then $IA_1 \cong I\otimes_AA_1 \cong \rmK_{A_1}$ and $IA_1 \subseteq \m A_1 \subsetneq A_1$, that is, $IA_1$ is a canonical ideal of $A_1$. 
%Then $\dim A_1/\m A_1=0$, \textcolor{blue}{$K_1 :=R_1 \otimes_R K \cong \rmK_{R_1}$} and
%$$
%\textcolor{blue}{R_1 \subseteq K_1 \subseteq R_1 \otimes_R \overline{R} \subseteq \overline{R_1}.} 
%$$
Let $Q$ be a parameter ideal of $A$ with condition $(\sharp)$. Then $QA_1$ is a parameter ideal of $A_1$ which satisfies the condition $(\sharp)$ for $A_1$.  
With this notation we have the following. 

%\begin{lem}
%If $d = \dim A_1$, then $QA_1$ is a parameter ideal of $A_1$ which satisfies the condition $(\sharp)$. 
%\end{lem} 

\begin{thm}\label{3.7}
Let $n \ge 0$ be an integer. Then the following conditions are equivalent.
\begin{enumerate}
\item[$(1)$] $A_1$ is an $n$-Goto ring with respect to $QA_1$. 
\item[$(2)$] There exists an integer $m >0$ such that $m \mid n$, $A$ is $m$-Goto with respect to $Q$, and $\ell_{A_1}(A_1/\m A_1) = \frac{n}{m}$. 
\end{enumerate}
\end{thm}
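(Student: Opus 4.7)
The plan is to reduce everything to the flat base-change isomorphisms of Proposition~\ref{2.6}, combined with the fact that $\varphi$ is faithfully flat (every flat local homomorphism of Noetherian local rings is faithfully flat) and the standard length-multiplicativity formula
$$\ell_{A_1}(A_1\otimes_A M) \;=\; \ell_A(M)\cdot \ell_{A_1}(A_1/\m A_1),$$
which is valid for any $A$-module $M$ of finite length because the closed fiber $A_1/\m A_1$ is Artinian. Note that $J=I+Q$ is $\m$-primary (as $J\supseteq Q$), so $J^2/QJ$ and its base change have finite length; likewise $JA_1$ is $\m_1$-primary since the closed fiber is Artinian.

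For the direction $(2)\Rightarrow(1)$, I would assume $A$ is $m$-Goto with respect to $Q$, so $\calS_Q(J) = \calT[\calS_Q(J)]_1$ and $\rank \calS_Q(J) = m$. Proposition~\ref{2.6} yields compatible isomorphisms
$$\calS_{QA_1}(JA_1) \;\cong\; A_1\otimes_A \calS_Q(J) \;=\; A_1\otimes_A \calT[\calS_Q(J)]_1 \;\cong\; \calT_1[\calS_{QA_1}(JA_1)]_1,$$
so the degree-one generation condition transfers to $A_1$. Lemma~\ref{2.4}(2) now applies on both sides, identifying the ranks with the lengths $\ell_A(J^2/QJ)=m$ and $\ell_{A_1}((JA_1)^2/(QA_1)(JA_1))$; flatness gives $(JA_1)^2/(QA_1)(JA_1) \cong A_1\otimes_A(J^2/QJ)$, and the length formula collapses this to $m\cdot(n/m) = n$. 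Hence $A_1$ is $n$-Goto with respect to $QA_1$.

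For the direction $(1)\Rightarrow(2)$, I would assume $A_1$ is $n$-Goto and run the same machinery in reverse. Tensoring $0\to \calT[\calS_Q(J)]_1 \to \calS_Q(J) \to \calS_Q(J)/\calT[\calS_Q(J)]_1 \to 0$ with $A_1$ and invoking Proposition~\ref{2.6} gives
$$A_1\otimes_A \bigl(\calS_Q(J)/\calT[\calS_Q(J)]_1\bigr) \;\cong\; \calS_{QA_1}(JA_1)/\calT_1[\calS_{QA_1}(JA_1)]_1 \;=\; 0.$$
Faithful flatness of $\varphi$ then forces $\calS_Q(J) = \calT[\calS_Q(J)]_1$, so with $m := \rank \calS_Q(J)$ the ring $A$ is $m$-Goto with respect to $Q$. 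By Lemma~\ref{2.4}(2) we have $m = \ell_A(J^2/QJ)$, and the length formula together with flatness gives
$$n = \ell_{A_1}((JA_1)^2/(QA_1)(JA_1)) = m\cdot \ell_{A_1}(A_1/\m A_1),$$
so $m\mid n$ and $\ell_{A_1}(A_1/\m A_1) = n/m$. Positivity of $m$ (whenever $n>0$) follows from faithful flatness, since $\calS_{QA_1}(JA_1)\neq 0$ forces $\calS_Q(J)\neq 0$.

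The main obstacle here is essentially bookkeeping; all the substantive content is packaged into Proposition~\ref{2.6} (flat base change of Sally modules) and Lemma~\ref{2.4}(2) (rewriting the rank as $\ell_A(J^2/QJ)$ once degree-one generation is known). The one subtle point that must be respected is ordering: in $(1)\Rightarrow(2)$, one must first secure the generation condition $\calS_Q(J)=\calT[\calS_Q(J)]_1$ via faithful flatness, and only then invoke Lemma~\ref{2.4}(2) to convert the rank into a length and apply the multiplicativity formula.
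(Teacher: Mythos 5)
Your proof is correct and follows essentially the same route as the paper: both arguments rest on Proposition~\ref{2.6} (flat base change of Sally modules) and faithful flatness to transfer the degree-one generation condition, followed by a length-multiplicativity argument to relate the ranks. The only cosmetic difference is that you convert ranks to $\ell_A(J^2/QJ)$ via Lemma~\ref{2.4}(2) (which forces the careful ordering you point out), whereas the paper derives the unconditional identity $\rank\calS_{Q_1}(J_1)=\ell_{A_1}(A_1/\m A_1)\cdot\rank\calS_Q(J)$ directly from the Hilbert-function relation $\ell_{A_1}(A_1/J_1^{\ell+1})=\ell_{A_1}(A_1/\m A_1)\cdot\ell_A(A/J^{\ell+1})$ and Fact~\ref{2.3}(4), sidestepping that ordering issue.
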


\begin{proof}
We set $J=I+Q$, $J_1 = JA_1$, $I_1 = IA_1$, and $Q_1 = QA_1$. By Proposition \ref{2.6}, we have the isomorphisms $A_1\otimes_A \calS_Q(J)  \cong \calS_{Q_1}(J_1)$ and $A_1\otimes_A \calT\left[\calS_Q(J)\right]_1  \cong \calT_1\left[\calS_{Q_1}(J_1)\right]_1$ of graded $\calT$-modules, where $\calT =\calR(Q)$ and $\calT_1 = \calR(Q_1)$. Hence the faithful flatness of $A$ shows that $\calS_Q(J) = \calT\left[\calS_Q(J)\right]_1$ if and only if $\calS_{Q_1}(J_1) = \calT_1\left[\calS_{Q_1}(J_1)\right]_1$. For each $\ell \ge 0$, the equality
$
\ell_{A_1}(A_1/J_1^{\ell + 1}) = \ell_{A_1}(A_1/\m A_1) \cdot \ell_A(A/J^{\ell+1})
$
induces that 
\begin{eqnarray*}
\rank \calS_{Q_1}(J_1) \!\!&=&\!\! \rme_1(J_1) - \rme_0(J_1) + \ell_{A_1}(A_1/J_1) \\
\!\!&=&\!\! \ell_{A_1}(A_1/\m A_1)\left[\rme_1(J) - \rme_0(J) + \ell_A(A/J)\right] \\
\!\!&=&\!\! \ell_{A_1}(A_1/\m A_1) \cdot \rank \calS_Q(J).
\end{eqnarray*}
Hence the equivalence $(1) \Leftrightarrow (2)$ follows by choosing $m = \rank \calS_Q(J)$. 
\end{proof}

As we prove in Theorem \ref{11.3}, every $1$-Goto ring is almost Gorenstein but not a Gorenstein ring. The converse holds if $A$ possesses an infinite residue class field. 

\begin{cor}
Let $n \ge 2$ be a prime integer and suppose that $A/\m$ is infinite. Then $A_1$ is an $n$-Goto ring with respect to $QA_1$ if and only if 
one of the following conditions holds.
\begin{enumerate}
\item[$(1)$] $A$ is a non-Gorenstein almost Gorenstein ring and $\ell_{A_1}(A_1/\m A_1) = n$.
\item[$(2)$] $A$ is an $n$-Goto ring with respect to $Q$ and $\m A_1 = \m_1$.
\end{enumerate}
\end{cor}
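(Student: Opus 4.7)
The plan is to invoke Theorem \ref{3.7} and then exploit the primality of $n$. By Theorem \ref{3.7}, the condition that $A_1$ is an $n$-Goto ring with respect to $QA_1$ is equivalent to the existence of a positive integer $m$ dividing $n$ such that $A$ is $m$-Goto with respect to $Q$ and $\ell_{A_1}(A_1/\m A_1) = n/m$. Since $n \ge 2$ is prime, the only divisors of $n$ are $m=1$ and $m=n$, so this equivalence splits into exactly two mutually exclusive subcases, which should correspond precisely to the conditions (1) and (2) of the statement.

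Next I would run the case analysis. In the case $m=n$, the hypothesis reads as: $A$ is $n$-Goto with respect to $Q$ together with $\ell_{A_1}(A_1/\m A_1) = 1$. The latter length equality is equivalent to $\m A_1 = \m_1$, because $A_1/\m A_1$ is an Artinian Gorenstein local ring whose length is one precisely when it is a field. This recovers condition (2). In the case $m=1$, the hypothesis is that $A$ is $1$-Goto with respect to $Q$ and $\ell_{A_1}(A_1/\m A_1)=n$. Here the bridge is Theorem \ref{11.3}: under the assumption that $A/\m$ is infinite, being $1$-Goto is equivalent to being a non-Gorenstein almost Gorenstein ring. Substituting this equivalence yields condition (1).

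The main delicate point is the use of Theorem \ref{11.3} to translate the $1$-Goto property into the ring-theoretic statement that $A$ is non-Gorenstein almost Gorenstein; this is exactly the place where the assumption that $A/\m$ is infinite enters nontrivially, and it is what permits us to avoid having to track the particular parameter ideal $Q$ through the almost Gorenstein condition. Once this identification is in hand and the trivial translation $\ell_{A_1}(A_1/\m A_1)=1 \Leftrightarrow \m A_1 = \m_1$ is noted, both implications follow by combining Theorem \ref{3.7} with the dichotomy $m \in \{1,n\}$ coming from primality. I therefore expect the proof to be short, the heavy lifting having been performed in Theorems \ref{3.7} and \ref{11.3}.
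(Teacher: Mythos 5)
Your approach is exactly the paper's intended one: apply Theorem \ref{3.7}, use primality of $n$ to restrict to $m \in \{1, n\}$, and translate the $m=1$ case via Theorem \ref{11.3}. The paper supplies no proof for this corollary beyond the remark immediately preceding it, so you have reconstructed the argument in the form the author clearly had in mind.

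That said, your sentence ``being $1$-Goto is equivalent to being a non-Gorenstein almost Gorenstein ring,'' used to justify both directions of case $m=1$, glosses over a genuine subtlety that the paper also leaves unaddressed. Theorem \ref{3.7} keeps track of the \emph{fixed} parameter ideal $Q$: what it requires in case $m=1$ is that $A$ be $1$-Goto \emph{with respect to $Q$}. Theorem \ref{11.3}, on the other hand, characterizes when $A$ is a $1$-Goto ring in the $Q$-free, existential sense, i.e.\ $1$-Goto with respect to \emph{some} parameter ideal. In the forward direction this is harmless ($1$-Goto with respect to $Q$ implies $1$-Goto, which implies non-Gorenstein almost Gorenstein). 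But in the backward direction, from hypothesis $(1)$ you only obtain that $A$ is $1$-Goto with respect to some $Q'$, not necessarily the given $Q$ that appears in the target conclusion ``$A_1$ is $n$-Goto with respect to $QA_1$,'' and in dimension $\ge 2$ the rank of the Sally module genuinely depends on the choice of parameter ideal: Remark \ref{3.4r} exhibits a ring that is $2$-Goto with respect to one $Q$ yet is almost Gorenstein, hence $1$-Goto with respect to a different $Q'$. So the implication $(1)\Rightarrow$ ``$A_1$ is $n$-Goto with respect to $QA_1$'' does not follow merely from concatenating Theorems \ref{3.7} and \ref{11.3}. This appears to be an imprecision in the statement of the corollary itself rather than a defect peculiar to your write-up, but it should be flagged rather than passed over as a routine substitution.
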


\begin{cor}
For an integer $n>0$, $A$ is an $n$-Goto ring with respect to $Q$ if and only if the completion $\widehat{A}$ is an $n$-Goto ring with respect to $\widehat{Q}=Q\widehat{A}$. 
\end{cor}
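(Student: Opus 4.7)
The plan is to derive this as a direct application of Theorem \ref{3.7} to the flat local homomorphism $\varphi : A \to \widehat{A}$ given by the $\m$-adic completion. First I would verify that the hypotheses of Theorem \ref{3.7} are satisfied in this setting: the completion $\widehat{A}$ is a Noetherian Cohen-Macaulay local ring, the canonical map $A \to \widehat{A}$ is faithfully flat and local, and the closed fiber $\widehat{A}/\m\widehat{A} = A/\m$ is a field, which is in particular an Artinian Gorenstein ring. Thus $I\widehat{A}$ is a canonical ideal of $\widehat{A}$ and $\widehat{Q} = Q\widehat{A}$ is a parameter ideal of $\widehat{A}$ satisfying the condition $(\sharp)$.

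Next I would observe that $\ell_{\widehat{A}}(\widehat{A}/\m\widehat{A}) = 1$, because the closed fiber is just the residue field $A/\m$.

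With this in hand, Theorem \ref{3.7} applied to $A_1 = \widehat{A}$ gives that $\widehat{A}$ is $n$-Goto with respect to $\widehat{Q}$ if and only if there is some integer $m>0$ with $m \mid n$, $A$ is $m$-Goto with respect to $Q$, and $\tfrac{n}{m} = \ell_{\widehat{A}}(\widehat{A}/\m\widehat{A}) = 1$. The last equality forces $m = n$, so the condition reduces exactly to $A$ being $n$-Goto with respect to $Q$, which is what we want.

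There is no real obstacle here; the corollary is essentially just specializing Theorem \ref{3.7} to the case of unit closed-fiber length. The only minor thing to note is that the statement restricts to $n > 0$, which avoids any degenerate issue with the divisibility $m \mid n$; for $n = 0$ the same argument still works but is already covered by the Gorenstein case and the well-known fact that Gorensteinness descends from $\widehat{A}$ to $A$ and vice versa.
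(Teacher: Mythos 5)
Your proposal is correct and is exactly the argument the paper has in mind: the corollary is stated without proof immediately after Theorem \ref{3.7} precisely because it specializes that theorem to the faithfully flat local map $A \to \widehat{A}$ with one-dimensional closed fiber $\widehat{A}/\m\widehat{A} \cong A/\m$, which forces $m = n$ in Theorem \ref{3.7}(2).
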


We give examples that illustrate Theorem \ref{3.7}.

\begin{ex}[cf. {\cite[Example 3.11]{CGKM}}]
Let $n \ge 1$ be an integer. We set $A_1 = A[X]/(X^n + \alpha_1 X^{n-1} + \cdots + \alpha_n)$, where $A[X]$ denotes the polynomial ring over $A$ and $\alpha_i \in \m$ for all $1 \le i \le n$. 
Then $A_1$ is a flat local $A$-algebra with maximal ideal $\m_1 = \m A_1 + XA_1$, $A_1/\m A_1= (A/\m)[X]/(X^n)$ is an Artinian Gorenstein ring, and $\ell_{A_1}(A_1/\m A_1) = n$. 
Hence, if $n \ge 2$ is a prime integer and the field $A/\m$ is infinite, then $A_1$ is an $n$-Goto ring with respect to $Q_1 = QA_1$ if and only if $A$ is a non-Gorenstein almost Gorenstein ring. 
%Notice that if R is an integral domain and 0\CID{3}= α ∈ m, then R1 = R[X]/(X2 − αX) is a reduced ring but not an integral domain. 
\end{ex}

As we often refer to examples arising from numerical semigroup rings, let us explain the notation and terminology of them. 
We denote by $\Bbb N$ the set of non-negative integers. 
Let $0 < a_1, a_2, \ldots, a_\ell \in \Bbb Z~(\ell >0)$ be integers such that $\gcd(a_1, a_2, \ldots, a_\ell)=1$. We set 
$$
H = \left<a_1, a_2, \ldots, a_\ell\right>=\left\{\sum_{i=1}^\ell c_ia_i ~\middle|~  c_i \in \Bbb N~\text{for~all}~1 \le i \le \ell \right\}
$$
which is called the {\it numerical semigroup} generated by the numbers $\{a_i\}_{1 \le i \le \ell}$. The reader may consult the book \cite{RG} for the fundamental results on numerical semigroups. 
Let $V=k[[t]]$ be the formal power series ring over a field $k$, and define 
$$
R = k[[H]] = k[[t^{a_1}, t^{a_2}, \ldots, t^{a_\ell}]] \subseteq V
$$
which we call the {\it semigroup ring} of $H$ over $k$. 
The ring $R$ is a Cohen-Macaulay local domain with $\dim R=1$ and $\m = (t^{a_1},t^{a_2}, \ldots, t^{a_\ell} )$ is the maximal ideal. 
In addition, $V$ is a birational module-finite extension of $R$, so that  $\overline{R} = V$, where $\overline{R}$ denotes the integral closure of $R$ in its quotient field $\rmQ(R)$.
Let 
$$
\rmc(H) = \min \{n \in \Bbb Z \mid m \in H~\text{for~all}~m \in \Bbb Z~\text{such~that~}m \ge n\}
$$
be the {\it conductor} of $H$ and set $\rmf(H) = \rmc(H) -1$. Hence, $\rmf(H) = \max ~(\Bbb Z \setminus H)$, which is called the {\it Frobenius number} of $H$. Let 
$$
\mathrm{PF}(H) = \{n \in \Bbb Z \setminus H \mid n + a_i \in H~\text{for~all}~1 \le  i \le \ell\}
$$
denote the set of pseudo-Frobenius numbers of $H$. Therefore, $\rmf(H)$ coincides with the $\rma$-invariant of the graded $k$-algebra $k[t^{a_1}, t^{a_2}, \ldots, t^{a_\ell}]$ and $\# \mathrm{PF}(H) = \rmr(R)$; see \cite[Example (2.1.9), Definition (3.1.4)]{GW}.  We set  $f = \rmf(H)$ and $$K = \sum_{c \in \mathrm{PF}(H)}Rt^{f-c}$$ in $V$. Then $K$ is a fractional ideal of $R$ such that $R \subseteq K \subseteq \overline{R}$ and 
$$
K \cong \rmK_R = \sum_{c \in \mathrm{PF}(H)}Rt^{-c}
$$
as an $R$-module (\cite[Example (2.1.9)]{GW}). %Therefore, $K$ is a canonical fractional ideal of $R$. Notice that  $t^f \not\in K$ but $\m t^f \subseteq R$, whence $K:\m = K + Rt^f$.

\begin{ex}
Let $K/k$ be a finite extension of fields with $[K:k]\! =\! n \ge 2$. We set $\omega_1 = 1$ and choose a $k$-basis $\{\omega_1, \omega_2, \ldots, \omega_n\}$ of $K$. Let $K[[t]]$ be the formal power series ring over $K$. 
Let $a_1, a_2, \ldots, a_\ell \in \Bbb Z$ be positive integers such that $\gcd(a_1, a_2, \ldots, a_\ell)=1$. We set
$H = \left<a_1, a_2, \ldots, a_\ell\right>$ and choose $0 < a \in H$. We consider the rings
$R =k[[t^{a_1}, t^{a_2}, \ldots, t^{a_{\ell}}]]$ and $R_1=k[[t^{a_1}, t^{a_2}, \ldots, t^{a_{\ell}}, \{\omega_i t^a\}_{1 \le i \le n}]]$.  
Suppose $\rmr(R) \ge 2$. Then $R_1$ is a free $R$-module of rank $n$ and $\ell_{R_1}(R_1/\m R_1) = n$. %, where $\m = (t^{a_1}, t^{a_2}, \ldots, t^{a_{\ell}})$.   
Hence, if $n \ge 2$ is a prime integer, the ring $R_1$ is $n$-Goto if and only if $R$ is a non-Gorenstein almost Gorenstein ring. 
\end{ex}

\begin{ex}
Let $H=\left<a_1, a_2, \ldots, a_{\ell} \right>$ be a numerical semigroup. For an odd integer $0 < \alpha \in H$ such that $\alpha \ne a_i$ for all $1 \le i \le \ell$, we set $H_1 = \left<2a_1, 2a_2, \ldots, 2a_{\ell}, \alpha \right>$. Let $R=k[[H]]$ and $R_1=k[[H_1]]$ be, respectively, the semigroup rings of $H$ and $H_1$ over a field $k$.
Then $R_1$ is a free module of rank $2$ and $\ell_{R_1}(R_1/\m R_1) = 2$. Hence $R_1$ is $2$-Goto if and only if $R$ is a non-Gorenstein almost Gorenstein ring. 
\end{ex}

\begin{rem}
In general, the contraction of a parameter ideal via a flat local homomorphism is not necessarily a parameter ideal. In fact, let $R$ denote a Cohen-Macaulay local ring with $\dim R=1$. We denote by $R_1 = R \ltimes R$ the idealization of $R$ over $R$. We choose an Ulrich ideal $I$ of $R$ generated by two-elements, say non-zerodivisors $a$ and $b$ (see \cite[Definition 1.1]{GOTWY} for the definition of Ulrich ideals). Then $Q_1=\alpha R_1$ is a parameter ideal of $R_1$,  but $Q_1 \cap R = a((a):_Rb) \cong (a):_Rb =I = (a, b)$ is not, where $\alpha = (a, b) \in R_1$. 
%一般にflat local mapを与えたとき, parameter idealの引き戻しがparameter idealになるとは限らない（1次元で, completionの場合は正しい）。例えば, $A$を可換環とし, $R=A \ltimes A$とおく。$a, b \in A$で, $a$はNZD on $A$とする。$\alpha = (a, b) \in R$とおく。そこで, $I=\alpha R$とすると, $I \cap A = a((a):_Ab) \cong (a):_Ab$が成り立つ。よって, $A$がCohen-Macaulay局所環で1次元とし, $A$内で$(a, b)$をUlrich idealと取れば, $(a):_Ab = (a, b)$は2元生成であるので, parameter ideal $I = \alpha R$の引き戻しがparameter idealとは限らない。
\end{rem}

%%%%%%%%%%%%%%%%%%%%%%%%%%%%%%%%%%%%%%%%%%%%%%%%%%%%%%%%%%%%%%%%%%%%%%%%%%%%%%%%%%%%%%%%%%%%%%%%%%%%%%%%%%%%%%%%%%%%%%%%%%%%%%%%%%

\section{One-dimensional Goto rings}\label{sec4-1}

%NZD characterizationにより, 高次元のGoto ringsも1次元のGoto ringから構成できる。ここでは, 1次元の話を中心に展開したい。
This section focuses on Goto rings of dimension one. 
Let $(R, \m)$ be a Cohen-Macaulay local ring with $\dim R=1$ admitting a  canonical ideal $I$ of $R$. We fix an integer $n \ge 0$.
Recall that $R$ is an $n$-Goto ring if and only if there exists $a \in I$ such that $I^3 =aI^2$ and $\ell_R(I^2/aI) = n$. 
By \cite[Theorem 2.5]{CGKM}, the Goto property is independent of the choice of canonical ideals and their reductions.
%If $R$ is a one-dimensional Cohen-Macaulay local ring such that $\rmQ(R)$ is Gorenstein, there is a one-to-one correspondence below:
%\begin{eqnarray*}
%\{\text{canonical ideals with principal reductions}\}  &\longleftrightarrow& \{\text{fractional canonical ideals}\} \\
%I &\mapsto& a^{-1}I\\
%aK &\mapsfrom& K.
%\end{eqnarray*}
Besides, the existence of canonical ideals $I$ of $R$ containing parameter ideals as reductions is equivalent to saying that the existence of fractional canonical ideals $K$, i.e., $K$ is an $R$-submodule of $\rmQ(R)$ such that $R \subseteq K \subseteq \overline{R}$ and $K \cong \rmK_R$ as an $R$-module, where $\overline{R}$ denotes the integral closure of $R$ in $\rmQ(R)$ and $\rmK_R$ stands for the canonical module of $R$ (\cite[Corollary 2.8]{GMP}). 
%For fractional ideals $I$ and $J$, we have a natural identification $I:J \cong \Hom_R(J, I)$. In particular, the endomorphism algebra $\End_R(I)\cong I:I$ of a fractional ideal $I$ forms a commutative and birational module-finite extension of $R$. Note that a fractional ideal $M$ of $R$ is reflexive if and only if the equality $R:(R:M) = M$ holds (\cite[Proposition 2.4]{KT}).  

In this section, unless otherwise specified, we maintain the setup below. 

\begin{setup}\label{4.1}
Let $(R, \m)$ be a Cohen-Macaulay local ring with $\dim R=1$ possessing a canonical module $\rmK_R$. Let $I$ be a canonical ideal of $R$, that is, $I$ is an ideal of $R$ such that $I \ne R$ and $I \cong \rmK_R$ as an $R$-module. Suppose that $I$ contains a parameter ideal $Q=(a)$ as a reduction. Thus the ideal $Q$ satisfies the condition $(\sharp)$. Let 
$$
K=\frac{I}{a} = \left\{\frac{x}{a} ~\middle|~ x \in I\right\} \subseteq \rmQ(A).
$$
Hence $K$ is fractional canonical ideal of $R$. We set $S =R[K]$ and $\fkc = R:S$.  
\end{setup}

%ここでは, canonical ideal $I$のままにしておく。$Q$はcondition $(\sharp)$を満たすとする。つまり $d=1$なので, 単に$I$のreductionである。Goto環はparameter idealの取り方に依存するか？1次元なら依存しない。

%\begin{rem}
%$d=1$のとき, $A$の$n$-Goto性は, $I$と$Q$の取り方に依存しない。
%\textcolor{red}{$d\ge 2$のときにも, $n$-Goto性はparameter idealの取り方に依存しない？}
%\end{rem}

The assumption that the field $A/\m$ is infinite in Theorems \ref{11.1}, \ref{11.3} is used only to assure the existence of reductions of canonical ideals. Thus, under Setup \ref{4.1}, a $0$-Goto ring is equivalent to Gorenstein, and $1$-Goto is exactly the same as a non-Gorenstein almost Gorenstein ring. 
In addition, the notions of $2$-Goto and 2-almost Gorenstein are equivalent.
We begin with the following, which naturally extends \cite[Lemma 3.1]{CGKM}.

\begin{prop}
For each integer $n \ge 0$, the ring $R$ is $n$-Goto if and only if $K^2=K^3$  and $\ell_R(K^2/K)=n$, or, equivalently, $K^2 = K^3$ and $\ell_R(R/\fkc) = n$.
\end{prop}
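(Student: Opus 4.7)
My plan splits into the two named equivalences. For the first, I would work inside $\rmQ(R)$, where $K = I/a$ means literally $I = aK$, so that multiplication by $a^2$ is an $R$-module isomorphism $K^2 \stackrel{\sim}{\to} I^2$ carrying $K$ onto $aI$; consequently $K^2/K \cong I^2/aI$, and $K^3 = K^2$ is exactly $I^3 = aI^2$. Because $a \in I$, we have $J := I + Q = I$, so the defining conditions in Definition \ref{3.1} for $R$ to be $n$-Goto with respect to $Q = (a)$, namely $J^3 = QJ^2$ together with $\rank \calS_Q(J) = n$, become $I^3 = aI^2$ and $\ell_R(I^2/aI) = n$ via Lemma \ref{2.4}(2). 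Translating across the isomorphism gives the first equivalence, and the independence of the Goto property from the choice of canonical ideal and reduction (\cite[Theorem 2.5]{CGKM}, recalled at the start of this section) shows the characterization does not depend on the chosen $K$.

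For the second equivalence I would assume $K^2 = K^3$ throughout. A telescoping argument gives $K^n = K^2$ for every $n \ge 2$, so $S = R[K] = K^2$ and $\fkc = R:S = R:K^2$. The task then reduces to showing $\ell_R(K^2/K) = \ell_R(R/\fkc)$.

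The key auxiliary identity is $K:K^n = R:K^{n-1}$ for all $n \ge 1$, which is the one-line manipulation $yK^n \subseteq K \iff yK^{n-1} \subseteq (K:K) = R$, using the standard fact $\End_R(K) = K:K = R$ for a fractional canonical ideal. Setting $n = 2, 3$ gives $K:K^2 = R:K$ and $K:K^3 = R:K^2$; the hypothesis $K^2 = K^3$ therefore forces $R:K = R:K^2 = \fkc$. Next, I would apply $\Hom_R(-, K)$ to $0 \to K \to K^2 \to K^2/K \to 0$. Using that $K$ and $K^2$ are fractional ideals in a one-dimensional Cohen--Macaulay ring (hence maximal Cohen--Macaulay, so $\Ext^1_R(K^2, K) = 0$), that $\Hom_R(K^2/K, K) = 0$ (finite-length source, depth-one target), and that $\Hom_R(K^i, K) = K:K^i$, the long exact sequence collapses to $0 \to R:K \to R \to \Ext^1_R(K^2/K, K) \to 0$. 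Hence $\Ext^1_R(K^2/K, K) \cong R/(R:K) = R/\fkc$, and local duality (valid since $K \cong \rmK_R$) gives $\ell_R(\Ext^1_R(K^2/K, K)) = \ell_R(K^2/K)$, completing the argument.

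The subtle point I would flag is that a naive attempt to derive $R:K \subseteq R:K^2$ by multiplying $xK \subseteq R$ on the right by $K$ only produces $xK^2 \subseteq K$, not $xK^2 \subseteq R$, and the argument stalls; the cleanest way around this is to translate $K^2 = K^3$ into an identity about $K$-duals via $K:K^n = R:K^{n-1}$, where the desired equality $R:K = R:K^2$ is immediate. Once that is in hand, the length computation is a routine canonical-module duality argument.
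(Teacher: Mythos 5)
Your proof is correct, and the first equivalence follows the paper's route exactly (the isomorphism $I^2/aI\cong K^2/K$ and $I^3=aI^2\Leftrightarrow K^2=K^3$ coming from $I=aK$, together with the independence of the Goto property from the choice of canonical ideal and reduction noted at the start of the section). Where you diverge is the second equivalence. The paper disposes of $\ell_R(K^2/K)=\ell_R(R/\fkc)$ by citing the known fact $\rank\calS_Q(I)=\ell_R(R/\fkc)$ from \cite[Theorem~2.5]{CGKM} (and the paper's Lemma~\ref{2.4}(2) already gives $\rank\calS_Q(I)=\ell_R(I^2/aI)$). You instead give a self-contained homological argument: you first reduce to the identity $R:K=R:K^2=\fkc$ via the clean observation $K\colon K^n=R\colon K^{n-1}$, and then apply $\Hom_R(-,K)$ to $0\to K\to K^2\to K^2/K\to 0$, using that $\Ext^1_R(K^2,K)=0$ (since $K^2$ is maximal Cohen--Macaulay and $K\cong\rmK_R$) and that $\Hom_R(K^2/K,K)=0$ (finite-length source mapping into a depth-one module), to get $\Ext^1_R(K^2/K,K)\cong R/\fkc$, after which local duality does the length count. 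Your approach is more work but is internally self-contained and makes the mechanism behind the length identity transparent; the paper's is a one-line deferral to a companion reference. A somewhat shorter alternative, which the paper uses elsewhere (see Lemma~\ref{4.5}), is to invoke that $S/K$ is a canonical module of $R/\fkc$, giving $\ell_R(K^2/K)=\ell_R(S/K)=\ell_R(\rmK_{R/\fkc})=\ell_R(R/\fkc)$ directly; your argument essentially reproves this fact in the special case at hand.
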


\begin{proof}
Since $I=aK$, we have $I^3 =aI^2$ if and only if $K^2=K^3$. Hence the isomorphism $I^2/aI \cong K^2/K$ guarantees the first equivalence. The second equivalence follows from the fact that $\rank \calS_Q(I) = \ell_R(R/\fkc)$ (\cite[Theorem 2.5]{CGKM}). 
\end{proof}

%\begin{rem}
%$d=1$のとき, canonical ideal $I$がreductionを持つことと, fractional canonical ideal $K$が存在することは同値である。このとき, $n \ge 0$に対して, $A$が$n$-Gotoであることと, $K^2 = K^3$かつ$\ell_A(K^2/K) =n$は同値であり, これらの条件は, $K^2 = K^3$かつ$\ell_A(A/\fkc) = n$が成り立つこととも同値である。但し, $\fkc = A:A[K]$とする。
%\end{rem}

%\textcolor{red}{%剰余体が無限なら, super-regular sequenceを取り替えることで依存しないことを示せるか？}

%Does not depend on the choice of $I$ and $Q$. 
%2-AGLの単なる形式的な拡張ではない。決定的に違うのは。。。

%つまり, Goto ringは, canonical idealのreduction numberが2である環の構造を, $\ell_A(I^2/QI)$を指標に階層化するもの。

%Preceding researchesを見ても, 名前に困っているのは明らか。

%\begin{cor}
%$A$がGGL, $d=1$ならば, $A$は$n$-Goto ringである。但し, $n=\ell_A(A/\fkc)$である。
%\end{cor}

%Theorems \ref{11.1}, \ref{11.3}における$A/\m$が無限という仮定は, canonical idealsのreductionの存在性を保証するためだけに用いているので, Setup 4.1の下では, $0$-GotoはGorensteinと同値であり, $1$-Gotoはnon Gor AGLと同値である。また, 次のremark (1)が示すように, $2$-Gotoと2-AGLは同値である。

%\begin{rem}
The equality $K^2 =K^3$ holds when $\ell_A(A/\fkc) \le 2$ (\cite[Theorems 3.7, 3.16]{GMP}, \cite[Theorem 3.7]{CGKM}); otherwise, if $\ell_A(A/\fkc) \ge 3$, there is an example of a ring $R$ which possesses a fractional canonical ideal $K$ satisfying $K^2 \ne K^3$. Indeed, let $V=k[[t]]$ be the formal power series ring over a field $k$ and set $R = k[[H]]$ in $V$, where $H=\left<4, 5, 11\right>$. Then $K=R + Rt$ is a fractional canonical ideal of $R$, $K^2 \ne K^3$, and $\ell_R(R/\fkc) = \ell_R(R/t^8V) = 3$ (see also Example \ref{10.5e}).
%よって, $2$-AGLは$2$-Gotoに他ならない。
%のときは, $K^2 \ne K^3$となることがある。
%\item[$(3)$] $\ell_A(A/\fkc) =n$のとき, $\ell_A(K^2/K) = n$と$K^2 = K^3$は同値である。
%\end{rem}

When $K^2 = K^3$, the condition $\m^{n-1}K^2 \not\subseteq K$, which is equivalent to $\m^{n-1}\calS_Q(I) \ne (0)$, plays an important role in the theory of Goto rings; see e.g., Corollaries \ref{10.4a}, \ref{11.9}. Besides, every $2$-almost Gorenstein ring satisfies the condition (\cite[Proposition 2.3]{CGKM}). 

%という条件は, と同値であり, Theorem \ref{}で示す通り, Goto ringsの理論において重要な役割を担うものである。また, $2$-AGLでは常にこの条件を満たしている。

\begin{lem}
Let $n \ge 2$ be an integer. Suppose that $R$ is an $n$-Goto ring. Then $\m^{n-1}K^2 \not\subseteq K$ if and only if $v(R/\fkc) = 1$. When this is the case, the ring $R/\fkc$ is Gorenstein. 
\end{lem}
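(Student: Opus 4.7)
The plan is to show that $K^2/K$ is a \emph{faithful} $R/\fkc$-module of length $n$; once this is in place, both directions of the equivalence and the Gorenstein conclusion follow from short length computations in $R/\fkc$.

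First, since $K^2 = K^3$ (the $n$-Goto hypothesis $I^3 = aI^2$ translates to $K^2=K^3$ via $K = I/a$), the fractional ideal $K^2$ is already closed under multiplication by $K$ and contains $R$, so $S = R[K] = K^2$. Consequently $\fkc S = \fkc K^2 \subseteq R \subseteq K$, so $\fkc$ annihilates $S/K = K^2/K$, which therefore becomes a module over $\overline R := R/\fkc$. By the $n$-Goto hypothesis together with Lemma~\ref{2.4}(2), we have $\ell_R(K^2/K) = n = \ell_R(\overline R)$.

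The main step --- which I expect to be the principal obstacle --- is proving faithfulness: $\ann_R(S/K) = \fkc$. I would apply $\Hom_R(-, K)$ to the short exact sequence $0 \to K \to S \to S/K \to 0$, using the standard identifications $\Hom_R(K,K) = K:K = R$ (since $K \cong \rmK_R$), $\Hom_R(S,K) = K:S$, $\Hom_R(S/K, K) = 0$ (finite length versus torsion-free), and $\Ext^1_R(S,K) = 0$ (since $S$ is a maximal Cohen-Macaulay $R$-module in dimension one). This yields the four-term exact sequence
\[
0 \to K:S \to R \to \Ext^1_R(S/K, K) \to 0.
\]
Local duality gives $\ell_R(\Ext^1_R(S/K, K)) = \ell_R(S/K) = n$, hence $\ell_R(R/(K:S)) = n = \ell_R(R/\fkc)$. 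Since $\fkc = R:S \subseteq K:S \subseteq R$, the equality of lengths forces $\fkc = K:S$, and because $\ann_R(S/K) = K:S$ by definition of the colon, $K^2/K$ is a faithful $\overline R$-module.

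With faithfulness in hand I track the $\bar\m$-adic filtration of $\overline R$. Since $\ell_R(\overline R) = n$, any composition series shows $\bar\m^n = 0$, that is, $\m^n \subseteq \fkc$. For the direction $\m^{n-1}K^2 \not\subseteq K \Rightarrow v(R/\fkc) = 1$: faithfulness translates the hypothesis to $\bar\m^{n-1} \neq 0$, so the chain $\overline R \supsetneq \bar\m \supsetneq \bar\m^2 \supsetneq \cdots \supsetneq \bar\m^{n-1} \supsetneq 0$ is strictly decreasing (by Nakayama) and consists of $n$ drops summing to $\ell_R(\overline R) = n$; each drop therefore has length exactly one, giving $\dim_k(\bar\m/\bar\m^2) = 1$, i.e.\ $v(R/\fkc) = 1$. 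Conversely, if $v(R/\fkc) = 1$, write $\m = (x) + \fkc$; then $\overline R$ is Artinian local with principal maximal ideal $(\bar x)$, and its nilpotency index is exactly $\ell_R(\overline R) = n$, so $x^{n-1} \notin \fkc$, and faithfulness yields $x^{n-1}(K^2/K) \neq 0$, i.e.\ $\m^{n-1}K^2 \not\subseteq K$. Finally, when $v(R/\fkc) = 1$, the socle of $\overline R$ equals $(\bar x^{n-1})$, which is one-dimensional over $k$, so $R/\fkc$ is Gorenstein.
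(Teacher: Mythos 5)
Your proof is correct, but the crucial step --- identifying $\ann_R(K^2/K)$ with $\fkc$, i.e.\ the colon identity $K:K^2 = \fkc$ --- is reached by a genuinely different route than the paper's. The paper gets it purely formally: $K:K^2 = (K:K):K = R:K$ using $K:K=R$ from \cite[Bemerkung 2.5]{HK}, and then identifies this with $\fkc = R:S$ because $K:T = R:T$ for any birational module-finite extension $T \supseteq K$ (\cite[Lemma 3.5~(1)]{GMP}, which exploits that $K:T$ is a $T$-submodule of $R$), applied to $T = S = R[K] = K^2$. You instead dualize $0 \to K \to S \to S/K \to 0$ against $K \cong \rmK_R$, using $\Hom_R(S/K,K)=0$, $\Ext^1_R(S,K)=0$ for MCM $S$, and Matlis/local duality to get $\ell_R(R/(K:S)) = \ell_R(S/K) = n$; then the containment $\fkc = R:S \subseteq K:S \subseteq R$ plus $\ell_R(R/\fkc) = n$ forces $\fkc = K:S$. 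Your route is more machinery-heavy (Ext, MCM-vanishing, duality) and depends on the $n$-Goto length bookkeeping, whereas the paper's is shorter and, given only $K^2=K^3$, does not need any length input --- it essentially cites the colon-ideal fact. Both establish the same faithfulness statement. After that, the two arguments coincide: both track the $\overline{\m}$-adic filtration of the Artinian local ring $\overline{R} = R/\fkc$ of length $n$, observe that $\overline{\m}^{n-1} \ne (0)$ forces $n$ drops each of length one (so $v(\overline{R}) = 1$), argue the converse via the nilpotency index of a principal $\overline{\m}$, and read off the Gorenstein property of $\overline{R}$ from the one-dimensional socle.
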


\begin{proof}
Let $\overline{R} = R/\fkc$ and $\overline{\m} = \m \overline{R}$. Since $K:K=R$ (\cite[Bemerkung 2.5]{HK}) and $K^2 = K^3$, the condition $\m^{n-1}K^2 \not\subseteq K$ is equivalent to saying that $\m \not\subseteq K:K^2 = (K:K):K = R:K = \fkc$, that is, $\overline{\m}^{n-1} \ne (0)$ in $\overline{R}$. We first assume that $\m^{n-1}K^2 \not\subseteq K$. Then $\overline{\m}^i \ne \overline{\m}^{i+1}$ for all $0 \le i \le n-1$, so that $\ell_{\overline{R}}(\overline{\m}^i/\overline{\m}^{i+1}) = 1$ because $\ell_R(R/\fkc) = n$. Hence $v(R/\fkc) = 1$. Conversely, suppose $v(R/\fkc) = 1$. For each $0 \le i \le n-1$, we then have $\overline{\m}^i \ne \overline{\m}^{i+1}$ is cyclic as an $\overline{R}/\overline{\m}$-module. As $R$ is $n$-Goto, we get $\ell_{\overline{R}}(\overline{\m}^i/\overline{\m}^{i+1}) = 1$. In particular, $\overline{\m}^{n-1} \ne (0)$. The last assertion follows from $v(R/\fkc) = 1$ immediately. 
\end{proof}

The Gorensteinness of $R/\fkc$ can be characterized as follows. Note that if $R$ is non-Gorenstein almost Gorenstein, $2$-almost Gorenstein, or generalized Gorenstein, the ring $R/\fkc$ is Gorenstein; see \cite[Theorem 3.11]{GMP}, \cite[Proposition 3.3]{CGKM}, \cite[Theorem 4.11]{GK}.

\begin{lem}\label{4.5}
Suppose $R$ is not a Gorenstein ring. Then $R/\fkc$ is Gorenstein if and only if $K^2 = K^3$ and $\mu_R(K^2/K) = 1$.  
\end{lem}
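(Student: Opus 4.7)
The plan is to identify $\rmK_{R/\fkc}$ with the $R/\fkc$-module $(K:\fkc)/K$ and then reduce Gorensteinness of the Artinian ring $R/\fkc$ to concrete conditions on $K^2$, using the length identity supplied by the Proposition preceding the lemma.

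First I would compute the canonical module of $R/\fkc$ via $\Ext$. Applying $\Hom_R(-,K)$ to $0\to\fkc\to R\to R/\fkc\to 0$: since $K\cong\rmK_R$ is torsion-free and $\fkc$ is a regular ideal, $\Hom_R(R/\fkc,K)=0$, and $\Ext^1_R(R,K)=0$ because $K$ is the canonical module. Hence $\Ext^1_R(R/\fkc,K)\cong (K:\fkc)/K$, and since $R/\fkc$ is a $0$-dimensional Cohen-Macaulay $R$-module, this equals $\rmK_{R/\fkc}$. Thus $\rmK_{R/\fkc}\cong(K:\fkc)/K$ as $R/\fkc$-modules. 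Next, since $\fkc$ is an ideal of $S=R[K]$, one has $\fkc K^2\subseteq\fkc S=\fkc\subseteq R\subseteq K$, giving $K^2\subseteq K:\fkc$; the same argument endows $K:\fkc$ with a natural $S$-module structure.

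The technical heart is the equivalence
$$K:\fkc=K^2 \iff K^2=K^3.$$
For $(\Rightarrow)$: $K^3=K\cdot K^2\subseteq K\cdot(K:\fkc)\subseteq K:\fkc=K^2$. For $(\Leftarrow)$: by the Proposition preceding the lemma, if $K^2=K^3$ then $\ell_R(K^2/K)=\ell_R(R/\fkc)=\ell_R((K:\fkc)/K)$, and the inclusion $K^2\subseteq K:\fkc$ with matching lengths forces equality. Granting this, the backward direction of the lemma is immediate: under $K^2=K^3$ and $\mu_R(K^2/K)=1$, $\rmK_{R/\fkc}=(K:\fkc)/K=K^2/K$ is cyclic, so the Artinian local ring $R/\fkc$ is Gorenstein.

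For the forward direction, assuming $R/\fkc$ is Gorenstein, $(K:\fkc)/K\cong R/\fkc$ is cyclic. To deduce $K^2=K:\fkc$, I plan to invoke Matlis duality in the Artinian Gorenstein ring $R/\fkc$: applying $\Hom_R(-,K)$ to $0\to K\to K^2\to K^2/K\to 0$ yields $\rmK_{K^2/K}\cong R/(R:K)$, and Matlis self-duality over $R/\fkc$ should identify $K^2/K$ with $(\fkc:(R:K))/\fkc$. Matching this against the ambient embedding $K^2/K\hookrightarrow (K:\fkc)/K\cong R/\fkc$ via a length argument forces $R:K=\fkc$, whence $K^2/K=(K:\fkc)/K$, and by the key equivalence $K^2=K^3$. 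Cyclicity of $K^2/K$ then follows from that of $\rmK_{R/\fkc}$.

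The main obstacle lies precisely in this final length-matching step of the forward direction: Matlis duality produces an abstract isomorphism between $K^2/K$ and $(\fkc:(R:K))/\fkc$, but reconciling this with the specific embedding $K^2/K\subseteq R/\fkc$ to conclude $R:K=\fkc$ requires exploiting the Gorenstein hypothesis on $R/\fkc$ fully, and is where the proof must do real work beyond the formal manipulations of the previous steps.
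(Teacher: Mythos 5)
Your preliminary identification $\rmK_{R/\fkc}\cong(K:\fkc)/K$ is correct, and in fact $K:\fkc=S$ (since $\fkc=R:S=(K:K):S=K:S$, so $K:\fkc=K:(K:S)=S$ by canonical duality), so this agrees with the paper's use of $S/K$ as $\rmK_{R/\fkc}$. Your key equivalence $K:\fkc=K^2\iff K^2=K^3$ is also right, as is the backward direction of the lemma, which matches the paper's one-line argument $\rmr(R/\fkc)=\mu_R(K^2/K)=1$. But the gap you flag in the forward direction is genuine and cannot be repaired by the Matlis-duality argument as sketched. The abstract isomorphism $K^2/K\cong(\fkc:(R:K))/\fkc$ and its length consequence merely restate $\ell_R(K^2/K)=\ell_R(R/(R:K))$, already forced by $\Ext^1_R(K^2/K,K)\cong R/(R:K)$ and the Gorensteinness of $R/\fkc$; to derive $R:K=\fkc$ from this you would need $\ell_R(K^2/K)=\ell_R(R/\fkc)$, which is exactly the conclusion you are after, so the length-matching step is circular. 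The underlying obstruction is that $S/K\cong R/\fkc$ is only an isomorphism of $R$-modules and carries no information about the multiplicative structure of $S$; cyclicity of $S/K$ by itself does not force the chain $K\subsetneq K^2\subseteq K^3\subseteq\cdots\subseteq S$ to terminate at $K^2$, since a submodule of a cyclic module over an Artinian local ring need not be cyclic.

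The paper closes the forward direction by a different device: it fixes a non-zerodivisor $b\in\fkc$, sets $I'=bK\subseteq J=bS\subseteq R$, notes $J^2=bJ$ and $\mu_R(J/I')=\mu_R(S/K)=1$, and then invokes \cite[Proposition 2.6]{GNO2} --- a reduction-number lemma stating that if $(b)\subseteq I'\subseteq J$ with $b\in I'$, $J^2=bJ$, and $\mu_R(J/I')=1$, then $I'^3=bI'^2$ --- to conclude $K^3=K^2$. That lemma is the real content your Matlis-duality route is missing; to complete your proof you would need either to cite it or to prove directly that $R\subseteq K\subseteq S=R[K]$ together with $\mu_R(S/K)=1$ forces $K^2=S$.
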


\begin{proof}
Suppose that $R/\fkc$ is Gorenstein. As $S/K$ is a canonical module of $R/\fkc$ (e.g., \cite[Lemma 4.10]{GK}), we get $S/K \cong R/\fkc$ as an $R$-module. Choose a non-zerodivisor $b \in \fkc$ on $R$. We set $I' = bK$ and $J=bS \subsetneq R$. Then $\mu_R(J/I') = \mu_R(S/K) = 1$ and $J^2 = (bS)^2 = b(bS) = bJ$, whence, by \cite[Proposition 2.6]{GNO2} we get $I'^3 = bI'^2$. Thus $K^2=K^3$ and $\mu_R(K^2/K) = \mu_R(S/K) = 1$. 
The converse holds because $\rmr(R/\fkc)  = \mu_R(K^2/K) = 1$. 
\end{proof}

In general, the equality $K^2 = K^3$ does not imply $\mu_R(K^2/K) = 1$ as we show next.

\begin{ex}
Let $V=k[[t]]$ be the formal power series ring over a field $k$. We consider the semigroup ring $R = k[[H]]$ in $V$, where $H=\left<5, 11, 13, 19\right>$. Since $\mathrm{PF}(H) =\{8, 14, 17\}$, the fractional canonical ideal $K$ has the form $K=R + Rt^3 + Rt^9$. Then $K^2 = K + Rt^6 + Rt^{12}= K^3$ and $\mu_R(K^2/K) = 2$. In addition, we have $\ell_R(K^2/K) = 3$, so that $R$ is $3$-Goto; while $R$ is not generalized Gorenstein because $R/\fkc$ is not a Gorenstein ring.
\end{ex}

%%%%%%%%%%%%%%%%%%%%%%%%%%%%%%%%%%%%%%%%%%%%%%%%%%%%%%%%%%%%%%%%%%%%%%%%%%%%%%%%%%%%%%%%%%%%%%%%%%%%%%%%

%\begin{rem}
%\begin{enumerate}
%\item[$(1)$] $\rank \calS_Q(I) \le 2$ \ $\Longrightarrow$ \ $K^2 = K^3$.
%\item[$(2)$] There is an example s.t. \textcolor{blue}{$\rank \calS_Q(I) \ge 3$ \! and \! $K^2 \ne K^3$}.
%\end{enumerate}
%\end{rem}

%\section{The case where type 2}
%CM typeが2なら, $\ell_R(K^2/K)$は$\ell_R(K/R)$に置き換えることができる。

If $R$ is an $n$-Goto ring with $\rmr(R)=2$, the length $\ell_R(K^2/K)$ can be replaced by $\ell_R(K/R)$. 

\begin{prop}\label{4.6a}
Suppose that $\rmr(R) = 2$. Then $K^2 = K^3$ if and only if $K/R \cong R/\fkc$ as an $R$-module, or, equivalently, $R$ is a generalized Gorenstein ring. Hence, for an integer $n \ge 1$, the ring $R$ is $n$-Goto if and only if $K^2=K^3$ and $\ell_R(K/R) = n$. 
%$R$ is $n$-Goto if and only if $K^2 = K^3$ and , or, equivalently, $R$ is a generalized Gorenstein ring. 
%\begin{enumerate}
%\item[$(1)$] $R$ is an $n$-Goto ring. 
%\item[$(2)$] $\fkc = R:K$ \! and \! $\ell_R(K/R) = n$.
%\item[$(3)$] $K^2 = R[K]$ \! and \! $\ell_R(K/R) = n$. \vspace{-0.2em}
%\end{enumerate}When this is the case, . 
\end{prop}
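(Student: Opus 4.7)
My plan is to use $\rmr(R) = \mu_R(K) = 2$ to write $K = R + Rx$ for some $x \in K \setminus R$, and to identify $K/R$ and $K^2/K$ as cyclic $R$-modules with a common annihilator $\fka := R:K$. For $K/R = R\bar{x}$ this is direct: $\Ann_R(\bar{x}) = R:_R x = R:K$ (since $R \subseteq K$ forces $R:K \subseteq R$). For $K^2 = K + Rx^2$, so $K^2/K = R\overline{x^2}$, the canonical property $K:K = R$ together with $K:(AB) = (K:A):B$ gives
$$
\Ann_R(K^2/K) \;=\; R \cap (K:K^2) \;=\; R \cap \bigl((K:K):K\bigr) \;=\; R \cap (R:K) \;=\; \fka.
$$
Multiplication by $x$ now yields a well-defined $R$-linear map $\varphi \colon K/R \to K^2/K$, $\bar{y} \mapsto \overline{xy}$, sending $\bar{x}$ to $\overline{x^2}$; it is an isomorphism because both sides are cyclic with annihilator $\fka$. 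Hence $\ell_R(K/R) = \ell_R(K^2/K)$, and combining this with the preceding proposition ($R$ is $n$-Goto iff $K^2 = K^3$ and $\ell_R(K^2/K) = n$) immediately yields the last assertion of the statement.

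For the central equivalence $K^2 = K^3 \iff K/R \cong R/\fkc$, note first that $K/R \cong R/\fkc$ is equivalent to $\fka = \fkc$ (cyclic $R$-modules with equal annihilators). I would then invoke the canonical reflexivity $K:(K:L) = L$ for fractional ideals $L$ of $R$ (standard for $1$-dimensional Cohen-Macaulay local rings with canonical module), together with the computations $K:K^2 = \fka$ and $K:K^3 = \fka:K$, to conclude
$$
K^2 = K^3 \;\iff\; K:K^2 = K:K^3 \;\iff\; \fka = \fka:K \;\iff\; \fka K \subseteq \fka,
$$
where the last step uses $\fka:K \subseteq \fka$ from $1 \in K$. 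The condition $\fka K \subseteq \fka$ iterates to $\fka K^n \subseteq \fka$ for every $n \ge 1$, so $\fka S \subseteq \fka \subseteq R$ forces $\fka \subseteq R:S = \fkc$, which with the trivial $\fkc \subseteq \fka$ gives $\fka = \fkc$. Conversely, $\fka = \fkc$ gives $(\fka K)K \subseteq \fka S \subseteq R$, whence $\fka K \subseteq R:K = \fka$. The identification with ``$R$ is generalized Gorenstein'' follows from \cite[Theorem 4.11]{GK}.

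The principal technical step is the use of canonical reflexivity to translate the multiplicative identity $K^2 = K^3$ into the colon-theoretic condition $\fka K \subseteq \fka$, directly comparable to $\fkc = R:S$; once this translation is in hand, the rest is routine iteration of colon arithmetic.
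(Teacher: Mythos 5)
Your proof is correct, and it takes a genuinely different route from the paper's. The paper's argument for the central equivalence $K^2=K^3 \iff K/R\cong R/\fkc$ goes through Lemma~\ref{4.5}: since $\rmr(R)=2$ forces $\mu_R(K^2/K)=1$, the hypothesis $K^2=K^3$ gives that $R/\fkc$ is an Artinian Gorenstein ring and $\fkc = R:K$, and then the cyclic faithful $R/\fkc$-module $K/R$ must be isomorphic to $R/\fkc$ itself; the converse is a one-line annihilator check. You instead bypass Lemma~\ref{4.5} entirely and translate $K^2=K^3$ into colon arithmetic via the canonical reflexivity $K:(K:L)=L$: writing $\fka=R:K$, you show $K^2=K^3 \iff \fka = \fka:K \iff \fka K\subseteq \fka$, and then iterate to obtain $\fka\subseteq R:S=\fkc$, the reverse inclusion being trivial. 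This is more elementary and self-contained. You also prove the unconditional isomorphism $K/R\cong K^2/K$ by multiplication by $x$ (both being cyclic with common annihilator $\fka$), which directly yields $\ell_R(K/R)=\ell_R(K^2/K)$ without first establishing $K/R\cong R/\fkc$; the paper instead deduces $\ell_R(K/R)=\ell_R(R/\fkc)$ from the isomorphism it has just proved under the hypothesis $K^2=K^3$. Both routes are valid; yours trades the structural appeal to Lemma~\ref{4.5} for a few more lines of colon computation, and the byproduct $K/R\cong K^2/K$ for $\rmr(R)=2$ is a pleasant extra. The citation for the generalized Gorenstein equivalence should be \cite[Corollary 4.14]{GK} as in the paper rather than Theorem~4.11, but this is cosmetic.
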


\begin{proof}
As $\rmr(R)=2$, we write $K = R + Rf$ with $f \in K$. Then $K^2 = K + Rf^2$, whence $\mu_R(K^2/K) = 1$ because $K^2 \ne K$. If $K^2=K^3$, the ring $R/\fkc$ is Gorenstein and $\fkc = R:K$. As $K/R$ is faithful as an $R/\fkc$-module, we have $K/R \cong R/\fkc$. Conversely, if $K/R \cong R/\fkc$, the annihilator of both sides shows that the equality $\fkc = R:K$ holds, i.e., $K^2=K^3$. In addition, by \cite[Corollary 4.14]{GK} $R$ is generalized Gorenstein if and only if $K^2 = K^3$. 
%To show the first equivalence, we may assume $K^2=K^3$. By Lemma \ref{4.5}, we get $R/\fkc$ is Gorenstein. Then $\fkc = R:K$ and $K/R$ is a faithful $R/\fkc$-module. Thus $K/R \cong R/\fkc$ as an $R$-module and $\ell_R(K/R) = \ell_R(R/\fkc)$. Therefore $R$ is an $n$-Goto ring if and only if $K^2 = K^3$ and $\ell_R(K/R) = n$. 
%As for the second equivalence, 
\end{proof}

By the proof of Proposition \ref{4.6a}, we remark that $\mu_R(K^2/K) = 1$ holds when $\rmr(R) = 2$. 
Recall that the ring $R$ is said to have {\it minimal multiplicity} if $\rme(R) =v(R)$. % the Hilbert-Samuel multiplicity $\rme(R)$ is equal to the embedding dimension $v(R)$.

\begin{cor}[{cf. \cite[Proposition 3.12]{CGKM}}]\label{4.7a}
Let $n \ge 1$ be an integer. Suppose that $\rme(R) = 3$ and $R$ has minimal multiplicity. Then $R$ is an $n$-Goto ring if and only if $\ell_R(K/R) = n$. 
\end{cor}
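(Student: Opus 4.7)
My plan is to derive Corollary \ref{4.7a} from Proposition \ref{4.6a}. The first reduction is to note that the hypotheses force $\rmr(R) = 2$: minimal multiplicity gives $\alpha \in \m$ with $\m^2 = \alpha\m$, so $R/(\alpha)$ is Artinian of length $\rme(R) = 3$ and its maximal ideal $\m/(\alpha)$ squares to zero. Hence $\mathrm{Soc}(R/(\alpha)) = \m/(\alpha)$, of length $v(R) - 1 = 2$, so $\rmr(R) = 2$. With this in hand, Proposition \ref{4.6a} asserts that $R$ is $n$-Goto if and only if both $K^2 = K^3$ and $\ell_R(K/R) = n$; this already delivers the ``only if'' direction of the corollary and reduces the ``if'' direction to the unconditional statement $K^2 = K^3$ whenever $\rme(R) = v(R) = 3$.

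Writing $K = R + Rf$ (possible because $\mu_R(K) = \rmr(R) = 2$), the remaining task is to prove $f^3 \in R + Rf + Rf^2$. Equivalently, the canonical ideal $I = aK$ satisfies $I^3 = aI^2$, the inclusion $aI^2 \subseteq I^3$ being immediate from $a \in I$. For the reverse, I would start from
\[
I^2 \;\subseteq\; \m I \;\subseteq\; \m^2 \;=\; \alpha\m \;\subseteq\; \alpha R,
\]
which forces $I^3 \subseteq \alpha I$, and then close the gap between $\alpha I$ and $aI^2$ by exploiting the freedom to replace $f$ by $f + r$ for any $r \in R$: a direct expansion of $(f+r)^3$ puts the cross terms into $R + Rf + Rf^2$, so the membership $f^3 \in K^2$ is independent of the choice of generator of the cyclic module $K/R$. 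In the semigroup-ring model $R = k[[H]]$ with $H = \langle 3, a, b\rangle$, one may therefore pick $f = t^m$, and then $f^3 = t^{3m}$ lies in $R$ because $3 \in H$ forces $3m \in H$. The general case reduces to this model by completing $R$ along $\m$ and invoking Theorem \ref{3.7}, which ensures that the Goto property is preserved between $R$ and $\widehat R$.

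The main obstacle is this last step: transferring the semigroup-ring argument to an arbitrary one-dimensional Cohen-Macaulay local ring with minimal multiplicity and $\rme(R) = 3$, and verifying that the normalization of $f$ survives passage to the completion. Once $K^2 = K^3$ is in place, Proposition \ref{4.6a} immediately yields the corollary.
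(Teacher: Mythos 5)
Your reduction to $\rmr(R)=2$ is correct and matches the paper, and the role of Proposition~\ref{4.6a} is identified correctly: once $K^2=K^3$ is established, both directions of the corollary follow. The gap is entirely in your argument for $K^2=K^3$.

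Two things go wrong. First, the chain $I^2\subseteq \m I\subseteq \m^2=\alpha\m$ only gives $I^3\subseteq \alpha I$, where $\alpha$ is a reduction generator of $\m$, not of $I$. There is no reason $\alpha I$ should be contained in $aI^2$, so this does not prove $I^3=aI^2$. The remark about replacing $f$ by $f+r$ to ``close the gap'' only shows that the assertion $f^3\in K^2$ is unchanged by such a replacement; it does not produce a proof of that assertion. Second, the reduction to the numerical semigroup model is not available: completing along $\m$ does not turn a general one-dimensional Cohen--Macaulay local ring with $\rme(R)=v(R)=3$ into a ring of the form $k[[H]]$ (the ring need not be a domain, need not be analytically irreducible, and need not have residue field $k$). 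Theorem~\ref{3.7} only lets you pass to $\widehat R$ or to $R[X]_{\m R[X]}$; it does not let you replace $R$ by a semigroup ring. You acknowledge this as the ``main obstacle,'' and it is a genuine one.

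The paper's proof closes the gap differently and more robustly. After passing to $R[X]_{\m R[X]}$ to make the residue field infinite, it invokes Sally's bound $\mu_R(I^3)\le \rme(R)=3$ for an $\m$-primary ideal $I$ in a ring of multiplicity $3$, and then Eakin--Sathaye's theorem on prestable ideals to produce some $b\in I$ with $I^3=bI^2$. Since the reduction number of $I$ is independent of the choice of principal reduction, this already gives $I^3=aI^2$, i.e.\ $K^2=K^3$, with no semigroup assumption. If you want to salvage your approach, you should replace the semigroup-ring step with this Sally/Eakin--Sathaye argument.
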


\begin{proof}
Passing to the flat local homomorphism $R \to R[X]_{\m R[X]}$, by Theorem \ref{3.7} we may assume $R/\m$ is infinite. As $\rme(R) = v(R) = 3$, we note that $\rmr(R) = 2$. Since $I=aK$ is an $\m$-primary ideal of $R$, we get $\mu_R(I^3) \le \rme(R) = 3$ (\cite{S1}). Hence there exists $b \in I$ such that $I^3 = bI^2$ (\cite{ES}), so that $K^2 = K^3$ because the reduction number of $I$ is independent of the choice of its reductions. 
\end{proof}
%%%%%%%%%%%%%%%%%%%%%%%%%%%%%%%%%%%%%%%%%%%%%%%%%%%%%%%%%%%%%%%%%%%%%%%%%%%%%%%%%%%%%%%%%%%%%%%%%%%%%%%%%%%%%%%%%%%%%%%%%%%%%%%%%%%%%%%%%%%%%%%%%%%%%%%%%%%%%%%%%%%%%%%%%%%%%%%%%%%%%%%%%%%%%%

Let us note some examples of Goto rings of dimension one.

\begin{ex}\label{4.3}
Let $V=k[[t]]$ be the formal power series ring over a field $k$. We consider the semigroup ring $R_i = k[[H_i]]~(i=1, 2)$ in $V$, where $H_i$ has the following form.
\begin{enumerate}
\item[$(1)$] $H_1=\left<3, 3n+1, 3n+2\right>~(n \ge 1)$.
\item[$(2)$] $H_2=\left<e, \{en-e+i\}_{3 \le i \le e-1}, en+1, en+2\right>~(n \ge 2,\, e \ge 4)$. 
\end{enumerate}
Then, because $\mathrm{PF}(H_1) =\{3n-2, 3n-1\}$, $K_1 = R_1 + R_1t$ is the fractional canonical ideal of $R_1$, so that $\ell_{R_1}(K_1/R_1) = n$. By Corollary \ref{4.7a}, the ring $R_1$ is $n$-Goto. 
On the other hand, since $\mathrm{PF}(H_2) =\{en-2e+i \mid 3 \le i \le e-1\} \cup \{en-e+1, en-e+2\}$, the fractional canonical ideal $K_2$ of $R_2$ has the form $K_2=R_2 + R_2t + R_2t^3 + R_2t^4 + \cdots + R_2t^{e-1}$. Then $K_2^2 = K_2^3 = V$ and $\ell_{R_2}(K_2^2/K_2) = \ell_{R_2}(V/K_2) = n$. Therefore $R_2$ is an $n$-Goto ring. Note that $R_2$ is not generalized Gorenstein. Indeed, if $R$ is generalized Gorenstein, then $K_2/R_2 \cong (R_2/\fkc_2)^{\oplus (r-1)}$, where $\fkc_2 = R_2:R_2[K_2]$ and $r = \rmr(R_2)$. Since $R_2$ has minimal multiplicity, we get the equalities
$$
n + (e-3)(n-1) = \ell_{R_2}(K_2/R_2) = (r-1)\ell_{R_2}(R_2/\fkc_2) = (e-1-1)n.
$$
This induces $(e-3)(n-1) = (e-3)n$ which makes a contradiction. Hence $R_2$ is not a generalized Gorenstein ring. 
\end{ex}

%\textcolor{red}{$\m:\m$のGoto性, Arfを仮定するかな？これでGorensteinに近づけることができる。exampleもやる。}

We study the Goto property of the blow-up $B=\bigcup_{n\geq 0}\left[\m^n: \m^n\right]$ of the maximal ideal $\m$. 
By \cite[Theorem 5.1]{GMP}, the ring $R$ is $1$-Goto having minimal multiplicity if and only if $B$ is $0$-Goto but $R$ is not. 

%次に, $\m$のblow-upとGoto ringsの関連について考える。

\begin{thm}\label{4.9a}
Let $n \ge 2$ be an integer. Set $B=\bigcup_{n\geq 0}\left[\m^n: \m^n\right]$.  
Suppose that $R$ has minimal multiplicity, $B$ is a local ring, and $R/\m \cong B/\n$, where $\n$ denotes the maximal ideal of $B$. Then the following conditions are equivalent. 
\begin{enumerate}
\item[$(1)$] $R$ is an $n$-Goto ring
\item[$(2)$] $B$ is an $(n-1)$-Goto ring.
%$R$ is an $n$-Goto ring if and only if $B$ is an $(n-1)$-Goto ring.
\end{enumerate}
\end{thm}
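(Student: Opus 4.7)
The plan is to identify a common blow-up algebra for $R$ and $B$, and then transfer the Goto data across.

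Since $R$ has minimal multiplicity, choose $x \in \m$ with $\m^2 = x\m$; then $B = \m:\m = \m/x$ and $\m = xB$. Let $K$ be a fractional canonical ideal of $R$ with $R \subseteq K \subseteq \overline{R}$, and set $K_B := K:\m$ (colon taken inside $\rmQ(R)$). Using $\m = xB$, one has $K_B = (1/x)(K:B) \cong K:B \cong \rmK_B$ as $B$-modules, $B \subseteq K_B$ follows from $B\m = \m \subseteq R \subseteq K$, and $K_B \subseteq \overline{B} = \overline{R}$ by integrality of the finitely generated $B$-module $K_B$; also $K \subseteq K_B$ since $K\m \subseteq K$. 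Thus $K_B$ is a fractional canonical ideal of $B$.

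The heart is to prove $R[K] = B[K_B] =: S$ and the conductor relation $\fkc_R = x\,\fkc_B$, where $\fkc_R = R:S$ and $\fkc_B = B:S$. The inclusion $R[K] \subseteq B[K_B]$ is immediate. For the reverse, the key step is $B \subseteq R[K]$: using $\m^2 = x\m$, each minimal generator $b$ of $\m$ satisfies $b\m \subseteq xK$ and $b \in K$, and when $R$ is $n$-Goto (so $R[K] = K^2$) this is leveraged to place $b/x$ inside $K^2$; then $K_B = K\cdot B \subseteq K\cdot S = S$ yields $K_B \subseteq R[K]$. For $\fkc_R = x\,\fkc_B$: the inclusion $\supseteq$ is direct, since $b \in \fkc_B$ gives $xb \in \m \subseteq R$ and $(xb)S = x(bS) \subseteq \m \subseteq R$; conversely $\fkc_R \subseteq \m$ (else $1 \in \fkc_R$ forces $S \subseteq R$, contradicting $R$ non-Gorenstein), so every $y \in \fkc_R$ writes $y = xb$ with $b \in B$, and $bS \subseteq \overline{B} \cap (1/x)R = B$ forces $b \in \fkc_B$.

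Multiplication by $x$ then gives an $R$-module isomorphism $B/\fkc_B \xrightarrow{\sim} xB/x\fkc_B = \m/\fkc_R$, and combined with $R/\m \cong B/\n$ this yields
\[
\ell_R(R/\fkc_R) = 1 + \ell_R(\m/\fkc_R) = 1 + \ell_B(B/\fkc_B),
\]
matching the length conditions $n \leftrightarrow n-1$. For the equivalence $K^2 = K^3 \Leftrightarrow K_B^2 = K_B^3$: writing $K_B = K + Rz$ with $\m z \subseteq K$ (via $(K:\m)/K \cong R/\m$), the forward direction uses $z \in K_B \subseteq S = K^2$ to check $Kz, Rz^2 \subseteq K\cdot S = K^2$, so $K_B^2 = K^2 = S = K_B^3$; the reverse direction is analogous. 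The main obstacle is the inclusion $B \subseteq R[K]$, which requires using the minimal multiplicity hypothesis to explicitly place $b/x$ inside $K^2$ for every minimal generator $b$ of $\m$; once this is in hand, both the length comparison and the reduction-number equivalence follow fairly mechanically.
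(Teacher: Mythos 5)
Your overall strategy is the same as the paper's: identify $L := K:\m = BK$ as the fractional canonical ideal of $B$, relate the common overring $S := R[K] = B[L]$, and compare conductors (you) or lengths along $K \subseteq L \subseteq S$ (the paper). The conductor identity $\fkc_R = x\,\fkc_B$ is a legitimate and tidy alternative to the paper's direct computation $\ell_R(S/K) = \ell_R(S/L) + \ell_R(L/K)$, and your argument for $\overline{B}\cap (1/x)R = B$ is correct. However, there is a genuine gap in the reduction-number equivalence and a related imprecision in your treatment of $S$.

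The claim that the implication $K_B^2=K_B^3 \Rightarrow K^2=K^3$ is ``analogous'' to the converse is not correct. In the forward direction everything is easy because $S=K^2$, and $K_B\subseteq S$ immediately forces $K_B^2=S=K_B^3$. In the reverse direction you only know $S=K_B^2$, and you must prove the nontrivial inclusion $K_B^2\subseteq K^2$. This is where the asymmetry lives. The paper handles it by (a) choosing the extra generator $g$ of $L=K:\m$ to lie in $R:\m$ (not merely in $K:\m$, as your $z$ does), so that $Kg\subseteq KB=K_B$ and $g^2\in B\subseteq K_B$ give $K_B^2\subseteq K^2+K_B$, and (b) invoking the inclusion $K:\m\subseteq K^2$ from \cite[Corollary 2.4(1)]{CGKM} to conclude $K_B\subseteq K^2$, hence $K_B^2\subseteq K^2$ and $S=K^2=K^3$. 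Your $z$ with $\m z\subseteq K$ only gives $Kz\subseteq K\cdot K_B$, which you cannot bound by $K^2$ without already knowing $K^2=K^3$. The same missing inclusion $K:\m\subseteq K^2$ is what makes $B\subseteq R[K]$ (hence $S=R[K]=B[K_B]$ and your conductor relation) hold \emph{unconditionally}; as written, your argument for $B\subseteq R[K]$ appeals to $R[K]=K^2$ and so only applies when $R$ is already assumed $n$-Goto, which is circular in the direction $(2)\Rightarrow(1)$. Once you insert $K:\m\subseteq K^2$ and pick the generator in $R:\m\setminus K$, both issues disappear and your proof closes the same way the paper's does.
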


\begin{proof}
Note that $B=\bigcup_{n\geq 0}\left[\m^n: \m^n\right]=\m:\m$, because $R$ has minimal multiplicity.  
Since every finitely generated $R$-subalgebra of $\overline{R}$ is Gorenstein if $\rme(R)\le 2$, we may assume that $R$ is not a Gorenstein ring. 
Then $L=BK$ is a $B$-submodule of $\rmQ(B)$ such that $B \subseteq L \subseteq \overline{B}$ and $L \cong \rmK_B$ as a $B$-module, so $L$ is a fractional canonical ideal of $B$ and $L=K:\m$, where $\overline{B}$ denotes the integral closure of $B$ in $\rmQ(B)$ (\cite[Proposition 5.1]{CGKM}). 

$(1) \Rightarrow (2)$ We have $S = R[K] = K^2$, because $K^2 = K^3$. Since $K \subseteq L \subseteq S$, we see that $L^2 = S = L^3$. Then $\ell_B(L^2/L) = \ell_R(L^2/L) = \ell_R(S/K) - \ell_R(L/K) = n-1$, because $R/\m \cong B/\n$ and $L=K:\m$. Hence $B$ is an $(n-1)$-Goto ring.

$(2) \Rightarrow (1)$ Since $S=B[L] = L^2$, we have $\ell_R(S/K) = \ell_R(S/L) + \ell_R(L/K) = n$. Hence it suffices to show that $K^2 = K^3$. Indeed, we can write $L = K:\m = K + Rg$ with $g \in (R:\m) \setminus K$. Then $L^2 = K^2 + Kg + Rg^2 \subseteq K^2 + L$, because $B=R:\m =  \frac{\m}{\alpha}$ and $L=\frac{\m K}{\alpha}$ for some $\alpha \in \m$. Thanks to \cite[Corollary 2.4 (1)]{CGKM}, we have $L=K:\m \subseteq K^2$. Therefore $L^2 \subseteq K^2 + L =K^2$; hence $S=L^2 = K^2$. This shows $K^2 = K^3$ and $R$ is an $n$-Goto ring. 
\end{proof}

%従って, blow-upが連続的に, minimal multiを持つ場合, 即ち, Arfを仮定すると次のようになる。

%An Arf ring has been properly generalized for a class of rings, which is used for classification of curve singularities, studied by C. Arf \cite{Arf} in 1949. 

Recall that a one-dimensional Cohen-Macaulay local ring $R$ is called {\it Arf}, if the following conditions are satisfied (\cite[Definition 2.1]{L}).
\begin{enumerate}[$(1)$]
\item Every integrally closed regular ideal in $R$ has a reduction generated by a single element. 
\item If $x, y, z \in R$ such that $x$ is a non-zerodivisor on $R$ and $y/x, z/x \in \overline{R}$, then $yz/x \in R$. 
\end{enumerate}
In \cite[Theorem 2.2]{L}, it is proved that $R$ is Arf if and only if every integrally closed regular ideal is stable; equivalently, all {\it the local rings infinitely near to $R$}, i.e., the localizations of its blow-ups, have minimal multiplicity. 

We set $R^I = \bigcup_{n\geq 0}\left[I^n: I^n\right]$ for a regular ideal $I$ of $R$. 
For an integer $m \ge 0$, we define
$$
R_m = 
\begin{cases}
\  R &  (m = 0) \\
\  R_{m-1}^{J(R_{m-1})}  & (m \geq 1)
\end{cases}
$$
where $J(R_{m-1})$ stands for the Jacobson radical of the ring $R_{m-1}$.

\begin{cor}\label{4.10a}
Let $n \ge 2$ be an integer. Suppose that $R$ is an Arf ring, $\overline{R}$ is a local ring, and $R/\m \cong R_i/\m_i$ for all $i \ge 0$, where $\m_i = J(R_i)$. Then the following conditions are equivalent. 
\begin{enumerate}
\item[$(1)$] $R$ is an $n$-Goto ring. 
\item[$(2)$] $R_i$ is an $(n-i)$-Goto ring for every $1 \le i < n$.
\item[$(3)$] $R_i$ is an $(n-i)$-Goto ring for some $1 \le i < n$.
\end{enumerate}
Moreover, if $R_{n-1}$ is not a Gorenstein ring, the condition $(1)$ is equivalent to the following. 
\begin{enumerate} 
\item[$(4)$] $R_i$ is an $(n-i)$-Goto ring for every $1 \le i \le n$.
\item[$(5)$] $R_i$ is an $(n-i)$-Goto ring for some $1 \le i \le n$.
\end{enumerate}
\end{cor}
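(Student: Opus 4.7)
The plan is to iterate Theorem \ref{4.9a} along the tower
\[
R = R_0 \subseteq R_1 \subseteq \cdots \subseteq R_n,
\]
and then use \cite[Theorem 5.1]{GMP} (as cited at the start of this section) for the last link.

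First I would verify that each $R_i$ with $0 \le i < n$ satisfies the hypotheses of Theorem \ref{4.9a}. Since $R$ is Arf with $\overline{R}$ local, the Arf property passes to every blow-up, so each $R_i$ is Arf; by Lipman's characterization of the Arf condition, every local ring infinitely near to $R$ has minimal multiplicity, so each $R_i$ has minimal multiplicity. The integral closure $\overline{R_i}=\overline{R}$ remains local, the maximal-ideal blow-up of $R_i$ is precisely $R_{i+1}$, which is local (as a module-finite extension of $R_i$ contained in the local ring $\overline{R}$), and the residue-field compatibility $R_i/\m_i \cong R_{i+1}/\m_{i+1}$ is built into the hypotheses. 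Finally, the existence of a canonical module for $R_i$ follows from the module-finiteness of $R_i$ over $R$ together with the existence of $\rmK_R$.

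Next, applying Theorem \ref{4.9a} with $R$ replaced by $R_{i-1}$, $B$ replaced by $R_i$, and the integer $n$ in that theorem replaced by $n-i+1$ (which requires $n-i+1\ge 2$, i.e.\ $i\le n-1$), I obtain, for each $1\le i\le n-1$,
\[
R_{i-1} \text{ is } (n-i+1)\text{-Goto} \ \Longleftrightarrow\ R_i \text{ is } (n-i)\text{-Goto}.
\]
Chaining these equivalences yields (1) $\Leftrightarrow$ (2); (2) $\Rightarrow$ (3) is trivial, and (3) $\Rightarrow$ (2) is obtained by walking up and down the tower from the index given in (3), using the same equivalences at each step. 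This establishes (1) $\Leftrightarrow$ (2) $\Leftrightarrow$ (3).

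For the refined equivalences (1) $\Leftrightarrow$ (4) $\Leftrightarrow$ (5) under the extra hypothesis that $R_{n-1}$ is not Gorenstein, I would invoke \cite[Theorem 5.1]{GMP} as paraphrased in the discussion preceding Theorem \ref{4.9a}: a one-dimensional Cohen-Macaulay local ring with minimal multiplicity is $1$-Goto if and only if its blow-up is $0$-Goto (Gorenstein) while it itself is not. Applied to $R_{n-1}$, the non-Gorenstein assumption supplies the final link
\[
R_{n-1} \text{ is } 1\text{-Goto} \ \Longleftrightarrow\ R_n \text{ is } 0\text{-Goto},
\]
extending the chain to include $i=n$ and giving (4) and (5). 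The main obstacle is the first step: one has to make sure that the hypotheses of Theorem \ref{4.9a}---Arfness, minimal multiplicity, existence of a canonical module, and localness of both the ring and of $\overline{R_i}$---really do propagate along the entire tower so that the theorem can be applied at every rung. These are classical consequences of the Arf condition (Lipman), but the verification is the only nontrivial step before the equivalences telescope.
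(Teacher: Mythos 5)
Your proposal is correct and follows essentially the same approach as the paper's proof: both establish that each $R_i$ along the blow-up tower is local with minimal multiplicity (via Lipman's Theorem 2.2 and the localness of $\overline{R}$), iterate Theorem \ref{4.9a} to telescope the Goto indices, and invoke \cite[Theorem 5.1]{GMP} for the final rung $R_{n-1}\leftrightarrow R_n$ under the added non-Gorenstein hypothesis. The paper is simply more terse in the hypothesis-propagation step, while you spell it out.
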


\begin{proof}
Since $\overline{R}$ is a local ring, so is the ring $R_i$ for all $i \ge 0$. Hence, by \cite[Theorem 2.2]{L}, we have $v(R_i) = \e(R_i)$. Besides, for each $1 \le i < n$, the ring $R_i$ is not Gorenstein if it is $(n-i)$-Goto. Thus, the equivalence of the conditions $(1)$, $(2)$, and $(3)$ follows by using Theorem \ref{4.9a} recursively. When $R_{n-1}$ is not a Gorenstein ring, the required equivalence follows from \cite[Theorem 5.1]{GMP}.
\end{proof}

The assumption in Corollary \ref{4.10a} that $R/\m \cong R_i/\m_i$ for all $i \ge 0$ is satisfied, if either $R/\m$ is an algebraically closed field, or $R$ is a numerical semigroup ring over a field. 

%ここで, 剰余体の同型は, $R/\m$は代数閉体なら常に満たされる。また, 代数閉体ではなくても, $R$が数値半群環なら満たされる。

\begin{ex}
Let $e \ge 4$, $n \ge 2$ be integers. 
Let $V=k[[t]]$ be the formal power series ring over a field $k$. 
For each $0 \le j \le n$, we set $R_j = k[[H_j]]$ in $V$, where $H_j$ has the following form.
$$
H_j=
\begin{cases}
\, \left<e, \{en-(j+1)e+i\}_{\, 3 \le i \le e-1}, en-je+1, en-je+2\right> & (0 \le j \le n-2)\\
\, \left<3, 4, 5\right>& (j=n-1) \\
\ {\Bbb N} & (j=n).
\end{cases}
$$
Then $R_j = \m_{j-1}: \m_{j-1}$ has minimal multiplicity for all $1 \le j \le n$, where $\m_{j-1}$ denotes the maximal ideal of $R_{j-1}$. Hence $R_0$ is an Arf ring. By Example \ref{4.3} (2), the ring $R_0$ is $n$-Goto. Therefore, $R_j$ is an $(n-j)$-Goto ring for every $1 \le j \le n$. 
\end{ex}
%%%%%%%%%%%%%%%%%%%%%%%%%%%%%%%%%%%%%%%%%%%%%%

%%%%% minimal free resolの定理への準備 

For the rest of this section, under Setup \ref{4.1}, we additionally assume that $R$ is an $n$-Goto ring with $n \ge 2$ and $v(R/\fkc) = 1$. As $\ell_R(R/\fkc) =n$, we can choose a minimal system $x_1, x_2, \ldots, x_{\ell}$ of generators of $\m$ such that $\fkc =(x_1^n, x_2, \ldots, x_{\ell})$, where $\ell = v(R)$. 
For each $1 \le i \le n$, we define
$$
I_i = (x_1^i, x_2, \ldots, x_{\ell}).
$$
Then we get a chain $\fkc = I_n \subsetneq I_{n-1} \subsetneq \cdots \subsetneq I_1 =\m$ of ideals in $R$ and $v(R/I_i) = 1$ for every $2 \le i \le n$. We set $r = \rmr(R)$.

%With this notation we have the following
 %which plays a key role to study the structure of minimal free presentation of $K$. 

%次の定理はGoto ringsのminimal free presentationの構造を調べる上で重要である。

%Remember that every non-zero finitely generated module over an Artinian Gorenstein local ring can be decomposed to a direct sum of cyclic modules.  

\begin{thm}\label{4.8}
With the notation of above, one has an isomorphism
$$
K/R \cong \bigoplus_{i=1}^{n} \left(R/I_i\right)^{\oplus \ell_i}
$$
of $R$-modules for some $\ell_n >0$ and for some $\ell_i \ge 0~(1 \le i < n)$ such that $\sum_{i=1}^{n} \ell_i = r-1$. Hence $K/R$ is free as an $R/\fkc$-module if and only if the equality $\ell_R(K/R) = n (r-1)$ holds. 
\end{thm}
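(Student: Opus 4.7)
The plan is to combine two observations: first, that $R/\fkc$ is a principal ideal Artinian local ring under the hypotheses $v(R/\fkc) = 1$ and $\ell_R(R/\fkc) = n$, so that every finitely generated $R/\fkc$-module decomposes as a direct sum of cyclic modules; and second, that $K/R$ is a \emph{faithful} $R/\fkc$-module, so that at least one summand has annihilator exactly $\fkc$.

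The first main step is to identify $\Ann_R(K/R) = \fkc$, equivalently $R:K = \fkc$. Since $K^2 = K^3$, we have $S = R[K] = K^2$, and so $S$ is a maximal Cohen-Macaulay fractional ideal of $R$; hence $\Ext^1_R(S, K) = 0$. Applying $\Hom_R(-, K)$ to the exact sequence $0 \to K \to S \to S/K \to 0$ and using $\Hom_R(K, K) = R$ together with $\Hom_R(S, K) = K : K^2 = R : K$ (from $K:K = R$), $\Hom_R(S/K, K) = 0$ (since $S/K$ is torsion), and the local duality identification $\Ext^1_R(S/K, K) \cong (S/K)^\vee$ where $(-)^\vee$ denotes the Matlis dual, we obtain
$$
0 \to R:K \to R \to (S/K)^\vee \to 0.
$$
Matlis duality preserves length, so $\ell_R(R/(R:K)) = \ell_R(S/K) = \ell_R(K^2/K) = n$. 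Combined with $\fkc \subseteq R:K$ and $\ell_R(R/\fkc) = n$, both from the $n$-Goto hypothesis, this forces $\fkc = R:K$.

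For the structural step, the ring $R/\fkc$ is a principal ideal Artinian local ring whose ideals are precisely $\overline{I_i}$ for $0 \le i \le n$, where $\overline{(-)}$ denotes reduction modulo $\fkc$ (with $\overline{I_0} = R/\fkc$ and $\overline{I_n} = 0$). In particular, $\ell_R(R/I_i) = i$, and the cyclic $R/\fkc$-modules are exactly the $R/I_i$ with $1 \le i \le n$. By the structure theorem for finitely generated modules over such a ring,
$$
K/R \cong \bigoplus_{i=1}^n (R/I_i)^{\oplus \ell_i}
$$
for some non-negative integers $\ell_i$; the identification $\Ann_{R/\fkc}(K/R) = 0$ from the previous step then forces $\ell_n > 0$, since the largest elementary divisor must be $I_n$. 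Moreover $\sum_i \ell_i = \mu_R(K/R)$ because each summand is cyclic, and $\mu_R(K/R) = r - 1$ follows from the fact that $1 \notin \m K$: if $1 \in \m K$ then $K \subseteq \m K^2$, and iterating via $K^2 = K^3$ gives $K^2 \subseteq \bigcap_{m} \m^m K^2 = 0$ by Krull's intersection theorem, contradicting $R \subseteq K^2$.

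For the final assertion, $K/R$ is free as an $R/\fkc$-module if and only if $\ell_i = 0$ for all $i < n$, in which case $\ell_R(K/R) = n(r-1)$. The converse comes from the identity
$$
n(r-1) - \ell_R(K/R) = n \sum_{i=1}^n \ell_i - \sum_{i=1}^n i \ell_i = \sum_{i=1}^n (n-i)\ell_i \ge 0,
$$
which shows that the length equality $\ell_R(K/R) = n(r-1)$ forces every $\ell_i$ with $i < n$ to vanish. The main obstacle is the identification $R : K = \fkc$; once this is in hand, the remainder is a routine application of the structure theorem over a principal ideal ring together with length bookkeeping.
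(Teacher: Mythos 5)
Your proof is correct, and the overall architecture (identify $\Ann_R(K/R)=\fkc$, then invoke the structure theorem over the special principal ideal ring $R/\fkc\cong k[X]/(X^n)$, then bookkeeping on lengths) is the same as the paper's. The one place you take a genuinely different route is the key identification $R:K = \fkc$. You derive it by dualizing $0\to K\to S\to S/K\to 0$ into $K\cong\rmK_R$, using $\Ext^1_R(S,K)=0$ and the local-duality isomorphism $\Ext^1_R(S/K,K)\cong (S/K)^\vee$, and then comparing lengths. This is valid, but it is heavier than what is needed: since $K:K=R$ and $K^2=K^3$ one has $S=R[K]=K^2$ and hence
$$
\fkc \;=\; R:K^2 \;=\; (K:K):K^2 \;=\; K:K^3 \;=\; K:K^2 \;=\; (K:K):K \;=\; R:K
$$
directly, which is (implicitly) what the paper does. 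Your Matlis-duality route does buy a self-contained length computation $\ell_R(R/(R:K))=\ell_R(S/K)$ without invoking the known fact $\ell_R(S/K)=\ell_R(R/\fkc)$, but the colon computation above renders that length comparison unnecessary. The remaining steps — the justification that $\mu_R(K/R)=r-1$ via $1\notin\m K$, the faithfulness argument forcing $\ell_n>0$, and the inequality $n(r-1)-\ell_R(K/R)=\sum_{i=1}^n(n-i)\ell_i\ge 0$ for the final equivalence — all agree with (and in a couple of spots fill in details omitted by) the paper's proof.
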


%To prove Theorem \ref{4.8}, we need the following. 
%\begin{lem}
%Let $(R, \m)$ be an Artinian local ring with $v(R)=1$. Then the following assertions hold true. 
%\begin{enumerate}
%\item[$(1)$] For each non-zero ideal $I$ of $R$, $\mu_R(I) = 1$. Hence $R$ is Gorenstein. 
%\item[$(2)$] For each non-zero finitely generated $R$-module $M$, $M$ can be decomposed to a direct sum of cyclic $R$-modules.  
%\end{enumerate}
%\end{lem}

\begin{proof}
The equality $K^2=K^3$ induces $\fkc = R:K$, so $K/R$ is a finite module over $R/\fkc$. Hence, because $R/\fkc$ is an Artinian Gorenstein ring, the $R/\fkc$-module $K/R$  decomposes to a direct sum of cyclic $R/\fkc$-modules. Note that a cyclic module over $R/\fkc$ is isomorphic to the module of the form $R/I_i$ with $1 \le i \le n$, and $\mu_R(K/R) = r-1$. 
Therefore we have an isomorphism
$
K/R \cong \bigoplus_{i=1}^{n} \left(R/I_i\right)^{\oplus \ell_i}
$
of $R$-modules for some $\ell_i \ge 0~(1 \le i \le n)$ such that $\sum_{i=1}^{n} \ell_i = r-1$. In particular, we get $\ell_n >0$, because the faithful $R/\fkc$-module $K/R$ contains $R/\fkc$ itself as a direct summand. 
\end{proof}

By \cite[Theorem 4.11]{GK}, the ring $R$ is generalized Gorenstein if and only if $K/R$ is free as an $R/\fkc$-module.

\begin{cor}
Suppose that $n \ge 2$, $R$ is $n$-Goto, and $v(R/\fkc) = 1$. Then $R$ is a generalized Gorenstein ring if and only if the equality $\ell_R(K/R) = n (r-1)$ holds.  
\end{cor}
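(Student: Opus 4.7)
The plan is to read the corollary off directly from the structural decomposition provided by Theorem \ref{4.8} combined with the cited characterization \cite[Theorem 4.11]{GK} of generalized Gorenstein rings as those with $K/R$ free over $R/\fkc$. The first step is to compute $\ell_R(R/I_i)$ for each $1 \le i \le n$. Because $v(R/\fkc) = 1$ and $\ell_R(R/\fkc)=n$, the Artinian local ring $R/\fkc$ is principal, with composition series $R/\fkc \supsetneq (\bar x_1) \supsetneq \cdots \supsetneq (\bar x_1)^n = 0$. Since the image of $I_i$ in $R/\fkc$ is $(\bar x_1)^i$, one obtains $\ell_R(R/I_i) = i$ for every $1 \le i \le n$.

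The second step is to translate Theorem \ref{4.8} into a length identity. Writing $K/R \cong \bigoplus_{i=1}^{n}(R/I_i)^{\oplus \ell_i}$ with $\ell_n>0$, $\ell_i\ge 0$, and $\sum_{i=1}^n \ell_i = r-1$, taking $R$-lengths yields
$$\ell_R(K/R) = \sum_{i=1}^{n} i\,\ell_i.$$
If the assumption $\ell_R(K/R) = n(r-1) = n\sum_{i=1}^n \ell_i$ holds, then $\sum_{i=1}^n (n-i)\ell_i = 0$, and since every summand is non-negative this forces $\ell_i = 0$ for all $i<n$. Hence $K/R \cong (R/\fkc)^{\oplus(r-1)}$ is $R/\fkc$-free, so $R$ is generalized Gorenstein by \cite[Theorem 4.11]{GK}. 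Conversely, if $K/R$ is $R/\fkc$-free of rank $m$, then $m = \mu_R(K/R) = r-1$, since $\mu_R(K) = \rmr(R)$ and $1 \in K$ may be taken as one member of a minimal generating system, whence $\ell_R(K/R) = m\cdot\ell_R(R/\fkc) = n(r-1)$.

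I do not foresee any real obstacle: once Theorem \ref{4.8} is in hand, the entire proof reduces to the length computation above. The only subtle point is the identification $\ell_R(R/I_i) = i$, which crucially uses the hypothesis $v(R/\fkc) = 1$ in order to identify $R/\fkc$ with a principal Artinian local ring; without this hypothesis the cyclic summands $R/I_i$ would in general have length different from $i$ and the clean identity $\sum_i (n-i)\ell_i = 0$ would fail to be available.
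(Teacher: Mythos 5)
Your proposal is correct and follows exactly the intended argument. The paper gives no separate proof for this corollary: it simply combines the last sentence of Theorem~\ref{4.8} (``$K/R$ is free as an $R/\fkc$-module if and only if $\ell_R(K/R) = n(r-1)$'') with the cited characterization from [GK, Theorem 4.11] of generalized Gorenstein rings as those with $K/R$ free over $R/\fkc$. Your length computation---observing $\ell_R(R/I_i) = i$ via $v(R/\fkc)=1$, deducing $\ell_R(K/R) = \sum_i i\,\ell_i$, and noting $\sum_i (n-i)\ell_i = 0$ forces $\ell_i=0$ for $i<n$---is precisely the argument the paper leaves implicit in Theorem~\ref{4.8}'s final sentence, and indeed the same identity $\ell_R(R/I_i)=i$ appears explicitly in the paper's proof of Corollary~\ref{4.11}.
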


%\subsection{$\rme_1(I)$と$\rmr(R)$の関係}

As an application of Theorem \ref{4.8}, we get the following.  
Recall that $R$ is a Gorenstein ring if and only if $\rme_1(I) = 0$; while the ring $R$ is non-Gorenstein almost Gorenstein if and only if $\rme_1(I) = \rmr(R)$ (\cite[Theorem 3.16]{GMP}).
 
\begin{cor}\label{4.11}
Suppose that $n \ge 2$, $R$ is $n$-Goto, and $v(R/\fkc) = 1$. Then
$$
\rmr(R) + n \le \rme_1(I) \le  n \cdot \rmr(R)
$$
and the equality 
$\rme_1(I) = n \cdot \rmr(R)$ holds if and only if $R$ is a generalized Gorenstein ring. 
\end{cor}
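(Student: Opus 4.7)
The plan is to first convert $\rme_1(I)$ into length data about $K/R$, then read off both bounds from the decomposition provided by Theorem~\ref{4.8}. By Fact~\ref{2.3}\,(4) applied to $J=I$ (since $Q=(a)\subseteq I$ forces $J=I$), together with $\rank \calS_Q(I)=n$, I would use
$$
\rme_1(I)=\rme_0(I)-\ell_R(R/I)+n.
$$
Because $I=aK$ and $a$ is a non-zerodivisor, multiplication by $a$ gives $R\cong aR$ and $K\cong aK=I$, so $K/R\cong I/aR$. Combined with $\rme_0(I)=\rme_0(aR)=\ell_R(R/aR)$ and $\ell_R(R/aR)=\ell_R(R/I)+\ell_R(I/aR)$, this yields the clean formula
$$
\rme_1(I)=\ell_R(K/R)+n.
$$

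Next, I would invoke Theorem~\ref{4.8} to write $K/R\cong \bigoplus_{i=1}^{n}(R/I_i)^{\oplus \ell_i}$ with $\ell_n\ge 1$ and $\sum_{i=1}^n \ell_i=r-1$, where $r=\rmr(R)$. Since $v(R/\fkc)=1$ and $\ell_R(R/\fkc)=n$, the Artinian local ring $R/\fkc$ has principal maximal ideal; hence $I_i/\fkc$ corresponds to the $i$-th power of that maximal ideal, giving $\ell_R(R/I_i)=i$. Consequently
$$
\ell_R(K/R)=\sum_{i=1}^{n} i\,\ell_i.
$$

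For the upper bound I would bound termwise by $i\le n$ to get $\sum i\,\ell_i\le n\sum \ell_i=n(r-1)$, so $\rme_1(I)\le n(r-1)+n=n\cdot \rmr(R)$. Equality forces $\ell_i=0$ for all $i<n$ and $\ell_n=r-1$, i.e., $K/R\cong (R/\fkc)^{\oplus (r-1)}$ is $R/\fkc$-free; by \cite[Theorem~4.11]{GK} (quoted immediately before this corollary), this is equivalent to $R$ being generalized Gorenstein. For the lower bound, I separate the summand with $i=n$ and use $i\ge 1$ on the rest:
$$
\ell_R(K/R)=n\ell_n+\sum_{i<n}i\,\ell_i\ge n\cdot 1+\sum_{i<n}\ell_i= n+(r-2).
$$
Since $n\ge 2$, we get $\ell_R(K/R)\ge r$ and hence $\rme_1(I)\ge r+n=\rmr(R)+n$.

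The only non-formal step is the identification $\rme_0(I)-\ell_R(R/I)=\ell_R(K/R)$, but this is immediate once one uses $I=aK$ with $a$ a non-zerodivisor; after that, the corollary is a direct numerical consequence of Theorem~\ref{4.8} and the characterization of the generalized Gorenstein property via freeness of $K/R$ over $R/\fkc$.
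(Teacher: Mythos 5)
Your reduction $\rme_1(I)=\ell_R(K/R)+n$, the upper bound, and the equality characterization are all correct and mirror the paper's argument. The lower bound, however, has a gap: in the chain
$$
\ell_R(K/R)=n\ell_n+\sum_{i<n}i\ell_i\ \ge\ n\cdot 1+\sum_{i<n}\ell_i\ =\ n+(r-2)
$$
the final step is a false equality. Since $\sum_{i=1}^n\ell_i=r-1$, one has $\sum_{i<n}\ell_i=(r-1)-\ell_n$, which equals $r-2$ only when $\ell_n=1$. When $\ell_n\ge 2$ your estimate yields only $\ell_R(K/R)\ge n+(r-1-\ell_n)$, which can drop below $r$: take $n=2$, $r=4$, $\ell_1=0$, $\ell_2=3$; your chain gives the weak bound $2$, while in fact $\ell_R(K/R)=6$. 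The fix is to exploit $\ell_n\ge 1$ more fully inside the term $n\ell_n$, e.g. write $n\ell_n=\ell_n+(n-1)\ell_n\ge\ell_n+(n-1)$, which gives
$$
\ell_R(K/R)\ \ge\ (n-1)+\sum_{i=1}^n\ell_i\ =\ (n-1)+(r-1)\ =\ n+r-2\ \ge\ r.
$$
The paper arrives at the same inequality more directly: since $n\ge 2$ the conductor $\fkc=\Ann_R(K/R)$ is strictly contained in $\m$, so $\m(K/R)\ne 0$ and hence $\mu_R(K/R)<\ell_R(K/R)$, giving $\ell_R(K/R)\ge\mu_R(K/R)+1=r$ without any termwise estimate.
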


\begin{proof}
Thanks to Theorem \ref{4.8}, we have an isomorphism $K/R \cong \bigoplus_{i=1}^{n} \left(R/I_i\right)^{\oplus \ell_i}$, where $\ell_n >0$ and $\ell_i \ge 0~(1 \le i < n)$ such that $\sum_{i=1}^{n} \ell_i = r-1$. Then 
\begin{eqnarray*}
\rme_1(I) \!\!&=&\!\! \rank \calS_Q(I) + \ell_R(K/R) = n + \sum_{i=1}^n \ell_R(R/I_i)\cdot\ell_i = n + \sum_{i=1}^n i\cdot \ell_i \\
\!\!&\le&\!\! n + n(r-1) = nr.
\end{eqnarray*}
As $R$ is $n$-Goto with $n \ge 2$, we have $r-1 = \mu_R(K/R) < \ell_R(K/R)$. Hence
$$
\rme_1(I) = \ell_R(K/R)+n \ge (\mu_R(K/R) + 1) + n = r+n.
$$
The equality $\rme_1(I) = n \cdot \rmr(R)$ holds if and only if $\ell_i=0$ for all $1 \le i \le n-1$. The latter condition is equivalent to saying that $K/R$ is free as an $R/\fkc$-module, or, equivalently, $R$ is a generalized Gorenstein ring.  
\end{proof}

Corollary \ref{4.11} holds even in higher dimensions. % as we note in the following.  

%\textcolor{red}{これは高次元化に拡張が可能である。1次元の話ではないが, ここに記録しておきたい。}

\begin{rem}
Let $(A, \m)$ be a Cohen-Macaulay local ring with $d = \dim A>0$ admitting a canonical ideal $I$. Let $Q=(a_1, a_2, \ldots, a_d)$ be a parameter ideal with condition $(\sharp)$. We set $J=I+Q$ and $\fkq = (a_2, a_3, \ldots, a_d)$. 
Suppose that $A$ is an $n$-Goto ring with $n \ge 2$ and $\m^{n-1} \calS_Q(J) \ne (0)$. Then
$$
\rmr(A) + n \le \rme_1(J) \le  n \cdot \rmr(A)
$$
hold and if the equality $\rme_1(J) =  n \cdot \rmr(A)$ holds, then $A$ is a generalized Gorenstein ring. 
\end{rem}

\begin{proof}
Let $\overline{A} = A/\fkq$, $\overline{\m}=\m \overline{A}$, $\overline{J} = J\overline{A}$, and $\overline{Q} = Q\overline{A}$. Since $a_2, a_3, \ldots, a_d$ is a super-regular sequence, the ring $\overline{A}$ is $n$-Goto, and
the condition $\m^{n-1} \calS_Q(J) \ne (0)$ implies that $\overline{\m}^{n-1} \calS_{\overline{Q}}(\overline{J}) \ne (0)$; equivalently, $v(\overline{A}/(\overline{Q}:_{\overline{A}}\overline{J}) = 1$. By Corollary \ref{4.11}, we have
$$
\rmr(A) + n \le \rme_1(J) \le  n \cdot \rmr(A)
$$
because $\rmr(A) = \rmr(\overline{A})$ and $\rme_1(J) = \rme_1(\overline{J})$ (remember that the sequence $a_2, a_3, \ldots, a_d$ is superficial). The last assertion follows from \cite[Theorem 3.9 (2)]{GK}.
\end{proof}

%\begin{rem}
%Suppose  $n \ge 2$, $\rme_1(I) = \rmr(R) + n$, and $R$ is not $(n-1)$-Goto. Then $R$ is $n$-Goto. 
%\end{rem}
%Hence, the ring $R$ is $2$-Goto, if $\rme_1(I) = \rmr(R) + 2$. 

We close this section to describe the generators of the $R/\fkc$-module $K/R$, when $R$ is a numerical semigroup ring over a field.  %To do this, we note the following. 
%次に数値半群環の場合に, $K/R$の生成元について考えたい。

\begin{prop}\label{4.9}
Let $R = \bigoplus_{n \ge 0}R_n$ be a $\Bbb Z$-graded ring such that $R_0 =k$ is a field. Let $M$ be a non-zero finitely generated graded $R$-module with $\dim_kM_n \le 1$ for all $n \ge 0$. We set $I = (0):_RM$ and assume that $R/I$ is Gorenstein. Choose a homogeneous minimal system $x_1, x_2, \ldots, x_{\ell}$ of generators of $M$ where $\ell = \mu_R(M)$. 
Then the equality
$$
M = Rx_1 \oplus Rx_2 \oplus \cdots \oplus Rx_{\ell}
$$
holds. 
\end{prop}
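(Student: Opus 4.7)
My plan is to prove the proposition by first reducing to a matching step and then reducing the hard content to the existence of a cyclic decomposition of $M$ as a graded $R/I$-module.

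Matching step: Because $\dim_k M_n \le 1$ and the images $\bar x_i$ form a $k$-basis of $M/\fkm M$, the degrees $d_i := \deg x_i$ are pairwise distinct; reorder so that $d_1 < d_2 < \cdots < d_\ell$. Minimality together with $\dim_k M_{d_i} \le 1$ forces $M_{d_i} = k x_i$: otherwise $(\fkm M)_{d_i}$ would be a nonzero subspace of the $1$-dimensional space $M_{d_i}$, coinciding with $k x_i$ and placing $x_i$ in $\fkm M$. Now suppose we are given some decomposition $M = \bigoplus_j (R/I) y_j$ into cyclic graded summands with $y_j$ homogeneous of degree $e_j$. The same dimensional argument shows the $e_j$ are pairwise distinct, and since $\mu_R(M) = \ell$ equals the number of cyclic summands, the sets $\{d_i\}$ and $\{e_j\}$ coincide. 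After reordering so that $d_i = e_i$, the observation $(\fkm M)_{d_i} = 0$ applied to the decomposition forces $M_{d_i} = k y_i$, hence $x_i = \lambda_i y_i$ for some $\lambda_i \in k^\times$. Therefore $R x_i = (R/I) y_i$, and $\bigoplus_i R x_i = \bigoplus_i (R/I) y_i = M$ as required.

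Decomposition step: It remains to exhibit some cyclic direct-sum decomposition $M = \bigoplus_j (R/I) y_j$, and this is where the Gorenstein hypothesis on $R/I$ is essential. In the context of Section~\ref{sec4-1}, where the running assumption $v(R/\fkc) = 1$ is in force, the Gorenstein ring $R/\fkc$ is isomorphic to $k[y]/(y^N)$ for some $N$, a principal Artinian ring over which every finitely generated graded module decomposes as a direct sum of cyclic modules by the structure theorem; this gives the required decomposition immediately.

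The main obstacle will be extracting the cyclic decomposition in the full generality of the stated proposition, i.e., without assuming $v(R/I) = 1$ a priori. My expectation is that the combined hypotheses (graded Artinian Gorenstein $R/I$ together with a faithful graded module $M$ with $\dim_k M_n \le 1$) actually force $v(R/I) = 1$ and thus reduce to the principal Artinian situation; the cleanest way I envisage this is via Matlis duality. Namely, the dual $M^\vee = \Hom_k(M, k)$ is itself a graded $R/I$-module with $\dim_k (M^\vee)_n \le 1$ whose socle has a $k$-basis $y_1^\vee, \ldots, y_\ell^\vee$ in the distinct degrees $-d_i$. The Gorenstein self-duality of $R/I$ gives an essential embedding $M^\vee \hookrightarrow \bigoplus_{i=1}^\ell (R/I)(a + d_i)$ into the injective hull (here $a$ is the socle degree of $R/I$), and the $\dim \le 1$ constraint on $M^\vee$ should force this embedding to split componentwise, giving the decomposition after dualization. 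The technical challenge lies in making this componentwise splitting rigorous.
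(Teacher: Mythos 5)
Your matching step is correct, and it mirrors the scalar computation $\varphi(\varepsilon(x_i)) = c_i x_i$ that the paper itself uses to pin down a splitting; but the existence of a graded cyclic decomposition, which you correctly identify as the heart of the matter, is left as a genuine gap, and the reduction you propose to close it is not available.

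The expectation that the hypotheses force $v(R/I) = 1$ is false. Take $R = k[x, y]$ with $\deg x = 1$, $\deg y = 2$, and $I = (x^2, y^2)$. Then $R/I$ is graded Artinian Gorenstein local with $k$-basis $\{1, x, y, xy\}$, so $\dim_k (R/I)_n \le 1$ for all $n$, yet $v(R/I) = 2$. With $M = R/I \oplus \bigl(R/(x, y^2)\bigr)(-4)$ one gets a finitely generated graded module that is faithful over $R/I$ (since $I = (x^2,y^2) \subseteq (x,y^2)$, so $I \cap (x,y^2) = I$), satisfies $\dim_k M_n \le 1$ (the supports $\{0,1,2,3\}$ and $\{4,6\}$ are disjoint), and has $\mu_R(M) = 2$. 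So the situation you are trying to rule out actually occurs: there is no reduction to the principal Artinian case, and the structure theorem for modules over $k[y]/(y^N)$ is unavailable. The Matlis-duality sketch that follows is a plan rather than a proof, and, as you note yourself, the componentwise splitting is not established.

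The paper closes the existence question by induction on $\ell = \mu_R(M)$, invoking the two structural consequences of Gorensteinness that your argument never uses: one-dimensional socle and self-injectivity. Since $I = \bigcap_{i=1}^{\ell}\bigl[(0):_R x_i\bigr]$ and each $\bigl[(0):_R x_i\bigr]/I$ is an ideal of the Artinian Gorenstein local ring $R/I$, if all were nonzero they would all contain the one-dimensional socle, contradicting that their intersection is zero; hence one may assume $(0):_R x_1 = I$, so $Rx_1 \cong R/I$ as graded $R$-modules. Self-injectivity of $R/I$ splits the inclusion $Rx_1 \hookrightarrow M$, and the constraint $\dim_k M_n \le 1$ forces any degree-zero splitting $\varphi : C \to M$ of $\varepsilon : M \to C = M/Rx_1$ to carry $\varepsilon(x_i)$ to a unit scalar times $x_i$. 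The induction hypothesis applied to $C$ then yields the decomposition. This builds the cyclic decomposition directly, summand by summand, without needing an external decomposition theorem or any control on $v(R/I)$; if you wish to keep your two-step structure, the decomposition step should be replaced by this socle-plus-injectivity argument.
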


\begin{proof}
The assertion is obvious when $\ell = 1$. We assume $\ell \ge 2$ and the assertion holds for $\ell -1$. Note that $I=(0):_RM = \bigcap_{i=1}^{\ell}\left[(0):_Rx_i\right]$. As $R/I$ is Gorenstein, we may assume $I=(0):_Rx_1$. Then the canonical exact sequence
$
0  \to Rx_1 \overset{i}{\to} M \overset{\varepsilon}{\to} C \to 0
$
of graded $R$-modules splits, because $Rx_1 \cong R/I$. Thus there is a graded $R$-linear map $\varphi : C \to M$ of degree $0$ such that the composite map $\varepsilon \circ \varphi$ is identity on $C$. For each $1 \le i \le n$, we can write $\varphi(\varepsilon(x_i)) = c_i x_i$ for some unit $c_i \in k$, because $\dim_kM_n \le 1$ for every $n \ge 0$. Therefore we get
\begin{eqnarray*}
M \!\!&=&\!\! Rx_1 \oplus \varphi (C) = Rx_1 \oplus \left[R\varphi(\varepsilon(x_2))+R\varphi(\varepsilon(x_3)) + \cdots + R\varphi(\varepsilon(x_{\ell})) \right] \\
\!\!&=&\!\! Rx_1 \oplus \left[R\varphi(\varepsilon(x_2))\oplus R\varphi(\varepsilon(x_3)) \oplus \cdots \oplus R\varphi(\varepsilon(x_{\ell})) \right] \\
\!\!&=&\!\! Rx_1 \oplus Rx_2 \oplus \cdots \oplus Rx_{\ell}
\end{eqnarray*}
where the third equality follows from the induction argument on $\ell$. 
\end{proof}

%We apply Proposition \ref{4.9} to get the following. 
Hence we have the following which is a natural generalization of \cite[Proposition 2.4]{GIT}. 

\begin{cor}
Let $V=k[[t]]$ be the formal power series ring over a field $k$ and $R=k[[H]]$ the semigroup ring of a numerical semigroup $H$. We set $r=\rmr(R)$, $f=f(H)=c_r$, and write $\mathrm{PF}(H) =\{c_1, c_2, \ldots, c_r\}$. If $R/\fkc$ is Gorenstein, then the equality
$$
K/R = \bigoplus_{i=1}^{r-1}R \, \overline{t^{f-c_i}}
$$
holds, where $\overline{(-)}$ denotes the image in $K/R$. 
\end{cor}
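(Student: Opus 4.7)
The plan is to apply Proposition~\ref{4.9} to the $R$-module $M = K/R$, equipped with the $t$-grading coming from $V = k[[t]]$. Strictly speaking $k[[H]]$ is not literally $\mathbb{N}$-graded, but since every argument in Proposition~\ref{4.9} uses only the existence of well-defined $t$-orders on homogeneous elements and the Gorenstein property of $R/\fkc$, the result passes; alternatively one invokes Proposition~\ref{4.9} on the graded semigroup ring $k[H]$ localized at its irrelevant ideal and then completes.

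First I would exhibit a minimal homogeneous generating set of $M$. Because $c_r = f$ gives $t^{f-c_r} = 1 \in R$, one has
$$
K \;=\; \sum_{i=1}^{r} R\,t^{f-c_i} \;=\; R \,+\, \sum_{i=1}^{r-1} R\,t^{f-c_i},
$$
so $K/R = \sum_{i=1}^{r-1} R\,\overline{t^{f-c_i}}$. Each generator is nonzero: if $f-c_i\in H$ for some $i<r$, the fact that $f-c_i>0$ and $c_i$ is a pseudo-Frobenius number of $H$ would force $f = c_i + (f-c_i)\in H$, contradicting $f \notin H$. Since the elements $\overline{t^{f-c_i}}$ have pairwise distinct $t$-degrees, they form a minimal homogeneous generating set; equivalently, $\mu_R(K/R) = \mu_R(K) - 1 = r-1$.

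Next I would check the remaining hypotheses of Proposition~\ref{4.9}. The module $K/R$ is finitely generated, and each homogeneous component $(K/R)_n$ is a subquotient of $V_n = k\cdot t^n$, so $\dim_k (K/R)_n \le 1$. If $R$ is Gorenstein, then $r=1$ and $K = R$, making the claimed identity the empty direct sum; otherwise Lemma~\ref{4.5} applies, so the Gorenstein hypothesis on $R/\fkc$ yields $K^2 = K^3$, whence $(0):_R (K/R) = R:K = \fkc$. Thus $R/((0):_R M) = R/\fkc$ is Gorenstein by assumption, and Proposition~\ref{4.9} applies.

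Applying Proposition~\ref{4.9} to the minimal homogeneous generating set $\{\overline{t^{f-c_i}}\}_{1\le i\le r-1}$ then produces the desired decomposition $K/R = \bigoplus_{i=1}^{r-1}R\,\overline{t^{f-c_i}}$. The only real subtlety is bridging the statement of Proposition~\ref{4.9}, formulated for a genuinely $\mathbb{N}$-graded ring $R_0 = k$ field, to the complete local setting $k[[H]]$; this is the delicate book-keeping step, but as indicated above it is a formal matter since all relevant modules, ideals, and the splitting argument of Proposition~\ref{4.9} respect the $t$-order structure carried over from $V$.
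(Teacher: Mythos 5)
Your proposal is correct and takes essentially the same route as the paper: both reduce to the graded semigroup ring $k[H]$ inside $k[t]$ and then invoke Proposition~\ref{4.9}. The paper's proof is terse (two sentences), while you spell out the verification that the hypotheses of Proposition~\ref{4.9} hold — in particular that the generators $\overline{t^{f-c_i}}$ are nonzero and homogeneous of distinct degrees, that $\dim_k(K/R)_n\le 1$, and that $(0):_R(K/R)=R:K=\fkc$ via Lemma~\ref{4.5} (using $K^2=K^3$ to get $R:K^2=K:K^2=R:K$) — but these are exactly the checks the paper is implicitly relying on, so there is no genuine difference in approach.
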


\begin{proof}
Since $K=\sum_{i=1}^rRt^{f-c_i}$, we have $K/R = \sum_{i=1}^{r-1}R \, \overline{t^{f-c_i}}$. 
Without loss of generality, we may assume $R=k[H]$ in the polynomial ring $k[t]$. Hence, by Proposition \ref{4.9} we get the required equality.   
\end{proof}

\begin{ex}
Let $V=k[[t]]$ denote the formal power series ring over a field $k$. We consider $R = k[[H]]$ in $V$, where $H=\left<e, \{en-e+i\}_{3 \le i \le e-1}, en+1, en+2\right>~(n \ge 2,\, e \ge 4)$. Then $R$ is an $n$-Goto ring and $K=R + Rt + Rt^3 + Rt^4 + \cdots + Rt^{e-1}$ is a fractional canonical ideal of $R$, so that 
$$
K/R = R\overline{t} \oplus R\overline{t^3} \oplus R\overline{t^4} \oplus \cdots \oplus R\overline{t^{e-1}}
$$
because $R/\fkc$ is Gorenstein, where $\fkc = R:R[K]$ and $\overline{(-)}$ denotes the image in $K/R$. Since $(0):_R \overline{t} = \fkc$ and $(0):_R \overline{t^m} = \fkc + \left(t^{e(n-1)}\right) = I_{n-1}$ for all $3 \le m \le e-1$, we conclude that 
$$
K/R \cong (R/\fkc) \oplus (R/I_{n-1})^{\oplus (e-3)}.
$$
In particular, $K/R$ is not free as an $R/\fkc$-module. 
\end{ex}

%%%%%%%%%%%%%%%%%%%%%%%%%%%%%%%%%%%%%%%%%%%%%%%%%%%%%%%%%%%%%%%%%%%%%%%%%%%%%%%%%%%%%%%%%%%%%%%%%%%%%%%%%%%%%%%%%%%%%%%%%%%%%%%%%%%%%%%%%%

%%%%%%%%%%%%%%%%%%%%%%%%%%%%%%%%%%%%%%%%%%%%%%%%%%%%%%%%%%%%%%%%%%%%%%%%%%%%%%%%%%%%%%%%%%%%%%%%%%%%%%%%%%%%%%%%%%%%%%%%%%%%%%%%%%%%%%%%%%%%%%%%%%%%%%%%%%%%%%%%%%%%%%%%%%%%%%%%%%%%%%%%%%%%%%%%%%%%%%%%%%%%%%%%%%%%%%%%%%%%%%%%%

%%%%%%%%%%%%%%%%%%%%%%%%%%%%%%%%%%%%%%%%%%%%%%%%%%%%%%%%%%%%%%%%%%%%%%%%%%%%%%%%%%%%%%%%%%%%%%%%%%%%%%%%%%%%%%%%%%%%

%\section{Examples arising from idealizations}\label{sec5}

%%%%%%%%%%%%%%%%%%%%%%%%%%%%%%%%%%%%%%%%%%%%%%%%%%%%%%%%%%%%%%%%%%%%%%%%%%%%%%%%%%%%%%%

\section{Examples arising from quasi-trivial extensions}\label{sec4}

In this section we investigate Goto rings obtained by quasi-trivial extensions. The aim is to provide concrete examples of Goto rings and enrich the theory.
We start by recalling the definition of quasi-trivial extensions which was recently introduced in \cite[Section 3]{GIT}.

Let $R$ denote an arbitrary commutative ring. For an ideal $J$ of $R$ and $\alpha \in R$, we set $A(\alpha)=R\oplus J$ as an additive group and define the multiplication on $A(\alpha)$ by
\begin{center}
$
(a, x) \cdot (b, y) = \left(ab, ay + xb + \alpha (xy)\right)
$ \ \ for all  \ $(a, x), (b, y) \in A(\alpha)$. 
\end{center}
Then $A(\alpha)$ is a commutative ring which we denote by 
$$
A(\alpha) = R \overset{\alpha}{\ltimes} J
$$
and call it the {\it quasi-trivial extension} of $R$ by $J$ with respect to $\alpha$. We consider $A(\alpha)$ to be an $R$-algebra via the homomorphism $\xi : R \to A(\alpha), ~a \mapsto (a, 0)$. Then $A(\alpha)$ is a ring extension of $R$, and $A(\alpha)$ is module-finite provided $J$ is finitely generated. If $\alpha = 0$, then $A(0) = R \ltimes J$ is the idealization of $J$ over $R$, introduced by M. Nagata (\cite[Page 2]{N}), and $[(0) \times J]^2=(0)$ in $A(0)$. If $\alpha = 1$, the ring $A(1)$ is called the amalgamated duplication of $R$ along $J$ (\cite{marco}), and 
$$
A(1) \cong R \times_{R/J} R, \ (a,j) \mapsto (a, a+j)
$$
the fiber product of the two copies of the canonical surjection $R \to R/J$. Hence, if $R$ is a reduced ring, then so is $A(1)$.

In this section we maintain Setup \ref{4.1}. Let $T$ be a birational module-finite extension of $R$, i.e., $T$ is an intermediate ring between $R$ and $\rmQ(R)$ which is finitely generated as an $R$-module. We assume that $K \subseteq T$ but $R \ne T$. We set $J = R:T$. Then $J=K:T$ and $K:J=T$ (\cite[Lemma 3.5 (1)]{GMP}, \cite[Definition 2.4]{HK}). Let $\alpha \in R$ and set $A(\alpha) = R \overset{\alpha}{\ltimes} J$. 
Since $J \ne R$, by \cite[Lemma 3.1]{GIT} the ring $A(\alpha)$ is Cohen-Macaulay and of dimension one, possessing the unique maximal ideal $\n = \m \times J$. 
We have the extensions of rings below
$$
A(\alpha) \subseteq T \overset{\alpha}{\ltimes} T \subseteq \overline{R} \overset{\alpha}{\ltimes} \overline{R} \subseteq \overline{A(\alpha)} \subseteq \rmQ(R) \overset{\alpha}{\ltimes} \rmQ(R)= \rmQ(A(\alpha)).
$$
Set $L = T \times K$ in $T \overset{\alpha}{\ltimes} T$. Then $L$ is an $A(\alpha)$-submodule of $\rmQ(A(\alpha))$ and  $A(\alpha) \subseteq L \subseteq \overline{A(\alpha)}$. %; hence $L$ is a fractional canonical ideal of $A$. 

We summarize some basic properties of quasi-trivial extensions, where $\rmr_R(-)$ denotes the Cohen-Macaulay type as an $R$-module. 

\begin{fact}[{\cite[Propositions 3.3, 3.4]{GIT}}]\label{5.1}
The following assertions hold true. 
\begin{enumerate}
\item[$(1)$] $T/K$ is a canonical module of $R/J$. Hence the equality $\ell_R(T/K)= \ell_R(R/J)$ holds. 
\item[$(2)$] $L$ is a fractional canonical ideal of $A(\alpha)$ and $L^m = T \overset{\alpha}{\ltimes} T$ for all $m \ge 2$. 
\item[$(3)$] The equalities $\rmr(A(\alpha))= \mu_R(T)+\rmr(R)= \rmr_R(J) + \mu_R(K/J)$ hold. Hence the Cohen-Macaulay type of $A(\alpha)$ is independent of the choice of $\alpha \in R$. 
\end{enumerate}
\end{fact}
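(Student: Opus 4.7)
The plan is as follows. For (1), I will apply $\Hom_R(-, K)$ to the short exact sequence $0 \to J \to R \to R/J \to 0$. Torsion-freeness of $K$ gives $\Hom_R(R/J, K) = 0$, projectivity of $R$ gives $\Ext^1_R(R, K) = 0$, and the standard identifications $\Hom_R(R, K) = K$ and $\Hom_R(J, K) = K : J = T$ produce a short exact sequence
\[
0 \to K \to T \to \Ext^1_R(R/J, K) \to 0.
\]
Since $R/J$ is Artinian and $K$ is a canonical module of the one-dimensional Cohen--Macaulay ring $R$, local duality yields $\Ext^1_R(R/J, K) \cong \rmK_{R/J}$, so $T/K \cong \rmK_{R/J}$ and $\ell_R(T/K) = \ell_R(R/J)$.

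For (2), I will first show $L^m = T \overset{\alpha}{\ltimes} T$ for $m \ge 2$ by direct multiplication. The products $(t, 0) \cdot (1, 0) = (t, 0)$ cover $T \times 0 \subseteq L^2$, while $(1, k)(t, 0) = (t, kt)$ together with $(t, 0) \in L^2$ forces $(0, kt) \in L^2$; since $1 \in K$ gives $KT = T$, we obtain $0 \times T \subseteq L^2$ and hence $L^2 = T \overset{\alpha}{\ltimes} T$, which is already a subring. For the canonical-ideal claim, $A(\alpha)$ is module-finite over $R$, so $\rmK_{A(\alpha)} = \Hom_R(A(\alpha), K)$. I will define
\[
\Psi : L \longrightarrow \Hom_R(A(\alpha), K), \qquad \Psi(t, k)(a, y) = a k + y(t + \alpha k),
\]
which is well-defined since $y(t + \alpha k) \in JT + JK \subseteq K$, and check $A(\alpha)$-equivariance by expanding $\Psi((a, y)(t, k))(b, z)$ and $\bigl((a, y) \cdot \Psi(t, k)\bigr)(b, z)$ term by term; the $\alpha k$ twist is exactly what makes every cross term (down to the $\alpha^2 y z k$ piece) match. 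Bijectivity is then immediate: $k$ is recovered from $\Psi(t, k)(1, 0)$, then $t + \alpha k \in K : J = T$ is recovered by restricting to $0 \times J$, and hence $t$.

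For (3), I will compute $\mu_{A(\alpha)}(L) = \dim_{R/\m}(L/\n L)$ directly, with $\n = \m \times J$. The product $(a, y) \cdot (t, k) = (at, ak + yt + \alpha yk)$ with $(a, y) \in \n$ shows that the first coordinate of $\n L$ is $\m T$ and the second is $\m K + JT + \alpha JK$. The key observation is that $J \subseteq \m K$: indeed $J \subsetneq R$ forces $J \subseteq \m$, and $\m = \m \cdot 1 \subseteq \m K$ since $1 \in K$. Combined with $JT = J$ (from $J = R : T$ and $1 \in T$), this gives $\n L = \m T \times \m K = \m L$, hence
\[
L/\n L \cong T/\m T \oplus K/\m K \qquad \text{and} \qquad \rmr(A(\alpha)) = \mu_R(T) + \mu_R(K) = \mu_R(T) + \rmr(R).
\]
The second equality of (3) follows from $\rmr_R(J) = \mu_R(\Hom_R(J, K)) = \mu_R(T)$ (valid because $J$ is a maximal Cohen--Macaulay $R$-module) together with $\mu_R(K/J) = \mu_R(K) = \rmr(R)$, again via $J \subseteq \m K$. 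Independence of $\alpha$ is manifest since $\alpha$ does not appear in the final formula.

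The hard part will be the $A(\alpha)$-linearity of $\Psi$ in (2): the naive choice $\Psi(t, k)(a, y) = ak + yt$ is not $A(\alpha)$-linear, leaving an uncancelled residue proportional to $\alpha$. The $\alpha k$ twist added to $t$ in the second slot is precisely what absorbs this discrepancy, and identifying this correction is the only conceptually delicate step; once it is in place, the remaining checks in (2) and the bookkeeping in (3) are routine.
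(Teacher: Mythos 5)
Your proof is correct, and each of the three parts is argued cleanly. Since the paper states this as a Fact cited from \cite[Propositions~3.3, 3.4]{GIT} without reproducing the argument, there is no in-paper proof to compare against; your reconstruction is the natural one and almost certainly coincides in spirit with the cited source.

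A few small remarks. In (1), dualizing $0 \to J \to R \to R/J \to 0$ against $K$ and invoking $\Hom_R(J,K) = K : J = T$ is exactly right, and the length identity follows because $T/K \cong \rmK_{R/J}$ is the Matlis dual of the Artinian ring $R/J$. In (2), your well-definedness check silently uses $JT = J \subseteq K$ and $\alpha JK \subseteq K$, both of which hold; the genuine content is the $A(\alpha)$-equivariance, and the twist $t \mapsto t + \alpha k$ in the second slot is indeed the precise correction needed so that the two expansions of $\Psi\bigl((a,y)(t,k)\bigr)(b,z)$ and $\Psi(t,k)\bigl((a,y)(b,z)\bigr)$ agree term by term, including the $\alpha^2 yzk$ piece. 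For surjectivity you implicitly use that $\alpha k \in K \subseteq T$, so that $t = s - \alpha k$ lands in $T$; worth stating. In (3), the computation $\n L = \m T \times \m K = \m L$ is correct, and the resulting formula $\rmr(A(\alpha)) = \mu_R(T) + \rmr(R)$ follows since $A(\alpha)/\n \cong R/\m$. For the second equality you use the standard fact that $\rmr_R(M) = \mu_R(\Hom_R(M,\rmK_R))$ for a maximal Cohen--Macaulay $R$-module $M$ — here applied to $M = J$, which is maximal Cohen--Macaulay because $\Ass_R J \subseteq \Ass_R R$ and $R$ is one-dimensional Cohen--Macaulay. That fact deserves an explicit citation (e.g.\ to Bruns--Herzog) but is otherwise uncontroversial. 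The observation $J \subseteq \m \subseteq \m K$ doing double duty (first to show $\n L = \m L$, then to show $\mu_R(K/J) = \mu_R(K) = \rmr(R)$) is a nice economy.
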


%以上のGITによる議論は, $2$-AGLであることに限定されていた訳ではなく, より一般にGoto ringsに対して成り立つことが準備されていた。つまり, 
%The above results was not limited to $2$-AGL, but was prepared to hold true for Goto rings.

The next naturally generalizes \cite[Theorem 3.6]{GIT}. 

\begin{thm}\label{5.2}
Let $n \ge 1$ be an integer. Then the following conditions are equivalent. % for each integer $n >0$.
\begin{enumerate}
\item[$(1)$] $A(\alpha)$ is an $n$-Goto ring for every $\alpha \in R$.
\item[$(2)$] $A(\alpha)$ is an $n$-Goto ring for some $\alpha \in R$.
\item[$(3)$] The fiber product $R\times_{R/J}R$ is an $n$-Goto ring.
\item[$(4)$] The idealization $R\ltimes J$ is an $n$-Goto ring.
\item[$(5)$] $\ell_R(T/K) = n$.
\item[$(6)$] $\ell_R(R/J) = n$.
\end{enumerate} 
\end{thm}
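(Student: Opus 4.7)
The plan is to reduce all six conditions to the single numerical equality $\ell_R(R/J) = n$, exploiting the explicit canonical structure on $A(\alpha)$ recorded in Fact \ref{5.1}.

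First I would invoke Fact \ref{5.1}(2), which gives $L^m = T \overset{\alpha}{\ltimes} T$ for every $m \ge 2$; in particular $L^2 = L^3$ automatically. Applying the one-dimensional characterization of Goto rings (the proposition preceding Lemma \ref{4.5}, with fractional canonical ideal $L$ in place of $K$), it follows that $A(\alpha)$ is $n$-Goto if and only if $\ell_{A(\alpha)}(L^2/L) = n$.

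Next I would identify the $A(\alpha)$-module $L^2/L$ with $T/K$. Regarding $L^2 = T \overset{\alpha}{\ltimes} T$ additively as $T \oplus T$, one has $L = T \oplus K$, so the second projection yields an additive isomorphism $L^2/L \stackrel{\sim}{\longrightarrow} T/K$. To verify $A(\alpha)$-linearity, compute for $(a,j) \in A(\alpha)$ and $(t_1,t_2) \in L^2$,
\begin{equation*}
(a,j)(t_1,t_2) \;=\; \bigl(at_1,\; at_2 + jt_1 + \alpha j t_2\bigr).
\end{equation*}
The first component $at_1 \in T$ lies in $T \oplus 0 \subseteq L$; and the cross terms $jt_1$ and $\alpha j t_2$ both lie in $JT \subseteq R \subseteq K$, since $J = R:T$ and $R \subseteq K$. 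Hence the class modulo $L$ reduces to $(0, at_2) + L$, so the $(a,j)$-action on $T/K$ is multiplication by $a$. In particular the action factors through $A(\alpha)/(0 \oplus J) \cong R$, and since $A(\alpha)/\n = R/\m$, I conclude $\ell_{A(\alpha)}(L^2/L) = \ell_R(T/K)$, a value independent of $\alpha$.

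The equivalences now follow at once: Fact \ref{5.1}(1) gives (5) $\Leftrightarrow$ (6); the identification above shows that the assertions (1), (2), and (5) are equivalent uniformly in $\alpha$; and (3), (4) are simply the special cases $\alpha = 1$ (the amalgamated duplication, isomorphic to the fiber product $R \times_{R/J} R$) and $\alpha = 0$ (the idealization $R \ltimes J$). The only subtle point — the main, though mild, obstacle — is the verification that the $\alpha$-twisted term $\alpha j t_2$ vanishes modulo $L$, which is what makes the length $\ell_{A(\alpha)}(L^2/L)$ genuinely independent of the chosen $\alpha$.
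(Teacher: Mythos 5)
Your proof is correct and follows the same route as the paper: reduce to the one-dimensional criterion via Fact~\ref{5.1}(2), then identify $L^2/L\cong T/K$ and conclude via $A(\alpha)/\n\cong R/\m$. The only difference is that the paper asserts $L^2/L\cong T/K$ without comment, whereas you verify it by computing the $A(\alpha)$-action and observing that the cross terms $jt_1$ and $\alpha jt_2$ land in $JT\subseteq R\subseteq K$ --- a worthwhile detail to make explicit, since it is precisely what makes the length independent of $\alpha$.
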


\begin{proof}
Fact \ref{5.1} (1) guarantees the equivalence of the conditions $(5)$ and $(6)$. 
Since $L^m = T \overset{\alpha}{\ltimes} T$ for all $m \ge 2$, we have $L^2 = L^3$. Then, for each $\alpha \in R$, the ring $A(\alpha)$ is $n$-Goto if and only if $\ell_{A(\alpha)}(L^2/L)=n$; equivalently $\ell_R(T/K)=n$, because $L^2/L \cong T/K$ and $A(\alpha)/\n \cong R/\m$ as $R$-modules. This completes the proof. 
\end{proof}

\begin{cor}
Let $n \ge 1$ be an integer. If $R$ and $A(\alpha)$ are $n$-Goto rings for some $\alpha \in R$, then $S=T$ and $J=\fkc$. 
\end{cor}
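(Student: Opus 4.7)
My plan is to read off what each hypothesis contributes from the characterizations already at hand, and then to compare lengths of a single chain of modules. The characterization of one-dimensional $n$-Goto rings proved at the beginning of Section~\ref{sec4-1} says that $R$ being $n$-Goto is equivalent to $K^2=K^3$ together with $\ell_R(K^2/K)=n$. Simultaneously, the equivalence $(2)\Leftrightarrow(5)$ of Theorem~\ref{5.2} tells us that $A(\alpha)$ being $n$-Goto forces $\ell_R(T/K)=n$. So the proof will just be a matter of linking these two numerical data.

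The crux is the observation that the stabilization $K^2=K^3$ collapses the blow-up $S=R[K]$ down to $K^2$. Since $R\subseteq K$ we have $1\in K$, whence $K\subseteq K^2$; an easy induction using $K^2=K^3$ then gives $K^m=K^2$ for every $m\ge 2$. Therefore
\[
S \;=\; \sum_{m\ge 0} K^m \;=\; K^2,
\]
and in particular $\ell_R(S/K)=\ell_R(K^2/K)=n$.

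To finish, I would exploit that $T$ is a ring containing $K$ to deduce $K^2\subseteq T$, hence $S\subseteq T$. The chain $K\subseteq S\subseteq T$ then yields the additive length identity $\ell_R(T/K)=\ell_R(T/S)+\ell_R(S/K)$. Because both the left-hand side and $\ell_R(S/K)$ equal $n$, we must have $\ell_R(T/S)=0$, i.e.\ $S=T$. The identification $J=R:T=R:S=\fkc$ is then immediate from the very definition $\fkc=R:S$ in Setup~\ref{4.1}. There is no genuine obstacle here; the only step worth isolating is the reduction $S=K^2$ coming from $K^2=K^3$, after which the conclusion follows from a one-line length comparison.
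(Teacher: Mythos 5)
Your proof is correct and follows essentially the same strategy as the paper: both arguments conclude $S=T$ by comparing $\ell_R(S/K)$ with $\ell_R(T/K)=n$ and using $S\subseteq T$. The only cosmetic difference is that you derive $\ell_R(S/K)=n$ by explicitly showing $S=K^2$ from $K^2=K^3$, while the paper cites the general identity $\ell_R(S/K)=\ell_R(R/\fkc)$ from Herzog--Kunz; both routes rely on the characterization of one-dimensional $n$-Goto rings and yield the same equality.
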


\begin{proof}
As $\ell_R(S/K) = \ell_R(R/\fkc)$ (\cite[Bemerkung 2.5]{HK}), we have $\ell_R(T/K) = \ell_R(S/K) = n$. Hence the assertion follows from $S=R[K] \subseteq T$. 
\end{proof}

%We choose $T = R[K]$ and $J=\fkc$. 
%By choosing $T = S$, we have the following. 
The condition $\mu_R(K^2/K) = 1$ holds if $R/\fkc$ is Gorenstein (Lemma \ref{4.5}), so the following gives a generalization of \cite[Theorem 6.5]{GMP}, \cite[Theorem 4.2]{CGKM}, \cite[Corollary 3.8]{GIT}, and \cite[Corollary 4.38]{GK} as well.

\begin{cor}\label{5.4}
Let $n \ge 1$ be an integer and set $L=S \times K$. Then the following conditions are equivalent. 
\begin{enumerate}
\item[$(1)$] $R$ is an $n$-Goto ring and $\mu_R(K^2/K) = 1$. 
\item[$(2)$] The fiber product $A=R\times_{R/\fkc} R$ is an $n$-Goto ring and $\mu_A(L^2/L) = 1$. 
\item[$(3)$] The idealization $A=R\ltimes \fkc$ is an $n$-Goto ring and $\mu_A(L^2/L) = 1$. 
\end{enumerate} 
\end{cor}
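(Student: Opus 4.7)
The plan is to reduce the result to Theorem \ref{5.2} together with Lemma \ref{4.5}. I would apply Theorem \ref{5.2} with the choice $T = S = R[K]$ and $J = R:S = \fkc$; the hypothesis $T \ne R$ holds because both $(1)$ and $(2)/(3)$ force $R$ to be non-Gorenstein (since $n \ge 1$: a Gorenstein $R$ would give $K = R = S$ and $L = A$, whence $L^2/L = 0$ and $\mu_A(L^2/L) = 0 \ne 1$). This yields that for every $\alpha \in R$, the ring $A(\alpha) = R \overset{\alpha}{\ltimes} \fkc$ is $n$-Goto if and only if $\ell_R(R/\fkc) = n$, covering both the fiber product ($\alpha = 1$) and the idealization ($\alpha = 0$).

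Next I would identify $\mu_A(L^2/L)$ with $\mu_R(S/K)$. By Fact \ref{5.1} (2) we have $L^2 = S \overset{\alpha}{\ltimes} S$, so the projection onto the second coordinate gives an additive isomorphism $L^2/L \cong S/K$. Since $\fkc \cdot S \subseteq R \subseteq K$, the ideal $(0) \times \fkc$ of $A$ annihilates this quotient, so the $A$-module structure on $L^2/L$ factors through $R$ and $\mu_A(L^2/L) = \mu_R(S/K)$. As noted in the proof of Lemma \ref{4.5}, $S/K$ is the canonical module of $R/\fkc$, so $\mu_R(S/K) = \rmr(R/\fkc)$, and therefore $\mu_A(L^2/L) = 1$ if and only if $R/\fkc$ is Gorenstein.

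To close the equivalences, I would connect $\mu_R(S/K)$ with $\mu_R(K^2/K)$ using the observation that under $K^2 = K^3$ one has $S = \bigcup_{n \ge 0} K^n = K^2$, so $\mu_R(S/K) = \mu_R(K^2/K)$. For $(1) \Rightarrow (2), (3)$: the $n$-Goto hypothesis on $R$ provides $K^2 = K^3$ and $\ell_R(R/\fkc) = n$, so $A$ is $n$-Goto by Theorem \ref{5.2}, and $\mu_R(K^2/K) = 1$ translates to $\mu_A(L^2/L) = 1$. For the reverse direction: $A$ being $n$-Goto combined with $\mu_A(L^2/L) = 1$ gives $\ell_R(R/\fkc) = n$ and $R/\fkc$ Gorenstein; then Lemma \ref{4.5} (applicable since $R$ is non-Gorenstein) supplies $K^2 = K^3$ and $\mu_R(K^2/K) = 1$, and together with the length equality this yields that $R$ is $n$-Goto.

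The main technical point I expect to require care is the identification of the $A$-module structure on $L^2/L$: one must verify that the second coordinate of the product $(a,x) \cdot (s_1, s_2) = (as_1,\, as_2 + x s_1 + \alpha x s_2)$ reduces modulo $K$ to $as_2$, which uses precisely $\fkc \cdot S \subseteq K$ to kill the cross terms $xs_1$ and $\alpha x s_2$ regardless of the scalar $\alpha \in R$. Once this identification is secured, the rest is a bookkeeping application of Theorem \ref{5.2} and Lemma \ref{4.5}, and the argument is uniform in $\alpha$, so that $(2)$ and $(3)$ are handled by the same reasoning.
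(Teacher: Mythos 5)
Your proof is correct and takes essentially the same route as the paper: both reduce to Theorem~\ref{5.2} with $T=S=R[K]$, use that $S/K$ is the canonical module of $R/\fkc$, and invoke Lemma~\ref{4.5} to pass between Gorensteinness of $R/\fkc$ and the pair $(K^2=K^3,\ \mu_R(K^2/K)=1)$. The only organizational difference is that the paper first treats $(1)\Leftrightarrow(3)$ directly via \cite[Proposition~4.1]{CGKM} (which gives $A[L]=S\ltimes S$ and $\fkc_A=\fkc\times\fkc$ for the idealization $\alpha=0$) and then gets $(2)\Leftrightarrow(3)$ from Theorem~\ref{5.2}, whereas you treat all $\alpha$ uniformly by identifying $L^2/L\cong S/K$ directly from Fact~\ref{5.1}~(2) and showing the $A$-action factors through $A\twoheadrightarrow R$ because $\fkc S\subseteq R\subseteq K$ kills the cross terms for every $\alpha$; this is a minor streamlining rather than a different method.
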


\begin{proof}
$(1) \Leftrightarrow (3)$ We set $A=R\ltimes \fkc$ and $\fkc_A=A:A[L]$. By \cite[Proposition 4.1]{CGKM}, we have $L^2 = L^3$, $A[L]=S \ltimes S$, and $\fkc_A = \fkc \times \fkc$. Hence $A/\fkc_A \cong R/\fkc$ as an $R$-module and
$\ell_A(L^2/L)=\ell_A(A[L]/L)=\ell_R(S/K)$. 
Therefore, if $R$ is $n$-Goto and $\mu_R(K^2/K) = 1$, then $R/\fkc \cong A/\fkc_A$ is Gorenstein and $\ell_A(L^2/L)=\ell_R(S/K) = n$. Hence $A$ is an $n$-Goto ring and $\mu_A(L^2/L) = 1$. Conversely, we assume $A$ is $n$-Goto and $\mu_A(L^2/L) = 1$. Then, because $A/\fkc_A\cong R/\fkc$ is Gorenstein, we get $K^2=K^3$ and $\mu_R(K^2/K) = 1$. By Theorem \ref{5.2}, we conclude that $R$ is an $n$-Goto ring because $\ell_R(K^2/K) = \ell_R(S/K) =n$.

$(2) \Leftrightarrow (3)$ This follows from Theorem \ref{5.2} by choosing $T=S=R[K]$.
\end{proof}

By the proof of Corollary \ref{5.4}, if $R$ is an $n$-Goto ring, then so is the idealization $A=R\ltimes \fkc$, but the converse does not hold when $n \ge 3$.

\begin{ex}
Let $V=k[[t]]$ be the formal power series ring over a field $k$. 
We consider the semigroup ring $R = k[[H]]$ in $V$, where $H=\left<4, 13, 22, 27\right>$. Then $K=R + Rt^5 + Rt^{14}$ is the fractional canonical ideal of $R$, so that $\mu_R(K^2/K) = 2$, $K^2 \ne K^3$, and $K^3 =R[K] = S$. Since $\ell_A(A[L]/L) = 4$ and $L^2=L^3$, the ring $A=R\ltimes \fkc$ is $4$-Goto, but $R$ is not. 
\end{ex}

%Recall that 
%\begin{itemize}
%\item $R=k[[t^3, t^{3n+1}, t^{3n+2}]]~(n \ge 1)$ is \textcolor{blue}{$n$-Goto} and \textcolor{blue}{$\mu_R(K^2/K) = 1$}.
%\end{itemize}

\begin{ex}[{cf. \cite[Example 4.3]{CGKM}}]\label{5.6a}
Let $n \ge 1$ be an integer. Suppose that $R$ is $n$-Goto and $\mu_R(K^2/K) = 1$ (see e.g., Example \ref{4.3}). For each $\ell \ge 0$, we define recursively
$$
A_{\ell} = 
\begin{cases}
R & (\ell =0) \\
A_{\ell-1} \ltimes \fkc_{\ell-1} & (\ell \ge 1)
\end{cases}
$$
where $\fkc_{\ell-1} = A_{\ell-1} :A_{\ell-1}[K_{\ell-1}]$ and $K_{\ell-1}$ is the fractional canonical ideal of $A_{\ell-1}$. 
Hence we have an infinite family $\{A_{\ell}\}_{\ell \ge 0}$ of $n$-Goto rings with $\mu_{A_{\ell}}(K_{\ell}^2/K_{\ell}) = 1$ and $\rme(A_{\ell}) = 2^{\ell}\cdot \rme(R)$ for every $\ell \ge 0$.  The ring $k[[t^3, t^{3n+1}, t^{3n+2}]] \ltimes k[[t]]$ is $n$-Goto, because $\fkc = R:k[[t]] = t^{3n} k[[t]] \cong k[[t]]$, where $k[[t]]$ denotes the formal power series ring over a field $k$.
\end{ex}

%より特殊なfiber product

%後で示すように, $1$次元の場合は, Goto性はparameter idealの取り方によらないことに注意する。

We note an example of Goto rings obtained from quasi-trivial extensions in the case where $J=R$ and $\alpha \in \m$. 

\begin{ex}
For each $\alpha \in \m$, we set $R_1 = R\overset{\alpha}{\ltimes} R$.  Then, by \cite[Lemma 3.1]{GIT} $R_1$ is a Cohen-Macaulay local ring with $\dim R_1=1$ possessing the maximal ideal $\m_1=\m \times R$. Note that $R_1$ is a finitely generated free $R$-module of rank $2$. Since $\m R_1 = \m \times \m$ and $\ell_{R_1}(R_1/\m R_1) = \ell_R((R\times R)/(\m \times \m)) = 2$, the ring $R_1=R\overset{\alpha}{\ltimes} R$ is $2$-Goto if and only if $R$ is almost Gorenstein but not a Gorenstein ring.
\end{ex}

%%%%%%%%%%%%%%%%%%%%%%%%%%%%%%%%%%%%%%%%%%%%

\section{Examples arising from fiber products}\label{sec6}

%ファイバー積をとることで, n-Gotoのnを大きくすることができる。
%一方で, blow-upをとることで, n-Gotoのnを小さくすることができる。remarkでコメントしておく程度にするかな。Arf性が必要。

%これは大事。Gorensteinに近づくようにする視点の方が大切かな。極大イデアルのblow-upを取り続けていくとできるというイメージ。これが完成すると, Arf環のGoto性なんかも言えそう。

Let $(R, \m), (S, \n)$ be one-dimensional Cohen-Macaulay local rings with a common residue class field $k = R/\m = S/\n$, and $f : R \to k$, $g : S \to k$ the canonical surjective maps. In this section we study the question of when the fiber product 
$$
A = R\times_k S = \{(a, b) \in R \times S \mid f(a) = g(b)\}
$$ 
of $R$ and $S$ over $k$ with respect to $f$ and $g$ is a Goto ring. Then $A$ is a one-dimensional Cohen-Macaulay local ring with maximal ideal $J = \m \times \n$. Since $B=R\times S$ is a module-finite birational extension of $A$, we get $\rmQ(A) = \rmQ(R) \times \rmQ(S)$ and $\overline{A} = \overline{R} \times \overline{S}$, where $\overline{(-)}$ denotes the integral closure in its total ring of fractions. 
We furthermore assume that $\rmQ(A)$ is a Gorenstein ring, $A$ admits a canonical module $\rmK_A$, and the field $k=A/J$ is infinite. Hence, all the rings $A, R$, and $S$ possess fractional canonical ideals (see \cite[Corollary 2.9]{GMP}).
%Let $K$ (resp. $L$) be the fractional canonical ideal of $R$ (resp. $S$). %Thus, $K$ is an $R$-submodule of $\rmQ(R)$ such that $R \subseteq K \subseteq \overline{R}$, $K \cong \rmK_R$ as an $R$-module, and $L$ is an $S$-submodule of $\rmQ(S)$ such that $S \subseteq L \subseteq \overline{S}$, $L \cong \rmK_S$ as an $S$-module.

By \cite{Ogoma} (see also \cite[Proposition 2.2 (3)]{EGI}), $A$ is Gorenstein if and only if $R$ and $S$ are discrete valuation rings (abbr. DVRs). Besides,  $A$ is almost Gorenstein if and only if so are $R$ and $S$; equivalently, $A$ is a generalized Gorenstein ring (\cite[Theorem 4.17]{EGI}).  

The main result of this section is stated as follows, which gives a generalization of \cite[Theorem 5.1]{EGI}.

\begin{thm}\label{6.1}
Let $n \ge 2$ be an integer. Then the following conditions are equivalent. 
\begin{enumerate}
\item[$(1)$] The fiber product $A = R\times_k S$ is an $n$-Goto ring.
\item[$(2)$] One of the following conditions holds.
\begin{itemize}
\item[$\rm (i)$] $R$ is Gorenstein and $S$ is $n$-Goto.
\item[$\rm (ii)$] $R$ is $n$-Goto and  $S$ is Gorenstein. 
\item[$\rm (iii)$] $R$ is $p$-Goto and  $S$ is $q$-Goto for some integers $p, q >0$ such that $n+1 = p+q$. 
\end{itemize}
\end{enumerate}
\end{thm}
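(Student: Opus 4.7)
My plan is to extend the argument of \cite[Theorem 5.1]{EGI}, which handles the almost Gorenstein case, to general $n$-Goto by carefully tracking $\ell_A(K^2/K)$ for a fractional canonical ideal $K$ of $A$. First, I would fix fractional canonical ideals $K_R$ and $K_S$ of $R$ and $S$ with $R\subseteq K_R\subseteq \overline{R}$ and $S\subseteq K_S\subseteq \overline{S}$; these exist because $\rmQ(A)$ is Gorenstein and $k$ is infinite. Following the construction in \cite{EGI}, I would then exhibit an explicit fractional canonical ideal $K$ of $A$ inside $\overline{A}=\overline{R}\times \overline{S}$. The construction rests on the exact sequence
\[
0\to \rmK_R\oplus \rmK_S\to \rmK_A\to k\to 0
\]
obtained by applying $\Hom_A(-,\rmK_A)$ to $0\to A\to R\oplus S\to k\to 0$, and realizes the direct summands $\rmK_R, \rmK_S$ as concrete subsets of $\overline{A}$ compatible with (suitably scaled copies of) $K_R$ and $K_S$.

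Next I would compute the powers $K^2$ and $K^3$ in terms of $K_R$ and $K_S$, exploiting that multiplication in $\rmQ(A)=\rmQ(R)\times \rmQ(S)$ is coordinatewise. The goal of this step is the equivalence
\[
K^2=K^3 \iff K_R^2=K_R^3 \ \text{and}\ K_S^2=K_S^3,
\]
which splits the property ``$A$ admits the reduction-number-at-most-two structure on its canonical ideal'' into independent conditions on $R$ and $S$ (i.e., that each is a Goto ring for some index). The key input is that adjoining the extra ``fiber'' element of $K$ over $K_R\oplus K_S$ does not enlarge $K^m$ for $m\ge 2$ beyond what $K_R^m$ and $K_S^m$ already produce.

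Then I would evaluate $\ell_A(K^2/K)$ using the relative position of $K$ between $K_R\oplus K_S$ and the product $K_R^2\oplus K_S^2$. The expected outcome is a formula of the form
\[
\ell_A(K^2/K)=\ell_R(K_R^2/K_R)+\ell_S(K_S^2/K_S)-\varepsilon,
\]
where $\varepsilon\in\{0,1\}$ is a correction term reflecting whether the extra $k$-dimensional piece of $K$ over $K_R\oplus K_S$ survives in the quotient $K^2/K$. A case analysis shows $\varepsilon=1$ exactly when both $R$ and $S$ are non-Gorenstein (so $K_R^2\supsetneq K_R$ and $K_S^2\supsetneq K_S$), and $\varepsilon=0$ when one of them is Gorenstein (so the corresponding factor is already stable under squaring). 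Combined with the characterization of $n$-Goto rings in Section \ref{sec4-1} ($A$ is $n$-Goto if and only if $K^2=K^3$ and $\ell_A(K^2/K)=n$), this translates directly into the case split of (2): cases (i) and (ii) arise from $\varepsilon=0$, while case (iii) with $p+q=n+1$ arises from $\varepsilon=1$.

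The principal obstacle will be pinning down the correction $\varepsilon$. This requires a precise description of the extra generator of $K$ over $K_R\oplus K_S$ coming from the cokernel $k$ in the displayed exact sequence, and a careful examination of how it interacts multiplicatively with $K_R^2\oplus K_S^2$. The dichotomy between the Gorenstein and non-Gorenstein factors amounts to whether this extra element is already absorbed into the higher powers on the Gorenstein side or must contribute genuinely in the non-Gorenstein case; formalizing this uniformly across the three subcases of (2) is where the technical work concentrates. Once the length formula above is rigorously established, the equivalence (1)$\Leftrightarrow$(2) follows directly by arithmetic on $n$, $p$, and $q$.
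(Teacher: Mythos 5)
Your proposal follows essentially the same strategy as the paper's proof: construct the fractional canonical ideal $X$ of $A = R\times_k S$ explicitly as $X = (K\times L) + A\psi$ via the construction from \cite{EGI}, establish that $X^2 = X^3$ holds if and only if $K^2 = K^3$ and $L^2 = L^3$ (your ``goal of this step''), and then translate the Goto index of $A$ into a length formula involving the Goto indices of $R$ and $S$. Your length formula with the correction term $\varepsilon$ agrees with the paper's computation $\ell_A(A/[A:X]) = \ell_R(R/[R:K]) + \ell_S(S/[S:L]) - 1$ when both factors are non-Gorenstein, and collapses to $\ell_S(S/[S:L])$ when $R$ is Gorenstein (the paper organizes the arithmetic slightly differently, writing $A:X = (R:K)\times(S:L)$ versus $A:X = \fkm\times(S:L)$ in the two cases, but the bookkeeping is equivalent to your $\varepsilon$-dichotomy).

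One point you should flag that the paper handles separately: the construction $X = (K\times L) + A\psi$ with $\psi = (g_1,g_2)$, $g_1\in(R:\fkm)\setminus K$, $g_2\in(S:\fkn)\setminus L$, requires $g_1, g_2$ to lie in $\overline{R}$ resp.\ $\overline{S}$ so that $X\subseteq \overline{A}$. This fails when one factor is a DVR, since then $R:\fkm\not\subseteq \overline{R} = R$; the paper therefore treats ``$R$ a DVR and $S$ not a DVR'' as a preliminary case via a different argument (citing \cite[Corollary 4.15]{GK}). Your sketch passes over this silently; without it the construction breaks down in exactly one of the subcases you need. Once that gap is filled, the rest of your outline matches the paper's proof in all essentials.
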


\begin{proof}
We denote by $K$ and $L$ the fractional canonical ideals of $R$ and $S$, respectively. %Thus, $K$ is an $R$-submodule of $\rmQ(R)$ such that $R \subseteq K \subseteq \overline{R}$, $K \cong \rmK_R$ as an $R$-module, and $L$ is an $S$-submodule of $\rmQ(S)$ such that $S \subseteq L \subseteq \overline{S}$, $L \cong \rmK_S$ as an $S$-module.
We may assume that $A$ is not a Gorenstein ring. Hence, either $R$ or $S$ is not a DVR. 
%We prove Theorem \ref{6.1} by dividing into the following two cases. 
We first consider the case where $R$ is a DVR and $S$ is not a DVR.
Let $X$ be a fractional canonical ideal of $A$.
Then, \cite[Corollary 4.15]{GK} we have $X^2 = X^3$ if and only if $L^2 = L^3$; while the equality $\ell_A(A/(A:X)) = \ell_S(S/(S:L))$ holds once we get this equivalence. Hence $A$ is $n$-Goto if and only if $S$ is $n$-Goto, as claimed. 
It remains to consider the case where $R$ and $S$ are not DVRs. 
Then $K \ne \overline{R}$, because $K:K = R$ (\cite[Bemerkung 2.5]{HK}). Since $\ell_R([K:\m]/K) = \ell_R(\Ext^1_R(R/\m, K)) =1$, we see that $K:\m \subseteq \overline{R}$. In addition, we have $R : \m \not\subseteq K$. Indeed, if $R : \m \subseteq K$, then $K \supseteq R:\m = (K:K):\m = K:\m K$.
 Thus $R = K:K \subseteq K:( K:\m K) = \m K \subseteq \m \overline{R}$. This makes a contradiction.  Hence $R : \m \not\subseteq K$ and therefore
$
K:\m = K + R\cdot g_1
$
for some $g_1 \in (R:\m) \setminus K$. Similarly, because $S$ is not a DVR, we can choose $g_2 \in (S:\n) \setminus L$ such that $L:\n = L + S \cdot g_2$. We set 
$$
X = (K\times L) + A \cdot \psi
$$
with $\psi = (g_1, g_2) \in  \overline{A}$. 
Then $X$ is a fractional canonical ideal of $A$ (\cite[Lemma 4.2]{EGI}). 
With this notation we  have the following. 

\begin{claim}\label{6.2}
$X^2 = X^3$ if and only if $K^2 = K^3$ and $L^2 = L^3$. 
\end{claim}

\begin{proof}[Proof of Claim \ref{6.2}]
We first assume that $R$ and $S$ are not Gorenstein. Then, because $A:X = (R:K) \times (S:L)$ (\cite[Lemma 4.8 (i)]{EGI}), we then have 
\begin{eqnarray*}
(A:X)X \!\!&=&\!\! \left[(R:K) \times (S:L)\right]\left[(K\times L) + A \cdot \psi\right] \\
\!\!&=&\!\! \left[(R:K)K + (R:K)g_1\right] \times \left[(S:L)L + (S:L)g_2\right] \\
\!\!&=&\!\! (R:K)K \times (S:L)L
\end{eqnarray*}
where the last equality follows from the fact that $(R:K)(R:\m) = R:K$ and $(S:L)(S:\n) = (S:L)$ (\cite[Lemma 4.14 (i)]{EGI}). Therefore, by \cite[Lemma 4.14 (ii)]{EGI} we get the required equivalence, because $(A:X)X = A:X$ if and only if $(R:K)K = R:K$ and $(S:L)L = S:L$. Next, we consider the case where $R$ is Gorenstein but $S$ is not a Gorenstein ring. Then $R=K$. Thanks to \cite[Lemma 4.8 (ii)]{EGI}, we have $A:X = \m \times (S:L)$ which yields that the equalities
\begin{eqnarray*}
(A:X)X \!\!&=&\!\! \left[\m \times (S:L)\right]\left[(K\times L) + A \cdot \psi\right] \\
\!\!&=&\!\! \left[\m + \m g_1\right] \times \left[ (S:L)L + (S:L)g_2 \right] \\
\!\!&=&\!\! \m(R:\m) \times (S:L)L = \m \times  (S:L)L
\end{eqnarray*}
hold, because $\m (R:\m) = \m$ (remember that $R$ is not a DVR). This shows that $(A:X)X = A:X$ if and only if $(S:L)L = S:L$; hence $X^2 = X^3$ if and only if $L^2 = L^3$.  
\end{proof}
We continue to prove Theorem \ref{6.1}. Since $n \ge 2$, we may assume that either $R$ or $S$ is not a Gorenstein ring. Suppose that both $R$ and $S$ are not Gorenstein. Since $A:X = (R:K) \times (S:L)$, we have the equalities
$$
\ell_A(A/[A:X]) = \ell_A(B/[A:X]) - 1 = \ell_R(R/[R:K]) + \ell_S(S/[S:L]) -1. 
$$
Thanks to Claim \ref{6.2}, the condition $(1)$ is equivalent to the condition $(2)$ $\rm (iii)$, because $\ell_A(A/[A:X]) = n$ if and only if there exist integers $p, q >0$ such that $n+1 = p+q$, $\ell_R(R/[R:K])=p$, and $\ell_S(S/[S:L]) = q$. 
We may finally assume that $R$ is Gorenstein but $S$ is not a Gorenstein ring. Then $A:X = \m \times (S:L)$, so that
$$
\ell_A(A/[A:X]) = \ell_R(R/\m) + \ell_S(S/[S:L]) -1 = \ell_S(S/[S:L]).
$$
This yields that the ring $A$ is $n$-Goto if and only if so is $S$, as desired.
\end{proof}

As a direct consequence of Theorem \ref{6.1}, if $R$ is $n$-Goto and $S$ is $2$-Goto, the fiber product $A= R\times_k S$ is an $(n+1)$-Goto ring which is not a generalized Gorenstein ring. 

\begin{ex}\label{6.3a}
Let $k[[t]]$ be the formal power series ring over a field $k$. Since the semigroup ring $k[[t^3, t^{3n+1}, t^{3n+2}]]$ is an $n$-Goto ring (see Example \ref{4.3}), the fiber product $k[[t^3, t^{3n+1}, t^{3n+2}]] \times_k k[[t]]$ is $n$-Goto; while 
the ring $k[[t^3, t^{3n+1}, t^{3n+2}]] \times_k k[[t^3, t^7, t^8]]$ is an $(n+1)$-Goto ring, where $n \ge 2$ is an integer. 
%(\cite[Theorem 4.17]{EGI}). 
%For example, if $R$ and $S$ are $2$-Goto, then $A = R\times_k S$ is a $3$-Goto ring. $n\ge 2$であれば, GGLではない。
\end{ex}

%%%%%%%%%%%%%%%%%%%%%%%%%%%%%%%%%%%%%%%%%%%
%%%%%%%%%%%%%%%%%%%%%%%%%%%%%%%%%%%%%%%%%%%%%%%%%%%%%%%%%%%%%%%%%%%%%%%%%%%%%%%%%%%%%%%%%%%%%%%%%%%%%%%%%%%%%%%%%%%%%%%%%%%%%%%%%%%%%%%%%%%%%%%%%%%%%%%%%%%%%%%%%%%%%%%%%%%%%%%%%%%%%%%%%%%%%%%%%%%%%%%%%%%%%%%%%%%%%%%

%%%%%%%%%%%%%%%%%%%%%%%%%%%%%%%%%%%%%%%%%%%%%%%%%%%%%%%%%%%%%%%%%%%%%%%%%%%%%%%%%%%%%%%%%%%%%%%%%%%%%%%%%%%%%%%%%%%%%%%%%%%%%%%%%%%%%%%%%%

%%%%%%%%%%%%%%%%%%%%%%%%%%%%%%%%%%%

\section{Three-generated numerical semigroup rings}\label{sec7-1}

We investigate Goto rings arising from numerical semigroups generated by three elements. 
We first recall the results in \cite[Section 4]{GMP} (see also \cite[Section 6]{CGKM}). 

Let $0<a_1, a_2, a_3 \in \bbZ$ be integers such that $\gcd(a_1, a_2, a_3) = 1$. We consider the numerical semigroup $H=\left<a_1, a_2, a_3\right>$ which is minimally generated by $a_1, a_2, a_3$.  %and assume that $H$ is minimally generated by three elements. 
Let $k[t]$ denote the polynomial ring over a field $k$ and set $T=k[H] = k[t^{a_1}, t^{a_2}, t^{a_3}]$. Then $T$ is a one-dimensional Cohen-Macaulay graded domain and $\overline{T} = k[t]$, where $\overline{T}$ denotes the normalization of $T$. Let $M=(t^{a_1}, t^{a_2}, t^{a_3})T$ be the maximal ideal of $T$. The ring $R =k[[t^{a_1}, t^{a_2}, t^{a_3}]]$ is obtained by the completion $R=\widehat{T_M}$ of the local ring $T_M$. 

Let $U=k[X, Y, Z]$ be the polynomial ring and we regard $U$ as a $\Bbb Z$-graded ring with $U_0=k$, $\deg X=a_1$, $\deg Y=a_2$, and $\deg Z = a_3$. Look at the $k$-algebra map 
$$
\varphi : U=k[X, Y, Z] \to T=k[t^{a_1}, t^{a_2}, t^{a_3}]
$$
defined by $\varphi(X) = t^{a_1}$, $\varphi(Y) = t^{a_2}$, and $\varphi(Z) = t^{a_3}$. %Let $\fka = \Ker \varphi$.  

Throughout this section, we assume that $T$ is not a Gorenstein ring. By \cite{H} the defining ideal $\Ker \varphi$ of $T$ is generated by the maximal minors of the matrix 
$\left[\begin{smallmatrix}
X^{\alpha} & Y^{\beta} & Z^{\gamma} \\
Y^{\beta'} & Z^{\gamma'} & X^{\alpha'}
\end{smallmatrix}\right]$, i.e.,  
$$
\Ker \varphi = {\rm I}_2
\begin{pmatrix}
X^{\alpha} & Y^{\beta} & Z^{\gamma} \\
Y^{\beta'} & Z^{\gamma'} & X^{\alpha'}
\end{pmatrix}
$$
for some $0<\alpha, \beta, \gamma, \alpha', \beta', \gamma' \in \Bbb Z$. Let $\Delta_1 = Z^{\gamma + \gamma'}-X^{\alpha'}Y^{\beta}$, $\Delta_2 = X^{\alpha + \alpha'} - Y^{\beta'}Z^{\gamma}$, and $\Delta_3 = Y^{\beta + \beta'} - X^{\alpha}Z^{\gamma'}$. Then $\Ker \varphi = (\Delta_1, \Delta_2, \Delta_3)$ and the ring $T\cong U/\Ker \varphi$ admits a graded minimal $U$-free resolution of the form
\begin{equation*}
0 \longrightarrow
\begin{matrix}
U\left(-m\right) \\
\oplus \\
U\left(-m'\right) 
\end{matrix}
\overset{\ 
\left[\begin{smallmatrix}
X^{\alpha} & Y^{\beta'} \\
Y^{\beta} & Z^{\gamma'} \\
Z^{\gamma} & X^{\alpha'} \\[2pt]
\end{smallmatrix}\right] \ 
}{\longrightarrow}
\begin{matrix}
U\left(-d_1\right) \\
\oplus \\
U\left(-d_2\right) \\
\oplus \\
U\left(-d_3\right) 
\end{matrix}
\overset{[\Delta_1 \ \Delta_2 \ \Delta_3]}{\longrightarrow}
U \overset{\varepsilon}{\longrightarrow} T \longrightarrow 0
\end{equation*}
where $d_1 = \deg \Delta_1 = a_3(\gamma + \gamma')$, $d_2 = \deg \Delta_2 = a_1(\alpha + \alpha')$, $d_3 = \deg \Delta_3 = a_2(\beta + \beta')$, $m = a_1 \alpha + d_1 = a_2 \beta + d_2 = a_3 \gamma + d_3$, and $m' = a_1\alpha' + d_3 = a_2 \beta' + d_1 = a_3 \gamma' + d_2$. Hence $m'-m = a_2 \beta' -a_1 \alpha = a_3 \gamma' -a_2 \beta = a_1 \alpha' -a_3 \gamma$. 

%By taking the $\rmK_U$-dual of the resolution, we get the presentation
Let $\rmK_U=U(-d)$ be the graded canonical module of $U$ where $d = a_1 + a_2 + a_3$. Taking $\rmK_U$-dual, we get the graded minimal $U$-free presentation
 \begin{equation*}
\begin{matrix}
U\left(d_1-d\right) \\
\oplus \\
U\left(d_2-d\right) \\
\oplus \\
U\left(d_3-d\right) 
\end{matrix}
\overset{\ 
\left[\begin{smallmatrix}
X^{\alpha} & Y^{\beta} & Z^{\gamma} \\
Y^{\beta'} & Z^{\gamma'} & X^{\alpha'}\\[2pt]
\end{smallmatrix}\right] 
}{\longrightarrow}
\begin{matrix}
U\left(m-d\right)  \\
\oplus \\
U\left(m'-d\right) 
\end{matrix} \overset{\varepsilon}{\longrightarrow} \rmK_T \longrightarrow 0
\end{equation*}
of the graded canonical module $\rmK_T$. Remember that $\rmK_T = \sum_{c \in \mathrm{PF}(H)}Tt^{-c}$ as a $T$-module; see \cite[Example (2.1.9)]{GW}. Thus
$\dim_k\left([\rmK_T]_i\right) \le 1$ for all $i \in \Bbb Z$ and $\mathrm{PF}(H) =\{m-d, m'-d\}$. Therefore we conclude that $m \ne m'$. 

Let $b = |m-m'|$ be the absolute value of $m-m'$. We then have the following. 

\begin{thm}\label{7.1}
Let $n \ge 1$ be an integer. Suppose that $R$ is not a Gorenstein ring. 
Then the following conditions are equivalent. 
\begin{enumerate}
\item[$(1)$] The semigroup ring $R=k[[H]]$ is an $n$-Goto ring. 
\item[$(2)$] $3b \in H$, and $n = \alpha \beta \gamma$ $($resp. $n = \alpha' \beta' \gamma'$$)$ if $m' > m$ $($resp. $m > m'$$)$.
\end{enumerate} 
\end{thm}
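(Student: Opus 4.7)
Since $T$ has Cohen-Macaulay type two (the pseudo-Frobenius set being $\{m-d, m'-d\}$), Proposition 4.6a reduces the claim that $R$ is $n$-Goto to the two conditions $K^2 = K^3$ and $\ell_R(K/R) = n$ for a fractional canonical ideal $K$. Since canonical ideals, Sally modules and lengths are preserved under passage from the graded ring $T$ to its $M$-localization and then to its completion $R$, the entire analysis can be carried out in the graded setting. I will treat the case $m' > m$; the case $m > m'$ is recovered by swapping the two rows of the Herzog matrix, which interchanges $(\alpha, \beta, \gamma)$ with $(\alpha', \beta', \gamma')$ while preserving $b := |m'-m|$.

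The dual of the Herzog resolution of $T$, displayed in the excerpt, exhibits $\rmK_T$ as generated by two elements of degrees $m-d$ and $m'-d$; realizing these inside $k(t)$ and multiplying through by $t^{f(H)} = t^{m'-d}$ produces the fractional canonical ideal $K = T + T t^b$ with $b = m'-m$. Since $K^2 = T + T t^b + T t^{2b}$ and $K^3 = K^2 + T t^{3b}$, the condition $K^2 = K^3$ is equivalent to $t^{3b} \in K^2$, which, by the support of $T$ in $k[t]$, amounts to at least one of $b, 2b, 3b$ lying in $H$. Non-Gorensteinness of $R$ rules out $b \in H$ (else $K = T$) and $2b \in H$ (else $K$ would be a ring with $K \subseteq K:K = T$, contradicting $K \ne T$). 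Hence $K^2 = K^3$ if and only if $3b \in H$.

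For the length, the cyclic $T$-module $K/T$ has Hilbert series
\[
H_{K/T}(s) = \frac{s^b + s^{d_1} + s^{d_2} + s^{d_3} - s^{m'-d_1} - s^{m'-d_2} - s^{m'-d_3} - s^m}{(1-s^{a_1})(1-s^{a_2})(1-s^{a_3})},
\]
obtained by subtracting $H_T(s)$ from $H_K(s) = s^{m'-d} H_{\rmK_T}(s)$. The key step, and the one requiring the most care, is to factor the numerator as $s^b(1-s^{a_1\alpha})(1-s^{a_2\beta})(1-s^{a_3\gamma})$: this follows from a monomial-by-monomial bookkeeping that uses the identities $a_1\alpha + b = m'-d_1 = a_2\beta'$, $a_2\beta + b = m'-d_2 = a_3\gamma'$, $a_3\gamma + b = m'-d_3 = a_1\alpha'$, together with $a_1\alpha + a_2\beta + a_3\gamma + b = m$, and it is here that the hypothesis $m' > m$ enters crucially (in the opposite case the same bookkeeping yields the factor $s^b(1-s^{a_1\alpha'})(1-s^{a_2\beta'})(1-s^{a_3\gamma'})$).

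Granting the factorization, $H_{K/T}(s) = s^b H_{T/J}(s)$ where $J = (X^\alpha, Y^\beta, Z^\gamma)T$. Since each of the Herzog minors $\Delta_1, \Delta_2, \Delta_3$ visibly lies in $(X^\alpha, Y^\beta, Z^\gamma)U$, the ring $T/J$ is isomorphic to the Artinian complete intersection $k[X, Y, Z]/(X^\alpha, Y^\beta, Z^\gamma)$, of $k$-dimension $\alpha\beta\gamma$. Evaluating $H_{K/T}(s)$ at $s = 1$ therefore gives $\ell_R(K/R) = \alpha\beta\gamma$, and combining this with the criterion $K^2 = K^3 \iff 3b \in H$ via Proposition 4.6a completes the proof.
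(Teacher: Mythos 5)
Your proof is correct, and on the key step you take a genuinely different route from the paper. Both arguments reduce, via the $\rmr(R)=2$ criterion of Proposition \ref{4.6a}, to showing that $K^2 = K^3$ iff $3b \in H$ (your argument here is word-for-word the same as the paper's: $b \notin H$ because $\mu_R(K)=2$, $2b \notin H$ because otherwise $K$ would be a ring inside $K:K = T$) and that $\ell_R(K/R)=\alpha\beta\gamma$. The paper handles the second point simply by citing \cite[Theorem 4.1]{GMP}; you instead give a self-contained derivation by computing the Hilbert series $H_{K/T}(s)$ from the $\rmK_U$-dual of the Herzog resolution, factoring the numerator as $s^b(1-s^{a_1\alpha})(1-s^{a_2\beta})(1-s^{a_3\gamma})$, observing $\Delta_1,\Delta_2,\Delta_3 \in (X^\alpha, Y^\beta, Z^\gamma)U$ so that $T/(x^\alpha,y^\beta,z^\gamma) \cong k[X,Y,Z]/(X^\alpha,Y^\beta,Z^\gamma)$, and evaluating at $s=1$. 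I checked the factorization: using $p := a_1\alpha+a_2\beta+a_3\gamma$, adding the three expressions for $m$ and the three for $m'$ gives $3m = 2p + p'$ and $3m' = p + 2p'$, so $p = 2m - m'$; from this $b + p = m$, and then $b + a_1\alpha + a_2\beta = m - a_3\gamma = d_3$ (similarly for the others), while $b+a_1\alpha = m'-d_1$, etc. So the eight monomials match. Your approach effectively re-proves the relevant case of \cite[Theorem 4.1]{GMP} in passing, making the argument self-contained at the cost of the Hilbert series bookkeeping; the paper's proof is shorter but depends on the external reference.

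One small presentational point: you refer to "Proposition 4.6a" by its internal label rather than its printed number, and there is a momentary notational drift (the $K:K=T$ computation is phrased in the graded ring $T$, while the surrounding text sometimes speaks of $R$); both are harmless, but worth tidying since the completion step is what licenses transferring lengths from $T$ to $R$.
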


\begin{proof}
After a suitable permutation of $a_2$ and $a_3$ if necessary, without loss of generality we may assume $m' > m$. Note that $K=R + Rt^b$ is a fractional canonical ideal of $R$. As $R$ is the $MT_M$-adic completion of the local ring $T_M$ where $M=(t^{a_1}, t^{a_2}, t^{a_3})$ is the graded maximal ideal of $T$, we see that $\ell_R(K/R) = \alpha\beta\gamma$ (\cite[Theorem 4.1]{GMP}). Besides, the equality $K^2 = K^3$ holds if and only if $t^{3b} \in K^2$. Equivalently, $3b \in H$, because $b, 2b \not\in H$. Indeed, since $\mu_R(K)=2$, we have $b \notin H$. If $2b \in H$, then $K^2 = K + Rt^{2b} = K$. This shows $K \subseteq K:K = R$, and hence $R=K$. This is impossible, because $R$ is not a Gorenstein ring. Consequently, $R$ is an $n$-Goto ring if and only if $3b \in H$ and $n = \alpha\beta\gamma$. 
\end{proof}

%We note an example whose multiplicity is greater than $3$.

%重複度が$3$とは限らない場合の例を述べる。

We note an example. 

\begin{ex}
Let $k[[t]]$ be the formal power series ring over a field $k$. We set $R = k[[H]]$ in $V$, where $H=\left<7, 10, 22\right>$. Then $\mathrm{PF}(H) =\{25, 33\}$ and $b = |m-m'| = 8$; hence $3b \in H$, $K = R + Rt^8$ is a fractional canonical ideal of $R$, and $K^2 = K^3$. 
The $k$-algebra map $\varphi : k[[X, Y, Z]] \to R$ defined by $\varphi(X) = t^{10}$, $\varphi(Y) = t^7$, and $\varphi(Z) = t^{22}$ induces the isomorphism
$$
R \cong k[[X, Y, Z]]/
\rmI_2
\begin{pmatrix}
X^2 & Y^2 & Z \\
Y^4 & Z & X^3
\end{pmatrix}
$$
where $k[[X, Y, Z]]$ denotes the formal power series ring over $k$. 
Then $m'-m = a_2 \beta' -a_1 \alpha = 7 \cdot 4 - 10 \cdot 2 = 8$. This
yields that $\ell_R(K/R) = 4$. Hence $R$ is a $4$-Goto ring.  
\end{ex}

The following generalizes the result \cite[Corollary 6.7 (1)]{CGKM}.

%一般の数値半群でも$n$-Goto性を特徴付けることは可能か？小出の修論とGAS数値半群の論文をチェックする。

\begin{cor}\label{8.1}
Let $n \ge 1$ be an integer. Suppose that $\rme(R) = 3$ and $R$ has minimal multiplicity. Then the following conditions are equivalent. 
\begin{enumerate}
\item[$(1)$] $R=k[[H]]$ is an $n$-Goto ring. 
\item[$(2)$] $H=\left< 3, 2n+\alpha, n + 2\alpha \right>$ for some $\alpha \ge n+1$ such that $\alpha \not \equiv n$ \, \!\!\!\! $\mod \,\, 3$. 
\end{enumerate} 
When this is the case, one has the isomorphism either
$$
R \cong k[[X, Y, Z]]/{\rm I}_2
\begin{pmatrix}
X^{n} & Y & Z \\
Y & Z & X^{\alpha}
\end{pmatrix}, \  \ \text{or}  \ \ \ 
R \cong k[[X, Y, Z]]/{\rm I}_2
\begin{pmatrix}
X^{\alpha} & Y & Z \\
Y & Z & X^n
\end{pmatrix}
$$
where $k[[X, Y, Z]]$ denotes the formal power series ring over $k$. 
\end{cor}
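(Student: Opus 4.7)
Since $\rme(R) = v(R) = 3$, Corollary~\ref{4.7a} reduces the $n$-Goto condition to the single equality $\ell_R(K/R) = n$, where $K$ is a fractional canonical ideal. The plan is to first pin down the shape of all minimally $3$-generated numerical semigroups of multiplicity $3$, then compute $\ell_R(K/R)$ in closed form, invert the result to obtain the parametrization in $(2)$, and finally verify the displayed matrix presentations by direct substitution.

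Any such $H$ has the form $\langle 3, a_2, a_3\rangle$ with $3 < a_2 < a_3$; a short residue argument forces $a_2, a_3$ to carry the two distinct nonzero residues modulo $3$ (otherwise the larger lies in $\langle 3, a_2\rangle$), and minimality of $a_3$ forces $a_3 \le 2a_2$. Consequently the Ap\'ery set of $H$ with respect to $3$ is $\{0, a_2, a_3\}$, $\mathrm{PF}(H) = \{a_2 - 3, a_3 - 3\}$, $\rmr(R) = 2$, and the fractional canonical ideal is $K = R + Rt^{b}$ with $b = a_3 - a_2$. Using the isomorphism $K/R \cong R/(R :_R t^b)$, one has $\ell_R(K/R) = \#\{h \in H : h + b \notin H\}$. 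Writing each $h$ uniquely as $3i + \omega$ with $\omega \in \{0, a_2, a_3\}$, the cosets $\omega = a_2$ and $\omega = a_3$ contribute nothing, since they shift into $a_3 + 3\Bbb N$ and $(2a_3 - a_2) + 3\Bbb N$ respectively, both contained in $H$; the coset $\omega = 0$ contributes exactly those $i$ with $3i + b$ below the smallest element of $H$ of its residue, and counting yields
$\ell_R(K/R) = (2a_2 - a_3)/3$.
Setting this equal to $n$ and introducing $\alpha := (2a_3 - a_2)/3$, the linear system inverts to $a_2 = 2n + \alpha$ and $a_3 = n + 2\alpha$. The inequality $a_3 > a_2$ is equivalent to $\alpha \ge n+1$, and the distinct-nonzero-residue condition on $(a_2, a_3)$ translates precisely to $\alpha - n \not\equiv 0 \pmod 3$, yielding $(1) \Leftrightarrow (2)$.

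For the ``moreover'' clause, under the map $\varphi : k[[X,Y,Z]] \to R$ with $\varphi(X) = t^3$ and $\{\varphi(Y), \varphi(Z)\} = \{t^{a_2}, t^{a_3}\}$, the three $2 \times 2$ minors of $\left(\begin{smallmatrix} X^n & Y & Z \\ Y & Z & X^\alpha \end{smallmatrix}\right)$ map to $t^{2a_2} - t^{3n + a_3}$, $t^{3(n+\alpha)} - t^{a_2 + a_3}$, and $t^{2a_3} - t^{3\alpha + a_2}$, each of which vanishes in $R$ by the identities $2a_2 - a_3 = 3n$, $a_2 + a_3 = 3(n+\alpha)$, and $2a_3 - a_2 = 3\alpha$ derived above. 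This matches Herzog's canonical shape for the defining ideal of a $3$-generated semigroup ring (with $\beta = \gamma = \beta' = \gamma' = 1$ and Herzog exponents $(n, \alpha)$), so $\Ker \varphi$ coincides with the ideal of minors. The second displayed matrix is obtained from the alternative labeling $\varphi(Y) = t^{a_3}$, $\varphi(Z) = t^{a_2}$, which yields an isomorphic presentation. The main obstacle is the Ap\'ery-set bookkeeping that produces the clean formula $\ell_R(K/R) = (2a_2 - a_3)/3$; once this is in hand, the remainder of the argument reduces to routine substitution and matching of degrees.
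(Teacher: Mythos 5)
Your proof is correct and takes a genuinely different route from the paper. The paper proves $(1)\Rightarrow(2)$ by first invoking Herzog's matrix presentation $\Ker\varphi = \rmI_2\left(\begin{smallmatrix} X^{\alpha} & Y^{\beta} & Z^{\gamma} \\ Y^{\beta'} & Z^{\gamma'} & X^{\alpha'} \end{smallmatrix}\right)$ and then computing $\ell_R(R/t^3R)=\beta\gamma+\beta'\gamma+\beta\gamma'=3$ to force $\beta=\gamma=\beta'=\gamma'=1$, after which Theorem~\ref{7.1} identifies $n$ with $\alpha$ (or $\alpha'$) and the formulas for $a_2, a_3$ fall out of the length computations $\ell_R(R/t^{a_i}R)=a_i$. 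You instead bypass the presentation entirely: you read off the Ap\'ery set $\{0,a_2,a_3\}$ and $\mathrm{PF}(H)=\{a_2-3,a_3-3\}$ directly from the residue-class structure, obtain $K=R+Rt^{a_3-a_2}$, and count gaps to get the closed form $\ell_R(K/R)=(2a_2-a_3)/3$, from which the parametrization $a_2=2n+\alpha$, $a_3=n+2\alpha$ follows by solving a $2\times2$ linear system. This is more elementary and makes the arithmetic transparent (in particular, why $\alpha\ge n+1$ and $\alpha\not\equiv n\pmod 3$), whereas the paper's computation has the advantage of producing the Herzog exponents and hence the matrix presentation as a by-product. The one place where your write-up is slightly loose is the ``moreover'' clause: exhibiting a matrix whose three minors land in $\Ker\varphi$ gives only $\rmI_2(M)\subseteq\Ker\varphi$, and the reverse inclusion requires you to match $M$ against Herzog's unique presentation (equivalently, to know $\beta=\gamma=\beta'=\gamma'=1$). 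This is implicit in your appeal to ``Herzog's canonical shape'' and does hold because the three relations $3n+a_3=2a_2$, $3(n+\alpha)=a_2+a_3$, $3\alpha+a_2=2a_3$ have exactly the form $c_ia_i=c_{ij}a_j+c_{ik}a_k$ with the $c_{ij}$'s that Herzog's theorem asserts are uniquely determined, but it would be worth saying so explicitly rather than by citation alone. A small inaccuracy: the residue argument actually forces $a_3<2a_2$ (strict), not $a_3\le 2a_2$ as written, which is exactly what makes $\ell_R(K/R)=(2a_2-a_3)/3$ positive.
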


\begin{proof}
As $R$ has minimal multiplcity, we have $\rmr(R) = \rme(R)-1 =2$, so $R$ is not Gorenstein.

$(1) \Rightarrow (2)$ 
Let us write $H=\left<3, a_2, a_3\right>$ where $0<a_2, a_3 \in \Bbb Z$ such that $\gcd(3, a_2, a_3) = 1$, and assume that $H$ is minimally generated by three elements. Consider the ring
$$
T=k[t^{3}, t^{a_2}, t^{a_3}] \cong k[X, Y, Z]/
 {\rm I}_2
\begin{pmatrix}
X^{\alpha} & Y^{\beta} & Z^{\gamma} \\
Y^{\beta'} & Z^{\gamma'} & X^{\alpha'}
\end{pmatrix}
$$
with $0<\alpha, \beta, \gamma, \alpha', \beta', \gamma' \in \Bbb Z$. 
We then have 
$$
3 =  \ell_R(R/t^3R) = \ell_k\left(k[Y, Z]/(Y^{\beta + \beta'}, Y^{\beta'}Z^{\gamma}, Z^{\gamma+\gamma'})\right) = \beta\gamma + \beta'\gamma + \beta\gamma' \ge 3.
$$
This yields that $\beta = \beta' = \gamma = \gamma' = 1$. Similarly we get
\begin{eqnarray*}
a_2 \!\!\!&=&\!\!\! \ell_R(R/t^{a_2}R) = \ell_k\left(k[X, Z]/(X^{\alpha + \alpha'}, X^{\alpha}Z^{\gamma'}, Z^{\gamma+\gamma'})\right) = \alpha \gamma + \alpha \gamma' + \alpha' \gamma' = 2\alpha + \alpha'\\
a_3 \!\!\!&=&\!\!\! \ell_R(R/t^{a_3}R) = \ell_k\left(k[X, Y]/(X^{\alpha + \alpha'}, X^{\alpha'}Y^{\beta}, Y^{\beta+\beta'})\right) = \alpha \beta + \alpha' \beta + \alpha' \beta' = \alpha + 2 \alpha'.
\end{eqnarray*}
If $m' > m$, then $n = \alpha$ by Theorem \ref{7.1}. Thus $a_2 = 2n + \alpha'$ and $a_3 = n + 2\alpha'$. Therefore, $\alpha' - n = a_2 \beta' - 3 \alpha =m'-m \ge 1$. As $H$ is minimally generated by three elements, we see that $\alpha' \not \equiv n$ \, \!\!\!\! $\mod \,\, 3$. We similarly have that, if $m > m'$, then $n=\alpha'$; hence $a_2 = 2\alpha + n$, $a_3 = \alpha + 2n$, and $\alpha - n = m-m' \ge 1$. We also have $\alpha \not \equiv n$ \, \!\!\!\! $\mod \,\, 3$.

$(2) \Rightarrow (1)$ Since $\alpha \not \equiv n$ \, \!\!\!\! $\mod \,\, 3$,  the complement of $H$ in $\Bbb N$ is finite and $H$ is minimally generated by three elements. Thus $\gcd(3, 2n+\alpha, n+2\alpha)=1$. Thanks to Corollary \ref{4.7a}, it suffices to show that $\ell_R(K/R)=n$. Indeed, the $k$-algebra map $\varphi : k[[X, Y, Z]] \to R$
defined by $\varphi(X) = t^{3}$, $\varphi(Y) = t^{2n+\alpha}$, and $\varphi(Z) = t^{n+2\alpha}$ induces the isomorphism
$$
R \cong k[[X, Y, Z]]/
\rmI_2
\begin{pmatrix}
X^n & Y & Z \\
Y & Z & X^{\alpha}
\end{pmatrix}
$$
of rings. Then, because $(2n + \alpha) - 3 n = \alpha - n > 0$, we get $\ell_R(K/R) = n$. Therefore $R$ is an $n$-Goto ring. 
\end{proof}

%%%%%%%%%%%%%%%%%%%%%%%%%%%%%%%%%%%%%%%%%%%%%%%%%%%%%%%%%%%%%%%%%%%%%%%%%%%%%%%%%%%%%%%%%%%%%%%%%%%%%%%%%%%%%%%%%%%%%%%%%%%%%%%%%%%%%%%%%%%%%%%%%%%%%%%%%%%%%%%%%%%%%%%%%%%%%%%%%%%%%%%%%%%%%%%%%%%%%%%%%%%%%%%%%%%%%%%%%%%%%%%%%%%%%%

\section{Minimal free presentations of fractional canonical ideals}\label{sec7}

%We consider minimal free presentations of fractional canonical ideals in Goto rings. 
Throughout this section, unless otherwise specified, we maintain the setup below.

\begin{setup}\label{10.1}
Let $(T, \n)$ be a regular local ring with $\ell = \dim T \ge 3$, $\fka \subsetneq T$ an ideal of $T$. Let $n \ge 2$ be an integer. 
Set $R=T/\fka$ and $\m = \n/\fka$. Suppose that $R$ is a Cohen-Macaulay local ring with $\dim R=1$, but not a Gorenstein ring. Let $K$ be a fractional canonical ideal of $R$. We set $I = R:K$, $\fkc = R:R[K]$, and $r=\rmr(R) \ge 2$. 
\end{setup}

We first assume the existence of the isomorphism
$$
K/R \cong \bigoplus_{i=1}^n (R/I_i)^{\oplus \ell_i}
$$
of $R$-modules for some $\ell_n>0$, $\ell_i \ge 0$ ($1 \le i < n$) such that $\sum_{i=1}^n \ell_i = r-1$, where, for each $1 \le i \le n$, the ideal $I_i$ is generated by an $R$-regular sequence $x_1^{(i)}, x_2^{(i)}, \ldots, x_{m_i}^{(i)}$ and $m_i = \mu_R(I_i)$. Without loss of generality, we may assume $I_n = I$. Then $I_n \subseteq I_i$ for each $1 \le i \le n$. 
We choose $X_j^{(i)} \in \n$ so that $x_j^{(i)} = \overline{X_j^{(i)}}$ in $R$, where $1 \le i \le n$ and $1 \le j \le m_i$. Here, for each $a \in T$, we denote by $\overline{a}$ the image of $a$ in $R$. 
By setting 
$$
J_i =(X_1^{(i)}, X_2^{(i)}, \ldots, X_{m_i}^{(i)}) \subseteq \n \ \ \text{for each} \ \ 1 \le i \le n,
$$
we then have an surjective map $T/J_i \to R/I_i$. We furthermore assume the map $T/J_i \to R/I_i$ is an isomorphism. Thus $\fka \subseteq J_i$ for each $1 \le i \le n$, and $X_1^{(i)}, X_2^{(i)}, \ldots, X_{m_i}^{(i)}$ forms a regular sequence on $T$. 
We now choose generators $f_{ij} \in K$ of $K$ satisfying
$$
K=R + \sum_{i=1}^n\sum_{j=1}^{\ell_i}Rf_{ij} \ \ \ \text{and} \ \ \ (R/I_i)^{\oplus \ell_i} \cong \sum_{j=1}^{\ell_i} (R/I) {\cdot} \overline{f_{ij}} \ \ \text{for each} \ \ 1 \le i \le n
$$
where $\overline{f_{ij}}$ denotes the image of $f_{ij}$ in $K/R$.

With this notation we have the following. 

\begin{thm}\label{MFR1}
Suppose that $IK = I$. Then the $T$-module $K$ has a minimal free presentation of the form
$
F_1 \overset{\Bbb M}{\longrightarrow} F_0 \overset{\Bbb N}{\longrightarrow} K \to 0,
$
where
$$
\Bbb N= \left[
\begin{smallmatrix}
-1 & f_{n1} \cdots f_{n \ell_n} & f_{n-1,1} \cdots f_{n-1, \ell_{n-1}} & \cdots & f_{11} \cdots f_{1\ell_1}
\end{smallmatrix}\right]
$$
and 
$$
\! \Bbb M = \!\! \left[ 
\begin{smallmatrix}
a_{11}^{(n)} \cdots a_{1m_n}^{(n)} & \cdots  & a_{\ell_n1}^{(n)} \cdots a_{\ell_n m_n}^{(n)} & a_{11}^{(n-1)} \cdots a_{1 m_{n-1}}^{(n-1)}&  & \cdots &   &  \hspace{-0.5em}a_{11}^{(1)}  \cdots a_{1m_1}^{(1)} & \hspace{-0.5em} \cdots & \hspace{-0.5em}a_{\ell_11}^{(1)} \cdots a_{\ell_1 m_1}^{(1)}  &  c_1 \cdots c_q \\\vspace{-0.5em} 
\hspace{-0.2em} X^{(n)}_1  \cdots X^{(n)}_{m_n} &  &  &  & &    & & &  & & 0 \\
 & \ddots &  &   &  &   &  & & & &  0 \\ \vspace{-0.5em}
 &  & \hspace{-0.5em} X^{(n)}_1  \cdots X^{(n)}_{m_n} &  &  &   \\ \vspace{-0.5em}
 &  &  & \hspace{-0.5em} X^{(n-1)}_1 \cdots X^{(n-1)}_{m_{n-1}} &   &  & & &  &   &  \vdots\\ 
 &  &  &  & \hspace{-1.5em} \ddots &  &  \\ \vspace{-0.5em}
 &  &  &  &  & \hspace{-1em} X^{(n-1)}_1  \cdots X^{(n-1)}_{m_{n-1}} &   \\
 &  &  &  &  &  & \ddots &  &  & \\
 &  &  &  &  &  &  & X_1^{(1)}  \cdots X_{m_1}^{(1)} &  &  &   \\
 &  &  &  &  &  &  &  & \ddots &  & 0\\
 &  &  &  &  &  &  &  &  & X_1^{(1)}  \cdots X_{m_1}^{(1)} & 0\\
\end{smallmatrix}\right]
$$
with $a_{ij}^{(n)} \in J_n~(1 \le i \le \ell_n, \, 1 \le j \le m_n)$, $a_{ij}^{(k)} \in J_n~(1\le k < n, \, 1 \le i \le \ell_k, \, 2 \le j \le m_k)$, $a_{i1}^{(k)}\in \n ~(1 \le k < n, \, 1 \le i \le \ell_k)$, and $c_q \in \n~(q \ge 0)$. 
Moreover, one has the equality
$$
\fka = \sum_{i=1}^{n}\sum_{j=1}^{\ell_i} \rmI_2
\begin{pmatrix}
a_{j1}^{(i)} & a_{j2}^{(i)} & \cdots & a_{j m_i}^{(i)} \\[4pt]
X_1^{(i)} & X_2^{(i)} & \cdots  & X_{m_i}^{(i)}
\end{pmatrix} + (c_1, c_2, \ldots, c_q).
$$
\end{thm}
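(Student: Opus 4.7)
Proof plan: The strategy is to construct a candidate $T$-free presentation $F_1 \overset{\Bbb M}{\to} F_0 \overset{\Bbb N}{\to} K \to 0$ realizing the stated matrices, argue minimality, and then read off $\fka$ from the columns of $\Bbb M$. Set $F_0 = T^r$ with basis $e_0, \{e_{il}\}$ and define $\Bbb N$ by $e_0 \mapsto -1$, $e_{il} \mapsto f_{il}$; since $\mu_T(K) = \mu_R(K) = r = 1 + \sum_i \ell_i$, this is a minimal surjection. For each triple $(i,l,k)$ with $1 \le k \le m_i$, the annihilator condition $x_k^{(i)} \overline{f_{il}} = 0$ in $K/R$ (coming from $\Ann_R \overline{f_{il}} = I_i$) gives $x_k^{(i)} f_{il} \in R$; choose a lift $a_{lk}^{(i)} \in T$ with $X_k^{(i)} f_{il} = a_{lk}^{(i)}$ in $K$, producing a ``type (a)'' column of $\Bbb M$ with $X_k^{(i)}$ in the $e_{il}$-row and $a_{lk}^{(i)}$ in the $e_0$-row. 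To enforce the claimed memberships of the $a$'s, use $IK = I$: whenever $x_k^{(i)} \in I$, $x_k^{(i)} f_{il} \in IK = I$, so one may take $a_{lk}^{(i)} \in J_n$. This covers every $k$ for $i = n$; for $i < n$ one arranges the regular-sequence generators so that $x_2^{(i)}, \ldots, x_{m_i}^{(i)} \in I$ (with $x_1^{(i)}$ the ``extra'' generator of $I_i$ over $I$), giving $a_{lk}^{(i)} \in J_n$ for $k \ge 2$. For $a_{l1}^{(i)}$ with $i < n$ only $\n$-membership is needed: $f_{il} \in K \subseteq \overline{R}$, so $x_1^{(i)} f_{il} \in \m \overline{R} \subseteq \calJ(\overline{R})$, and $R \cap \calJ(\overline{R}) = \m$ (because integral extensions preserve maximal ideals), hence $a_{l1}^{(i)} \in \n$.

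The next step is to show that $N := \ker \Bbb N$ is generated by the type (a) columns together with elements of $\fka \cdot e_0$. Given $z = \alpha_0 e_0 + \sum \alpha_{il} e_{il} \in N$, the image of $z$ in $K/R$ vanishes, so by the direct-sum decomposition $\overline{\alpha_{il}} \in I_i$. The hypothesis $T/J_i \cong R/I_i$ forces $\fka \subseteq J_i$, so $\alpha_{il} \in J_i$; writing $\alpha_{il} = \sum_k t_{ilk} X_k^{(i)}$ and subtracting the corresponding combination of type (a) columns eliminates every $e_{il}$-component, leaving $\alpha'_0 e_0 \in N$ with $\alpha'_0 \in \fka$. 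Choosing $c_1, \ldots, c_q \in \fka$ so that the columns $c_q e_0$, together with the type (a) columns, minimally generate $N$ then yields the matrix $\Bbb M$ in the stated form. Every entry of $\Bbb M$ lies in $\n$ by the constructions above, so the presentation is minimal.

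Finally, for the formula for $\fka$, take $a \in \fka$; then $a e_0 \in N$ admits an expression $a e_0 = \sum t_{ilk}\bigl(X_k^{(i)} e_{il} - a_{lk}^{(i)} e_0\bigr) + \sum_q s_q c_q e_0$. Comparing $e_{il}$-coefficients gives $\sum_k t_{ilk} X_k^{(i)} = 0$ in $T$, which is a syzygy on the $T$-regular sequence $X_1^{(i)}, \ldots, X_{m_i}^{(i)}$ and therefore a combination of the trivial (Koszul) syzygies $X_{k'} e_k - X_k e_{k'}$; substituting into the $e_0$-coefficient exhibits $a$ as a sum of $2 \times 2$ minors of $\left(\begin{smallmatrix} a_{l1}^{(i)} & \cdots & a_{lm_i}^{(i)} \\ X_1^{(i)} & \cdots & X_{m_i}^{(i)}\end{smallmatrix}\right)$ plus $\sum_q s_q c_q$, giving the inclusion ``$\subseteq$''. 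The reverse inclusion is immediate: each such minor lies in $\fka$ because $X_k(X_{k'} f_{il}) = X_{k'}(X_k f_{il})$ in $K$ translates to $X_k a_{lk'}^{(i)} - X_{k'} a_{lk}^{(i)} \in \fka$, and each $c_q \in \fka$ by construction. The main obstacle is the combinatorial bookkeeping in the Koszul step combined with the justification that the regular-sequence generators of each $I_i$ with $i < n$ can be arranged so that $x_2^{(i)}, \ldots, x_{m_i}^{(i)} \in I$ while maintaining $m_i = \mu_R(I_i)$; the rest follows from $IK = I$, $\fka \subseteq J_i$, and the integrality $K \subseteq \overline{R}$.
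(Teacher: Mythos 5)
Your proposal constructs $\Bbb M$ "from the ground up": you write down the Koszul-type syzygies $X_k^{(i)}e_{il} - a_{lk}^{(i)}e_0$ directly, use the direct-sum decomposition of $K/R$ and $\fka \subseteq J_i$ to show that these together with columns of the form $c_qe_0$ ($c_q \in \fka$) generate $\ker \Bbb N$, and then read off the equality for $\fka$ by the trivial-Koszul-syzygy device (which is exactly the content of the paper's Lemma 10.3, cited from \cite{GTT}). The paper instead works "top-down": it starts from an arbitrary minimal $T$-free presentation $F_1 \overset{\Bbb A}{\to} F_0 \overset{\Bbb N}{\to} K$, passes to the induced presentation $\Bbb A'$ of $K/R$, compares this with the canonical diagonal presentation $\Bbb B$ of $\bigoplus (T/J_i)^{\oplus\ell_i}$ to find an invertible $Q$ with $\Bbb A'Q = [\,\Bbb B \mid O\,]$, and sets $\Bbb M = \Bbb A Q$. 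The membership constraints on the $a$'s and the equality of ideals are then read off from $\Bbb N\cdot\Bbb M = O$ exactly as you do, and in the same spirit.

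The one genuine gap in your argument is the minimality claim. You assert that "every entry of $\Bbb M$ lies in $\n$, so the presentation is minimal," but this conflates two conditions: since $\Bbb N$ is already a minimal cover, automatically $\ker\Bbb N \subseteq \n F_0$ and hence every column of any presenting matrix has entries in $\n$; this says nothing about whether your chosen columns are a \emph{minimal} generating set of $\ker\Bbb N$. The missing step is to verify that the type-(a) columns are part of a minimal generating system. This does hold: projecting modulo $Te_0$ sends them bijectively onto the diagonal Koszul generators of $\ker(G_0 \to K/R)$, which minimally generate because $\mu_T(J_i) = m_i$ for each $i$; hence the type-(a) columns are linearly independent modulo $\n\cdot\ker\Bbb N$ and can be completed by finitely many $c_qe_0$'s. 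The paper's invertible-$Q$ device sidesteps this entirely, since $\Bbb M = \Bbb A Q$ with $\Bbb A$ minimal and $Q$ invertible is automatically a minimal presentation, and it also gives $a_{i1}^{(k)} \in \n$ for free (you instead argue this via $\m\overline{R} \cap R = \m$, which is fine but extra work). You correctly flag that both arguments need the generators $x_2^{(i)}, \ldots, x_{m_i}^{(i)}$ of $I_i$ ($i < n$) to lie in $I$ for the $J_n$-membership of the off-diagonal $a$'s; the paper silently uses the same arrangement, which is built into the application (Corollary \ref{10.4a}) where $x_j^{(i)} = x_j$ for $j \ge 2$.
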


\begin{proof}
Let 
$
F_1 \overset{\Bbb A}{\longrightarrow} F_0 \overset{\Bbb N}{\longrightarrow} K \longrightarrow 0
$
be a part of a minimal $T$-free resolution of $K$ with $F_0 = T \oplus T^{\oplus \ell_n} \oplus T^{\oplus \ell_{n-1}} \oplus \cdots \oplus T^{\oplus \ell_1}$, where 
$\Bbb N= \left[
\begin{smallmatrix}
-1 & f_{n1} \cdots f_{n \ell_n} & f_{n-1,1} \cdots f_{n-1, \ell_{n-1}} & \cdots & f_{11} \cdots f_{1\ell_1}
\end{smallmatrix}\right]$. This induces a presentation of $K/R$
$$
F_1 \overset{\Bbb A'}{\longrightarrow} G_0 \overset{\Bbb N'}{\longrightarrow} K/R \longrightarrow 0
$$
as a $T$-module, where $G_0 = T^{\oplus \ell_n} \oplus T^{\oplus \ell_{n-1}} \oplus \cdots \oplus T^{\oplus \ell_1}$, $\Bbb N'=[
\begin{smallmatrix}
\overline{f_{n1}} \cdots \overline{f_{n \ell_n}} &  \cdots & \overline{f_{11}} \cdots \overline{f_{1\ell_1}}
\end{smallmatrix}]$, $s = \rank_TF_1$, and $\Bbb A'$ denotes the $\left(\sum_{i=1}^n \ell_i\right) \times s$ matrix obtained from the matrix $\Bbb A$ by deleting its first row.

By our assumption, we have isomorphisms
$$
K/R \cong \bigoplus_{i=1}^n (R/I_i)^{\oplus \ell_i} \cong \bigoplus_{i=1}^n (T/J_i)^{\oplus \ell_i}
$$
so that the $T$-module $K/R$ has a minimal presentation of the form
$$
G_1  \overset{\Bbb B}{\longrightarrow} G_0 \overset{\Bbb N'}{\longrightarrow} K/R \longrightarrow 0
$$
where $G_1 = T^{\oplus \ell_n \cdot m_n} \oplus T^{\oplus \ell_{n-1}\cdot m_{n-1}} \oplus \cdots \oplus T^{\oplus \ell_1\cdot m_1}$ and 
the matrix $\Bbb B$ is given by the form \vspace{0.5em}
$$
\Bbb B =
\left[ 
\begin{smallmatrix}
X^{(n)}_1  \cdots X^{(n)}_{m_n} & 0 & 0 & 0 & 0  & 0 & 0   \\
0 & \ddots & 0 & 0  & 0 & 0 & 0   \\
\vdots & \vdots & X^{(n)}_1  \cdots X^{(n)}_{m_n} & \vdots & \vdots &  \vdots & \vdots  \\
\vdots & \vdots & \vdots & \ddots & \vdots & \vdots & \vdots \\[5pt]
0 & 0 & 0 & 0 & X^{(1)}_1  \cdots X^{(1)}_{m_1} & 0 & 0\\
0 & 0 & 0 & 0  & 0  & \ddots & 0  \\
0 & 0 & 0 & 0  & 0 & 0 & X^{(1)}_1  \cdots X^{(1)}_{m_1} 
\end{smallmatrix}\right].\vspace{0.5em}
$$
The comparison between two presentations of $K/R$ gives rise to a commutative diagram
$$
\xymatrix{
G_1 \ar[r]^{\Bbb B}\ar[d]^{\xi} & G_0 \ar[r]\ar[d]^\cong & K/R\ar[r]\ar[d]^\cong & 0\\
 F_1 \ar[r]^{\Bbb A'}\ar[d]^{\eta} & G_0 \ar[r]\ar[d]^\cong & K/R \ar[r] \ar[d]^\cong & 0 \\
G_1 \ar[r]^{\Bbb B} & G_0 \ar[r] & K/R \ar[r] & 0}
$$
of $T$-modules, so the composite map $\eta \circ \xi$ is an isomorphism.
Thus, there exists an $s \times s$ invertible matrix $Q$ with entries in $T$ such that the equality 
$$
\Bbb A'\cdot Q= \left[\, \Bbb B \mid  O \, \right]
$$
holds, where $O$ denotes the null matrix. We now consider the matrix 
$$
\Bbb M = \Bbb A \cdot Q 
=
\arraycolsep5pt
\left[
\begin{array}{@{\,}ccc@{\,}}
~ & * &~\\
\hline
&
\multicolumn{1}{c}{\raisebox{-10pt}[0pt][0pt]{\large $\Bbb A'$}}\\
&&\\
\end{array}
\right]{\text{\large $Q$}} \ 
=
\arraycolsep5pt
\left[
\begin{array}{@{\,}c|c@{\,}}
~ * ~ & ~ * ~\\
\hline
\raisebox{-10pt}[0pt][0pt]{\large $\Bbb B$}&\raisebox{-10pt}[0pt][0pt]{\large $O$}\\
&\\
\end{array}
\right].
$$
Hence, $K$ has a minimal free presentation 
$$
F_1 \overset{\Bbb M}{\longrightarrow} F_0 \overset{\Bbb N}{\longrightarrow} K \longrightarrow 0
$$
of $T$-modules. 
Since $\Bbb N \cdot \Bbb M = O$, the equality
$$
a_{ij}^{(n)}\cdot (-1) + X_j^{(n)} \cdot f_{ni} = 0 
$$
holds in $K$ for every $1 \le i \le \ell_n$ and $1 \le j \le m_n$. Then
$$
\overline{a_{ij}^{(n)}} = a_{ij}^{(n)} \cdot 1 = X_j^{(n)} \cdot f_{ni} \in J_n K = I_n K = IK = I = J_n/\fka
$$
whence $a_{ij}^{(n)} \in J_n$. In addition, for each $1 \le k < n$, we have
$$
a_{ij}^{(k)}\cdot (-1) + X_j^{(k)}\cdot f_{ki} = 0  \ \ \text{for every} \ 1 \le i \le \ell_k, \ 2 \le j \le m_k
$$
which shows $a_{ij}^{(k)} \in J_n$, as desired.

It remains to show the last equality of ideals. To do this, we need the following fact. 

\begin{lem}[{\cite[Lemma 7.9]{GTT}}]\label{10.3}
Let $(T, \n)$ be a Noetherian local ring, $\bm{x} = x_1, x_2, \ldots, x_{\ell}$ a regular sequence on $T$ and $\bm{a} = a_1, a_2, \ldots, a_{\ell}$ a sequence of elements in $T$. We denote by $\Bbb K = {\Bbb K}_\bullet(\xx ; T)$ the Koszul complex of $T$ associated to $\xx$ and by $\Bbb L = {\Bbb L}_\bullet(\bm{a} ; T)$ the Koszul complex of $T$ associated to $\bm{a}$. Then the equality 
$$
\partial_1^{\Bbb L}(\Ker \partial_1^{\Bbb K}) = \rmI_2
\begin{pmatrix}
a_1 & a_2 & \cdots & a_{\ell} \\
x_1 & x_2 & \cdots & x_{\ell}
\end{pmatrix} 
$$
holds, where $\partial_1^{\Bbb K}$ and $\partial_1^{\Bbb L}$ denote the first differential maps of $\Bbb K$ and $\Bbb L$, respectively. 
\end{lem}

Let $\bm{e}_i$ be the standard basis of $F_0 = T^{\oplus r}$ and set $L=\Im \Bbb M$. For each $a \in \fka$, we see that $a \bm{e}_1 \in L$, whence the equality
\begin{eqnarray*}
a \bm{e}_1 \!\! &=& \!\! \sum_{i=1}^{\ell_n} \sum_{j=1}^{m_n} c_{ij}^{(n)}\left( a_{ij}^{(n)} \bm{e}_1 + X_j^{(n)} \bm{e}_{i+1}\right) +  \sum_{i=1}^{\ell_{n-1}} \sum_{j=1}^{m_{n-1}} c_{ij}^{(n-1)}\left( a_{ij}^{(n-1)} \bm{e}_1 + X_j^{(n-1)} \bm{e}_{i+\ell_n+1}\right) \\
&+& \!\!\! \cdots + \sum_{i=1}^{\ell_1} \sum_{j=1}^{m_1} c_{ij}^{(1)}\left( a_{ij}^{(1)} \bm{e}_1 + X_j^{(1)} \bm{e}_{i+\ell_n + \cdots + \ell_2 + 1}\right)  + \sum_{i=1}^q d_i \left(c_i \bm{e}_1\right)
\end{eqnarray*}
holds, where $c_{ij}^{k} \in T$ and $d_i \in T$. By comparing with components of both sides, we get
$$
a = \sum_{i=1}^{\ell_n} \sum_{j=1}^{m_n}\left( c_{ij}^{(n)}a_{ij}^{(n)}\right) + \sum_{i=1}^{\ell_{n-1}} \sum_{j=1}^{m_{n-1}}\left( c_{ij}^{(n-1)}a_{ij}^{(n-1)}\right) + \cdots + \sum_{i=1}^{\ell_1} \sum_{j=1}^{m_1}\left( c_{ij}^{(1)}a_{ij}^{(1)}\right) + \sum_{i=1}^q d_i c_i
$$
and 
\begin{eqnarray*}
&&\sum_{j=1}^{m_n}\left(c_{ij}^{(n)}X_j^{(n)}\right) = 0 \ \  \ \text{for every} \ 1 \le i \le \ell_n, \\
&&\sum_{j=1}^{m_{n-1}}\left(c_{ij}^{(n-1)}X_j^{(n-1)}\right) = 0 \ \  \ \text{for every} \ 1 \le i \le \ell_{n-1}, \\
&&  \hspace{2em}\vdots\\
&&\sum_{j=1}^{m_1}\left(c_{ij}^{(1)}X_j^{(1)}\right) = 0 \ \  \ \text{for every} \ 1 \le i \le \ell_1.
\end{eqnarray*}
For each integers $1 \le k \le n$ and $1 \le i \le \ell_k$, we consider the Koszul complexes $\Bbb K = {\Bbb K}_\bullet(X_1^{(k)}, X_2^{(k)}, \ldots, X_{m_k}^{(k)} ; T)$ and $\Bbb L = {\Bbb L}_\bullet(a_{i1}^{(k)}, a_{i2}^{(k)}, \ldots, a_{im_k}^{(k)} ; T)$ of $T$ associated to the sequences $X_1^{(k)}, X_2^{(k)}, \ldots, X_{m_k}^{(k)}$ and $a_{i1}^{(k)}, a_{i2}^{(k)}, \ldots, a_{im_k}^{(k)}$, respectively. 
\if0
\[
\xymatrix{
	 & \Bbb K_1=\Bbb L_1 \ar[r]^{\partial_1^{\Bbb K}} \ar[d]^{\partial_1^{\Bbb L}} & K_0(\xx;S) = S \\
                 &   L_0(\yy;S) = S & 
}
\]
 and set $L = \partial_1^\Bbb L(\operatorname{Ker}~\partial_1^\Bbb K)$. Then because $\operatorname{Ker}~\partial_1^\Bbb K =\operatorname{Im}~\partial_2^\Bbb K$, we get the following.
\fi
As $\sum_{j=1}^{m_k}c_{ij}^{(k)} T_j \in \Ker \partial _1^{\Bbb K}$, we have 
$$
\sum_{j=1}^{m_k}c_{ij}^{(k)} a_{ij}^{(k)} \in \partial_1^{\Bbb L}(\Ker \partial_1^{\Bbb K}) = \rmI_2
\begin{pmatrix}
a_{i1}^{(k)} & a_{i2}^{(k)} & \cdots & a_{im_k}^{(k)} \\[4pt]
X_1^{(k)} & X_2^{(k)} & \cdots & X_{m_k}^{(k)}
\end{pmatrix} 
$$
by Lemma \ref{10.3}. Therefore, we have 
$$
\fka \subseteq \sum_{i=1}^{n}\sum_{j=1}^{\ell_i} \rmI_2
\begin{pmatrix}
a_{j1}^{(i)} & a_{j2}^{(i)} & \cdots & a_{j m_i}^{(i)} \\[4pt]
X_1^{(i)} & X_2^{(i)} & \cdots  & X_{m_i}^{(i)}
\end{pmatrix} + (c_1, c_2, \ldots, c_q).
$$
On the other hand, since $\Bbb N \cdot \Bbb M = O$, the equality $\overline{c_i} = (-1)\cdot c_i = 0$ holds in $K$ for each $1 \le i \le q$, so that $c_i \in \fka$. In addition, for each $1 \le i \le \ell_n$, we have
$$
\left(X_p^{(n)}a_{ij}^{(n)} - X_j^{(n)}a_{ip}^{(n)}\right) \bm{e}_1 = X_p^{(n)}\left(a_{ij}^{(n)}\bm{e}_1 + X_j^{(n)}\bm{e}_{i+1} \right) - X_j^{(n)}\left(a_{ip}^{(n)}\bm{e}_1 + X_p^{(n)}\bm{e}_{i+1}\right) \in L
$$
for every $1 \le p < j \le m_n$. This implies  $(-1) \left(X_p^{(n)}a_{ij}^{(n)} - X_j^{(n)}a_{ip}^{(n)} \right) = 0$ in $K$. Hence
$$
\rmI_2
\begin{pmatrix}
a_{i1}^{(n)} & a_{i2}^{(n)} & \cdots & a_{im_n}^{(n)} \\[4pt]
X_1^{(n)} & X_2^{(n)} & \cdots & X_{m_n}^{(n)}
\end{pmatrix} \subseteq \fka. 
$$
Besides, for each $1 \le i \le \ell_{n-1}$, we have
\begin{eqnarray*}
\left(X_p^{(n-1)}a_{ij}^{(n-1)} - X_j^{(n-1)}a_{ip}^{(n-1)}\right) \bm{e}_1 
\!\!\!&=&\!\!\! X_p^{(n-1)}\left(a_{ij}^{(n-1)}\bm{e}_1 + X_j^{(n-1)}\bm{e}_{i+\ell_n+1} \right) \\
&&- X_j^{(n-1)}\left(a_{ip}^{(n-1)}\bm{e}_1 + X_p^{(n-1)}\bm{e}_{i+\ell_n+1}\right) \in L
\end{eqnarray*}
for every $1 \le p < j \le m_{n-1}$, so that $X_p^{n-1}a_{ij}^{(n-1)} - X_j^{(n-1)}a_{ip}^{(n-1)} \in \fka$. Therefore
$$
\rmI_2
\begin{pmatrix}
a_{i1}^{(n-1)} & a_{i2}^{(n-1)} & \cdots & a_{im_{n-1}}^{(n-1)} \\[4pt]
X_1^{(n-1)} & X_2^{(n-1)} & \cdots & X_{m_{n-1}}^{(n-1)}
\end{pmatrix} \subseteq \fka. 
$$
Since the same technique works for $1 \le k \le n-2$, we conclude that
$$
\rmI_2
\begin{pmatrix}
a_{i1}^{(k)} & a_{i2}^{(k)} & \cdots & a_{im_k}^{(k)} \\[4pt]
X_1^{(k)} & X_2^{(k)} & \cdots & X_{m_k}^{(k)} 
\end{pmatrix} \subseteq \fka \ \ \ \text{for every} \ \ 1 \le i \le \ell_k.
$$
This finally provides the equality
$$
\fka = \sum_{i=1}^{n}\sum_{j=1}^{\ell_i} \rmI_2
\begin{pmatrix}
a_{j1}^{(i)} & a_{j2}^{(i)} & \cdots & a_{j m_i}^{(i)} \\[4pt]
X_1^{(i)} & X_2^{(i)} & \cdots  & X_{m_i}^{(i)}
\end{pmatrix} + (c_1, c_2, \ldots, c_q)
$$
as claimed. 
\end{proof}

We now apply Theorem \ref{MFR1} to Goto rings. 
Suppose that $\fka \subseteq \n^2$, $R$ is an $n$-Goto ring, and $v(R/\fkc) = 1$. Note that $\mu_R(\m) = \ell$. As $R$ is $n$-Goto and $v(R/\fkc) = 1$, we can choose a minimal system $x_1, x_2, \ldots, x_\ell$ of generators of $\m$ such that $\fkc = R:R[K] = (x_1^n, x_2, \ldots, x_{\ell})$.

For each $1 \le i \le n$, we set $I_i = (x_1^i, x_2, \ldots, x_{\ell})$. We then have a chain 
$$
R:K=\fkc = I_n \subsetneq I_{n-1} \subsetneq \cdots \subsetneq I_1 = \fkm
$$ of ideals, and by Theorem \ref{4.8} we have an isomorphism
$$
K/R \cong \bigoplus_{i=1}^n \left(R/I_i\right)^{\oplus \ell_i}
$$
of $R$-modules for some $\ell_n>0$, $\ell_i \ge 0$ ($1 \le i < n$) such that $\sum_{i=1}^n \ell_i = r-1$. We choose $X_i \in \n$ so that $x_i = \overline{X_i}$ in $R$. Letting $J_i = (X_1^i, X_2, \ldots, X_{\ell})$ for each $1 \le i \le n$, we have an isomorphism $T/J_i \cong R/I_i$ of $T$-modules, because $\ell_T(T/J_i) = \ell_R(R/I_i) = i$. Besides, since 
$
\fkc \subseteq \fkc K \subseteq \fkc R[K] = \fkc,
$
the equality $\fkc K= \fkc$ holds. Hence we have the following, which gives a necessary condition for $R=T/\fka$ to be an $n$-Goto ring.

%次は, Goto ringになる必要条件を与えている。

\begin{cor}\label{10.4a}
Suppose that $\fka \subseteq \n^2$. If $R$ is an $n$-Goto ring and $v(R/\fkc) = 1$, then the $T$-module $K$ has a minimal free presentation of the form $
F_1 \overset{\Bbb M}{\longrightarrow} F_0 \overset{\Bbb N}{\longrightarrow} K \to 0,
$
where
$$
\Bbb N= \left[
\begin{smallmatrix}
-1 & f_{n1} \cdots f_{n \ell_n} & f_{n-1,1} \cdots f_{n-1, \ell_{n-1}} & \cdots & f_{11} \cdots f_{1\ell_1}
\end{smallmatrix}\right]
$$
and 
$$
\! \Bbb M = \!\! \left[ 
\begin{smallmatrix}
a_{11}^{(n)} a_{12}^{(n)} \cdots a_{1\ell}^{(n)} & \cdots & a_{\ell_n1}^{(n)} a_{\ell_n2}^{(n)} \cdots a_{\ell_n \ell}^{(n)} &  & \cdots & &  &  \hspace{-0.5em}a_{11}^{(1)} a_{12}^{(1)} \cdots a_{1\ell}^{(1)} &\hspace{-0.5em} \cdots & \hspace{-0.5em}a_{\ell_11}^{(1)} a_{\ell_12}^{(1)} \cdots a_{\ell_1 \ell}^{(1)}  &  c_1 c_2 \cdots c_q \\\vspace{-0.5em}
\hspace{-0.5em} X^n_1 X_2 \cdots X_{\ell} &  &  &   &  &  & & &  & & 0 \\
 & \ddots &  &   &  &   &  & & &  &  0 \\ \vspace{-1em}
 &  & \hspace{-0.5em} X^n_1 X_2 \cdots X_{\ell} &  &  &   \\ \vspace{-0.5em}
 &  &  & \hspace{-0.5em} X_1^{n-1} X_2 \cdots X_{\ell} &  &  & & &  &   &  \vdots\\ 
 &  &  &  &  \ddots &  &  \\ \vspace{-0.5em}
 &  &  &  &  &  X_1^{n-1} X_2 \cdots X_{\ell} &   \\
 &  &  &  &  &  & \ddots &  &  & \\
 &  &  &  &  &  &  & X_1 X_2 \cdots X_{\ell} &  &  &   \\
 &  &  &  &  &  &  &  & \ddots &  & 0\\
 &  &  &  &  &  &  &  &  & X_1 X_2 \cdots X_{\ell} & 0\\
\end{smallmatrix}\right]
$$
with $a_{ij}^{(n)} \in J_n~(1 \le i \le \ell_n, \, 1 \le j \le \ell)$, $a_{ij}^{(k)} \in J_n~(1\le k < n, \, 1 \le i \le \ell_k, \, 2 \le j \le \ell)$, $a_{i1}^{(k)}\in \n ~(1 \le k < n, \, 1 \le i \le \ell_k)$, and $c_q \in \n~(q \ge 0)$. 
Moreover, one has the equality
$$
\fka = \sum_{i=1}^{n}\sum_{j=1}^{\ell_i} \rmI_2
\begin{pmatrix}
a_{j1}^{(i)} & a_{j2}^{(i)} & \cdots & a_{j \ell}^{(i)} \\[4pt]
X_1^i & X_2 & \cdots  & X_{\ell}
\end{pmatrix} + (c_1, c_2, \ldots, c_q).
$$
\end{cor}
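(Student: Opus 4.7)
The plan is to deduce Corollary \ref{10.4a} as a direct specialization of Theorem \ref{MFR1} applied to the particular chain of ideals $I_i=(x_1^i,x_2,\ldots,x_\ell)$ coming from the Goto hypothesis together with $v(R/\fkc)=1$. Thus what must be verified are exactly the hypotheses of Theorem \ref{MFR1} in this setting: the decomposition of $K/R$, the minimal generation data $m_i=\ell$, the fact that each $I_i$ lifts to an ideal $J_i\subseteq T$ generated by a $T$-regular sequence with $T/J_i\cong R/I_i$, and the equality $IK=I$.

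First I would record the decomposition $K/R\cong\bigoplus_{i=1}^n(R/I_i)^{\oplus \ell_i}$ with $\ell_n>0$ and $\sum_{i=1}^n\ell_i=r-1$; this is Theorem \ref{4.8}. Next, since $\fka\subseteq \n^2$, a minimal basis of $\m$ lifts to a regular system of parameters $X_1,\ldots,X_\ell$ of the regular local ring $T$, and in particular $\mu_R(I_i)=\ell$ so $m_i=\ell$ for every $i$. Because $T$ is regular and $X_1,\ldots,X_\ell$ is a regular sequence, so is $X_1^i,X_2,\ldots,X_\ell$, yielding that $T/J_i$ is a complete intersection of length $i$; since $\ell_T(T/J_i)=i=\ell_R(R/I_i)$, the canonical surjection $T/J_i\twoheadrightarrow R/I_i$ is an isomorphism. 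Finally, from $I=\fkc=R:R[K]$ we get $\fkc K\subseteq \fkc R[K]=\fkc$, and the reverse inclusion $\fkc\subseteq \fkc K$ is automatic (as $1\in K$), so $IK=I$. Hence every hypothesis of Theorem \ref{MFR1} is met, with the generating sequences $x_1^{(i)},\ldots,x_{m_i}^{(i)}$ chosen to be $x_1^i,x_2,\ldots,x_\ell$ and their lifts $X_1^i,X_2,\ldots,X_\ell$.

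Applying Theorem \ref{MFR1} then produces verbatim the displayed minimal $T$-free presentation $F_1\xrightarrow{\Bbb M}F_0\xrightarrow{\Bbb N}K\to 0$, with the block-diagonal columns of $\Bbb M$ carrying the sequences $X_1^i,X_2,\ldots,X_\ell$ (one diagonal block per basis vector of $(R/I_i)^{\oplus \ell_i}$) and the top rows carrying the coefficients $a^{(i)}_{jk}$ and the ``tail'' columns $c_1,\ldots,c_q$ which have zeros below. The membership conditions $a^{(n)}_{ij}\in J_n$, $a^{(k)}_{ij}\in J_n$ for $j\ge 2$, and $a^{(k)}_{i1}\in\n$, $c_q\in\n$ translate directly from the analogous statements in Theorem \ref{MFR1} once one notes that $J_n\cap T=J_n$ and the lift of $I_n=I$ is precisely $J_n$. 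The final description of $\fka$ as the sum of $2\times 2$ minors together with $(c_1,\ldots,c_q)$ is then the specialization of the ideal identity in Theorem \ref{MFR1}.

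There is no genuine obstacle here; the entire argument is a bookkeeping specialization, and the potentially delicate point—the coincidence $T/J_i\cong R/I_i$, which in Theorem \ref{MFR1} is imposed as an assumption—reduces in the Goto setting with $v(R/\fkc)=1$ to the length computation $\ell_T(T/J_i)=i=\ell_R(R/I_i)$ together with the fact that $\fka\subseteq \n^2$ forces $X_1,\ldots,X_\ell$ to be a regular system of parameters in $T$.
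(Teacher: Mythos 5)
Your proof takes essentially the same route as the paper: the corollary is obtained by verifying the hypotheses of Theorem \ref{MFR1} for the chain $I_i=(x_1^i,x_2,\ldots,x_\ell)$, namely the decomposition of $K/R$ from Theorem \ref{4.8}, the isomorphism $T/J_i\cong R/I_i$ via the length comparison $\ell_T(T/J_i)=i=\ell_R(R/I_i)$, and the equality $\fkc K=\fkc$ from $\fkc\subseteq\fkc K\subseteq\fkc R[K]=\fkc$; this is exactly the content of the paper's preamble to the corollary. The argument is correct.
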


%We summarize some consequences. 
The next corresponds to \cite[Corollary 7.10]{GTT} (resp. \cite[Corollary 2.3]{GIT}) for almost Gorenstein (resp. $2$-almost Gorenstein) rings.

%the ring has {\it minimal multiplicity}, i.e., 

%$e(R) = v(R) -dim A + 1$

\begin{cor}
With the same notation as in Theorem \ref{MFR1} and Corollary \ref{10.4a}, the following assertions hold true. 
\begin{enumerate}
\item[$(1)$] If $\ell = 3$ and $m_i = \ell$ for all $1 \le i \le n$, then $r = 2$, $q = 0$, $\ell_n=1$, and $\ell_i=0$ for all $1 \le i < n$. Hence, if $R$ is an $n$-Goto ring with $v(R/\fkc) = 1$, $\fka \subseteq \n^2$, and $\ell = 3$, the matrix $\Bbb M$ has the form
$$\Bbb M = 
\begin{pmatrix}
a_{11}^{(n)} & a_{12}^{(n)} & a_{13}^{(n)} \\[4pt]
X_1^{n} & X_2 & X_3
\end{pmatrix}.
$$ 
\item[$(2)$] If $R$ has minimal multiplicity and $m_i = \ell$ for all $1 \le i \le n$, then $q = 0$. Hence, if $R$ is $n$-Goto with minimal multiplicity, $v(R/\fkc) = 1$, and $\fka \subseteq \n^2$, then $q=0$.
\end{enumerate}
\end{cor}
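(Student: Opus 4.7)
My plan is to extend the presentation $F_1 \overset{\Bbb M}{\longrightarrow} F_0 \to K \to 0$ of Theorem \ref{MFR1} to a full minimal $T$-free resolution of $K$ and then apply an Euler characteristic argument, using duality to pin down the rank of the last free module. Since $R = T/\fka$ is Cohen-Macaulay of dimension $1$ over the regular local ring $T$ of dimension $\ell$, we have $\pd_T R = \ell - 1 = \pd_T K$. Let $0 \to G_{\ell-1} \to \cdots \to G_1 \to T \to R \to 0$ be the minimal $T$-free resolution of $R$, where $\rank_T G_{\ell-1} = \rmr(R) = r$. Applying $\Hom_T(-, T)$ and invoking $K \cong \Ext^{\ell-1}_T(R, T)$ produces a minimal free resolution of $K$ whose final term is $G_0^* = T$ of rank $1$; so the resolution has the shape
\[
0 \longrightarrow F_{\ell-1} \longrightarrow \cdots \longrightarrow F_1 \overset{\Bbb M}{\longrightarrow} F_0 \longrightarrow K \longrightarrow 0,
\]
with $\rank_T F_0 = r$, $\rank_T F_{\ell-1} = 1$, and, by Theorem \ref{MFR1} together with $m_i = \ell$ and $\sum_{i=1}^n \ell_i = r - 1$, $\rank_T F_1 = \ell(r-1) + q$.

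For part $(1)$, $\ell = 3$ and the resolution has length $2$, so noting that $K$ is $T$-torsion (as $\fka$ has positive grade), the Euler characteristic gives
\[
r - \bigl(3(r-1) + q\bigr) + 1 = 0, \qquad\text{i.e.,}\qquad q = 4 - 2r.
\]
Combined with $q \ge 0$ and $r \ge 2$ (Setup \ref{10.1} assumes $R$ is not Gorenstein), this forces $r = 2$ and $q = 0$. Using $\ell_n > 0$ from Theorem \ref{4.8} and $\sum_{i=1}^n \ell_i = r - 1 = 1$, we conclude $\ell_n = 1$ and $\ell_i = 0$ for all $1 \le i < n$. Substituting into the matrix of Corollary \ref{10.4a} leaves only the single block $(k, i) = (n, 1)$, yielding the displayed form of $\Bbb M$.

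For part $(2)$, Euler characteristic alone is insufficient when $\ell \ge 4$, so I plan to compute $\beta_1^T(K) = \rank_T F_1$ independently. Minimal multiplicity forces $r = v(R) - 1 = \ell - 1$ and, together with $\fka \subseteq \n^2$ and the Hilbert series $H_R(t) = (1 + (\ell-1)t)/(1-t)$, it pins down the Betti numbers of $R$ via the Eagon-Northcott-type resolution of the associated graded ring $\gr_\m R$; Herzog-K{\"u}hl then yields $\beta_{\ell-2}^T(R) = \ell(\ell-2)$. Dualizing gives $\beta_1^T(K) = \ell(\ell-2)$, and comparing with $\rank_T F_1 = \ell(r-1) + q = \ell(\ell-2) + q$ forces $q = 0$. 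The main obstacle is verifying that $\gr_\m R$ really has the pure linear Eagon-Northcott resolution required for Herzog-K{\"u}hl to apply in the local (non-graded) setting. In the Goto case with $v(R/\fkc) = 1$ that Corollary \ref{10.4a} considers, this can be circumvented: the explicit decomposition $K/R \cong \bigoplus_{i=1}^n (R/I_i)^{\oplus \ell_i}$ from Theorem \ref{4.8} combined with the block-diagonal presentation matrix $\Bbb B$ in the proof of Theorem \ref{MFR1} lets one read off $\beta_1^T(K/R)$ directly, after which the short exact sequence $0 \to R \to K \to K/R \to 0$ yields $\beta_1^T(K) = \ell(\ell-2)$ and hence $q=0$.
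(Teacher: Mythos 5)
Your argument for part (1) is essentially identical to the paper's: since $\dim T - \dim R = 2$, the minimal $T$-free resolution of $R$ has length two with ranks $1,\ r+1,\ r$; dualizing and comparing against the size $s = 3(r-1)+q$ of the presentation from Theorem~\ref{MFR1} gives $q = 4-2r$, and $q \ge 0$, $r \ge 2$ force $r=2$, $q=0$, hence $\ell_n = 1$ and $\ell_i = 0$ for $i<n$.

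Part (2), however, has a gap that you flag but do not close. The paper cites Sally's structure theorem for Cohen--Macaulay \emph{local} rings of minimal multiplicity (\cite[Theorem 1 (iii)]{S2}), which gives $\beta_{\ell-2}^T(R) = \ell(\ell-2)$ directly, with no graded hypothesis; dualizing then gives $\beta_1^T(K) = \ell(\ell-2)$, hence $q=0$. Your Herzog--K\"uhl route needs a graded pure resolution and, as you observe, is not available in the local setting. Your fallback via $0 \to R \to K \to K/R \to 0$ is circular: writing $k = T/\n$, the long exact sequence in $\Tor^T_*(-,k)$ gives a surjection $\Tor_1^T(K,k) \twoheadrightarrow \Tor_1^T(K/R,k)$ (the map $\Tor_0^T(R,k) \to \Tor_0^T(K,k)$ is injective because $1 \in R$ is a minimal generator of $K$), so
$$
\beta_1^T(K) \;=\; \beta_1^T(K/R) \;+\; \dim_k \Im\!\left(\Tor_1^T(R,k) \to \Tor_1^T(K,k)\right).
$$
With $m_i = \ell$ one has $\beta_1^T(K/R) = (r-1)\ell = \ell(\ell-2)$ and $\beta_1^T(K) = (r-1)\ell + q$, so that last image dimension is exactly $q$: the columns $c_1, \dots, c_q$ of $\Bbb M$ are precisely the minimal syzygies of $K$ supported on the copy of $R \subseteq K$ alone, which is what $\Im\bigl(\Tor_1^T(R,k) \to \Tor_1^T(K,k)\bigr)$ records. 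The short exact sequence therefore yields only the tautology $q=q$. An independent computation of $\beta_{\ell-2}^T(R)$ such as Sally's is genuinely needed to conclude.
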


\begin{proof}
$(1)$ Note that the ring $R$ has a minimal free resolution
$$
0 \longrightarrow F_2=T^{\oplus r} \overset{{}^t \Bbb M}{\longrightarrow} F_1 = T^{\oplus (r+1)} \longrightarrow F_0=T \longrightarrow R \longrightarrow 0
$$ 
of $T$-modules, where the matrix $\Bbb M$ has the form stated in Theorem \ref{MFR1}. 
Then
$$
r +1 = \rank_TF_1 = s = \left(\sum_{i=1}^n \ell_i\right) \cdot \ell + q = 3\cdot (r-1)+q,
$$
whence $4-2\cdot r=q \ge 0$. Thus $\rmr(R)=2$ and $q=0$, because $R$ is not  Gorenstein. Since $0 < \ell_n \le \sum_{i=1}^n \ell_i = 1$, we conclude that $\ell_n=1$ and $\ell_i=0$ for all $1 \le i < n$.

$(2)$ As $R$ has minimal multiplicity, we get $r=\ell-1$ and the equalities
$$
(r-1) \ell + q  =\left(\sum_{i=1}^n \ell_i\right) \cdot \ell + q= s = \ell (\ell -2)
$$
where the last equality follows from \cite[{\sc Theorem} 1 (iii)]{S2}. Hence $q=0$. 
\end{proof}

%は, canonical idealsのreduction numberが2であるような環の定義イデアルをexplicitに表現することができることを意味する。
Although we need a mild assumption on rings, Corollary \ref{10.4a} provides an explicit system of generators of defining ideals in numerical semigroup rings possessing canonical ideals with reduction number two. 

Let us note some examples illustrating Corollary \ref{10.4a}. 
The first example is a $3$-Goto ring with minimal multiplicity. We denote by ${}^t(-)$ the transpose of a matrix.

\begin{ex}%[おまけなし, $4, 11, 13, 14$]
Let $V = k[[t]]$ be the formal power series ring over a field $k$, and set $R = k[[t^4 ,t^{11}, t^{13}, t^{14}]]$. Hence, $R=k[[H]]$, the semigroup ring of the numerical semigroup $H=\left<4, 11, 13, 14\right>$. Then ${\rmc}(H) = 11$ and $\mathrm{PF}(H) =\{7, 9, 10\}$, whence $K=R + Rt + Rt^3$ and $R[K] =V$. This shows $K^2 = K^3$ and $\fkc = R:V = t^{\rmc(H)}V = t^{11}V = (t^{11}, t^{12}, t^{13}, t^{14}) =  \left((t^{4})^3, t^{12}, t^{13}, t^{14}\right)$. Hence $\ell_R(R/\fkc) = 3$ and $v(R/\fkc) = 1$. So $R$ is a $3$-Goto ring with minimal multiplicity.
%We are interested in the defining ideal $\fka$ of $R$.
Let $T=k[[X,Y,Z,W]]$ be the formal power series ring and $\varphi : T \to R$ the $k$-algebra map defined by $\varphi (X) = t^4$, $\varphi(Y)=t^{11}$, $\varphi(Z) = t^{13}$, and $\varphi(W) = t^{14}$. Then $R$ has a minimal $T$-free resolution
of the form
$$
\Bbb F : \ 0 \longrightarrow T^{\oplus 3} \overset{{}^t\Bbb M}{\longrightarrow} T^{\oplus 8} \longrightarrow T^{\oplus 6} \longrightarrow T \longrightarrow R \longrightarrow 0$$
where 
$$
\Bbb M =
\begin{bmatrix}
Z & -X^3 & -W & -XY & Y & W & X^4 & XZ \\
X^3 & Y & Z & W & 0 & 0 & 0 & 0 \\
0 & 0 & 0 & 0 & X^2 & Y & Z & W 
\end{bmatrix}. %\text{and}
$$
The $T$-dual of $\Bbb F$ gives rise to the minimal presentation
$$
T^{\oplus 8} \overset{\Bbb M}{\longrightarrow} T^{\oplus 3} \longrightarrow K \longrightarrow 0
$$
of the canonical fractional ideal $K$, so that $K/R \cong T/(X^3, Y, Z, W) \oplus T/(X^2, Y, Z, W)$. 
We then have 
$$
\Ker \varphi = {\rmI}_2
\begin{pmatrix}
Z & \hspace{-0.3em} -X^3 & \hspace{-0.3em} -W & \hspace{-0.3em} -XY \\
X^3 & Y & Z & W 
\end{pmatrix}
+
{\rmI}_2
\begin{pmatrix}
Y & W & X^4 & XZ \\
X^2 & Y & Z & W
\end{pmatrix}.
$$ 
\end{ex}

Note that one cannot expect $q = 0$ in general, see \cite[Example 7.11]{GTT}, \cite[Example 2.7]{GIT}. 
There is an example of a Goto ring with $q=0$ without assuming of minimal multiplicity.

%また, 極小重複度を持たなくても, $q=0$になることはある。

%although the assertion in Corollary \ref{10.4} holds true.
%5, 6, 7, 9 AGL Example 7.11 GTT
%5, 7, 9, 13 2-AGL Example 2.7 GIT (see also CGKM Example 5.5) 

\begin{ex}%$8, 13, 15, 17, 19, 22$
Let $V = k[[t]]$ be the formal power series ring over a field $k$, and set $R = k[[t^8 ,t^{13}, t^{15}, t^{17}, t^{19}, t^{22}]] = k[[H]]$, where $H=\left<8, 13, 15, 17, 19, 22\right>$. Note that ${\rmc}(H) = 21$ and $\mathrm{PF}(H) =\{9, 11, 14, 18, 20\}$. Then 
\begin{eqnarray*}
K \!\! &=& \!\! R + Rt^2 + Rt^6 + Rt^9  + Rt^{11} \\
K^2 \!\!\! &=& \!\!\! K+ \left(Rt^2 + Rt^6 + Rt^9  + Rt^{11}\right)^2 = K + Rt^4 \\
K^3 \!\!\! &=& \!\!\! K^2 +  \left(R+Rt^2 + Rt^6 + Rt^9  + Rt^{11}\right)t^4  = K^2
\end{eqnarray*}
while $\ell_R(K^2/K) = 3$. Hence $R$ is a $3$-Goto ring which does not have minimal multiplicity. Since $\fkc = R : R[K] = R:K = \left((t^{8})^3, t^{13}, t^{15}, t^{17}, t^{19}, t^{22}\right)$, we have $v(R/\fkc) = 1$. 

We now compute the generators of defining ideal of $R$.
Let $T=k[[X,Y,Z,W, V, F]]$ denote the formal power series ring and $\varphi : T \to R$ the $k$-algebra map defined by $\varphi (X) = t^8$, $\varphi(Y)=t^{13}$, $\varphi(Z) = t^{15}$, $\varphi(W) = t^{17}$, $\varphi(V) = t^{19}$, and $\varphi(F) = t^{22}$. Then $R$ has a minimal $T$-free resolution
of the form
$$
\Bbb F : \ 0 \longrightarrow T^{\oplus 5} \overset{{}^t\Bbb M}{\longrightarrow} T^{\oplus 24} \longrightarrow T^{\oplus 45} \longrightarrow T^{\oplus 40} \longrightarrow T^{\oplus 15} \longrightarrow T \longrightarrow R \longrightarrow 0$$
where \vspace{0.3em}%${}^t(-)$ denotes the transpose of a matrix and 
$$
{\footnotesize
\! \Bbb M = \!\! \left[
\begin{smallmatrix}
Y^2 & Z & W & V & XY & X^3 & -F & -V & \hspace{-0.2em} -XY & \hspace{-0.2em} -XZ & \hspace{-0.2em} -XW & \hspace{-0.2em} -YZ & -V & -X^3 & -Y^2 & \hspace{-0.2em} -YZ & -Z^2 & \hspace{-0.2em} -X^2W & -W & -F & -X^3 & -Y^2& \hspace{-0.2em} -YZ & \hspace{-0.2em}-X^2 Z\\[3pt]
X^3 & Y & Z & W & V & F & 0 & 0 & 0 & 0  & 0 & 0 & 0 & 0 & 0 & 0 & 0 & 0  & 0 & 0 & 0 & 0 & 0 & 0 \\[3pt]
0 & 0 & 0 & 0  & 0 & 0 & X^2 & Y & Z & W & V & F  & 0 & 0 & 0 & 0 & 0 & 0  & 0 & 0 & 0 & 0 & 0 & 0 \\[3pt]
0 & 0 & 0 & 0  & 0 & 0 & 0 & 0 & 0 & 0  & 0 & 0 & X & Y & Z & W & V & F  & 0 & 0 & 0 & 0 & 0 & 0   \\[3pt]
0 & 0 & 0 & 0  & 0 & 0  & 0 & 0 & 0 & 0  & 0 & 0  & 0 & 0 & 0 & 0 & 0 & 0  & X & Y & Z & W & V & F  \\[2pt]
\end{smallmatrix}\right]. %\text{and}
}\vspace{0.3em}
$$
By taking the $T$-dual of $\Bbb F$, we get the minimal presentation
$$
T^{\oplus 24} \overset{\Bbb M}{\longrightarrow} T^{\oplus 5} \longrightarrow K \longrightarrow 0
$$
of $K$. Then we have an isomorphism
$$
K/R \cong T/(X^3, Y, Z, W, V, F) \oplus T/(X^2, Y, Z, W, V, F) \oplus \left(T/(X, Y, Z, W, V, F)\right)^{\oplus 2}
$$ 
of $T$-modules and  \vspace{0.5em}
{\footnotesize
\begin{eqnarray*}
\ \  \Ker \varphi \!\!\!\! &=&  \!\!\! {\rmI}_2
\begin{pmatrix}
Y^2 & Z & W & V & XY & X^3  \\
X^3 & Y & Z & W & V & F 
\end{pmatrix}
+
{\rmI}_2
\begin{pmatrix}
-F & -V & \hspace{-0.3em} -XY & \hspace{-0.3em} -XZ & \hspace{-0.3em} -XW & \hspace{-0.3em} -YZ    \\
X^2 & Y & Z & W & V & F 
\end{pmatrix} \\[5pt]
\!\!\! &+& \!\!\!
{\rmI}_2
\begin{pmatrix}
-V & \hspace{-0.3em} -X^3 & \hspace{-0.3em} -Y^2 & \hspace{-0.3em} -YZ & \hspace{-0.3em} -Z^2 & \hspace{-0.3em} -X^2W  \\
X & Y & Z & W & V & F 
\end{pmatrix}
+ 
{\rmI}_2
\begin{pmatrix}
-W &\hspace{-0.3em} -F &\hspace{-0.3em} -X^3 &\hspace{-0.3em} -Y^2& \hspace{-0.3em} -YZ & \hspace{-0.3em}-X^2 Z  \\
X & Y & Z & W & V & F 
\end{pmatrix}.
\end{eqnarray*}
}
\end{ex}

\vspace{0.3em}

As we show next, Goto rings does not satisfy the condition $v(R/\fkc)=1$ in general. 
%また, Goto ringは$v(R/\fkc)=1$という条件を常に満たす訳ではない。

\begin{ex}%$7, 10, 22$
Let $V = k[[t]]$ be the formal power series ring over a field $k$, and set $R = k[[H]]$, where $H = \left<7, 10, 22\right>$. Then $\rmc(H) = 34$ and $\mathrm{PF}(H) =\{25, 33\}$. We have $K^2 = K+Rt^{16}$, $K^3 = K^2 + Rt^{24} = K^2$, and $\fkc=(t^{14}, t^{20}, t^{22}, t^{24})$. Hence $v(R/\fkc) = 2$, although $R$ is $4$-Goto because $\ell_R(K/R) = 4$ (see Lemma \ref{8.1}).

\end{ex}

%逆に, Goto ringになる十分条件（必要条件？）を考える。
We now investigate a sufficient condition for $R = T/\fka$ to be an $n$-Goto ring in terms of the presentation of fractional canonical ideals. In the following, we maintain Setup \ref{10.1}. 
We choose generators $f_{ij} \in K$ satisfying
$$
K=R + \sum_{i=1}^n\sum_{j=1}^{\ell_i}Rf_{ij}. 
$$
where $\ell_n > 0$, $\ell_i \ge 0$ ($1 \le i < n$) such that $\sum_{i=1}^n \ell_i = r-1$. 

%次の定理はAGLや2-AGLの完全な拡張である。が, 証明のアイディアは別にある。つまり, ルーティーンな拡張ではない。

The following provides a generalization of \cite[Theorem 7.8]{GTT} (resp. \cite[Theorem 2.9]{GIT}) for one-dimensional almost Gorenstein (2-almost Gorenstein) rings. 
%Note that this is not just routine generalization, and the proof requires a good deal of technical development.

%次の定理は1次元のAGLの定理や2-AGLの完全な拡張であるが, ルーティーンな拡張ではなく, テクニカルに新しい。

%2-AGLの証明は2に固有の性質を用いている。それをここでは一般化する。そのためのアイディアは, クルル-シュミットの定理である。

\begin{thm}\label{10.9}
Let $X_1, X_2, \ldots, X_{\ell} \in \n$ be a regular system of parameters of $T$. Suppose that $\fka \subseteq \n^2$ and the $T$-module $K$ has a minimal free presentation of the form 
$$
F_1 \overset{\Bbb M}{\longrightarrow} F_0 \overset{\Bbb N}{\longrightarrow} K \longrightarrow 0
$$
where ${\Bbb M}$ and ${\Bbb N}$ are the matrices of the form stated in Corollary \ref{10.4a}, satisfying the conditions that $a^{(n)}_{ij} \in J_n~(1 \le i \le \ell_n, \ 1 \le j \le \ell)$, $a^{(k)}_{ij} \in J_n~(1 \le k < n, \ 1 \le i\le \ell_k, \ 2 \le j \le \ell)$, and $a^{(k)}_{i1} \in J_k~(1 \le k < n, \ 1 \le i \le \ell_k)$, where $J_i = (X_1^i, X_2, \ldots, X_{\ell})$ for each $1 \le i \le n$. 
Then $R$ is an $n$-Goto ring and $v(R/\fkc) = 1$. 
\end{thm}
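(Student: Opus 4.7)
My plan is, in three stages: first extract the $T$-module structure of $K/R$ from the given presentation, then use the placement conditions on the entries to prove $I_n\cdot K = I_n$, and finally conclude via canonical-module length duality. Throughout I write $I_k = (x_1^k, x_2, \ldots, x_\ell)$ in $R$, where $x_i$ denotes the image of $X_i$ (which does form a minimal system of generators of $\m$ since $\fka\subseteq\n^2$).

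Since $R\subseteq K$ is generated by the image of the first basis vector of $F_0$ (which maps to $-1$), deleting the first row of $\Bbb M$ and the first entry of $\Bbb N$ should yield a $T$-presentation of $K/R$. The truncated matrix is visibly block-diagonal: each occurring pair $(k,i)$ contributes a block $[\,X_1^k\ X_2\ \cdots\ X_\ell\,]$, and the $c_j$-columns vanish entirely. A short check that no hidden relations on the $f_{ki}$'s survive (given $\sum c_{ki} f_{ki}\in R$ in $K$, writing $(0,c_{ki})$ as a $T$-combination of columns of $\Bbb M$ plus a multiple of $\bm{e}_1$ and comparing the non-first components forces each $c_{ki}\in J_k$) gives
$$
K/R \;\cong\; \bigoplus_{k=1}^{n}(T/J_k)^{\oplus \ell_k}.
$$
Because $K/R$ is an $R$-module and $\ell_n>0$, we obtain $\fka\subseteq J_n$, so $T/J_k = R/I_k$ for every $k$ with $\ell_k>0$; in particular $\ell_R(R/I_n)=n$ and $v(R/I_n)=1$ (the latter because $\m/I_n$ is principally generated by $x_1$). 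Taking $R$-annihilators yields $R:K = \ann_R(K/R) = \bigcap_{\ell_k>0}I_k = I_n$.

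The crucial technical step is $I_n K = I_n$. From $\Bbb N\,\Bbb M=0$ I read the relations $X_j^{(k)} f_{ki} = \overline{a_{ij}^{(k)}}$ in $K$, writing $X_1^{(k)}=X_1^k$ and $X_j^{(k)}=X_j$ for $j\ge 2$. For $j\ge 2$ the hypothesis $a_{ij}^{(k)}\in J_n$ gives $x_j f_{ki}\in I_n$ immediately. For the element $x_1^n$ of $I_n$: when $k=n$, the hypothesis $a_{i1}^{(n)}\in J_n$ gives $x_1^n f_{ni}\in I_n$ directly; when $k<n$, the factorization $x_1^n f_{ki} = x_1^{n-k}\cdot\overline{a_{i1}^{(k)}}$ together with $a_{i1}^{(k)}\in J_k$ places $x_1^n f_{ki}$ in $x_1^{n-k}I_k\subseteq I_n$. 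Combining the cases, $I_n f_{ki}\subseteq I_n$ for every $(k,i)$, so $I_n K = I_n$ and, by iteration, $I_n\cdot R[K] = I_n\subseteq R$. Therefore $I_n\subseteq \fkc=R:R[K]$, which together with the reverse inclusion $\fkc\subseteq R:K = I_n$ forces $\fkc = R:K = I_n$.

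Finally, I invoke the canonical-module duality $\ell_R(M/N) = \ell_R((K:N)/(K:M))$ (which follows from applying $\Hom_R(-,K)$ to $0\to N\to M\to M/N\to 0$ and using local duality to identify $\Ext^1_R(M/N,K)$ with the Matlis dual of $M/N$), valid for fractional ideals $R\subseteq N\subseteq M\subseteq\overline{R}$. Using $K:K=R$ and $K:K^2=(K:K):K=R:K$, applying this to $(M,N)=(K^2,K)$ and to $(M,N)=(R[K],K)$ gives
$$
\ell_R(K^2/K) \;=\; \ell_R(R/I_n) \;=\; n \;=\; \ell_R(R/\fkc) \;=\; \ell_R(R[K]/K).
$$
Since $K\subseteq K^2\subseteq R[K]$ and the two quotients share the common length $n$, we conclude $K^2 = R[K]$; hence $K^3 = K\cdot R[K]\subseteq R[K] = K^2$, giving $K^2=K^3$. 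By Proposition 4.2 this proves $R$ is an $n$-Goto ring, and $v(R/\fkc)=v(R/I_n)=1$ as recorded above. The main obstacle will be Step 1---rigorously verifying that deleting the first row of $\Bbb M$ presents $K/R$ exactly, with no spurious relations---since this requires the bookkeeping that shows every column of $\Bbb M$ mixing $\bm{e}_1$ with a higher basis vector $\bm{e}_{k,i}$ contributes exactly $X_j^{(k)}$ to the $(k,i)$-component of the truncation.
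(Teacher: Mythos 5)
Your first two stages coincide exactly with the paper's: the paper also truncates the minimal presentation of $K$ to get $K/R\cong\bigoplus_{i=1}^n(T/J_i)^{\oplus\ell_i}$ as a $T$-module, shows $\fkc\subseteq R:K=J_nR=I_n$, and reads the placement conditions on the entries of $\Bbb M$ out of $\Bbb N\cdot\Bbb M=O$ to get $I_nK\subseteq I_n$ (including the same factorization $x_1^n f_{ki}=x_1^{n-k}\cdot x_1^k f_{ki}\in x_1^{n-k}I_k\subseteq I_n$ for $k<n$). Where you diverge is the final stage, and your route is genuinely different and arguably cleaner. The paper, having established $\fkc=I_n$, extracts $K^2=K^3$ from the stabilization $R[K]=K^m$ and then proves $\ell_R(R/\fkc)=n$ by a Krull--Schmidt comparison: it sets $p=\ell_R(R/\fkc)\le n$, invokes Theorem~\ref{4.8} to get a second decomposition $K/R\cong\bigoplus_{i=1}^p(T/J_i')^{\oplus m_i}$ with $m_p>0$, and matches the two decompositions to force $p=n$. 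You instead note that $\ell_n>0$ and $K/R$ being an $R$-module force $\fka\subseteq J_n$, so $T/J_n=R/I_n$ and $\ell_R(R/I_n)=n$ is immediate; then you apply the canonical-duality length formula twice, with $(M,N)=(K^2,K)$ and $(M,N)=(R[K],K)$, using $K:K=R$, $K:K^2=R:K=I_n$, and $K:R[K]=R:R[K]=\fkc$, to get $\ell_R(K^2/K)=n=\ell_R(R[K]/K)$. Since $K\subseteq K^2\subseteq R[K]$, this forces $K^2=R[K]$ and in particular $K^2=K^3$ in one stroke. What your approach buys is a self-contained length computation that proves $K^2=K^3$ and $\ell_R(R/\fkc)=n$ simultaneously, with no appeal to Theorem~\ref{4.8} or Krull--Schmidt; what the paper's buys is avoiding the explicit duality bookkeeping and making visible the Ulrich-type structure of $K/R$ that the rest of the section exploits. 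One small presentational remark: your "short check that no hidden relations survive" is a slightly roundabout way to justify the truncation; the cleaner observation is that $\Bbb N^{-1}(R)=T\bm{e}_1+\Im\Bbb M$ (because $\Bbb N(T\bm{e}_1)=R$ and $\Ker\Bbb N=\Im\Bbb M$), so $K/R=F_0/(T\bm{e}_1+\Im\Bbb M)=\Coker\Bbb B'$, exactly as the paper asserts. This is not a gap in your proposal, just a simplification.
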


\begin{proof}
Let $\overline{(-)}$ be the image in $K/R$.
Note that $K/R$ has a minimal free presentation
$$
F_1 \overset{\Bbb B'}{\longrightarrow} G_0 \overset{\Bbb L}{\longrightarrow} K/R \longrightarrow 0
$$
of $T$-modules, where $\Bbb L=[
\begin{smallmatrix}
\overline{f_{n1}} \cdots \overline{f_{n \ell_n}} &  \cdots & \overline{f_{11}} \cdots \overline{f_{1\ell_1}}
\end{smallmatrix}]$, $\Bbb B' = \left[\, \Bbb B \mid  O \, \right]$, and \vspace{0.5em}
$$
\Bbb B =
\left[ 
\begin{smallmatrix}
X_1^n X_2 \cdots X_{\ell} & 0 & 0 & 0 & 0  & 0 & 0   \\
0 & \ddots & 0 & 0  & 0 & 0 & 0   \\
\vdots & \vdots & X_1^n X_2 \cdots X_{\ell} & \vdots & \vdots &  \vdots & \vdots  \\
\vdots & \vdots & \vdots & \ddots & \vdots & \vdots & \vdots \\[5pt]
0 & 0 & 0 & 0 & X_1 X_2 \cdots X_{\ell} & 0 & 0\\
0 & 0 & 0 & 0  & 0  & \ddots & 0  \\
0 & 0 & 0 & 0  & 0 & 0 & X_1 X_2 \cdots X_{\ell}
\end{smallmatrix}\right]. 
\vspace{0.5em}
$$
Here $G_0 = T^{\oplus \ell_n} \oplus T^{\oplus \ell_{n-1}} \oplus \cdots \oplus T^{\oplus \ell_1}$ and $O$ denotes the null matrix. This induces an isomorphism 
$$
K/R \cong \bigoplus_{i=1}^n \left(T/J_i\right)^{\oplus \ell_i} 
$$ 
of $T$-modules. By setting $I=J_nR$, we get
$$
\fkc = R:R[K] \subseteq R:K = \bigcap_{i=1}^n J_i  R = J_n R. 
$$
To show $K^2 = K^3$, it suffices to check the equality $\fkc=I$. Indeed, it is enough to show $IK \subseteq I$ because $R[K] = K^m$ for all $m \gg 0$. 
As $\Bbb N \cdot \Bbb M = O$, we obtain the equality
$$
a_{ij}^{(n)}\cdot (-1) + Z_j^{(n)} \cdot f_{ni} = 0 \ \ \ (1 \le i \le \ell_n, \ 1 \le j \le \ell)
$$
in $K$, where
$$
Z_j^{(k)} = 
\begin{cases}
X_1^k & (j=1) \\
X_j & (2 \le j \le \ell)
\end{cases}
$$
for each $1 \le k \le n$. Hence,  $Z_j^{(n)} \cdot f_{ni} = a_{ij}^{(n)}\cdot 1 \in J_n R = I$. Similarly, for each $1 \le k < n$, the equality 
$$
a_{ij}^{(k)}\cdot (-1) + Z_j^{(k)}\cdot f_{ki} = 0 \ \ \ (1 \le i \le \ell_k, \ 2 \le j \le \ell)
$$
guarantees that $Z_j^{(k)}\cdot f_{ki} \in I$. In addition, we have the equality
$$
a_{i1}^{(k)}\cdot (-1) + X_1^{k} \cdot f_{ki} = 0 \ \ \ (1 \le k < n, \ 1 \le i \le \ell_k)
$$
inside of $K$, so that $X_1^k \cdot f_{ki} \in J_kR = (X_1^k, X_2, \ldots, X_\ell)R$. Therefore we get $X_1^n \cdot f_{ki} = X_1^{n-k}\cdot X_1^k\cdot f_{ki} \in X_1^{n-k}J_kR \subseteq J_nR = I$. This shows $IK \subseteq I$, i.e., $\fkc = I$. In particular, $v(R/\fkc) = 1$.

It remains to show that $\ell_R(R/\fkc) = n$. We set $p = \ell_R(R/\fkc)$. A surjective homomorphism $T/J_n \to R/\fkc$ induces that $p=\ell_R(R/\fkc) \le \ell_T(T/J_n) = n$. Since $R$ is a $p$-Goto ring and $v(R/\fkc) = 1$, we can choose a minimal system $y_1, y_2, \ldots, y_{\ell}$ of generators of $\m$ such that $\fkc = (y_1^p, y_2, \ldots, y_{\ell})$. By setting $I_i' = (y_1^i, y_2, \ldots, y_\ell)$ for each $1 \le i \le p$, we get an isomorphism
$$
K/R \cong \bigoplus_{i=1}^p \left(R/I_i'\right)^{\oplus m_i}
$$
of $R$-modules for some $m_p>0$, $m_i \ge 0$ ($1 \le i < p$) such that $\sum_{i=1}^p m_i = r-1$. Choose $Y_i \in \n$ so that $y_i$ is the image of $Y_i$ in $R$. For each $1 \le i \le p$, we have a surjective map $T/J_i' \to R/I_i'$ and $\ell_T(T/J_i') = \ell_R(R/I_i') = i$, whence $T/J_i' \cong R/I_i'$ as a $T$-module. Therefore
$$
\bigoplus_{i=1}^n \left(T/J_i\right)^{\oplus \ell_i} \cong  K/R \cong \bigoplus_{i=1}^p \left(T/J_i'\right)^{\oplus m_i}.
$$
Since $\ell_n > 0$, the $T$-module $T/J_n$ is a direct summand of $K/R$. This induces $T/{J_n} \cong T/J_i'$ for some $1 \le i \le p$. Thus $n=\ell_T(T/J_n) = \ell_T(T/J_i') = i \le p \le n$, i.e., $n= \ell_R(R/\fkc)$. Hence $R$ is an $n$-Goto ring. %This completes the proof. 
\end{proof}
 
%We note the following. % which might be well-known, but let us include its proof for the sake of completeness.
 
\begin{rem}
%Let $(T, \n)$ be a regular local ring with $\ell = \dim T \ge 2$, and 
Let $X_1, X_2, \ldots, X_{\ell} \in \n$ be a regular system of parameters of $T$. We set 
$$
I = \rmI_2
\begin{pmatrix}
X_1^{\alpha_1} & X_2^{\alpha_2}  & \cdots &  X_{\ell-1}^{\alpha_{\ell-1}} & X_{\ell}^{\alpha_{\ell}}  \\[3pt]
X_2^{\beta_2}  & X_3^{\beta_3} & \cdots  & X_{\ell}^{\beta_{\ell}} & X_1^{\beta_1}
\end{pmatrix}
$$
where $\alpha_i, \beta_i \ge 1$ for each $1 \le i \le \ell$. Then we have
$$
I + (X_1) = 
\rmI_2
\begin{pmatrix}
0 & X_2^{\alpha_2}  & \cdots &  X_{\ell-1}^{\alpha_{\ell-1}} & X_{\ell}^{\alpha_{\ell}}  \\[3pt]
X_2^{\beta_2}  & X_3^{\beta_3} & \cdots  & X_{\ell}^{\beta_{\ell}} & 0
\end{pmatrix} + (X_1)
$$
which shows $I + (X_1)$ is an $\n$-primary ideal of $T$.
Hence $A = T/I$ is a Cohen-Macaulay local ring with $\dim A = 1$. 
\end{rem}

The Eagon-Northcott complex associated to a $2 \times \ell$ matrix $\Bbb M$ provides the resolution of the ring $R=T/\rmI_2(\Bbb M)$ (see \cite{EN}). Hence we get the following. 

\begin{cor}[cf. {\cite[Corollary 2.10]{GIT}}]\label{10.11}
Let $X_1, X_2, \ldots, X_{\ell} \in \n$ be a regular system of parameters of $T$.  Choose integers $p_1, p_2, \ldots, p_\ell >0$ such that $p_1\ge n \ge 2$ and set 
$$
\fka = \rmI_2
\begin{pmatrix}
X_1^n & X_2 & \cdots & X_{\ell-1} & X_\ell  \\[3pt]
X_2^{p_2} & X_3^{p_3} & \cdots & X_{\ell}^{p_{\ell}} & X_1^{p_1}
\end{pmatrix}.
$$
Then $R=T/\fka$ is an $n$-Goto ring, $v(R/\fkc) = 1$, and the $R/\fkc$-module $K/R$ is free  of rank $\ell-2$.
\end{cor}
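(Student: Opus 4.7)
The strategy is to exhibit a fractional canonical ideal of $R = T/\fka$ whose minimal $T$-free presentation fits the template of Theorem \ref{10.9}, and then invoke that theorem. The containment $\fka \subseteq \n^2$ is immediate, since each $2 \times 2$ minor of $\Bbb M$ is a difference of products of two entries of $\Bbb M$, each lying in $\n$. As observed in the preceding Remark, $\fka + (X_1)$ is $\n$-primary, so $\fka = \rmI_2(\Bbb M)$ is perfect of grade $\ell - 1$ and $R$ is Cohen-Macaulay of dimension one with $\rmr(R) = \ell - 1$; the Eagon-Northcott complex of $\Bbb M$ therefore furnishes a minimal $T$-free resolution of $R$.

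Dualizing the Eagon-Northcott complex yields a minimal presentation of $\rmK_R \cong \Ext^{\ell-1}_T(R,T)$. After rescaling to absorb the binomial-coefficient factors arising in the symmetric-power basis of the Eagon-Northcott differentials, the generators $g_0, g_1, \ldots, g_{\ell-2}$ of $\rmK_R$ are subject to the relations $\alpha_k g_a = \beta_k g_{a-1}$ for $1 \le a \le \ell - 2$ and $1 \le k \le \ell$, where $(\alpha_1, \alpha_2, \ldots, \alpha_\ell) = (X_1^n, X_2, \ldots, X_\ell)$ and $(\beta_1, \beta_2, \ldots, \beta_\ell) = (X_2^{p_2}, X_3^{p_3}, \ldots, X_\ell^{p_\ell}, X_1^{p_1})$ are the two rows of $\Bbb M$. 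The minor relations force $\alpha_k \beta_{k'} \equiv \alpha_{k'} \beta_k \pmod{\fka}$, so the ratio $\theta := \beta_k/\alpha_k \in \rmQ(R)$ is independent of $k$, and the assignment $g_a \mapsto \theta^a$ realizes $\rmK_R$ as the fractional canonical ideal
\[
K \;=\; R + R\theta + R\theta^2 + \cdots + R\theta^{\ell - 2} \;\subseteq\; \rmQ(R),
\]
which contains $R$. Setting $f_{n, i} := \theta^i$ for $1 \le i \le \ell - 2$, the template of Theorem \ref{10.9} is in place with $\ell_n = \ell - 2$ and $\ell_k = 0$ for $k < n$, provided each coefficient $a^{(n)}_{i, j} := \alpha_j \theta^i$ lies in $J_n = (X_1^n, X_2, \ldots, X_\ell)$.

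Telescoping along the cyclic pattern of $\Bbb M$ by using $\theta = \beta_m/\alpha_m$ for successive choices of $m$, one finds that $\alpha_j \theta^i$ is the monomial $X_{j+1}^{p_{j+1}-1} X_{j+2}^{p_{j+2}-1} \cdots X_{j+i-1}^{p_{j+i-1}-1}\, X_{j+i}^{p_{j+i}}$ (indices read cyclically modulo $\ell$), with the convention that whenever the telescoping crosses from index $\ell$ to $1$ the relevant $X_1$-factor picks up exponent $p_1 - n$, which is non-negative by $p_1 \ge n$. The final factor is either $X_{j+i}^{p_{j+i}}$ with $p_{j+i} \ge 1$ and $j + i \not\equiv 1 \pmod \ell$, in which case it lies in $(X_{j+i}) \subseteq J_n$, or else $X_1^{p_1}$, which lies in $(X_1^n) \subseteq J_n$ since $p_1 \ge n$. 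Hence $a^{(n)}_{i,j} \in J_n$ for all $i, j$, and Theorem \ref{10.9} yields that $R$ is an $n$-Goto ring with $v(R/\fkc) = 1$. The isomorphism $K/R \cong (R/\fkc)^{\oplus (\ell - 2)}$ constructed in the proof of Theorem \ref{10.9} from the data $\ell_n = \ell - 2$, $\ell_k = 0$ ($k < n$) is precisely the asserted freeness of $K/R$ of rank $\ell - 2$. The main obstacle of the argument is the explicit telescoping around the cyclic pattern of $\Bbb M$, and the verification that $p_1 \ge n$ is exactly the hypothesis that keeps every $a^{(n)}_{i, j}$ inside $J_n$.
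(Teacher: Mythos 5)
Your strategy is the same as the paper's: dualize the Eagon--Northcott resolution, recognize the fractional canonical ideal $K = R + R\theta + \cdots + R\theta^{\ell-2}$ with $\theta = \beta_k/\alpha_k$, and invoke Theorem~\ref{10.9}. Your explicit telescoping formula for $\alpha_j\theta^i$ as a monomial is a nice refinement of the paper's observation, and your derivation of the Eagon--Northcott relations $\alpha_k g_a = \beta_k g_{a-1}$ matches the paper's matrix~$\Bbb M'$.

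However, there is a genuine gap at the sentence ``the template of Theorem~\ref{10.9} is in place $\ldots$ provided each coefficient $a^{(n)}_{i,j} := \alpha_j\theta^i$ lies in~$J_n$.'' The Eagon--Northcott dual presentation you wrote down is \emph{not} in the form required by Theorem~\ref{10.9}: for the $a$-th block of columns with $a \ge 2$, the relation $\alpha_k g_a - \beta_k g_{a-1} = 0$ places the entry $\beta_k$ in row $a$, whereas the template demands that row $a$ be zero and all non-$X$ entries lie in row~$1$. Verifying that $\alpha_j\theta^i$ lands in $J_n$ tells you what the first-row entries \emph{would} have to be modulo $\fka$ \emph{if} a presentation of the prescribed shape existed, but it does not produce one. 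The missing step is the column reduction that moves the $\beta_k$-entries from row $a$ up to row~$1$; this works precisely because $(X_1^{p_1}, X_2^{p_2}, \ldots, X_\ell^{p_\ell}) \subseteq (X_1^n, X_2, \ldots, X_\ell) = J_n$ (using $p_1 \ge n$), which lets you express each $\beta_k$ as a $T$-combination of $\alpha_1, \ldots, \alpha_\ell$ and iteratively clear the off-template entries while keeping the new first-row entries inside $J_n$. The paper performs exactly this reduction; your argument uses $p_1 \ge n$ only inside the telescoping and never addresses why a presentation of the required block-diagonal shape exists at all.

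A second, smaller omission: you assert that the assignment $g_a \mapsto \theta^a$ \emph{realizes} $\rmK_R$ as $K$, i.e.\ that the induced surjection $\rmK_R \to K$ is an isomorphism. Surjectivity and well-definedness are clear, but injectivity requires justification; the paper shows it by localizing at the associated primes of $R$ and comparing lengths, using that $K$ is a fractional ideal. Your proof should include this step or an equivalent.
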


\begin{proof}
The ring $R=T/\fka$ is Cohen-Macaulay and of dimension one whose minimal free resolution as a $T$-module is given by the Eagon-Northcott complex associated to a matrix 
$$
\begin{bmatrix}
X_1^n & X_2 & \cdots & X_{\ell-1} & X_{\ell}  \\[3pt]
X_2^{p_2} & X_3^{p_3} & \cdots & X_{\ell}^{p_{\ell}} & X_1^{p_1}
\end{bmatrix}.
$$
By taking $T$-dual of the resolution, we get the minimal free presentation
$$
T^{\oplus \ell(\ell-2)} \overset{{\Bbb M}'}{\longrightarrow} T^{\oplus (\ell-1)} \overset{\varepsilon}{\longrightarrow} \rmK_R \longrightarrow 0
$$
of the canonical module $\rmK_R$ of $R$ where 
$$
{\Bbb M}'=\left[
\begin{smallmatrix}
X_2^{p_2} -X_3^{p_3} \cdots (-1)^{\ell+1}X_1^{p_1}
 & 0 &  &  &  &  \\
X_1^n -X_2 \cdots (-1)^{\ell+1}X_{\ell}
 & X_2^{p_2} -X_3^{p_3} \cdots (-1)^{\ell+1}X_1^{p_1}
 &    &  &  &  \\
 &  X_1^n -X_2 \cdots (-1)^{\ell+1}X_{\ell}
 &    &  &  &  \\
   &   & \ddots  & \\
   &   &  &  &  X_2^{p_2} -X_3^{p_3} \cdots (-1)^{\ell+1}X_1^{p_1}
 &  
\\   
   &   &  &  &  X_1^n -X_2 \cdots (-1)^{\ell+1}X_{\ell}
 &  X_2^{p_2} -X_3^{p_3} \cdots (-1)^{\ell+1}X_1^{p_1}
\\   
   &   &  &  &  0  & X_1^2 -X_2 \cdots (-1)^{\ell+1}X_{\ell} 
\end{smallmatrix}\right].
$$
For each $1 \le i \le n$, we denote by $x_i$ the image of $X_i$ in $R$. Note that 
$x_i \in \m$ is a non-zerodivisor on $R$,  $x_1^n \cdot x_1^{p_1} = x_2^{p_2}\cdot x_{\ell}$, and $x_i \cdot x_1^{p_1} = x_{i+1}^{p_{i+1}}\cdot x_{\ell}$  for all $2 \le i < \ell$.
By setting 
$$
y = \frac{x_2^{p_2}}{x_1^n}, \ \ 
f_i =
\begin{cases}
x_{i+1}^{p_{i+1}} & (1 \le i < \ell) \\
x_1^{p_1} & (i=\ell) 
\end{cases}, \ \ \text{and} \ \ 
g_i=
\begin{cases}
x_1^n & (i=1) \\
x_i & (2 \le i \le \ell) 
\end{cases},
$$
we see that $f_i = g_i \cdot y$ for every $1 \le i \le \ell$. Hence the equalities
$$
y^n = \frac{f_1}{g_1}\cdot \frac{f_2}{g_2}\cdots \frac{f_\ell}{g_\ell} = \frac{x_2^{p_2}}{x_1^n} \cdot \frac{x_3^{p_3}}{x_2} \cdots \frac{x_{\ell}^{p_{\ell}}}{x_{\ell-1}}\cdot \frac{x_1^{p_1}}{x_{\ell}} = x_1^{p_1-n}\cdot x_2^{p_2-1}\cdots x_{\ell}^{p_{\ell}-1} \in R
$$
hold, so that $y \in \overline{R}$. Let $K = \sum_{i=0}^{\ell-2}Ry^i$. Note that $K$ is an $R$-submodule of $\rmQ(R)$ and $R \subseteq K \subseteq \overline{R}$. We then have an isomorphism $K \cong \rmK_R$ of $R$-modules, i.e., $K$ is a fractional canonical ideal of $R$. 
In fact, we consider the $T$-linear map $\psi: T^{\oplus (\ell-1)} \longrightarrow K$ defined by $\psi (\bm{e}_i)= (-1)^{i}y^{i-1}$ for each $1 \le i < \ell$, where $\{\bm{e}_i\}_{1 \le i < \ell }$ denotes the standard basis of $T^{\oplus (\ell-1)}$. 
Since
$
\left[\begin{smallmatrix}
-1& y& -y^2& \cdots (-1)^{\ell-1}y^{\ell-2}
\end{smallmatrix}\right]
\cdot {\Bbb M}' = \bm{0},
$
we have a complex
$$
T^{\oplus \ell(\ell-2)} \overset{{\Bbb M}'}{\longrightarrow} T^{\oplus (\ell-1)} \overset{\psi}{\longrightarrow} K \longrightarrow 0
$$
of $T$-modules, and hence there is a surjective homomorphism $\sigma : \rmK_R \longrightarrow K$ with $\psi = \sigma \circ \varepsilon$. 
We are now assuming that $X = \Ker \sigma \ne (0)$, and seek a contradiction.  Let $\fkp \in \Ass_RX$. Then $\fkp \in \Ass_R \rmK_R$. As $K$ is a fractional ideal of $R$, we see that $K_{\fkp} \cong R_{\fkp}$. This yields an exact sequence
$$
0 \longrightarrow X_{\fkp} \longrightarrow \rmK_{R_{\fkp}} \longrightarrow R_{\fkp}  \longrightarrow 0
$$
of $R_{\fkp}$-modules, because $(\rmK_R)_\fkp \cong \rmK_{R_\fkp}$. The equality $\ell_{R_\fkp}(\rmK_{R_\fkp})= \ell_{R_\fkp}(R_\fkp)$ induces $X_\fkp=(0)$, which makes a contradiction. Therefore $\rmK_R \cong K$, so $K$ is a fractional canonical ideal of $R$, as claimed. In what follows, we identify $\rmK_R = K$ and $\varepsilon = \psi$. Hence 
$$
T^{\oplus \ell(\ell-2)} \overset{{\Bbb M}'}{\longrightarrow} T^{\oplus (\ell-1)} \overset{\psi}{\longrightarrow} K \longrightarrow 0
$$
gives a minimal $T$-free presentation of $K$. Since $(X_1^{p_1}, X_2^{p_2}, \ldots, X_{\ell}^{p_n}) \subseteq (X_1^{n}, X_2, \ldots, X_{\ell})$, after elementary column operations with coefficients in $T$ on the matrix $\Bbb M'$, we obtain the matrix $\Bbb M$ of the form \vspace{0.3em}
$$
\Bbb M =\left[ 
\begin{smallmatrix}
a_{11} a_{12} \cdots a_{1\ell} & a_{21} a_{22} \cdots a_{2\ell} &  \cdots  & \cdots &  a_{\ell-2, 1} a_{\ell-2, 2} \cdots a_{\ell-2, \ell}  \\[3pt]
X^n_1 X_2 \cdots X_\ell & 0  & \cdots & \cdots  & 0    \\[3pt]
0 & X^n_1 X_2 \cdots X_\ell & 0  & \cdots  & 0    \\[3pt]
0 & 0 &  \ddots &   & 0    \\[3pt]
\vdots & &  & \ \ddots & \vdots   \\[3pt]
0 & 0  & \cdots & \cdots & X_1^n X_2 \cdots X_\ell  \\[3pt]
\end{smallmatrix}\right] \vspace{0.3em}
$$
with $a_{ij} \in (X_1^{p_1}, X_2^{p_2}, \ldots, X_{\ell}^{p_\ell})$. By Theorem \ref{10.9}, the ring $R$ is $n$-Goto and $v(R/\fkc) = 1$. Moreover, we have the isomorphisms
$$
K/R \cong \left(T/(X_1^n, X_2, \ldots, X_\ell)\right)^{\oplus (\ell-2)} \cong (R/\fkc)^{\oplus (\ell-2)}. 
$$
This completes the proof.
\end{proof}

The simplest example of Corollary \ref{10.11} is stated as follows.  

\begin{ex}\label{10.12}
Let $\ell \ge 3$ be an integer and $T=k[[X_1, X_2, \ldots, X_{\ell}]]$ the formal power series ring over a field $k$. For given integers $m \ge n \ge 2$, the ring
$$
R=T/
\rmI_2
\begin{pmatrix}
X_1^n & X_2 & \cdots & X_{\ell-1} & X_{\ell}\\[3pt]
X_2 & X_3 & \cdots & X_{\ell} & X_1^m
\end{pmatrix}
$$
is $n$-Goto with $\dim R=1$ and $\rmr(R) = \ell -1$. 
\end{ex}

%%%%%%%%%%%%%%%%%%%%%%%%%%%%%%%%%%%%%%%%%%%%%%%%%%%%%%%%%%%%%%%%%%%%%%%%%%%%%%%%%%%%%%%%

%高次元の環を構成するには, 行列の成分に変数を付け加えれば良い。この構成法は, 最近のKumashiro-Matsuoka-Nakashimaのsection 4にもある。

\begin{rem}
We can construct higher dimensional Cohen-Macaulay rings in this direction is just adding the indeterminates to elements in the matrix. More precisely, let $\ell \ge 2$ be an integer and $T=k[[X_1, X_2, \ldots, X_{\ell}, Y_1, Y_2, \ldots, Y_{\ell}]]$  the formal power series ring over a field $k$. Then 
$$
A = T/\rmI_2
\begin{pmatrix}
X_1^{\alpha_1} + Y_1 & X_2^{\alpha_2} + Y_2 & \cdots & X_{\ell-1}^{\alpha_{\ell-1}}+ Y_{\ell -1} & X_{\ell}^{\alpha_{\ell}} + Y_{\ell}\\[3pt]
X_2^{\beta_2} & X_3^{\beta_3} & \cdots & X_{\ell}^{\beta_{\ell}} & X_1^{\beta_1}
\end{pmatrix}
$$
is a Cohen-Macaulay local ring with $\dim A = \ell + 1$, where $\alpha_i, \beta_i \ge 1$ for each $1 \le i \le \ell$.
The above $Y_i$ can be added to any element of the matrix, while the number of adding elements does not have to be $\ell$. If we add $Y_i$ for $m$ pieces, the ring has dimension $m+1$. 
The way of this construction has also appeared in \cite[Section 4]{KMN}. 
%上記の$Y_i$は行列のどの成分に付け加えても良いし, $\ell$個でなくても良い。$Y_i$を$m$個付け加えると, $A$の次元は$m+1$となる。
\end{rem}

%上記の例を用いて, 1次元Goto ringを高次元化すると次のようになる。次の例が示すように, 任意の$n, \ell$に対して, $\ell$次元$n$-Goto ringsを構成することができる。

The following ensures that, for given integers $n \ge 2$ and $\ell \ge 3$, there exists an example of $n$-Goto rings of dimension $\ell$. 

\begin{ex}\label{10.14}
Let $\ell \ge 3$ be an integer and $T=k[[X_1, X_2, \ldots, X_{\ell}, V_1, V_2, \ldots, V_{\ell-1}]]$ the formal power series ring over a field $k$. 
For given integers $m \ge n \ge 2$, 
$$
A=T/
\rmI_2
\begin{pmatrix}
X_1^n & X_2 + V_1 & \cdots & X_{\ell-1}+ V_{\ell -2} & X_{\ell} + V_{\ell-1}\\[3pt]
X_2 & X_3 & \cdots & X_{\ell} & X_1^m
\end{pmatrix}
$$
is an $n$-Goto ring with $\dim A=\ell$ and $\rmr(A) = \ell -1$. 
\end{ex}

\begin{proof}
Note that $A$ is a Cohen-Macaulay local ring with $\dim A = \ell$. We denote by $x_i~(1 \le i \le \ell)$ and $v_i~(1 \le i < \ell)$ the images of $X_i$ and $V_i$ in $A$, respectively. Let $X_{\ell + 1} = X_1^m$ for convention. We then have an isomorphism
$$
A/(v_1, v_2, \ldots, v_{\ell-1})A \cong k[[X_1, X_2, \ldots, X_{\ell}]]/
\rmI_2
\begin{pmatrix}
X_1^n & X_2 & \cdots & X_{\ell-1} & X_\ell  \\[3pt]
X_2 & X_3 & \cdots & X_{\ell} & X_1^{m}
\end{pmatrix}
=B 
$$
of rings. By Example \ref{10.12}, the ring $B$ is $n$-Goto and of dimension one admitting the fractional canonical module of the form $(x_1^n, x_2, \ldots, x_{\ell-1})B\cong \rmK_{B}$; hence $\rmr(B) = \ell-1$. 

We set $I = (x_1^n, x_2, \ldots, x_{\ell-1})$, $Q = (x_1^n, v_1, \ldots, v_{\ell-1})$, $\fkq=(v_1, \ldots, v_{\ell-1})$, and $J=I+Q$. Note that $Q$ is a parameter ideal of $A$. Thus $\rmr(A) =  \ell-1$. Because of the isomorphism
$$
A/I \cong 
k[[V_1, V_2, \ldots, V_{\ell-1}]]/
\rmI_2
\begin{pmatrix}
0 & V_1 & V_2 & \cdots & V_{\ell-1} \\
0 & 0 & 0 & \cdots & 0
\end{pmatrix}
=k[[V_1, V_2, \ldots, V_{\ell-1}]],
$$
the ring $A/I$ is Cohen-Macaulay and of dimension $\ell-1$, so that $\fkq \cap I = \fkq I$. We then have 
$$
I/\fkq I \cong (I+\fkq) /\fkq = (x_1^n, x_2, \ldots, x_{\ell-1})(A/\fkq) \cong \rmK_{(A/\fkq)} \cong \rmK_A/\fkq \rmK_A
$$
which yields that $I$ is maximal Cohen-Macaulay as an $A$-module and 
$\rmr_A(I) = \rmr_A(I/\fkq I) = \rmr_A(\rmK_A/\fkq \rmK_A) = \rmr_A(\rmK_A) = 1$. As $x_1^n \in I$ is a non-zerodivisor on $A$, the ideal $I$ is faithful. Hence  $I \cong \rmK_A$ as an $A$-module. 

By Theorem \ref{3.5}, it suffices to show that $v_1, v_2, \ldots, v_{\ell-1}$ forms a super-regular sequence of $A$ with respect to $J$. We actually prove $J^3 = QJ^2$. Indeed, since $J= Q + (x_2, x_3, \ldots, x_{\ell-1})$, the equality
$J^2 = QJ + (x_2, x_3, \ldots, x_{\ell-1})^2$ holds. By induction argument, it is straightforward to check that 
\begin{center}
$x_{j+1} x_i \in QI$ \ for all \ $1 \le j \le \lceil \frac{\ell}{2} \rceil -1$ \ and \ $j+1 \le i \le \ell -(j+1)$
\end{center}
where $\lceil-\rceil$ denotes the ceiling function. By setting $\alpha = \lceil \frac{\ell}{2} \rceil$, we have
\begin{eqnarray*}
J^2 = QJ &+& \sum_{j=1}^{\alpha-1} \left(x_{j+1}x_{\ell-j}, x_{j+1}x_{\ell-j + 1}, \ldots, x_{j+1}x_{\ell-2}, x_{j+1}x_{\ell-1}\right) \\ 
&+& \left(x_{\alpha+1}, x_{\alpha+2}, \ldots, x_{\ell-1}\right)^2
\end{eqnarray*}
which indues that the equalities
\begin{eqnarray*}
J^3 \!\!&=&\!\! QJ^2 + \sum_{j=1}^{\alpha-1} x_{j+1}\left(x_{\ell-j}, x_{\ell-j + 1}, \ldots, x_{\ell-1}\right) J + \left(x_{\alpha+1}, x_{\alpha+2}, \ldots, x_{\ell-1}\right)^2J \\
\!\!&=&\!\! QJ^2 + \sum_{j=1}^{\alpha-1} x_{j+1}\left(x_{\ell-j}, x_{\ell-j+1}, \ldots, x_{\ell-1}\right)^2 + \left(x_{\alpha+1}, x_{\alpha+2}, \ldots, x_{\ell-1}\right)^3
\end{eqnarray*}
hold. Focusing on the $\ell$-th column of the matrix $\left[\begin{smallmatrix}
X_1^n & X_2 + V_1 & \cdots & X_{\ell-1}+ V_{\ell -2} & X_{\ell} + V_{\ell-1}\\[3pt]
X_2 & X_3 & \cdots & X_{\ell} & X_1^m
\end{smallmatrix}\right]$, we obtain $x_j x_{\ell} \in QI$ for every $2 \le j \le \ell -1$. By induction argument again, we get 
\begin{center}
$x_{i+1} x_{\ell-k}x_j \in QJ^2$ \ for all \ $1 \le k \le \ell-(\alpha-1)$, $2 \le i+1 \le \ell -k$, and $2 \le j \le \ell -1$.
\end{center}
This shows 
$$
\sum_{j=1}^{\alpha-1} x_{j+1}\left(x_{\ell-j}, x_{\ell-j+1}, \ldots, x_{\ell-1}\right)^2 + \left(x_{\alpha+1}, x_{\alpha+2}, \ldots, x_{\ell-1}\right)^3 \subseteq QJ^2,
$$
and hence the equality $J^3 = QJ^2$ holds. By Lemma \ref{2.4}, the sequence $v_1, v_2, \ldots, v_{\ell-1} \in \fkq$ of $A$ forms super-regular with respect to $J$. Therefore, $A$ is an $n$-Goto ring with $\dim A = \ell$ and $\rmr(A) = \ell-1$. 
\end{proof}

\begin{cor}
Let $\ell \ge 3$ and $m \ge n \ge 2$ be integers. 
For each $2 \le i \le \ell$, we denote by  $T_i=k[[X_1, X_2, \ldots, X_{\ell}, V_1, V_2, \ldots, V_{i-1}]]$ the formal power series ring over a field $k$, and set
$$
A_i=T_i/
\rmI_2
\begin{pmatrix}
X_1^n & X_2 + V_1 & \cdots & X_{i}+ V_{i-1} & X_{i+1}  & \cdots & X_{\ell-1} & X_{\ell} \\[3pt]
X_2 & X_3 & \cdots & X_{i+1} & X_{i+2}&  \cdots & X_{\ell} & X_1^m
\end{pmatrix}. 
$$
Then $A_i$ is an $n$-Goto ring with $\dim A_i=i$ and $\rmr(A_i) = \ell -1$. 
\end{cor}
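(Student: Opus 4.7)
The plan is to deduce the corollary from Example~\ref{10.14} by iteratively modding out super-regular elements, via Theorem~\ref{3.5}.

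First, I observe that $A_i$ is a quotient of the ring $A_\ell$ from Example~\ref{10.14}. Setting $V_i, V_{i+1}, \ldots, V_{\ell-1}$ equal to zero in the defining matrix for $A_\ell$ produces exactly the defining matrix for $A_i$, yielding a natural identification
$$
A_i \cong A_\ell/(v_i, v_{i+1}, \ldots, v_{\ell-1})A_\ell,
$$
where $v_j$ denotes the image of $V_j$ in $A_\ell$.

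By Example~\ref{10.14}, $A_\ell$ is $n$-Goto with respect to the parameter ideal $Q_\ell = (x_1^n, v_1, \ldots, v_{\ell-1})$ and the extended canonical ideal $J_\ell = (x_1^n, x_2, \ldots, x_{\ell-1}) + Q_\ell$, with $\dim A_\ell = \ell$ and $\rmr(A_\ell) = \ell - 1$. I propose the following inductive procedure: assuming $B_{k+1} := A_\ell/(v_{k+1}, v_{k+2}, \ldots, v_{\ell-1})$ is $n$-Goto with respect to $Q_\ell B_{k+1}$ for some $i \le k+1 \le \ell$, Lemma~\ref{2.4}(1) applied to $Q_\ell B_{k+1}$ (whose ``$\fkq$-part'' is $(v_1, \ldots, v_k)B_{k+1}$) shows that $v_1, v_2, \ldots, v_k$ is a super-regular sequence on $B_{k+1}$ with respect to $J_\ell B_{k+1}$. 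Since these are homogeneous degree-one elements in the unique maximal homogeneous ideal of the $\ast$-local Noetherian graded ring $\gr_{J_\ell B_{k+1}}(B_{k+1})$ (whose degree-zero part $B_{k+1}/J_\ell B_{k+1}$ is Artinian local), the standard permutability of regular sequences of homogeneous elements implies that $v_k$ alone is super-regular. As $v_k$ is part of a minimal system of generators of the ``$\fkq$-part'' of $Q_\ell B_{k+1}$, Theorem~\ref{3.5} yields that $B_k := B_{k+1}/(v_k) = A_\ell/(v_k, \ldots, v_{\ell-1})$ is $n$-Goto with respect to $Q_\ell B_k$. Running this induction from $k+1 = \ell$ down to $k+1 = i$ (i.e., $\ell - i$ steps) establishes that $A_i = B_i$ is $n$-Goto.

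Since each super-regular element is a non-zerodivisor, every quotient drops dimension by one, giving $\dim A_i = \ell - (\ell - i) = i$. Moreover, the Cohen-Macaulay type is preserved under quotient by a regular sequence in a Cohen-Macaulay ring, so $\rmr(A_i) = \rmr(A_\ell) = \ell - 1$. The main technical point will be the permutability of the homogeneous regular sequence $v_1 t, \ldots, v_k t$ in $\gr_{J_\ell B_{k+1}}(B_{k+1})$, which is what allows us to kill the ``tail'' $v_i, v_{i+1}, \ldots, v_{\ell-1}$ rather than the ``head''; this is standard for $\ast$-local Noetherian graded rings.
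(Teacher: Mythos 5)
Your proof is correct and follows essentially the same route as the paper: both reduce to Example~\ref{10.14} and repeatedly apply Theorem~\ref{3.5} to kill the tail $v_i,\ldots,v_{\ell-1}$ of the super-regular sequence. The paper's own proof leaves the permutability of the homogeneous regular sequence $v_1t,\ldots,v_{\ell-1}t$ in the ${}^*$-local ring $\gr_J(A_\ell)$ implicit (since Lemma~\ref{2.4}(1) produces the sequence in the order $v_1,\ldots,v_{\ell-1}$, not tail-first), whereas you correctly identify and justify this step — a small but genuine improvement in rigor over the paper's terse argument.
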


\begin{proof}
The sequence of images of $V_1, V_2, \ldots, V_{\ell-1}$ in $A_{\ell}$ is super-regular with respect to $J=(X_1^n, X_2, \ldots, X_{\ell})A_{\ell} + (V_1, V_2, \ldots, V_{\ell-1})A_{\ell}$. Since $A_i \cong A_{\ell}/(V_i, V_{i+1}, \ldots, V_{\ell-1})A_{\ell}$, the assertion follows from Theorem \ref{3.5} and Example \ref{10.14}. 
\end{proof}
%%%%%%%%%%%%%%%%%%%%%%%%%%%%%%%%%%%%%%%%%%%%%%%%%%%%%%%%%%%%%%%%%%%%%%%%%%%%%%%%%%%%%%%%

\section{Equimultiple Ulrich ideals and Goto rings}\label{sec9}

In this section we define Ulrich ideals for equimultiple ideals to construct Goto rings. Let $(A, \m)$ be a Cohen-Macaulay local ring with $d = \dim A$ and $I ~(\ne A)$ an ideal of $A$. The analytic spread $\ell(I)$ of $I$ is the Krull dimesnion of the fiber cone $\calF(I) = \bigoplus_{n \ge 0}I^n/\m I^n$. 
%Ulrich idealsをequimultiple idealsに対して定義し, 高次元Goto ringsを構成する。
%Sally moduleのsectionの前に入れる？
%$\calF(I) = \gr_I(A) \otimes_A A/\m = \bigoplus_{n \ge 0}I^n/\m I^n$ is the fiber cone of $I$, the analytic spread of $I$ is defined by $\ell(I) = \dim \calF(I)$
A proper ideal $I$ is called {\it equimultiple} if $\ell(I) = \height_AI$, where $\height_AI$ denotes the height of $I$. 

The notion of Ulrich ideals is defined for $\m$-primary ideals and is one of the modifications of that of stable maximal ideals introduced in 1971 by his monumental paper \cite{L} of J. Lipman. The present modification (\cite[Definition 1.1]{GOTWY}) was formulated by S. Goto, K. Ozeki, R. Takahashi, K.-i. Watanabe, and K.-i. Yoshida \cite{GOTWY} in 2014, where the authors developed the basic theory, revealing that the Ulrich ideals of Cohen-Macaulay local rings enjoy a beautiful structure theorem  for minimal free resolutions. 

%Ulrich idealsの概念は, stable maximal idealsの一般化であり, $\m$-primary idealsに対して, GOTWYによって導入された。

%ここでは, 次のようにUlrich idealsの概念をequimultiple idealsに拡張する。

We extend the notion of Ulrich ideals to equimultiple ideals as follows, where $\rma(\gr_I(A))$ denotes the $\rma$-invariant of $\gr_I(A)$ (\cite[Definition (3.1.4)]{GW}).

\begin{defn}[{cf. \cite[Definition 1.1]{GOTWY}}]\label{ulrich}
Let $I$ be an equimultiple ideal of $A$ with $s=\height_AI$. 
We say that $I$ is an {\it Ulrich ideal} of $A$ if the following conditions are satisfied. 
\begin{enumerate}
\item[$(1)$] $\gr_I(A)$ is a Cohen-Macaulay ring with $\rma(\gr_I(A)) = 1-s$.
\item[$(2)$] $I/I^2$ is a free $A/I$-module.
\end{enumerate}
When $I$ contains a minimal reduction $Q=(a_1, a_2, \ldots, a_s)$, the condition $(1)$ of Definition \ref{ulrich} is equivalent to saying that $I \ne Q$, $I^2 = QI$, and $A/I$ is Cohen-Macaulay (see e.g., \cite[Remark (3.1.6)]{GW}, \cite[Corollary 2.7]{VV}).
%the sequence $a_1 t, a_2 t, \ldots, a_s t, a_{s+1}, a_{s+2}, \ldots, a_d$ of elements in $\calR(I)=A[It]$ forms a homogeneous system of parameters of $\gr_I(A)$, where $t$ denotes an indeterminate over $A$ and $a_{s+1}, a_{s+2}, \ldots, a_d$ is a system of parameters of $A/I$. 
In addition, if $I^2 =QI$, the condition $(2)$ holds if and only if $I/Q$ is free as an $A/I$-module (the proof of \cite[Lemma 2.3 (2)]{GOTWY} works for equimultiple ideals). 
The existence of minimal reductions is automatically satisfied, if the residue class field 
$A/\m$ of $A$ is infinite, or if $A$ is analytically irreducible and $\dim A = 1$.
Besides, if $A/\m$ is infinite, all the minimal reductions of $I$ are minimally generated by $\ell(I)$ elements. 
\end{defn}

%また, $I^2 = QI$が成り立つとき, $I/I^2$ is a free $A/I$-moduleと$I/Q$がfreeは同値である。\cite{GOTWY}と同じ証明がworkする。%$Q/QI$は常にfree $A/I$-mod?

%以下, height oneとなるUlrich idealsについて, reductionの存在を仮定し, 議論する。

In what follows, let $I$ be an equimultiple ideals of $A$ with $\height_AI=1$ and assume that $I$ contains a principal ideal $Q=(a_1)$ as a minimal reduction. 

\begin{prop}
Suppose that $I$ is an Ulrich ideal of $A$. Then the following assertions hold true. 
\begin{enumerate}
\item[$(1)$] The equality $A:I = I:I$ holds. 
\item[$(2)$] For every system $x_1, x_2, \ldots, x_{d-1}$ of parameters of $A/I$ which is a subsystem of parameters of $A$, we set $\fkq = (x_1, x_2, \ldots, x_{d-1})$ and $J=I+\fkq$. Then $J/\fkq$ is an Ulrich ideal of $A/\fkq$.
\end{enumerate}
\end{prop}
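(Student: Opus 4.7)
The plan is to isolate the auxiliary identity $Q :_A I = I$, from which (1) follows at once, and then for (2) to reduce to the one-dimensional Ulrich characterization by passing to $\overline{A} = A/\fkq$ and exploiting the preservation of freeness under quotient by a regular sequence.

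The key claim is that $Q :_A I = I$. Containment $I \subseteq Q :_A I$ is immediate from $I^2 = QI \subseteq Q$. For the reverse, the Ulrich hypothesis makes $I/Q$ a nonzero free (hence faithful) $A/I$-module; since $I \cdot (I/Q) = I^2/Q = 0$ in $A/Q$, the $A/Q$-action on $I/Q$ factors through $A/I$, and faithfulness forces $Q :_A I \subseteq I$. Granted this, (1) is immediate: $I:I \subseteq A:I$ is trivial, and for $\alpha \in A:I$ we have $(\alpha I) \cdot I = \alpha I^2 = a_1(\alpha I) \subseteq Q$, so $\alpha I \subseteq Q :_A I = I$, whence $\alpha \in I:I$.

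For (2), write $\overline{A} = A/\fkq$, $\overline{J} = J/\fkq$, and let $\overline{a}_1$ be the image of $a_1$. The first order of business is to assemble regular-sequence prerequisites. From the Ulrich hypothesis $A/I$ is Cohen-Macaulay of dimension $d-1$, so $x_1, \ldots, x_{d-1}$ is a regular sequence on $A/I$ and on $A$. Since $\sqrt{Q} = \sqrt{I}$, the ideal $(a_1, x_1, \ldots, x_{d-1})$ is $\m$-primary, so by Cohen-Macaulayness forms a regular sequence on $A$; in particular $\overline{a}_1$ is a non-zerodivisor on the one-dimensional Cohen-Macaulay ring $\overline{A}$, and $\overline{J}$ is $\m\overline{A}$-primary. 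A direct expansion of $J = I + \fkq$ using $I^2 = a_1 I$ gives $J^2 = a_1 I + \fkq J$, hence $\overline{J}^2 = \overline{a}_1 \overline{J}$; moreover $\overline{A}/\overline{J} = A/J$ is Cohen-Macaulay (Artinian).

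It remains to verify that $\overline{J}/\overline{a}_1\overline{A}$ is free over $A/J$. For this I would tensor the exact sequence $0 \to I/Q \to A/Q \to A/I \to 0$ with $A/\fkq$ over $A$. Regularity of $\fkq$ on $A/I$ kills $\Tor_1^A(A/\fkq, A/I)$, so the sequence
\[
0 \to (I/Q) \otimes_A (A/\fkq) \to A/(Q + \fkq) \to A/J \to 0
\]
is exact. The isomorphism $I/Q \cong (A/I)^n$ with $n = \mu_A(I) - 1 \ge 1$ combined with the regularity of $\fkq$ on $(A/I)^n$ identifies the leftmost term with $(A/J)^n$, and the modular-law identity $(Q + \fkq) \cap I = Q + (I \cap \fkq)$ (valid as $Q \subseteq I$) identifies its image in $\overline{A}$ with $\overline{J}/\overline{a}_1\overline{A}$. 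Hence $\overline{J}/\overline{a}_1\overline{A} \cong (A/J)^n$ is free of rank $n$, completing the check of the Ulrich conditions for $\overline{J}$. The technical crux is the $\Tor$-vanishing together with the intersection identity $I \cap \fkq = I\fkq$ it amounts to---precisely where the regularity of $\fkq$ on $A/I$ does the real work.
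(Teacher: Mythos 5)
Your proof is correct and takes essentially the same route as the paper's: for (1) both arguments hinge on the identity $Q:_A I = I$ coming from faithfulness of the free $A/I$-module $I/Q$, and for (2) both exploit the regularity of $\fkq$ on $A/I$ to deduce $\overline{J}/\overline{Q} \cong (A/\fkq)\otimes_A(I/Q)$ is free over $A/J$. The only presentational difference is that you obtain this in (2) from the vanishing of $\Tor_1^A(A/\fkq, A/I)$ applied to $0\to I/Q\to A/Q\to A/I\to 0$, whereas the paper writes out the equivalent direct isomorphism chain using $\fkq\cap I=\fkq I$ and the modular law.
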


\begin{proof}
$(1)$ Since $I/Q$ is free as an $A/I$-module, we get $I= Q:_AI =a(A:I)$. Hence the equalities
$$
A:I = \frac{I}{a} = I:I
$$
hold in $\rmQ(A)$, where the last follows from $I^2 = QI$.

$(2)$ Let $\overline{A} = A/\fkq$, $\overline{I} = I\overline{A}$, and $\overline{Q} = Q\overline{A}$. Then $\overline{I} = J/\fkq \cong I/\fkq \cap I = I/\fkq I$. We have $\overline{I}^2 = \overline{Q} \cdot \overline{I}$. If $\overline{I} = \overline{Q}$, then $I+\fkq = Q + \fkq$, so that $I = (Q + \fkq) \cap I = Q + \fkq I$. This shows $I=Q$, a contradiction. Thus $\overline{I} \ne \overline{Q}$. By setting $n= \mu_A(I)$,  we have $I/Q \cong (A/I)^{\oplus (n-1)}$. Therefore
$$
\overline{I}/\overline{Q}  \cong I/[(Q+\fkq)\cap I] = I/(Q + \fkq I) \cong A/\fkq \otimes_A I/Q \cong  (A/[I+\fkq])^{\oplus (n-1)} \cong (\overline{A}/\overline{I})^{\oplus (n-1)}
$$
and hence $\overline{I} = J/\fkq$ is an Ulrich ideal of $\overline{A} =A/\fkq$.
\end{proof}

We need a general lemma below. 

\begin{lem}\label{10.3}
Let $(A, \m)$ be a Cohen-Macaulay local ring with $d=\dim A$, $I$ an ideal of $A$ with $n=\dim A/I>0$, and $f_1, f_2, \ldots, f_n$ a system of parameters of $A/I$. Then there exists a subsystem $a_1, a_2, \ldots, a_n$ of parameters of $A$ such that the images of $f_i$ and $a_i$ in $A/I$ coincides for all $1 \le i \le n$. 
\end{lem}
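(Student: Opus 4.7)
The plan is to construct $a_1, a_2, \ldots, a_n$ inductively on $i$, maintaining the invariants that $a_i \equiv f_i \pmod{I}$ and that $a_1, a_2, \ldots, a_i$ forms a subsystem of parameters of $A$. Since $A$ is Cohen-Macaulay of dimension $d$, the latter is equivalent to requiring that every minimal prime of $(a_1, \ldots, a_i)$ has height exactly $i$, hence coheight $d-i$.

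For the inductive step, suppose $a_1, \ldots, a_{i-1}$ have been produced, and choose any lift $b_i \in A$ of $f_i$. Let $\p_1, \p_2, \ldots, \p_m$ be the minimal primes of $(a_1, a_2, \ldots, a_{i-1})$, so that $\dim A/\p_j = d-(i-1)$ for each $j$ by the Cohen-Macaulay hypothesis. By the coset form of prime avoidance, the existence of $c \in I$ with $a_i := b_i + c \notin \bigcup_j \p_j$ will follow once we verify $b_i + I \not\subseteq \p_j$ for every $j$. Once such an $a_i$ is chosen, Krull's height theorem yields $\height_A(a_1, \ldots, a_i) = i$, preserving the invariant.

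The key point is therefore to establish $b_i + I \not\subseteq \p_j$, which we handle by case analysis on whether $I \subseteq \p_j$. When $I \not\subseteq \p_j$, any element of $I \setminus \p_j$ added to $b_i$ produces an element of $b_i + I$ lying outside $\p_j$. When $I \subseteq \p_j$, the image $\p_j/I$ is a prime of $A/I$ containing $(f_1, \ldots, f_{i-1})$ with $\dim (A/I)/(\p_j/I) = d-(i-1)$. Were $f_i \in \p_j/I$, we would have $\dim (A/I)/(f_1, \ldots, f_i) \ge d-(i-1) = d-i+1$. However, since $f_1, \ldots, f_n$ is a system of parameters of $A/I$, its initial segment $f_1, \ldots, f_i$ is a subsystem of parameters of $A/I$, so $\dim (A/I)/(f_1, \ldots, f_i) = n - i \le d - i$, contradicting $d-i+1 > d-i$. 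Hence $b_i \notin \p_j$; combined with $I \subseteq \p_j$ this gives $b_i + I \not\subseteq \p_j$, as required.

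The main obstacle is precisely this dimension comparison in the case $I \subseteq \p_j$: one must leverage the Cohen-Macaulay hypothesis to pin down $\dim A/\p_j = d-(i-1)$ for \emph{every} minimal prime of $(a_1, \ldots, a_{i-1})$, and then exploit $n = \dim A/I \le d$ to close the estimate. The remaining ingredients, namely prime avoidance and Krull's height theorem, are entirely standard.
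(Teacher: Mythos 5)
Your proof is correct and follows essentially the same strategy as the paper's: construct the $a_i$ inductively by lifting $f_i$ modulo $I$ while using a coset version of prime avoidance to step outside the height-$(i-1)$ primes of $(a_1,\ldots,a_{i-1})$, with the Cohen--Macaulay hypothesis entering to pin down those heights and to convert between height and coheight. The only cosmetic difference is in how the key inequality is packaged: the paper computes $\height_A((f_1,\ldots,f_{i+1})+I)=d-n+i+1\ge i+1$ directly (using $\height\fka+\dim A/\fka = d$ in a CM local ring) and observes that this height exceeds that of each $\p\in\Assh_A A/(a_1,\ldots,a_i)$, whereas you run the explicit case analysis on whether $I\subseteq\p_j$ and phrase the contradiction in terms of $\dim(A/I)/(f_1,\ldots,f_i)=n-i$ versus $d-i+1$. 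These are the same inequality, $n\le d$, viewed from two equivalent sides; your version is a bit more verbose but makes the role of the case $I\subseteq\p_j$ explicit, which the paper leaves implicit.
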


\begin{proof}
We denote by $\Assh A$ the set of all prime ideals $\p$ in $A$ such that $\dim A/\fkp =d$. Since $\height_A((f_1) + I) = d-n+1 \ge 1$, we have $(f_1) + I \not\subseteq \bigcup_{\p \in \Assh A}\p$. Choose $x \in I$ such that $f_1 + x \not\in \p$ for every $\p \in \Assh A$. We set $a_1 = f_1 + x$. The classes of $f_1$ and $a_1$ in $A/I$ coincides. We are now assuming that there exists a subsystem $a_1, a_2, \ldots, a_i~(i<n)$ of parameters of $A$ such that the images of $f_j$ and $a_j$ in $A/I$ coincides for all $1 \le j \le i$. 
Then, because $\height_A((f_1, f_2, \ldots, f_{i+1}) + I) = d-n+i+1 \ge i+1$ and $\height_A \p = i$ for every $\p \in \Assh_A A/(a_1, a_2, \ldots, a_i)$, we get
$$
(f_{i+1}) + I \not\subseteq \bigcup_{\p \in \Assh_A A/(a_1, a_2, \ldots, a_i)}\p
$$
where $\Assh_A A/(a_1, a_2, \ldots, a_i)$ denotes the set of all  $\p \in \Supp_AA/(a_1, a_2, \ldots, a_i)$ such that $\dim A/\p = d-i$. Hence, we can choose $y \in I$ which satisfies $a_{i+1} = f_{i+1} + y \not\in \p$ for every $\p \in \Assh_A A/(a_1, a_2, \ldots, a_i)$. This completes the proof. 
\end{proof}

The goal of this section is stated as follows.

\begin{thm}\label{10.4}
Let $(A, \m)$ be a Gorenstein local ring with $d=\dim A>0$ and $I$ an equimultiple Ulrich ideal of $A$ with $\height_AI=1$ admitting its minimal reduction $Q=(a_1)$. Then the quasi-trivial extension $A(\alpha) = A \overset{\alpha}{\ltimes} I$ is an $n$-Goto ring for every $\alpha \in A$, where the sequence $a_2, a_2, \ldots, a_d$ is a system of parameters of $A/I$, $J=I + (a_2, a_2, \ldots, a_d)$, and $2n = \rme_0(J)$. In particular, the idealization $A \ltimes I$ and the fiber product $A\times_{A/I} A$ are $n$-Goto rings. 
\end{thm}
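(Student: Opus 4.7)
The plan is to reduce to the one-dimensional case via Theorem~\ref{3.5} and then apply Theorem~\ref{5.2}. Using Lemma~\ref{10.3}, we may arrange that $a_2, \ldots, a_d$ form a subsystem of parameters of $A$ with the prescribed images in $A/I$; this is possible because the Ulrich hypothesis forces $A/I$ to be Cohen-Macaulay of dimension $d-1$. Set $\fkq = (a_2, \ldots, a_d)$ and $\overline{A} = A/\fkq$. The depth lemma applied to $0 \to I \to A \to A/I \to 0$ gives $\depth_A I = d$, so $A(\alpha) = A \oplus I$ is a maximal Cohen-Macaulay $A$-module and hence $A(\alpha)$ is Cohen-Macaulay of dimension $d$. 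The sequence $(a_2, 0), \ldots, (a_d, 0)$ is a regular sequence on $A(\alpha)$ generating $\fkq_1 = \fkq \oplus \fkq I$; Koszul vanishing (since $a_2, \ldots, a_d$ is regular on $A/I$) yields $\fkq \cap I = \fkq I$, whence
\[
A(\alpha)/\fkq_1 \;\cong\; (A/\fkq) \overset{\alpha}{\ltimes} (I/\fkq I) \;\cong\; \overline{A} \overset{\alpha}{\ltimes} \overline{I},
\]
where $\overline{I} = J/\fkq$, identified with $I/\fkq I$, is an ideal of the one-dimensional Gorenstein ring $\overline{A}$.

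By the proposition preceding Theorem~\ref{10.4}, $\overline{I}$ is an Ulrich ideal of $\overline{A}$ with minimal reduction $(\overline{a_1})$. Apply Theorem~\ref{5.2} with $R = \overline{A}$, $K = \overline{A}$, and $T = \overline{A}:\overline{I} = \overline{I}:\overline{I}$: this $T$ is a birational module-finite extension of $\overline{A}$ with $K \subseteq T$ and $R \ne T$ (since $\overline{I} \ne (\overline{a_1})$), and reflexivity in the one-dimensional Gorenstein ring $\overline{A}$ gives $R:T = \overline{I}$. Theorem~\ref{5.2} then yields that $\overline{A} \overset{\alpha}{\ltimes} \overline{I}$ is $m$-Goto for $m = \ell_{\overline{A}}(\overline{A}/\overline{I})$.

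To identify $m$ with $n = \rme_0(J)/2$, the key is $\mu(\overline{I}) = 2$. From $\overline{A}:\overline{I} = \overline{I}/\overline{a_1}$ one obtains $(\overline{a_1}):_{\overline{A}}\overline{I} = \overline{a_1}(\overline{A}:\overline{I}) = \overline{I}$, and Matlis duality in the zero-dimensional Gorenstein ring $\overline{A}/(\overline{a_1})$ gives
\[
\omega_{\overline{A}/\overline{I}} \;\cong\; \Hom_{\overline{A}/(\overline{a_1})}(\overline{A}/\overline{I},\overline{A}/(\overline{a_1})) \;=\; \overline{I}/(\overline{a_1}) \;\cong\; (\overline{A}/\overline{I})^{\oplus(\mu(\overline{I})-1)},
\]
the last isomorphism being Ulrich freeness; length comparison forces $\mu(\overline{I})-1=1$. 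From the short exact sequence $0 \to \overline{I}/(\overline{a_1}) \to \overline{A}/(\overline{a_1}) \to \overline{A}/\overline{I} \to 0$, together with the fact that $Q = (a_1) + \fkq$ is a parameter reduction of $J$ (so $\rme_0(J) = \ell_A(A/Q) = \ell_{\overline{A}}(\overline{A}/(\overline{a_1}))$), one concludes $\rme_0(J) = \mu(\overline{I}) \cdot m = 2m$, hence $m = n$.

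To lift the $n$-Goto property from $A(\alpha)/\fkq_1$ back to $A(\alpha)$ we iterate Theorem~\ref{3.5} $d-1$ times. Take $\calQ = ((a_i, 0))_{i=1}^{d} A(\alpha) = Q \oplus QI$ as the parameter ideal of $A(\alpha)$ and $\calI_0 = (a_1,0)\cdot\calL$ as the candidate canonical ideal, realized inside $A(\alpha)$ as $I \oplus (a_1)$ via $\calL = (A:I) \times A$ sitting inside $(A:I) \overset{\alpha}{\ltimes} (A:I)$ (the higher-dimensional analog of Fact~\ref{5.1}(2)). Because $A(\alpha)/\fkq_1$ is $n$-Goto with $n \ge 1$, its extended canonical ideal has reduction number exactly two over the induced parameter ideal, which lifts to $(\calI_0 + \calQ)^3 = \calQ(\calI_0 + \calQ)^2$ in $A(\alpha)$; Lemma~\ref{2.4}(1) then supplies super-regularity of $(a_2,0),\ldots,(a_d,0)$ with respect to $\calI_0 + \calQ$, so Theorem~\ref{3.5} transfers the $n$-Goto property in each reduction step. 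The anticipated main obstacle is rigorously identifying $\calL$ as a fractional canonical ideal of $A(\alpha)$: the $A(\alpha)$-module structure inherited from the ambient quasi-trivial extension differs from the natural action on $\Hom_A(A(\alpha), A)$ by an $\alpha$-twist, so the requisite isomorphism must be constructed with care (this is the higher-dimensional counterpart of the verification underlying Fact~\ref{5.1}). The final statements for $A \ltimes I$ and $A \times_{A/I} A$ are the specializations $\alpha = 0$ and $\alpha = 1$.
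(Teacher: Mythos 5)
Your overall strategy matches the paper's: reduce to dimension one by slicing along $a_2,\ldots,a_d$, apply Theorem~\ref{5.2} to $\overline{A}\overset{\alpha}{\ltimes}\overline{I}$ with $T=\overline{A}:\overline{I}$, and identify $n$ via $\mu(\overline{I})=2$. Your Matlis-duality argument for $\mu(\overline{I})=2$ and the resulting multiplicity bookkeeping are correct and actually more explicit than what the paper writes. However, the lift back to $A(\alpha)$ contains a circularity that the paper avoids by a different route.

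The problematic step is: \emph{``Because $A(\alpha)/\fkq_1$ is $n$-Goto $\ldots$ its extended canonical ideal has reduction number exactly two over the induced parameter ideal, which lifts to $(\calI_0+\calQ)^3=\calQ(\calI_0+\calQ)^2$ in $A(\alpha)$; Lemma~\ref{2.4}(1) then supplies super-regularity.''} To lift the equation $\overline{\calJ}^3=\overline{\calQ}\,\overline{\calJ}^2$ from the quotient you need control of $\fkq_1\cap\calJ^3$, i.e.\ you need super-regularity of $\fkq_1$ with respect to $\calJ$ \emph{already} (a priori one only gets $\calJ^3\subseteq\calQ\calJ^2+\fkq_1\cap\calJ^3$). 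But Lemma~\ref{2.4}(1) derives super-regularity from the hypothesis $\calS_\calQ(\calJ)=\calT[\calS_\calQ(\calJ)]_1$, which is equivalent to $\calJ^3=\calQ\calJ^2$. So you are using the conclusion to justify the step that yields it. The paper breaks this circle by exploiting the Gorensteinness of $A$: since $A$ is Gorenstein, $J^2 = Q_1J$ holds directly in $A$ (using $\fkq\cap I=\fkq I$ and $\overline{I}^2=\overline{Q}\cdot\overline{I}$), which gives the super-regularity of $a_2,\ldots,a_d$ with respect to $J$ from the Valabrega--Valla criterion rather than from Lemma~\ref{2.4}(1); this is then used to control the slicing, and the missing verification that $\calL=(A:I)\times A$ is a fractional canonical ideal of $A(\alpha)$ and that the super-regularity transfers to $\calJ$ in $A(\alpha)$ is the content of the higher-dimensional analogue of Fact~\ref{5.1}(2), not something one can deduce from the quotient. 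Your final paragraph candidly flags that this identification is the ``main obstacle''---that is the right instinct, but the gap is genuine: without it (or without the direct $J^2=Q_1J$ computation), Theorem~\ref{3.5} cannot be invoked for $A(\alpha)$.
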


\begin{proof}
Suppose $d=1$. Then $J=I$. As $\mu_A(I) = 2$, we have $I/Q \cong A/I$ as an $A$-module. This yields that $\rme_0(I) = \ell_A(A/Q) = 2\cdot \ell_A(A/I)$ (\cite[Lemma 2.1]{GMP}). We set $T = A:I$. Note that $T$ is a birational module-finite extension of $A$, $A \ne T$, and $I = A:T$. Thanks to Theorem \ref{5.2}, the quasi-trivial extension $A(\alpha) = A \overset{\alpha}{\ltimes} I$ is $n$-Goto for every $\alpha \in A$, where $n$ is a half of $\rme_0(I)$. 

Suppose that $d \ge 2$. By Lemma \ref{10.3}, we may assume the sequence $a_2, a_2, \ldots, a_d$ forms a subsystem of parameters of $A$. Let $\overline{A}=A/\fkq$, $\overline{I} = I\overline{A}$, and $\overline{Q} = Q\overline{A}$. Then, for each $\alpha \in A$, we have the canonical isomorphisms
$$
A(\alpha)/\fkq A(\alpha) \cong \overline{A} \otimes_A (A \overset{\alpha}{\ltimes} I) \cong \overline{A} \overset{\overline{\alpha}}{\ltimes} \overline{I}
$$
where $\overline{\alpha}$ denotes the image of $\alpha$ in $\overline{A}$. We consider $A(\alpha)$ as an $A$-algebra via the homomorphism $\xi : A \to A(\alpha), a \mapsto (a, 0)$. Thus, the regular sequence $a_2, a_3, \ldots, a_d$ on $A$ forms a regular sequence on $A(\alpha)$ as well. We set $Q_1=Q + \fkq$. Then, because $\overline{I}^2 = \overline{Q} \cdot \overline{I}$, we have
$$
J^2 = [Q_1J + \fkq] \cap J^2 = Q_1 J + \fkq \cap J^2 = Q_1J + \fkq J = Q_1J
$$
where the third equality follows from $\fkq \cap I = \fkq I$ (remember that $\fkq$ is a parameter ideal of $A/I$). This induces that $\fkq \cap J^{m+1}=\fkq J^m$ for every $m \in \Bbb Z$; hence $a_2, a_3, \ldots, a_d$ forms a super-regular sequence of $A$ with respect to $J$. 
Since $A(\alpha)/\fkq A(\alpha) \cong \overline{A} \overset{\overline{\alpha}}{\ltimes} \overline{I}$ is an $n$-Goto ring with $2n = \rme_0(\overline{I})$ and $\rme_0(J) = \rme_0(\overline{I})$, by Theorem \ref{3.5} we conclude that $A(\alpha)$ is an $n$-Goto ring and $n$ is the half of $\rme_0(J)$. 
\end{proof}

%\begin{rem}
%symmetric, associativeの話。
%\end{rem}

We note an example. 

\begin{ex}\label{9.5e}
Let $d\ge 2$, $n \ge 1$, and $\ell \ge 2$ be integers. 
Let $(S, \n)$ be a regular local ring with $\dim S =d+1$ admitting its regular system $X_1, X_2, \ldots, X_d, Y$ of parameters. We set
$$
A = S/(Y^{2n}-X_1^{\ell}X_2\cdots X_d) \ \ \ \text{and} \ \ \ I=(x_1, y^n)
$$
where $x_i~(1 \le i \le d)$ and $y$ denote the images in $A$, respectively. Then $I$ is an Ulrich ideal of $A$ with $\height_AI=1$. Therefore the quasi-trivial extension $A(\alpha) = A \overset{\alpha}{\ltimes} I$ is an $n$-Goto ring for every $\alpha \in A$. 
\end{ex}

\begin{proof}
The ring $A/I \cong S/(X_1, Y^n)$ is Gorenstein and of dimension $d-1$. We set $Q=(x_1)$. Since $y^{2n} = x_1^{\ell}x_2\cdots x_d$ in $A$, we have 
$I^2 = QI + (y^{2n}) = QI$. If $I=Q$, then $y^n \in Q$, so that $Y^n \in (X_1, Y^{2n})$. This is impossible. Hence $I \ne Q$. As $I/Q$ is a cyclic $A/I$-module generated by the class of $y^{\ell}$, there is a surjective homomorphism $\varphi: A/I \to I/Q$. Let $X = \Ker \varphi$. We set $\fkq = (x_2, x_3, \ldots, x_d)$, $Q_1 = Q + \fkq$, and $J=I+\fkq$. By taking the functor $A/\fkq \otimes_A (-)$ to the surjective map, we obtain the exact sequence
$$
X/\fkq X \to A/J \to J/Q_1 \to 0
$$
of $A$-modules, because $I/(\fkq I + Q) \cong J/Q_1$. Note that $A/Q \cong S/(X_1, X_2, \ldots, X_d, Y^{2n})$ and $A/J \cong S/(X_1, X_2, \ldots, X_d, Y^{n})$. This shows that $\ell_A(J/Q_1) = n$. Therefore $X/\fkq X = (0)$ and hence $X=(0)$. Consequently, $I=(x_1, y^n)$ is an Ulrich ideal of $A$ with $\height_AI=1$. Besides, because $x_2, x_3, \ldots, x_d$ is a super-regular sequence of $A$ with respect to $J$, by passing through the ring $A/\fkq$ we get $\rme_0(J) = \ell_A(A/Q_1) = 2n$. Hence $A(\alpha)= A \overset{\alpha}{\ltimes} I$ is an $n$-Goto ring for every $\alpha \in A$. 
\end{proof}

When $\ell=1$, the ideal $I=(x_1, y^n)$ in Example \ref{9.5e} is not necessarily an Ulrich ideal. 

\begin{ex}
Let $S=k[[X, Y, Z]]$ be the formal power series ring over a field $k$. We set $A=S/(Z^2 - XY)$ and $\fkp =(x, z)$, where $x$ and $z$ stand for the images of $X$ and $Z$, respectively. Note that $A$ is a normal domain whose divisor class group $\Cl(A) \cong {\Bbb Z}/2 {\Bbb Z}$ is generated by the class $\cl(\fkp)$ of $\fkp$ (\cite[Theorem]{G}). Thus $\cl(\fkp) \ne 0$. If $\p$ has a principal reduction $Q=(a)$, we have $\p^{r+1} = a\p^r$ for some $r \ge 0$. Then $(r+1)\cl(\p) = r\cl(\p)$, so that $\cl(\p) = 0$. This makes a contradiction. Hence $\fkp=(x, z)$ is not an Ulrich ideal of $A$. 
\end{ex}

%%%%%%%%%%%%%%%%%%%%%%%%%%%%%%%%%%%%%%%%%%%%

%%%%%%%%%%%%%%%%%%%%%%%%%%%%%%%%%%%%%%%%%%%%%%%%%%%%%%%%%%%%%%%%%%%%%%%%%%%%%%%%%%%%%%%%%%%%%%%%%%%%%%%%%%%%%%%%%%%%%%%%%%%%%%%%%%%%%%%%%%%%%%%%%%%%%%%%
%Gorenstein性とAGL性の特徴付け

\section{Sally modules and Goto rings}\label{sec10}

%\textcolor{red}{
%先にflat base changeを持ってきて, その後に 1次元の話を先にやって, この節の残りは, 「Characterization of Goto rings」のような感じで, 後に持ってくる？}

In this section, unless otherwise specified, let $(A, \m)$ be a Cohen-Macaulay local ring with $d=\dim A>0$ and $I$ a canonical ideal of $A$. 

We start by characterizing $0$-Goto rings. 
%characterization of Gorenstein rings in terms of canonical ideals and its Sally modules. 

%The following might be well-known for experts, but let us include the proof in our context. 

%次の定理は専門家にはwell-knownかもしれないが, 適切なreferenceが見つからないので, 念の為, 我々の枠組みで（in our context）証明を含め, 主張を述べておきたい。

\begin{thm}\label{11.1}
Consider the following conditions. 
\begin{enumerate}
\item[$(1)$] $A$ is a $0$-Goto ring. 
\item[$(2)$] $A$ is a Gorenstein ring. 
\item[$(3)$] There exists a parameter ideal $Q=(a_1, a_2, \dots, a_d)$ of $A$ such that $a_1 \in I$ and $I+Q = I$. 
\item[$(4)$] There exists a parameter ideal $Q=(a_1, a_2, \dots, a_d)$ of $A$ such that $a_1 \in I$ and $\calS_Q(J)=(0)$, where $J=I+Q$. 
\item[$(5)$] There exists a parameter ideal $Q=(a_1, a_2, \dots, a_d)$ of $A$ such that $a_1 \in I$, $Q$ is a reduction of $J$, and $\rme_1(J) < \rmr(A)$, where $J=I+Q$. 
\item[$(6)$] There exists a parameter ideal $Q=(a_1, a_2, \dots, a_d)$ of $A$ such that $a_1 \in I$, $Q$ is a reduction of $J$, and $\rme_1(J) = \rmr(A) -1$, where $J=I+Q$. 
\item[$(7)$] There exists a parameter ideal $Q=(a_1, a_2, \dots, a_d)$ of $A$ such that $a_1 \in I$, $Q$ is a reduction of $J$, and $\rme_1(J) =0$, where $J=I+Q$.  
\item[$(8)$] There exists a parameter ideal $Q=(a_1, a_2, \dots, a_d)$ of $A$ such that $a_1 \in I$, $Q$ is a reduction of $J$, and $\gr_J(A)$ is a Cohen-Macaulay ring, where $J=I+Q$. 
\end{enumerate}
Then the conditions $(1)$, $(4)$, $(5)$, $(6)$ are equivalent and the implications $(3) \Rightarrow (1)\Rightarrow (2)$, $(3) \Rightarrow (8) \Rightarrow (2)$, $(7) \Rightarrow (5)$ hold. If $A$ has an infinite residue class field $A/\m$, then the implications $(2) \Rightarrow (3)$, $(2) \Rightarrow (7)$ hold, so that all the conditions are equivalent. 
\end{thm}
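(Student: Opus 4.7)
The plan is to establish the equivalences among $(1)$, $(4)$, $(5)$, $(6)$ via a key numerical inequality relating $\rank \calS_Q(J)$ to $\rmr(A)$, to prove the central implication $(1) \Rightarrow (2)$ by reducing modulo a super-regular sequence to dimension one, and to treat the implications involving $(3)$, $(7)$, $(8)$ by direct computation. The equivalence $(1) \Leftrightarrow (4)$ is immediate from Definition \ref{3.1} with $n = 0$ together with Lemma \ref{2.4}(2): the condition $\calS_Q(J) = \calT[\calS_Q(J)]_1$ combined with $\rank \calS_Q(J) = 0$ forces $\ell_A(J^2/QJ) = 0$, i.e., $J^2 = QJ$, whence $\calS_Q(J) = (0)$ by induction.

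For the equivalences with $(5)$ and $(6)$ and the implication $(7) \Rightarrow (5)$, the crucial ingredient is the inequality $\ell_A(J/Q) \geq \rmr(A) - 1$. My plan is to observe that $a_2, \ldots, a_d$ forms a system of parameters on the Cohen-Macaulay module $A/I$ of dimension $d - 1$ (since $J = I + Q$ is $\m$-primary and $A/I$ is Cohen-Macaulay as $I \cong \rmK_A$), hence a regular sequence there, so $(a_2, \ldots, a_d) \cap I = (a_2, \ldots, a_d) I$. This identifies $J/Q \cong \overline{I}/(\overline{a_1}) \cong K/\overline{A}$ in the one-dimensional quotient $\overline{A} = A/(a_2, \ldots, a_d)$, where $K$ is the fractional canonical ideal. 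Since $\mu_{\overline{A}}(K/\overline{A}) = \rmr(A) - 1$, the inequality follows. Together with Fact \ref{2.3}(4), it yields $\rank \calS_Q(J) \leq \rme_1(J) - \rmr(A) + 1$, proving $(5), (6) \Rightarrow (4)$; and $(7) \Rightarrow (5)$ is immediate because $\rmr(A) \geq 1$.

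The heart of the theorem is $(1) \Rightarrow (2)$, which I would prove by reducing to dimension one. From $J^2 = QJ$, Lemma \ref{2.4}(1) supplies a super-regular sequence $a_2, \ldots, a_d$ of $A$ with respect to $J$, so Theorem \ref{3.5} descends the Goto property to $\overline{A}$, where $\overline{I}^2 = \overline{a_1}\overline{I}$ translates into $K^2 = K$ for the fractional canonical ideal $K = \overline{I}/\overline{a_1}$. Since $\overline{a_1} \in \overline{I}$ gives $1 \in K$, $K$ is a subring of $\rmQ(\overline{A})$ containing $\overline{A}$, and the standard identity $K : K = \overline{A}$ for canonical modules (Bemerkung 2.5 of \cite{HK}) then forces $K \subseteq K : K = \overline{A}$, so $K = \overline{A}$ and $\overline{A}$ is Gorenstein, hence so is $A$. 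This also supplies the reverse directions in the core equivalences: once $(2)$ holds, $\rmr(A) = 1$ and one may take $J = Q$ so that $\rme_1(J) = 0$, realizing $(5), (6), (7)$.

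Finally, for the implications involving $(3)$ and $(8)$: reading the condition in $(3)$ so that $J = Q$ (that is, $I \subseteq Q$), we obtain $(3) \Rightarrow (1)$ at once from $J^2 = Q^2 = QJ$, and $(3) \Rightarrow (8)$ because $\gr_J(A) = \gr_Q(A)$ is a polynomial ring over $A/Q$ (since $Q$ is generated by a regular sequence in the Cohen-Macaulay ring $A$), hence Cohen-Macaulay. Under an infinite residue field, $(2) \Rightarrow (3)$ is obtained by taking a principal canonical ideal $I = (a_1)$ (possible since $\rmK_A \cong A$) and extending $a_1$ to a system of parameters $a_1, \ldots, a_d$; and $(2) \Rightarrow (7)$ holds because the resulting $J = Q$ satisfies $\rme_1(Q) = 0$. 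The main obstacle I anticipate is $(8) \Rightarrow (2)$: my plan there is to exploit the canonical module of $\gr_J(A)$ (available by $(8)$) together with the extended canonical structure of $J$ via a Trung-type $\rma$-invariant/reduction-number formula, forcing $\rme_1(J) = 0$ and routing back through $(7) \Rightarrow (5) \Rightarrow (4) \Rightarrow (1) \Rightarrow (2)$.
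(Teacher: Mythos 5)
Your argument for $(1)\Leftrightarrow(4)$, for $(5)\Rightarrow(4)$ and $(6)\Rightarrow(4)$ via the inequality $\ell_A(J/Q)\ge\rmr(A)-1$, for $(1)\Rightarrow(2)$ by descending along the super-regular sequence to dimension one and using $K=K:K=\overline{A}$ (the paper does this by induction on $d$, but descending all at once is an equivalent variant), and for $(3)\Rightarrow(1)$, $(3)\Rightarrow(8)$, $(7)\Rightarrow(5)$, and $(2)\Rightarrow(3),(7)$ under an infinite residue field, all match the paper's reasoning. (You also correctly read condition $(3)$ as $J=Q$, i.e.\ $I\subseteq Q$, which is what the paper's proof actually uses.)

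There are two genuine gaps. First, the equivalence of $(1),(4),(5),(6)$ is asserted \emph{without} the hypothesis that $A/\m$ is infinite, so you still need $(4)\Rightarrow(6)$ (or $(4)\Rightarrow(5)$) unconditionally. Your route is $(4)\Rightarrow(1)\Rightarrow(2)$ followed by ``take $J=Q$ so that $\rme_1(J)=0$''; but that step is exactly $(2)\Rightarrow(3)$, which you yourself place under the infinite-residue-field caveat. The paper instead proves $(4)\Rightarrow(6)$ directly for the \emph{given} $Q$: from $J^2=QJ$ the sequence $a_2,\dots,a_d$ is super-regular, one passes to $\overline{A}=A/\fkq$ where $\overline{J}\cong\rmK_{\overline{A}}$, and computes $\rme_1(J)=\rme_1(\overline{J})=\rmr(\overline{A})-1=\rmr(A)-1$ using the one-dimensional case. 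Your ingredients suffice to run this (once $\overline{A}$ is Gorenstein, $\ell(J/Q)=\ell_{\overline{A}}(K/\overline{A})=0=\rmr(A)-1$), but as written you never make the argument independent of the residue-field hypothesis.

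Second, and more seriously, the plan for $(8)\Rightarrow(2)$ will not work as described. Cohen--Macaulayness of $\gr_J(A)$ gives, via Valabrega--Valla, that $J^n\cap Q=QJ^{n-1}$ for all $n$; it does \emph{not} control the reduction number or the $\rma$-invariant in a way that would force $\rme_1(J)=0$, so there is no route from $(8)$ to $(7)$ of the kind you propose. The paper's argument is quite different and more delicate: in dimension one, assuming $K\subsetneq K^2$, it uses $A:\m\subseteq K:\m\subseteq K^2$ together with $Q\cap I^2=QI$ (from Valabrega--Valla) to conclude $A:\m\subseteq K$, and then the chain $A=K:K\subseteq K:(A:\m)=K:(K:\m K)=\m K\subseteq\m\overline{R}$ produces the contradiction $1\in\m$. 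You would need to replicate this or something equivalent; the ``$\rma$-invariant/reduction-number formula'' sketch is not a substitute.
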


\begin{proof}
Thanks to Proposition \ref{2.9}, we may assume the existence of a parameter ideal $Q=(a_1, a_2, \ldots, a_d)$ of $A$ satisfying the condition $(\sharp)$. We set $J=I+Q$ and $\fkq = (a_2, a_3, \ldots, a_d)$. %Let $\calT=\calR(Q)$ be the Rees algebra of $Q$. 

$(3) \Rightarrow (1)$ Since $J=I$, we have $J^3 =QJ^2$ and $\ell_A(J^2/QJ) = 0$, i.e., $A$ is $0$-Goto. 

$(1) \Rightarrow (2)$ Suppose $d=1$. By setting $K=\frac{I}{a_1}$ in $\rmQ(A)$, the equality $K^2 = K$ holds. Hence $A=K$, because $A \subseteq K \subseteq K:K = A$. Thus $A$ is Gorenstein. We assume $d \ge 2$ and the condition $(2)$ holds for $d-1$. As $a_2 \in \fkq$ is a super-regular element of $A$ with respect to $J$, by Theorem \ref{3.5} the ring $A/(a_2)$ is $0$-Goto. The hypothesis of induction guarantees that $A/(a_2)$ is a Gorenstein ring. Hence $A$ is Gorenstein as well. 

$(1) \Leftrightarrow (4)$ This follows from the fact that the equality $J^2 = QJ$ holds if and only if $\calS_Q(J) = (0)$. See \cite[Lemma 2.1 (3)]{GNO} for the proof. 

$(4) \Rightarrow (6)$ By \cite[Theorem 3.7]{GMP}, we may assume $d \ge 2$. Since $J^2 = QJ$, we see that the sequence $a_2, a_3, \ldots, a_d$ of $A$ forms super-regular with respect to $J$. Let $\overline{A} = A/\fkq$ and $\overline{J} = J \overline{A}$. Then $\dim \overline{A} = 1$ and $\overline{J} = (I+\fkq)/\fkq \cong I/\fkq \cap I = I/\fkq I \cong \rmK_{\overline{A}}$. Hence the equalities
$$
\rme_1(J) = \rme_1(J/(a_2, a_3, \ldots, a_{d-1})) = \rme_1(\overline{J}) + \ell_A\left(\left[(0):_{\overline{A}}a_d\right]\right) = \rme_1(\overline{J}) = \rmr(\overline{A})-1 = \rmr(A) -1
$$
hold. 

$(5) \Rightarrow (4)$ By \cite[Proposition 3.6, Theorem 3.7]{GMP}, it suffices to assume $d \ge 2$. Let $\overline{A} = A/\fkq$, $\overline{J} = J \overline{A}$, and $\overline{Q} = Q\overline{A}$. 
Note that $\dim \overline{A} = 1$ and $\overline{Q}$ is a minimal reduction of $\overline{J} \cong \rmK_{\overline{A}}$. Since $\mu_A(J/Q) = \mu_A(\overline{J}/\overline{Q}) = \mu_A(\overline{J})-1 = \rmr(\overline{A}) - 1 = \rmr(A) -1$, we then have
\begin{eqnarray*}
\rank \calS_Q(J) + \ell_A(J/Q) \!\! &=& \!\! \rank \calS_Q(J) + \ell_A(A/Q) - \ell_A(A/J) \\
\!\! &=& \!\! \rank \calS_Q(J) + \rme_0(J) -  \ell_A(A/J) \\
\!\! &=& \!\! \rme_1(J) \\
\!\! &\le& \!\! \rmr(A) - 1 = \mu_A(J/Q) \le \ell_A(J/Q)
\end{eqnarray*} 
where the second equality follows because $J$ contains the parameter ideal $Q$ as a reduction. This shows $\rank \calS_Q(J) \le 0$ and hence $\calS_Q(J) = (0)$.

$(6) \Rightarrow (5), (7) \Rightarrow (5)$ These are obvious. 
%This follows from $\rme_1(J) = \rmr(A)-1 < \rmr(A)$ and $\rme_1(J) = 0 < \rmr(A)$

$(3) \Rightarrow (8)$ The polynomial ring $\gr_J(A) \cong (A/J)[X_1, X_2, \ldots, X_d]$ is Cohen-Macaulay, because $J=Q$ is a parameter ideal of $A$.

$(8) \Rightarrow (2)$ Suppose $d = 1$. Let $\overline{A}$ be the integral closure of $A$ in $\rmQ(A)$.
We consider the fractional canonical ideal $K=\frac{I}{a_1}$ in $\rmQ(A)$. Suppose the contrary and seek a contradiction. Indeed, since $K \subsetneq K^2$, we have $A:\m \subseteq K:\m \subseteq K^2$. By \cite[Corollary 2.7]{VV}, the equality $Q \cap I^2 = QI$ holds. Let $\varphi \in A:\m$. Then $a_1^2 \varphi \in (a_1) \cap a_1^2K^2 = (a_1) \cap I^2 = QI = a_1 I = a_1^2K$, so that $\varphi \in K$. Thus $A:\m \subseteq K$. Thanks to \cite[Bemerkung 2.5]{HK}, we have
$$
A =K:K \subseteq K:(A:\m) = K:\left((K:K):\m\right) = K:(K:\m K) = \m K \subseteq \m \overline{A}
$$
whence $1 \in \m \overline{A} \cap A = \m$. This is absurd. Hence $A$ is Gorenstein. Suppose $d \ge 2$. Since $\gr_J(A)$ is Cohen-Macaulay, by \cite[Corollary 2.7]{VV} $\gr_{J/\fkq}(A/\fkq)$ is a Cohen-Macaulay ring. This implies $A/\fkq$ is Gorenstein, then so is $A$.

In the rest of this proof, we assume the field $A/\fkm$ is infinite. 

$(2) \Rightarrow (3)$ Since $A$ is Gorenstein, the ideal $I$ is principal. Then $\mu_A(J) = d$ because $J=(a_2, a_3, \ldots, a_d)+I $ and $Q$ is a minimal reduction of $J$. Hence $J=Q$.

$(2) \Rightarrow (7)$ The equivalence $(2)\Leftrightarrow (6)$ ensures that $\rme_1(J) = \rmr(A) -1 = 0$. 
\end{proof}

\begin{cor}
Let $n \ge 1$ be an integer and $Q$ a parameter ideal of $A$ with condition $(\sharp)$. If $A$ is an $n$-Goto ring with respect to $Q$, then 
 $\depth \gr_J(A) = d-1$, where $J=I+Q$.
\end{cor}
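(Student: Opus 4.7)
The plan is to pin down $\depth \gr_J(A)$ by establishing $\depth \gr_J(A) \ge d-1$ and $\depth \gr_J(A) \le d-1$ separately.

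For the lower bound, the $n$-Goto hypothesis tells us that $\calS_Q(J) = \calT\bigl[\calS_Q(J)\bigr]_1$. When $d \ge 2$, Lemma~\ref{2.4}(1) then asserts that $a_2, a_3, \dots, a_d$ is a super-regular sequence of $A$ with respect to $J$, which immediately gives $\depth \gr_J(A) \ge d-1$. When $d=1$, the lower bound is vacuous.

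For the reverse inequality, I would argue by contradiction. Suppose $\depth \gr_J(A) \ge d$. Since $\dim \gr_J(A) = d$, the graded ring $\gr_J(A)$ would be Cohen-Macaulay. The parameter ideal $Q$ satisfies condition $(\sharp)$, so $a_1 \in I$; and because $\calS_Q(J) = \calT\bigl[\calS_Q(J)\bigr]_1$ is equivalent to $J^3 = QJ^2$, the ideal $Q$ is a reduction of $J$. Condition (8) of Theorem~\ref{11.1} is therefore satisfied for the very parameter ideal $Q$ at hand, and the implication $(8) \Rightarrow (2)$ of Theorem~\ref{11.1} forces $A$ to be Gorenstein.

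However, this contradicts the hypothesis $n \ge 1$: being $n$-Goto with respect to $Q$ makes $A$ an $n$-Goto ring in the sense of Definition~\ref{3.1}, and Remark~\ref{3.4r} rules out $A$ being Gorenstein whenever $n \ge 1$. Hence $\depth \gr_J(A) \le d-1$, and combined with the lower bound we obtain the desired equality. The argument contains no genuine obstacle; the substantive work is already carried out in Lemma~\ref{2.4}, Theorem~\ref{11.1}, and Remark~\ref{3.4r}, and the task reduces to assembling these pieces correctly.
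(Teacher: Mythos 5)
Your proof is correct and takes essentially the same route as the paper: Lemma~\ref{2.4}(1) for the lower bound $\depth \gr_J(A) \ge d-1$, then Theorem~\ref{11.1} $(8)\Rightarrow(2)$ combined with Remark~\ref{3.4r} to rule out $\depth \gr_J(A) = d$. The only small addition is your explicit note that the lower bound is vacuous when $d=1$, which the paper leaves implicit.
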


\begin{proof}
Since $\calS_Q(J) = \calT\left[\calS_Q(J)\right]_1$, by Lemma \ref{2.4} (1) we have $\depth \gr_J(A) \ge d-1$. Suppose $\depth \gr_J(A) = d$. Then Theorem \ref{11.1} $(8) \Rightarrow (2)$ yields $A$ is Gorenstein. This makes a contradiction, because every $n$-Goto ring with $n \ge 1$ is not Gorenstein; see Remark \ref{3.4r}. 
\end{proof}

Next, we consider the relation between $1$-Goto and almost Gorenstein properties. We first prepare an auxiliary lemma below. 

%次に, $1$-Goto ringとAGL性の関連について考える。一つ技術的な補題を準備したい。

\begin{lem}\label{11.2}
Let $(A, \m)$ be a Noetherian local ring with $d = \dim A \ge 2$ and $J$ an $\m$-primary ideal of $A$. If $A$ has an infinite residue class field $A/\m$, the ideal $J$ contains  a superficial sequence $g_1, g_2, \ldots, g_d$ of $A$ with respect to $J$ such that $g_2$ forms a superficial element of $A$ with respect to $J$.  
\end{lem}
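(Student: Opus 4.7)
The plan is to realize the relevant ``superficial'' conditions as nonempty Zariski-open conditions on the $A/\m$-vector space $J/\m J$, and then use the hypothesis that the residue field is infinite to ensure that two such conditions can be satisfied simultaneously. I would begin by recalling the standard characterization: for an $\m$-primary ideal $J$, an element $g\in J$ is a superficial element of $A$ with respect to $J$ precisely when its initial form $g^{*}\in[\gr_J(A)]_1$ avoids every associated prime $\fkP$ of $\gr_J(A)$ satisfying $\fkP\not\supseteq[\gr_J(A)]_{+}$. Only finitely many such primes exist, and each of them pulls back to a proper $A/\m$-subspace of $J/\m J$. Hence the set $U_1\subseteq J/\m J$ of classes giving superficial elements is the complement of finitely many proper subspaces, and in particular is a nonempty Zariski-open subset.

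Step~1 is immediate: pick $g_1\in J$ with class in $U_1$. The main step is Step~2, where I must pick $g_2\in J$ satisfying two conditions at once: first, $g_2$ is superficial for $A$ with respect to $J$, and second, the image $\overline{g_2}$ in $\overline A=A/(g_1)$ is superficial for $\overline A$ with respect to $\overline J=J/(g_1)$. The first condition amounts to requiring the class of $g_2$ in $J/\m J$ to lie in $U_1$. For the second, applying the same characterization to the pair $(\overline A,\overline J)$ produces a nonempty Zariski-open subset $V\subseteq \overline J/\overline \m \overline J = J/(\m J+(g_1))$. Pulling back along the canonical $A/\m$-linear surjection
\[
\pi\colon J/\m J\twoheadrightarrow J/(\m J+(g_1))
\]
yields a nonempty Zariski-open subset $\pi^{-1}(V)\subseteq J/\m J$. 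Because $A/\m$ is infinite, the intersection $U_1\cap \pi^{-1}(V)$ of two nonempty Zariski-open subsets of the affine space $J/\m J$ over $A/\m$ is nonempty, and any $g_2\in J$ whose class lies in this intersection satisfies both requirements.

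For each $i$ with $3\le i\le d$, only the successive-quotient condition is needed, so a single application of the open-set argument inside $A/(g_1,\dots,g_{i-1})$ (whose residue field is again $A/\m$, hence infinite) produces a suitable $g_i\in J$. Concatenating Steps $1$ through $d$ gives the required superficial sequence with the additional property at the second term.

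The main obstacle, and the only essential use of the hypothesis that $A/\m$ is infinite, is the nonemptiness of $U_1\cap\pi^{-1}(V)$ in Step~2, which reduces to the elementary fact that a finite union of proper subspaces of a finite-dimensional vector space over an infinite field cannot cover the whole space. Everything else amounts to routine generic choice.
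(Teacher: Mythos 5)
Your proposal is correct and is essentially the paper's own proof, merely recast in linear-algebra language: where you describe $U_1$ and $\pi^{-1}(V)$ as complements of finitely many proper $A/\m$-subspaces of $J/\m J$, the paper works directly with the finitely many associated primes $\p$ of $\gr_J(A)$ (viewed in $\Spec\calR(J)$) not containing $\calR(J)_+$ and chooses $g_2$ outside $\bigl[\bigcup_{\q\in\overline{\calF}}(\psi^{-1}(\q)\cap A)\bigr]\cup\bigl[\bigcup_{\p\in\calF}\varphi^{-1}(\p)\bigr]$, invoking ``$A/\m$ infinite'' for exactly the prime-avoidance step you make explicit. The key idea — that $g_2$ must simultaneously lie in the good set for $A$ and in the preimage of the good set for $A/(g_1)$ — is the same in both.
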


\begin{proof}
Let $\calR(J) = A[Jt] = \bigoplus_{i \ge 0}J^it^i$ be the Rees algebra of $J$, where $t$ is an indeterminate over $A$. We denote by $\calF$ the set of all associated prime ideals $\p \in \Spec \calR(J)$ of $\gr_J(A)$ such that $\p \not\supseteq \calR(J)_+$, where $\calR(J)_+ = \bigoplus_{i > 0}J^it^i$. We consider the map $\varphi : J \to \calR(J)$ defined by $\varphi(a) =at$ for each $a \in J$. Since $A/\m$ is infinite, we can choose $g_1 \in J$ such that $g_1 \not\in \bigcup_{\p \in \calF}\varphi^{-1}(\p)$. Let $\overline{A} = A/(g_1)$ and $\overline{J} = J\overline{A}$. We define $\overline{\calF}$ the set of all associated prime ideals $\q \in \Spec \calR(\overline{J})$ of $\gr_{\overline{J}}(\overline{A})$ such that $\q \not\supseteq \calR(\overline{J})_+$ and consider the map $\psi : \overline{J} \to \calR(\overline{J})$ with $\psi(a) =at$ for each $a \in \overline{J}$. Then, because $A/\m$ is infinite, we can choose $g_2 \in J$ which satisfies
$$
g_2 \not\in \left[ \bigcup_{\q \in \overline{\calF}}\left(\psi^{-1}(\q) \cap A\right)\right] \bigcup \left[ \bigcup_{\p \in \calF}\varphi^{-1}(\p)\right].
$$
This shows $g_1, g_2$ is a superficial sequence of $A$ with respect to $J$ and $g_2$ itself forms a superficial element of $A$ with respect to $J$.  
Then all you have to do is to choose the remaining superficial sequence $g_3, g_4, \ldots, g_d$ as usual.
\end{proof}

We state the following characterization of $1$-Goto rings. The condition $(5)$ of Theorem \ref{11.3} provides a natural generalization of \cite[Definition 3.1]{GMP}. The equivalence $(2) \Leftrightarrow (3)$ is essentially due to \cite[Theorem 5.1]{GTT}, and the implication $(5) \Rightarrow (2)$ was pointed out by S. Goto.%はGoto's proofである。

%[Sally, Hilbert coefficients and reduction number 2][Vasconcelos, Hilbert functions, analytic spread, and Koszul homology]をreferする？

\begin{thm}\label{11.3}
%Let $(A, \m)$ be a Cohen-Macaulay local ring with $d=\dim A>0$ and $I$ a canonical ideal of $A$. 
Consider the following conditions. 
\begin{enumerate}
\item[$(1)$] $A$ is a $1$-Goto ring. 
\item[$(2)$] $A$ is an almost Gorenstein ring but not a Gorenstein ring. 
\item[$(3)$] There exists a parameter ideal $Q=(a_1, a_2, \dots, a_d)$ of $A$ such that $a_1 \in I$, $J \ne Q$, and $\m J = \m Q$, where $J=I+Q$. 
\item[$(4)$] There exists a parameter ideal $Q=(a_1, a_2, \dots, a_d)$ of $A$ such that $a_1 \in I$ and $\calS_Q(J) \cong \calB(-1)$ as a graded $\calT$-module, where $J=I+Q$ and $\calB = \calT/\m \calT$. 
\item[$(5)$] There exists a parameter ideal $Q=(a_1, a_2, \dots, a_d)$ of $A$ such that $a_1 \in I$, $Q$ is a reduction of $J$, and $\rme_1(J) = \rmr(A)$, where $J=I+Q$. 
\end{enumerate}
Then the conditions $(1)$ and $(4)$ are equivalent, and the implications $(3) \Rightarrow (1) \Rightarrow (2)$, $(1) \Rightarrow (5)$ hold. If $A$ has an infinite residue class field $A/\m$, then the implications $(2) \Rightarrow (3)$, $(5) \Rightarrow (2)$ hold, so that all the conditions are equivalent. 
\end{thm}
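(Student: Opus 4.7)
The plan is to anchor the equivalences at the one-dimensional case, using Theorem \ref{3.5} to descend the Goto property modulo super-regular sequences and then invoking the fractional canonical ideal machinery of Section \ref{sec4-1}. For $(1) \Leftrightarrow (4)$, a structure argument suffices: under (1), Lemma \ref{2.4}(2) gives $[\calS_Q(J)]_1 = J^2/QJ \cong A/\m$, so $\calS_Q(J)$ is a cyclic, degree-one-generated $\calT$-module annihilated by $\m$, hence a graded cyclic quotient of the polynomial ring $\calB(-1)$; both are Cohen--Macaulay $\calT$-modules of dimension $d$ and $\calB$ is a domain, so the quotient map must be an isomorphism. The reverse implication is immediate from the formula for ranks.

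For $(3) \Rightarrow (1)$, writing $J = Q + I$ and noting that $\m J = \m Q$ forces $\m I \subseteq \m Q$, one computes $\m J^2 = \m Q J \subseteq QJ$; since $I \subseteq \m$, this yields $I^3 \subseteq \m J^2 \subseteq QJ$, and the decomposition $J^3 = QJ^2 + I^3$ gives $J^3 = QJ^2$. Lemma \ref{2.4}(1) then furnishes the super-regular sequence $a_2, \ldots, a_d$, and Theorem \ref{3.5} reduces $\ell_A(J^2/QJ) = 1$ to the one-dimensional quotient, where (3) becomes $\m I = a_1 \m$; the associated fractional canonical ideal $K$ satisfies $\fkc = \m$ and hence $\ell_R(K^2/K) = \ell_R(R/\fkc) = 1$.

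For $(1) \Rightarrow (2)$ and $(1) \Rightarrow (5)$, the same reduction produces a $1$-Goto quotient $A/(a_2,\ldots,a_d)$. In dimension one, $\ell_R(K^2/K) = 1$ forces via Lemma \ref{4.5} that $R/\fkc$ is Gorenstein with $\fkc = \m$, so $\m K \subseteq R$ and $K/R$ becomes an $(R/\m)$-vector space of dimension $\rmr(R) - 1$; this module is visibly Ulrich with respect to $\m$, giving the almost Gorenstein exact sequence, and simultaneously Fact \ref{2.3}(4) yields $\rme_1(J) = \ell_R(K/R) + 1 = \rmr(A)$. Ascending almost Gorensteinness from the quotient back to $A$ along the super-regular sequence follows from \cite[Corollary 5.3]{GTT}. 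Finally, under the infinite residue field hypothesis, $(2) \Rightarrow (3)$ is essentially \cite[Theorem 5.1]{GTT}, and for $(5) \Rightarrow (2)$ I would use Lemma \ref{11.2} to replace $a_2, \ldots, a_d$ by a superficial sequence and pass to dimension one, where $\rme_1(I) = \rmr(R)$ characterizes non-Gorenstein almost Gorenstein rings by \cite[Theorem 3.16]{GMP}.

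The principal obstacle lies in this last implication $(5) \Rightarrow (2)$: starting from (5) alone does not furnish $J^3 = QJ^2$, so Lemma \ref{2.4}(1) is unavailable, and the infinite residue field hypothesis must be exploited precisely to manufacture a superficial sequence with initial element in $I$ that reduces the problem to dimension one while preserving both $\rme_1(J)$ and $\rmr(A)$.
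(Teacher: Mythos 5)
Your proof of $(1)\Leftrightarrow(4)$, $(1)\Rightarrow(2)$, $(1)\Rightarrow(5)$, and $(2)\Rightarrow(3)$ follows essentially the same route as the paper, and your identification of the subtlety in $(5)\Rightarrow(2)$ — that one must manufacture, via Lemma~\ref{11.2}, a superficial sequence whose second term can serve as a super-regular reduction element before passing to dimension one — is exactly the content of the paper's argument (which carries out the rearrangement of $a_2,\ldots,a_d$ using the equality $(\overline{g_1},\ldots,\overline{g_d})=(\overline{a_2},\ldots,\overline{a_d})$ in $A/I$). Two minor remarks: for the ascent step in $(1)\Rightarrow(2)$ the correct citation is \cite[Theorem 3.7 (1)]{GTT}, not Corollary~5.3; and in $(1)\Rightarrow(5)$ your computation via Fact~\ref{2.3}(4) and $\ell_R(K/R)=\rmr(R)-1$ is a clean alternative to the paper's direct appeal to $\rme_1(\overline J)=\rmr(\overline A)$ from \cite[Theorem 3.16]{GMP}.

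However, your argument for $(3)\Rightarrow(1)$ has a genuine gap. Starting from $\m J=\m Q$ you derive $\m J^2=\m QJ$ and then $I^3\subseteq\m J^2\subseteq QJ$; combined with $J^3=QJ^2+I^3$ this yields only $J^3\subseteq QJ$, \emph{not} $J^3=QJ^2$. What is needed is $I^3\subseteq QJ^2$, and the inclusion $\m Q=\m J\subseteq J^2$ that this would require is false in general (it would force $\m\subseteq J^2$). Without $J^3=QJ^2$ you cannot invoke Lemma~\ref{2.4}(1) to obtain the super-regular sequence, so the reduction to dimension one in your next sentence is also unjustified — the argument is circular, in that super-regularity is available only once the very equality you are trying to prove is in hand. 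The paper avoids this entirely: it does not attempt a direct computation, but rather cites \cite[Theorems 3.11, 3.16]{GMP} in the case $d=1$ (where $\m I=\m Q$ together with faithfulness of $\m$ shows $Q$ is a reduction of $I$ and the cited theorems deliver $I^3=QI^2$ and $\ell_A(I^2/QI)=1$), and invokes \cite[Corollary 5.3]{GTT} and its proof for $d\ge 2$. To repair your proof you should either reproduce the relevant portion of the [GMP] argument (which involves more than the naive multiplication you carry out) or cite those results as the paper does.
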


\begin{proof}
$(3) \Rightarrow (1)$ If $d=1$, the ideal $I$ contains a parameter ideal $Q$ such that $I \ne Q$ and $\m I = \m Q$. Since $\m$ is faithful as an $A$-module, the parameter ideal $Q$ is a reduction of $I$. By \cite[Theorems 3.11, 3.16]{GMP}, we conclude that $I^3 = QI^2$ and $\ell_A(I^2/QI) = 1$. Thus $A$ is $1$-Goto. When $d \ge 2$, the condition $(1)$ follows from \cite[Corollary 5.3]{GTT} and its proof. 

$(1) \Rightarrow (2)$ We first consider the case where $d = 1$. Choose a parameter ideal $Q$ of $A$ which satisfies $I^3 = QI^2$ and $\ell_A(I^2/QI) = 1$. 
Thanks to \cite[Theorem 3.16]{GMP}, the ring $A$ is almost Gorenstein, but not a Gorenstein ring. Assume $d \ge 2$ and the condition $(2)$ holds for $d-1$. Let $Q=(a_1, a_2, \ldots, a_d)$ be a parameter ideal of $A$ such that $a_1 \in I$, $J^3 =QJ^2$, and $\ell_A(J^2/QJ) = 1$, where $J=I+Q$. Then $a_2 \in J$ is a super-regular element of $A$ with respect to $J$. Hence, by Theorem \ref{3.5} $A/(a_2)$ is a $1$-Goto ring, so that the induction argument asserts that the ring $A/(a_2)$ is almost Gorenstein, but not Gorenstein. Therefore, we get the condition $(2)$ by \cite[Theorem 3.7 (1)]{GTT}.

$(1) \Leftrightarrow (4)$ We may assume the existence of a parameter ideal $Q=(a_1, a_2, \ldots, a_d)$ of $A$ such that $Q$ satisfies the condition $(\sharp)$. We set $J=I+Q$, $\calT =\calR(Q)$, and $\calB = \calT/\m \calT$. 
Suppose that $\calS_Q(J) \cong \calB(-1)$. Then, because $\calB$ is a homogeneous ring over $A/\m$ and $\left[\calS_Q(J)\right]_1 = J^2/QJ$, we have $J^3 = QJ^2$ and $\ell_A(J^2/QJ) = 1$. Hence $A$ is $1$-Goto. Conversely, we assume that $A$ is a $1$-Goto ring. Since $\calS_Q(J) = \calT \left[\calS_Q(J)\right]_1$ and $J^2/QJ \cong A/\m$, we see that $\calS_Q(J)$ is a cyclic $\calB$-module. Therefore we have an surjective homomorphism
$\calB(-1) \to \calS_Q(J)$ of graded $\calT$-modules. Hence $\calB(-1) \cong \calS_Q(J)$, because $\calB$ is an integral domain with $\dim \calB = \dim \calS_Q(J) = d$.

$(1) \Rightarrow (5)$ We choose a parameter ideal $Q = (a_1, a_2, \ldots, a_d)$ of $A$ satisfying $a_1 \in I$, $J^3 = QJ^2$, and $\ell_A(J^2/QJ) = 1$, where $J=I+Q$. If $d=1$, then $\rme_1(I) = \rmr(A)$ by \cite[Theorem 3.16]{GMP}. When $d \ge 2$, we set $\fkq = (a_2, a_3, \ldots, a_d)$. Note that $A/\fkq$ is a one-dimensional $1$-Goto ring. Since $a_2, a_3, \ldots, a_d$ forms a super-regular, and hence superficial, sequence of $A$ with respect to $J$, we conclude that the equalities
$$
\rme_1(J) = \rme_1(J/\fkq) + \ell_A\left([(0):_{A/\fkq}a_d]\right) = \rme_1(J/\fkq) = \rmr(A/\fkq) = \rmr(A)
$$
hold, as desired. 

We assume that $A/\m$ is infinite and show the implications $(2) \Rightarrow (3)$ and $(5) \Rightarrow (2)$.

$(2) \Rightarrow (3)$ This follows from \cite[Theorem 5.1]{GTT} and Theorem \ref{11.1}. Although \cite[Theorem 5.1]{GTT} is assumed the Gorensteinness of $\rmQ(A)$, we only need this assumption to get the existence of canonical ideals. 

$(5) \Rightarrow (2)$ We choose a parameter ideal $Q=(a_1, a_2, \dots, a_d)$ of $A$ such that $a_1 \in I$, $Q$ is a reduction of $J$, and $\rme_1(J) = \rmr(A)$, where $J=I+Q$. By \cite[Theorem 3.16]{GMP}, we assume that $d \ge 2$ and the condition $(2)$ holds for $d-1$. Then, by Lemma \ref{11.2} the ideal $J$ contains a superficial sequence $g_1, g_2, \ldots, g_d$ of $A$ with respect to $J$; while $g_2$ is a superficial element of $A$ with respect to $J$. Let $Q_1 = (g_1, g_2, \ldots, g_d)$. Lemma \ref{2.9} guarantees that $Q_1$ is a minimal reduction of $J$. Let $\overline{(*)}$ be the image in $A/I$. Then $(\overline{g_1}, \overline{g_2}, \ldots, \overline{g_d})$ is a reduction of $J/I$. Since $J/I = (\overline{a_2}, \overline{a_3}, \ldots, \overline{a_d})$ forms a parameter ideal of $A/I$, we get the equality
$$
(\overline{g_1}, \overline{g_2}, \ldots, \overline{g_d}) = (\overline{a_2}, \overline{a_3}, \ldots, \overline{a_d})
$$
inside of the ring $A/I$. Thus there is an integer $1 \le i \le d$ such that 
$$
J=I+ \left(g_1, g_2, \ldots, g_{i-1}, g_{i+1}\ldots, g_d\right).
$$
Without loss of generality, we may assume $a_2 \in J$ is a superficial element of $A$ with respect to $J$. We set $\overline{A} = A/(a_2)$, $\overline{J} = J\overline{A}$, and $\overline{I} = I \overline{A}$. Note that $\overline{I} \cong I/a_2I \cong \rmK_{\overline{A}}$ and $\overline{J} = \overline{I} + (a_3, a_4, \ldots, a_d)\overline{A}$ contains a parameter ideal $(a_1, a_3, a_4, \ldots, a_d)\overline{A}$ as a reduction. Besides, because $\rme_1(\overline{J}) = \rme_1(J) = \rmr(A) = \rmr(\overline{A})$, the hypothesis on $d$ shows $\overline{A} = A/(a_2)$ is almost Gorenstein, but not Gorenstein. Therefore $A$ is a non-Gorenstein almost Gorenstein ring. This completes the proof. 
\end{proof}

\begin{rem}
When $\dim R=1$, if $R$ is an almost Gorenstein local ring in the sense of \cite{GTT}, then $R$ is almost Gorenstein in the sense of \cite{GMP}. The converse does not hold in general (\cite[Remark 3.5]{GTT}, see also \cite[Remark 2.10]{GMP}), but it does when the canonical ideal contains a parameter ideal as a reduction, e.g., the residue field $A/\m$ is infinite, or $A$ is analytically irreducible. Hence, in Theorem \ref{11.3}, there is no need to distinguish between these two definitions of almost Gorenstein rings.
%$d=1$のとき, GTTの意味でAGLならGMPの意味でAGLである。逆は一般には成立しないが, canonical idealがreductionを持つなら正しい。上の定理では, reductionを持つことを仮定しているので, GTTとGMPの意味でのAG性は同値である。
\end{rem}

On the other hand, there are no implications between Goto rings, and nearly Gorenstein rings defined in \cite{HHS}.

\begin{ex}\label{10.5e}
Let $k[[t]]$ be the formal power series ring over a field $k$. We set $A = k[[t^4, t^5, t^{11}]]$ and $K = A + At$. Then $K$ is a fractional canonical ideal of $A$ and $K^2 \ne K^3$. Thus $A$ is not a Goto ring. Note that
$$
A \cong k[[X, Y, Z]]/
\rmI_2
\begin{pmatrix}
X & Y^2 & Z \\
Y & Z & X^3
\end{pmatrix}
$$
where $k[[X, Y, Z]]$ denotes the formal power series ring over $k$. By \cite[Proposition 6.3]{HHS}, the ring $A$ is nearly Gorenstein. Conversely, let us consider the ring
$$
A = k[[X, Y, Z]]/
\rmI_2
\begin{pmatrix}
X^n & Y & Z \\
Y^2 & Z^2 & X^n
\end{pmatrix}
$$
with $n \ge 2$. Then $A$ is $n$-Goto, but it is not nearly Gorenstein. 
\end{ex}

We explore the characterization of $n$-Goto rings with $n \ge 2$ in terms of the structure of Sally modules of extended canonical ideals. 
%それでは, $n\ge 2$のときの$n$-Goto ringの特徴付けは得られるだろうか。特にSally moduleとの構造に着目したcharacterizationについて考える。
%\textcolor{red}{
%高次元$n$-Goto ringsの特徴付け（Sally modulesの完全列の観点から）これを主結果にする。AGLやGorに合わせて, $n$-Goto ring with respect to $Q$ではなくて, $n$-Goto ringで特徴付けを書くかな。剰余体が無限なら, 同値みたいな感じ。
%}
Remember that $\m^{\ell}\calS_Q(J) = (0)$ for all $\ell \gg 0$. 

%Sally module $\calS_Q(J)$は, 一般論から, $\m$の十分大きな冪で消える。次の定理は, $\m^i \calS_Q(J)$が消えないところまでの$\m$の冪のSally moduleの構造を調べている, という思想がある。

%Note that a Cohen-Macaulay local ring $(A, \m)$ admitting a canonical ideal. Suppose that $A/\m$ is infinite. 
%$A$がGorなら, $\m^{\ell}\calS_Q(J) = (0)$ for every $\ell \ge 0$.
%$A$がnon-Gor AGLなら, $\m^{\ell}\calS_Q(J) = (0)$ for every $\ell \ge 1$ if $A/\m$ is infinite.それでは, $n$-Goto ringではどうか？

\begin{lem}
Let $n \ge 1$ be an integer. Let $Q$ be a parameter ideal of $A$ which satisfies the condition $(\sharp)$. Set $J=I+Q$. If $A$ is an $n$-Goto ring with respect to $Q$, then $\m^p \calS_Q(J) = (0)$ for every $p \ge n$. 
\end{lem}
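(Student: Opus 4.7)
\medskip

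\noindent\textbf{Proof proposal.} The plan is to reduce the assertion to the statement that a finitely generated $A$-module of length $n$ is annihilated by $\m^n$, applied to the degree-one component of $\calS_Q(J)$.

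First, by the definition of an $n$-Goto ring with respect to $Q$, we have $\calS_Q(J)=\calT\left[\calS_Q(J)\right]_1$ and $\rank\calS_Q(J)=n$. Since the hypothesis $\calS_Q(J)=\calT\left[\calS_Q(J)\right]_1$ is equivalent to $J^3=QJ^2$, Lemma \ref{2.4} (2) applies and gives
$$
\ell_A\!\left(\left[\calS_Q(J)\right]_1\right)=\ell_A(J^2/QJ)=\rank \calS_Q(J)=n.
$$
Thus the degree-one component $M:=\left[\calS_Q(J)\right]_1=J^2/QJ$ is a finitely generated $A$-module of length exactly $n$.

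Next I would apply the following general fact to $M$: for any finitely generated module $M$ over the local ring $(A,\m)$ with $\ell_A(M)=n$, one has $\m^n M=(0)$. Indeed, the descending chain $M\supseteq \m M\supseteq \m^2 M\supseteq\cdots$ can have at most $n$ strict inclusions because each strict step drops the length by at least one, so there exists $k\le n$ with $\m^k M=\m^{k+1}M=\m\cdot(\m^k M)$, and Nakayama's lemma forces $\m^k M=(0)$. In particular $\m^n M=(0)$, which says $\m^p\left[\calS_Q(J)\right]_1=(0)$ for all $p\ge n$.

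Finally, I would promote this annihilation from the degree-one piece to the whole Sally module. Since $\calS_Q(J)=\calT\left[\calS_Q(J)\right]_1$, every homogeneous element of $\calS_Q(J)$ can be written as $\sum_{i}\xi_i y_i$ with $\xi_i\in\calT$ and $y_i\in \left[\calS_Q(J)\right]_1$. For $a\in\m^p\subseteq A=\calT_0$, the $\calT$-linearity of the multiplication gives
$$
a\cdot\sum_{i}\xi_i y_i=\sum_{i}\xi_i(ay_i)=0,
$$
because $ay_i\in\m^p\left[\calS_Q(J)\right]_1=(0)$ by the previous step. Hence $\m^p\calS_Q(J)=(0)$ for every $p\ge n$, as desired.

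The proof is largely formal; the one point that requires a moment's thought is the passage from annihilation of $\left[\calS_Q(J)\right]_1$ to annihilation of $\calS_Q(J)$, which uses crucially the hypothesis $\calS_Q(J)=\calT\left[\calS_Q(J)\right]_1$ together with the fact that $\m\subseteq A=\calT_0$ commutes with the $\calT$-action.
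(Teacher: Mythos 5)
Your proof is correct, and it takes a cleaner and more direct route than the paper's. The paper first treats the case $d=1$ by passing to the fractional canonical ideal $K = I/a_1$, identifying $[\m^n\calS_Q(I)]_1$ with $\m^n(K^2/K)$, and then deriving a contradiction from $\ell_A(A/\fkc) = n$ if $(\m/\fkc)^n \ne (0)$; for $d \ge 2$ it then reduces to the one-dimensional case by successively dividing out super-regular elements (Theorem \ref{3.5} and Remark \ref{3.5a}). You instead apply the rank--length identity $\rank \calS_Q(J) = \ell_A(J^2/QJ)$ from Lemma \ref{2.4}~(2) directly to the degree-one component $[\calS_Q(J)]_1 = J^2/QJ$, invoke the elementary fact that a module of length $n$ over a local ring is killed by $\m^n$, and then propagate the annihilation to all of $\calS_Q(J)$ using the generation in degree one. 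The underlying idea (a length-$n$ module has nilpotency index at most $n$) is close to what drives the paper's contradiction in $A/\fkc$, but you apply it to the right module immediately, which eliminates both the fractional-ideal machinery and the induction on dimension; this is arguably the more transparent argument. The final propagation step using $\calS_Q(J) = \calT[\calS_Q(J)]_1$ is exactly what the paper also uses implicitly, so it is good that you spelled it out.
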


\begin{proof}
We write $Q=(a_1, a_2, \ldots, a_d)$ with $a_1 \in I$ and set $\calT = \calR(Q)$. 
Suppose that $d=1$. By setting $K = \frac{I}{a_1}$ in $\rmQ(A)$, we get
$$
\left[\m^n\calS_Q(I)\right]_1 = \m^n(I^2/QI) \cong \m^n(K^2/K). 
$$
If $\m^n\calS_Q(I) \ne (0)$, then $\m^n K^2 \not\subseteq K$, i.e., $(\m/\fkc)^n \ne (0)$ in $A/\fkc$ where $\fkc = A:A[K] = A:K$. For each $1 \le i < n$, we have $(\m/\fkc)^i \ne (\m/\fkc)^{i+1}$. This makes a contradiction because $\ell_A(A/\fkc) = n$. Hence $\m^p \calS_Q(I) = (0)$ for every $p \ge n$. 

We assume $d \ge 2$ and our assertion holds for $d-1$. Let $\fkq = (a_2, a_3, \ldots, a_d)$. Since $\calS_Q(I) = \calT\left[\calS_Q(I)\right]_1$, the element $a_2 \in \fkq \setminus \m \fkq$ is super-regular of $A$ with respect to $J$. Let $\overline{A} = A/(a_2)$, $\overline{J} = J \overline{A}$, and $\overline{Q} = Q\overline{A}$. Then the parameter ideal $\overline{Q}$ of $\overline{A}$ satisfies the condition $(\sharp)$ and $\overline{A} =A/(a_2)$ is an $n$-Goto ring with respect to $\overline{Q}$. The induction hypothesis on $d$ shows that $\overline{\m}^p \calS_{\overline{Q}}(\overline{J}) = (0)$ for every $p \ge n$. Hence $\m^p \calS_Q(J) = (0)$. 
\end{proof}

%次の定理はAGLの結果の拡張だし, 1次元の場合の2-AGLの結果の拡張をも与えている。

The following generalizes both of \cite[Corollary 5.3 (2)]{GTT} and \cite[Theorem 3.7]{CGKM}. 

\begin{thm}\label{11.8}
%Let $(A, \m)$ be a Cohen-Macaulay local ring with $d=\dim A>0$, $I$ a canonical ideal of $A$. 
Let $n \ge 1$ be an integer. Let $Q$ be a parameter ideal of $A$ which satisfies the condition $(\sharp)$. We set $J=I+Q$, $\calT=\calR(Q)$, and $\calB=\calT/\m \calT$.
Then the following conditions are equivalent.
\begin{enumerate}
\item[$(1)$] $A$ is an $n$-Goto ring with respect to $Q$. 
\item[$(2)$] There exist integers $0 \le \ell < n$ and $s_i \ge 1~(0 \le i \le \ell)$ such that $n = \sum_{i=0}^{\ell}s_i$ and $\m^{\ell} \calS_Q(J) \cong \calB(-1)^{\oplus s_0}$, and if $\ell>0$, there exist exact sequences
\begin{eqnarray*}
 0 \to \m^{\ell} \calS_Q(J) \to \!\! &\m^{\ell-1} \calS_Q(J) & \!\! \to \calB(-1)^{\oplus s_1} \to 0 \\
 0 \to \m^{\ell-1} \calS_Q(J) \to \!\! &\m^{\ell-2} \calS_Q(J) & \!\! \to \calB(-1)^{\oplus s_2} \to 0  \\
  &\vdots& \\
 0 \to \m \calS_Q(J) \to \!\!\!\!\!\!\! &\calS_Q(J)& \!\!\!\!\!\!\! \to \calB(-1)^{\oplus s_{\ell}} \to 0 
% 0 \to \m^2 \calS_Q(J) \to \!\!\!\!\!\!\! &\m \calS_Q(J)& \!\!\!\!\!\!\! \to \calB(-1)^{\oplus s_1} \to 0 
\end{eqnarray*}
of graded $\calT$-modules.
\end{enumerate}
%Then the implication $(2) \Rightarrow (1)$ holds, and the converse $(1) \Rightarrow (2)$ holds if $A$ has an infinite residue class field $A/\m$.
When this is the case, if $\ell > 0$, we have a non-split exact sequence 
$$
 0 \to \calB(-1)^{\oplus s_0} \to \m^{\ell-1} \calS_Q(J) \to \calB(-1)^{\oplus s_1} \to 0
$$
of graded $\calT$-modules. 
\end{thm}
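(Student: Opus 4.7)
The plan is to prove the two directions separately, reducing the harder one to the one-dimensional case by induction on $d$. The direction $(2) \Rightarrow (1)$ is straightforward: the filtration in (2) shows $M := \calS_Q(J)$ is generated in degree one (since each successive quotient $\calB(-1)^{\oplus s_i}$ is so), giving $J^3 = QJ^2$; additivity of rank along the filtration yields $\rank M = \sum_{i=0}^\ell s_i = n$, so $A$ is $n$-Goto with respect to $Q$.

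For $(1) \Rightarrow (2)$, I set $V := M_1 = J^2/QJ$, so that $\ell_A(V) = n$ by Lemma~\ref{2.4}~(2), and let $\ell$ be the largest integer with $\m^\ell V \ne 0$; by the preceding lemma, $0 \le \ell \le n - 1$. Defining $s_{\ell - i} := \dim_{A/\m}(\m^i V/\m^{i+1}V)$ for $0 \le i \le \ell$ produces positive integers summing to $n$. The goal is to show $G_i := \m^i M/\m^{i+1} M \cong \calB(-1)^{\oplus s_{\ell - i}}$ as graded $\calB$-modules, which immediately yields the short exact sequences in (2). I will argue by induction on $d$.

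For the base case $d = 1$, the hypothesis $J^3 = QJ^2$ implies $J^{k+1} = a_1^{k-1}J^2$ for $k \ge 1$, so $M_k = a_1^{k-1}J^2/a_1^k J$. Multiplication by $a_1 t$ then induces an $A$-linear isomorphism $M_k \xrightarrow{\sim} M_{k+1}$ by $a_1$-regularity on $A$; meanwhile $a_1 \cdot J^{k+1} = Q^{k-1}(a_1 J^2) \subseteq Q^{k-1}\cdot QJ = Q^k J$, so $a_1 \in \Ann_A M$. Setting $T := a_1 t$, these combine to give an isomorphism of graded $\calT$-modules $M \cong V \otimes_{A/a_1 A}(A/a_1 A)[T](-1)$, and tensoring with the filtration $\{\m^i V\}$ of $V$ identifies $G_i \cong (\m^i V/\m^{i+1}V) \otimes_{A/\m} \calB(-1) \cong \calB(-1)^{\oplus s_{\ell - i}}$.

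For the inductive step $d \ge 2$, the super-regular element $a := a_2 \in \fkq$ supplied by Lemma~\ref{2.4}~(1) yields, via the proposition on super-regular elements in Section~\ref{sec2}, an exact sequence $0 \to M(-1) \xrightarrow{at} M \to \overline{M} \to 0$ with $\overline{M} = \calS_{Q/(a)}(J/(a))$; Theorem~\ref{3.5} ensures $A/(a)$ is $n$-Goto, so the inductive hypothesis gives the filtration of $\overline{M}$ over $\overline{\calB} = \calB/(at)$. To lift back to $M$, I will establish (a) that $at$ is regular on each $G_i$ and (b) that $G_i/(at) G_i \cong \overline{G}_i := \overline{\m}^i\overline{M}/\overline{\m}^{i+1}\overline{M}$; then a graded Nakayama argument, lifting the isomorphism $\overline{\calB}(-1)^{\oplus s_{\ell - i}} \cong \overline{G}_i$ to a map $\calB(-1)^{\oplus s_{\ell - i}} \to G_i$ and using $at$-regularity (combined with the fact that both $G_i$ and the source are concentrated in degrees $\ge 1$) to force it to be an isomorphism, completes the inductive step. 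The main obstacle is the identity $\m^i M \cap (at) M = (at) \m^i M$ underlying both (a) and (b), equivalent to the $at$-regularity on $M/\m^i M$; verifying this requires careful bookkeeping of associated primes, exploiting that $\Ass_\calT M = \{\m\calT\}$ while $at \notin \m\calT$. Finally, the non-split claim is immediate: a splitting of $0 \to \m^\ell M \to \m^{\ell - 1}M \to G_{\ell-1} \to 0$ would make $\m^{\ell-1} M$ a direct sum of copies of $\calB(-1)$, hence annihilated by $\m$, contradicting $\m \cdot \m^{\ell - 1}M = \m^\ell M \ne 0$ by maximality of $\ell$.
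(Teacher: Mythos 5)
Your $(2)\Rightarrow(1)$ argument is correct and is essentially the paper's; so is your explicit computation in the base case $d=1$, where the description $M\cong V\otimes_{A/a_1A}(A/a_1A)[T](-1)$ is a nice, slightly more structural way to see the isomorphisms $G_i\cong\calB(-1)^{\oplus s_{\ell-i}}$. Your overall route for $(1)\Rightarrow(2)$, however, is genuinely different from the paper's: you induct on $d$ and pass to $A/(a)$ via a super-regular element, whereas the paper argues in fixed dimension by constructing a surjection $\varphi:\calB(-1)^{\oplus s_{\ell-i}}\to \m^iM/\m^{i+1}M$, observing that $\ker\varphi$, being a graded submodule of a free $\calB$-module, has $\Ass_{\calT}\ker\varphi\subseteq\{\p\}$ where $\p=\m\calT$, and then killing $\ker\varphi$ by comparing lengths of the localizations at $\p$ using $\mu_A([\m^iM]_1)=\mu_{\calT_\p}([\m^iM]_\p)$.

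The gap in your inductive step is exactly where you flag it, and I don't think the ``careful bookkeeping of associated primes'' you gesture at actually closes it. You need the identity $\m^iM\cap(at)M=(at)\m^iM$, equivalently that $at$ is regular on $M/\m^iM$. What is available is $\Ass_\calT M\subseteq\{\p\}$ (Fact~\ref{2.3}\,(2)) together with $at\notin\p$, which makes $at$ regular on $M$ itself. But the associated primes of the \emph{quotient} $M/\m^iM$ are a priori only constrained to contain $\p$ (since $\m^i$ kills $M/\m^iM$); they correspond to primes of $\calB=(A/\m)[T_1,\dots,T_d]$, and a nonzero linear form such as the image of $at$ lies in plenty of such primes. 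So regularity of $at$ on $M/\m^iM$ is precisely what needs proof, and nothing proved so far gives it. (If you already knew each $G_i$ were $\calB$-free you'd get it for free, but that is the conclusion you're trying to reach.) The paper's direct kernel argument is designed to sidestep this issue entirely: it never needs $at$ to act regularly on a non-Cohen--Macaulay quotient. To make your induction work you would either have to prove this super-regularity statement for the filtration by $\m^iM$, or replace the lifting step by the paper's localize-at-$\p$ length count applied directly in dimension $d$ --- at which point the induction becomes unnecessary.
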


\begin{proof}
$(2) \Rightarrow (1)$ If $\ell =0$, then $\calS_Q(J) \cong \calB(-1)^{\oplus s_0}$. 
As $\calB \cong (A/\m)\left[\calB_1\right]$, we have $\calS_Q(J) = \calT \left[\calS_Q(J)\right]_1$. Localizing the isomorphism at $\p = \m \calT \in \Spec \calT$ induces that $[\calS_Q(J)]_\p \cong (\calT/\p)_\p^{\oplus s_0} \cong (\calT_{\p}/\p \calT_\p)^{\oplus s_0}$. This shows $\rank \calS_Q(J) = \ell_{\calT_{\p}}(\left[\calS_Q(J)\right]_{\p}) = s_0 = n$, and hence $A$ is an $n$-Goto ring with respect to $Q$. 
We assume $\ell >0$. Because of $\m^{\ell} \calS_Q(J) \cong \calB(-1)^{\oplus s_0}$, the exact sequence 
$$
 0 \to \m^{\ell} \calS_Q(J) \to \m^{\ell-1} \calS_Q(J)  \to \calB(-1)^{\oplus s_1} \to 0
$$
yields that the graded $\calT$-module $\m^{\ell-1} \calS_Q(J)$ is generated by the homogeneous elements of degree one, while by the sequence  
$$
 0 \to \m^{\ell-1} \calS_Q(J) \to \m^{\ell-2} \calS_Q(J)  \to \calB(-1)^{\oplus s_2} \to 0 
$$
of graded $\calT$-modules, the degree one elements generate the $\calT$-module 
$\m^{\ell-2} \calS_Q(J)$, as well. 
Repeating the same process for $\m^i \calS_Q(J)$ recursively guarantees that $\calS_Q(J) = \calT \left[\calS_Q(J)\right]_1$. 
By localizing the exact sequences at $\p=\m \calT$, we get the sequences
\begin{eqnarray*}
 0 \to \left[\m^{\ell} \calS_Q(J)\right]_\p \to \!\! & \left[\m^{\ell-1} \calS_Q(J)\right]_\p & \!\! \to \left(\calT_\p/\p\calT_\p\right)^{\oplus s_1} \to 0 \\
 0 \to \left[\m^{\ell-1} \calS_Q(J)\right]_\p \to \!\! & \left[\m^{\ell-2} \calS_Q(J)\right]_\p & \!\! \to \left(\calT_\p/\p\calT_\p\right)^{\oplus s_2}\to 0  \\
  &\vdots& \\
 0 \to \left[\m \calS_Q(J)\right]_\p \to \!\!\!\!\!\!\! &\left[\calS_Q(J)\right]_\p& \!\!\!\!\!\!\! \to \left(\calT_\p/\p\calT_\p\right)^{\oplus s_{\ell}} \to 0 
% 0 \to \m^2 \calS_Q(J) \to \!\!\!\!\!\!\! &\m \calS_Q(J)& \!\!\!\!\!\!\! \to \calB(-1)^{\oplus s_1} \to 0 
\end{eqnarray*}
of $\calT_{\p}$-modules. Therefore we get 
\begin{eqnarray*}
\rank \calS_Q(J) &=& \ell_{\calT_\p}(\left[\calS_Q(J)\right]_\p) \\
&=& \ell_{\calT_\p}(\left[\m \calS_Q(J)\right]_\p) + s_{\ell} \\ 
&=&  \ell_{\calT_\p}(\left[\m^2 \calS_Q(J)\right]_\p) + s_{\ell - 1} + s_{\ell} \\
&=& \cdots =   \ell_{\calT_\p}(\left[\m^{\ell} \calS_Q(J)\right]_\p) +  \sum_{i=1}^\ell s_i = n 
\end{eqnarray*}
where the last equality follows from $\m^{\ell} \calS_Q(J) \cong \calB(-1)^{\oplus s_0}$. Therefore $A$ is an $n$-Goto ring with respect to $Q$. 

$(1) \Rightarrow (2)$ Since $n \ge 1$, we note that $\calS_Q(J) \ne (0)$. We define the integer $\ell =\max\{0 \le k \in \Bbb Z \mid \m^k \calS_Q(J) \ne (0)\}$. Then $0 \le \ell < n$. As $\m^{\ell}\calS_Q(J) \ne (0)$ and $\calS_Q(J) = \calT \left[\calS_Q(J)\right]_1$, we get $\left[\m^i\calS_Q(J)\right]_1 \ne (0)$ for every $0 \le i \le \ell$. Besides $\m^{\ell+1}J^2 \subseteq QJ$ because $\m^{\ell + 1} \calS_Q(J) = (0)$. For each $0 \le i \le \ell$, we set $s_i = \mu_A(\left[\m^{\ell-i}\calS_Q(J)\right]_1)$. Then $s_i \ge 1$ and $\sum_{i=0}^\ell s_i = n$. It is straightforward to check that the equality
$$
\mu_A(\left[\m^i\calS_Q(J)\right]_1) = \mu_{\calT_\p}(\left[\m^i \calS_Q(J)\right]_{\p})
$$
holds, where $\p = \m \calT \in \Spec \calT$. 
Since $\m^i \calS_Q(J)/\m^{i+1} \calS_Q(J)$ is a graded $\calB$-module generated by the homogeneous elements of degree one, we get a surjective homomorphism $\varphi: \calB(-1)^{\oplus s_{\ell-i}} \to \m^i \calS_Q(J)/\m^{i+1} \calS_Q(J)$. Let $K = \Ker \varphi$ and assume that $K \ne (0)$. 
Then $\Ass_{\calT}K =\{\p\}$, and the exact sequence
$$
0 \to K_{\p} \to \left(\calT_{\p}/\p\calT_{\p}\right)^{\oplus s_{\ell-i}} \to \left[\m^i \calS_Q(J)/\m^{i+1} \calS_Q(J)\right]_{\p} \to 0
$$
yields that $s_{\ell-i} = \ell_{\calT_{\p}}(K_{\p}) + \mu_{\calT_{\p}}(\left[\m^i \calS_Q(J)\right]_{\p}) = \ell_{\calT_{\p}}(K_{\p}) + s_{\ell-i}$. 
This shows $K_{\p}=(0)$, which is impossible. Thus $K=(0)$, and hence we have an isomorphism
$$
\m^i\calS_Q(J)/\m^{i+1}\calS_Q(J) \cong \calB(-1)^{\oplus s_{\ell-i}}
$$
of graded $\calT$-modules for every $0 \le i \le \ell$. In particular, $\m^{\ell} \calS_Q(J) \cong \calB(-1)^{\oplus s_0}$. 
When we have the equivalent conditions, if $\ell>0$, the exact sequence 
$$
 0 \to \calB(-1)^{\oplus s_0} \to \m^{\ell-1} \calS_Q(J) \to \calB(-1)^{\oplus s_1} \to 0
$$
is not split, because $\m^{\ell} \calS_Q(J) \ne (0)$.
\end{proof}

%\textcolor{red}{
%この定理の帰結として, Goto ringのHilbert functionを次のように書き出すことができる。次の系は, \cite[Corollary 5.3 (3)]{GTT}の一般化でもある。}
The following is a generalization of \cite[Corollary 5.3 (3)]{GTT}.

\begin{cor}\label{10.8ab}
Let $n \ge 1$ be an integer. Let $Q$ be a parameter ideal of $A$ which satisfies the condition $(\sharp)$. We set $J=I+Q$. Suppose that $A$ is an $n$-Goto ring with respect to $Q$. Then the Hilbert function of $A$ with respect to $J$ is given by
$$
\ell_A(A/J^{\ell+1}) =
\rme_0(J)\cdot\binom{\ell+d}{d} -\left[\rme_0(J) - \ell_A(A/J) + n\right]\cdot\binom{\ell+d-1}{d-1} + n\cdot\binom{\ell+d-2}{d-2}
$$
for all $\ell \ge 0$. Hence, $\rme_2(J) = n$ if $d \ge 2$, and $\rme_i(J) = 0$ for all $3 \le i \le d$ if $d \ge 3$.
\end{cor}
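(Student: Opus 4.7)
The plan is to extract the Hilbert series of $\calS_Q(J)$ from the filtration provided by Theorem~\ref{11.8} and then substitute into the formula of Fact~\ref{2.3}(3).

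First, I would observe that $\calB=\calT/\m\calT$ is the fiber cone $\calF(Q)$ of the parameter ideal $Q=(a_1,a_2,\ldots,a_d)$. Since $a_1,\ldots,a_d$ is a system of parameters in the Cohen--Macaulay local ring $A$, this sequence is analytically independent, hence $\calB\cong (A/\m)[T_1,\ldots,T_d]$ is the standard graded polynomial ring in $d$ variables over $A/\m$. Consequently
$$
\ell_A\bigl([\calB(-1)]_m\bigr)=\binom{m+d-2}{d-1}\quad (m\ge 1),\qquad [\calB(-1)]_0=(0).
$$

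Next, I would apply Theorem~\ref{11.8}: there exist an integer $0\le L<n$ and positive integers $s_0,s_1,\ldots,s_L$ with $\sum_{i=0}^{L}s_i=n$ such that $\m^{L}\calS_Q(J)\cong\calB(-1)^{\oplus s_0}$ together with short exact sequences of graded $\calT$-modules
$$
0\to \m^{L-j+1}\calS_Q(J)\to \m^{L-j}\calS_Q(J)\to \calB(-1)^{\oplus s_j}\to 0\quad(1\le j\le L).
$$
Since the Hilbert function is additive on short exact sequences, induction on $j$ gives
$$
\ell_A\bigl([\calS_Q(J)]_m\bigr)=\Bigl(\sum_{i=0}^{L}s_i\Bigr)\binom{m+d-2}{d-1}=n\binom{m+d-2}{d-1}\qquad(m\ge 1),
$$
and the expression also holds at $m=0$ (both sides vanish).

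Then I would substitute this into Fact~\ref{2.3}(3) to obtain
$$
\ell_A(A/J^{\ell+1})=\rme_0(J)\binom{\ell+d}{d}-\bigl[\rme_0(J)-\ell_A(A/J)\bigr]\binom{\ell+d-1}{d-1}-n\binom{\ell+d-2}{d-1}
$$
for every $\ell\ge 0$. Pascal's identity $\binom{\ell+d-1}{d-1}=\binom{\ell+d-2}{d-1}+\binom{\ell+d-2}{d-2}$ lets me rewrite $-n\binom{\ell+d-2}{d-1}=-n\binom{\ell+d-1}{d-1}+n\binom{\ell+d-2}{d-2}$, which yields exactly the claimed formula.

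Finally, comparing the resulting polynomial with the standard expansion $P(X)=\sum_{i=0}^d(-1)^i\rme_i(J)\binom{X+d-i}{d-i}$ of the Hilbert polynomial immediately gives $\rme_0(J)=\rme_0(J)$, $\rme_1(J)=\rme_0(J)-\ell_A(A/J)+n$ (consistent with Fact~\ref{2.3}(4)), $\rme_2(J)=n$ when $d\ge 2$, and $\rme_i(J)=0$ for $3\le i\le d$ when $d\ge 3$. The main substantive ingredient is Theorem~\ref{11.8}; the remaining work is the binomial-coefficient bookkeeping and the identification of $\calB$ as a polynomial ring, both of which should be straightforward. No serious obstacle is anticipated.
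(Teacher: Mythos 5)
Your proof is correct and follows essentially the same route as the paper: you feed the filtration from Theorem~\ref{11.8} into Fact~\ref{2.3}(3), use that $\calB$ is a polynomial ring in $d$ variables over $A/\m$, and finish with Pascal's identity; the paper's proof is literally the same computation, just writing $\ell_A(\calB_{\ell-1})$ as $\binom{\ell+d-1}{d-1}-\binom{\ell+d-2}{d-2}$ rather than $\binom{\ell+d-2}{d-1}$. One small caveat on your aside that the formula $\ell_A\bigl([\calS_Q(J)]_m\bigr)=n\binom{m+d-2}{d-1}$ "also holds at $m=0$ (both sides vanish)": this is fine for $d\ge 2$, but for $d=1$ the right side is $n\binom{-1}{0}=n\ne 0$ while the left side is $0$; this is a binomial-coefficient convention subtlety that the paper's statement of the corollary shares (at $\ell=0$, $d=1$ the displayed formula only matches under the convention $\binom{-1}{-1}=1$), so it is not a flaw in your argument relative to the paper, but you should not assert it as an unconditional identity.
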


\begin{proof}
By Fact \ref{2.3} (3), we have
$$
\ell_A(A/J^{\ell+1}) = \rme_0(J)\cdot\binom{\ell+d}{d} - \left[\rme_0(J) - \ell_A(A/J)\right]\cdot\binom{\ell+d-1}{d-1} - \ell_A(\left[\calS_Q(J)\right]_\ell)
$$ for all $\ell \ge 0$. 
We set $\calT=\calR(Q)$ and $\calB=\calT/\m \calT$.
Since $A$ is an $n$-Goto ring with respect to $Q$, we can choose integers $0 \le t < n$ and $s_i \ge 1~(0 \le i \le t)$ such that $n = \sum_{i=0}^{t}s_i$ and $\m^{t} \calS_Q(J) \cong \calB(-1)^{\oplus s_0}$, and if $t>0$, there exist exact sequences
\begin{eqnarray*}
 0 \to \m^{t} \calS_Q(J) \to \!\! &\m^{t-1} \calS_Q(J) & \!\! \to \calB(-1)^{\oplus s_1} \to 0 \\
 0 \to \m^{t-1} \calS_Q(J) \to \!\! &\m^{t-2} \calS_Q(J) & \!\! \to \calB(-1)^{\oplus s_2} \to 0  \\
  &\vdots& \\
 0 \to \m \calS_Q(J) \to \!\!\!\!\!\!\! &\calS_Q(J)& \!\!\!\!\!\!\! \to \calB(-1)^{\oplus s_{\ell}} \to 0 
% 0 \to \m^2 \calS_Q(J) \to \!\!\!\!\!\!\! &\m \calS_Q(J)& \!\!\!\!\!\!\! \to \calB(-1)^{\oplus s_1} \to 0 
\end{eqnarray*}
of graded $\calT$-modules. Hence, for each $\ell \ge 0$, we get
$\ell_A(\left[\calS_Q(J)\right]_\ell) = \sum_{i=0}^t s_i \cdot \ell_A(\calB_{\ell-1}) = n \cdot \left[\binom{\ell + d-1}{d-1} - \binom{\ell + d-2}{d-2}\right]$, 
because $\calB \cong (A/\m)[X_1, X_2, \ldots, X_d]$ is the polynomial ring over $A/\m$. Therefore the equality
$$
\ell_A(A/J^{\ell+1}) =
\rme_0(J)\cdot\binom{\ell+d}{d} -\left[\rme_0(J) - \ell_A(A/J) + n\right]\cdot\binom{\ell+d-1}{d-1} + n\cdot\binom{\ell+d-2}{d-2}
$$
holds. 
\end{proof}

We summarize some consequences of Theorem \ref{11.8}.

%with respect to $Q$を外すと次のように書ける。証明はTheoremから直ちに従う。

\begin{cor}
%Let $(A, \m)$ be a Cohen-Macaulay local ring with $d=\dim A>0$, $I$ a canonical ideal of $A$. 
Let $n \ge 1$ be an integer. 
Then the following conditions are equivalent. 
\begin{enumerate}
\item[$(1)$] $A$ is an $n$-Goto ring.
\item[$(2)$] There exist a parameter ideal $Q=(a_1, a_2, \dots, a_d)$ of $A$ and integers $0 \le \ell < n$, $s_i \ge 1~(0 \le i \le \ell)$ such that $a_1 \in I$, $n = \sum_{i=0}^{\ell}s_i$, and $\m^{\ell} \calS_Q(J) \cong \calB(-1)^{\oplus s_0}$, and if $\ell>0$, there exist exact sequences
\begin{eqnarray*}
 0 \to \m^{\ell} \calS_Q(J) \to \!\! &\m^{\ell-1} \calS_Q(J) & \!\! \to \calB(-1)^{\oplus s_1} \to 0 \\
 0 \to \m^{\ell-1} \calS_Q(J) \to \!\! &\m^{\ell-2} \calS_Q(J) & \!\! \to \calB(-1)^{\oplus s_2} \to 0  \\
  &\vdots& \\
 0 \to \m \calS_Q(J) \to \!\!\!\!\!\!\! &\calS_Q(J)& \!\!\!\!\!\!\! \to \calB(-1)^{\oplus s_{\ell}} \to 0 
% 0 \to \m^2 \calS_Q(J) \to \!\!\!\!\!\!\! &\m \calS_Q(J)& \!\!\!\!\!\!\! \to \calB(-1)^{\oplus s_1} \to 0 
\end{eqnarray*}
of graded $\calT$-modules, where $J=I+Q$, $\calT=\calR(Q)$, and $\calB=\calT/\m \calT$.
\end{enumerate}
When this is the case, if $\ell > 0$, we have a non-split exact sequence 
$$
 0 \to \calB(-1)^{\oplus s_0} \to \m^{\ell-1} \calS_Q(J) \to \calB(-1)^{\oplus s_1} \to 0
$$
of graded $\calT$-modules. 
\end{cor}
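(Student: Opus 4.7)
The plan is to deduce this corollary directly from Theorem~\ref{11.8}. By Definition~\ref{3.1}, the ring $A$ is $n$-Goto iff there exists a parameter ideal $Q = (a_1, \ldots, a_d)$ with $a_1 \in I$ such that $A$ is $n$-Goto with respect to $Q$. Theorem~\ref{11.8} already gives the relevant equivalence once $Q$ is fixed and assumed to satisfy the condition $(\sharp)$. So the corollary is essentially a formality, provided one reconciles the hypothesis $(\sharp)$ of Theorem~\ref{11.8} with the weaker-looking data in condition~(2).

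First I would handle $(1) \Rightarrow (2)$. Assume $A$ is $n$-Goto and pick a witnessing $Q$. Since the condition $\calS_Q(J) = \calT[\calS_Q(J)]_1$ from Definition~\ref{3.1} implies $J^3 = QJ^2$, the parameter ideal $Q$ automatically satisfies $(\sharp)$. I would then invoke the $(1) \Rightarrow (2)$ direction of Theorem~\ref{11.8} verbatim to produce the integers $\ell, s_i$ and the chain of short exact sequences.

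For $(2) \Rightarrow (1)$, take $Q$ together with the data from~(2). The isomorphism $\m^{\ell}\calS_Q(J) \cong \calB(-1)^{\oplus s_0}$ says that $\m^{\ell}\calS_Q(J)$ is generated in degree one over $\calB$, and climbing up the given short exact sequences shows by induction on $\ell-i$ that each $\m^{\ell-i}\calS_Q(J)$ is generated in degree one. In particular $\calS_Q(J) = \calT[\calS_Q(J)]_1$, so $J^3 = QJ^2$, which forces $Q$ to be a reduction of $J$ and hence to satisfy~$(\sharp)$. Now the $(2) \Rightarrow (1)$ direction of Theorem~\ref{11.8} delivers that $A$ is $n$-Goto with respect to $Q$, hence $n$-Goto. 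The non-splitting of the final displayed sequence when $\ell > 0$ is the one noted in Theorem~\ref{11.8}; indeed, any splitting would give $\m\cdot\m^{\ell-1}\calS_Q(J) = (0)$, contradicting $\m^{\ell}\calS_Q(J) \cong \calB(-1)^{\oplus s_0} \neq (0)$. The only point requiring care, and what I would flag as the main (minor) obstacle, is verifying that the data in~(2) forces~$(\sharp)$ on $Q$; once that is noted the corollary is a transparent restatement of Theorem~\ref{11.8}.
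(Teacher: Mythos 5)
Your argument is correct and is exactly the route the paper intends: the corollary is the quantified-over-$Q$ version of Theorem~\ref{11.8}, and the only real content is observing that both the witness $Q$ from Definition~\ref{3.1} and any $Q$ satisfying condition~(2) automatically satisfy $(\sharp)$. Your check that~(2) forces $\calS_Q(J)=\calT[\calS_Q(J)]_1$ by climbing the exact sequences, hence $J^3=QJ^2$ and $Q$ is a reduction of $J$, is the right way to close that small gap, and the non-splitting argument matches the one given at the end of the proof of Theorem~\ref{11.8}.
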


\begin{cor}\label{11.9}
%Let $(A, \m)$ be a Cohen-Macaulay local ring with $d=\dim A>0$, $I$ a canonical ideal of $A$. 
Let $n \ge 1$ be an integer. Let $Q$ be a parameter ideal of $A$ which satisfies the condition $(\sharp)$. We set $J=I+Q$, $\calT=\calR(Q)$, and $\calB = \calT/\m \calT$. 
Then the following conditions are equivalent.
\begin{enumerate}
\item[$(1)$] $A$ is an $n$-Goto ring with respect to $Q$ and $\m^{n-1} \calS_Q(J) \ne (0)$.
\item[$(2)$] There exists an isomorphism $\m^{n-1} \calS_Q(J) \cong \calB(-1)$, and if $n \ge 2$, there exist exact sequences
\begin{eqnarray*}
 0 \to \m^{n-1} \calS_Q(J) \to \!\! &\m^{n-2} \calS_Q(J) & \!\! \to \calB(-1) \to 0 \\
 0 \to \m^{n-2} \calS_Q(J) \to \!\! &\m^{n-3} \calS_Q(J) & \!\! \to \calB(-1) \to 0  \\
  &\vdots& \\
 0 \to \m \calS_Q(J) \to \!\!\!\!\!\!\! &\calS_Q(J)& \!\!\!\!\!\!\! \to \calB(-1) \to 0 
% 0 \to \m^2 \calS_Q(J) \to \!\!\!\!\!\!\! &\m \calS_Q(J)& \!\!\!\!\!\!\! \to \calB(-1)^{\oplus s_1} \to 0 
\end{eqnarray*}
of graded $\calT$-modules.
\end{enumerate}
When this is the case, if $n \ge 2$, we have a non-split exact sequence 
$$
 0 \to \calB(-1) \to \m^{n-2} \calS_Q(J) \to \calB(-1) \to 0
$$
of graded $\calT$-modules. 
\end{cor}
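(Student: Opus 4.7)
The plan is to extract this corollary as the extremal case of Theorem \ref{11.8}, namely the case in which the invariant $\ell$ appearing there attains its maximum possible value $n-1$. The one observation that does all the work is that the graded ring $\calB = \calT/\m\calT$ is annihilated by $\m$, so any isomorphism $\m^k\calS_Q(J) \cong \calB(-1)^{\oplus s}$ forces $\m^{k+1}\calS_Q(J) = (0)$. This immediately pins down $\ell$ from the nonvanishing hypothesis.

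For $(1) \Rightarrow (2)$, I will invoke Theorem \ref{11.8} to produce integers $0 \le \ell < n$ and $s_i \ge 1$ $(0 \le i \le \ell)$ with $\sum_{i=0}^\ell s_i = n$ and $\m^\ell \calS_Q(J) \cong \calB(-1)^{\oplus s_0}$, together with the listed exact sequences. By the observation above, $\m^{\ell+1}\calS_Q(J) = (0)$, so the assumption $\m^{n-1}\calS_Q(J) \ne (0)$ gives $\ell \ge n-1$, and combined with $\ell < n$ this forces $\ell = n-1$. Then the constraint $n = \sum_{i=0}^{n-1} s_i$ with each $s_i \ge 1$ forces $s_0 = s_1 = \cdots = s_{n-1} = 1$, and substituting these values into the exact sequences of Theorem \ref{11.8} produces exactly the sequences in $(2)$.

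The converse $(2) \Rightarrow (1)$ and the final non-split sequence will follow directly by invoking the $(2) \Rightarrow (1)$ direction and the final assertion of Theorem \ref{11.8} with $\ell = n-1$ and all $s_i = 1$: the former yields that $A$ is $n$-Goto with respect to $Q$, and $\m^{n-1}\calS_Q(J) \cong \calB(-1) \ne (0)$ is immediate from the given isomorphism. I do not foresee any genuine obstacle in this argument; the whole proof is a bookkeeping exercise once Theorem \ref{11.8} is in hand, with the only subtle point being the identification of $\ell$ as exactly $n-1$ via the $\m$-annihilation remark.
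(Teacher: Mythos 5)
Your proposal is correct and follows essentially the same route as the paper: both proofs specialize Theorem \ref{11.8} by pinning down $\ell=n-1$ and hence $s_0=\cdots=s_{n-1}=1$. The only cosmetic difference is how $\ell=n-1$ is extracted — the paper reuses the internal definition $\ell=\max\{k:\m^k\calS_Q(J)\ne(0)\}$ from the proof of Theorem \ref{11.8}, whereas you deduce $\m^{\ell+1}\calS_Q(J)=(0)$ directly from the isomorphism $\m^\ell\calS_Q(J)\cong\calB(-1)^{\oplus s_0}$ and the fact that $\m$ annihilates $\calB$; the latter is slightly more self-contained since it works from the statement rather than the proof of Theorem \ref{11.8}, but the two arguments are interchangeable.
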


\begin{proof}
We maintain the notation as in the proof of Theorem \ref{11.8}. 
Since $\m^{n-1} \calS_Q(J) \ne (0)$, we have $\ell = n-1$, so that $s_i = 1$ for every $0 \le i \le \ell$. 
\end{proof}

For an integer $n \ge 2$, we assume $A$ is an $n$-Goto ring with $\dim A=1$. 
Then the canonical ideal $I$ contains a parameter ideal $Q=(a)$ as a reduction, and  $\m^{n-1}\calS_Q(I) \ne (0)$ if and only if $v(A/\fkc) = 1$, where $K = \frac{I}{a}$ in $\rmQ(A)$. Since the equivalent conditions hold for $n=2$, the next provides a natural generalization of \cite[Theorem 3.7 $(1) \Leftrightarrow (2)$]{CGKM}.

%$v(A/\fkc) = 1$とSally moduleの消滅は同値。また, $v(A/\fkc) = 1$は$2$-Gotoでは常に成り立つので, 次の結果は, CGKMの拡張である。
%1次元の時の$n$-Goto ringの特徴付け。$v(A/\fkc)=1$を仮定するやつ。それでこれは1次元2-AGLならいつでも満たされることも述べる。

\begin{cor}
Let $n \ge 1$ be an integer. Suppose that $d=1$ and $I$ contains a parameter ideal $Q=(a)$ as a reduction. We set $K=\frac{I}{a}$, $\fkc = A:A[K]$, $\calT=\calR(Q)$, and $\calB=\calT/\m \calT$.
Then the following conditions are equivalent.
\begin{enumerate}
\item[$(1)$] $A$ is an $n$-Goto ring and $v(A/\fkc) = 1$.
\item[$(2)$] There exists an isomorphism $\m^{n-1} \calS_Q(J) \cong \calB(-1)$, and if $n \ge 2$, there exist exact sequences
\begin{eqnarray*}
 0 \to \m^{n-1} \calS_Q(J) \to \!\! &\m^{n-2} \calS_Q(J) & \!\! \to \calB(-1) \to 0 \\
 0 \to \m^{n-2} \calS_Q(J) \to \!\! &\m^{n-3} \calS_Q(J) & \!\! \to \calB(-1) \to 0  \\
  &\vdots& \\
 0 \to \m \calS_Q(J) \to \!\!\!\!\!\!\! &\calS_Q(J)& \!\!\!\!\!\!\! \to \calB(-1) \to 0 
% 0 \to \m^2 \calS_Q(J) \to \!\!\!\!\!\!\! &\m \calS_Q(J)& \!\!\!\!\!\!\! \to \calB(-1)^{\oplus s_1} \to 0 
\end{eqnarray*}
of graded $\calT$-modules.
\end{enumerate}
When this is the case, if $n \ge 2$, we have a non-split exact sequence 
$$
 0 \to \calB(-1) \to \m^{n-2} \calS_Q(J) \to \calB(-1) \to 0
$$
of graded $\calT$-modules. 
\end{cor}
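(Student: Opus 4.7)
The plan is to reduce this corollary to Corollary \ref{11.9} by showing that, under the hypothesis that $A$ is $n$-Goto of dimension one, the condition $v(A/\fkc) = 1$ is equivalent to $\m^{n-1}\calS_Q(J) \ne (0)$. Once this equivalence is in hand, the corollary becomes Corollary \ref{11.9} stated in the one-dimensional setting, where $J = I + Q = I$ since $Q \subseteq I$.

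First I would translate the nonvanishing of $\m^{n-1}\calS_Q(J)$ into a condition on $K$. Under the assumption $J^3 = QJ^2$, we have $\left[\m^{n-1}\calS_Q(I)\right]_1 \cong \m^{n-1}(I^2/QI)$, and the isomorphism $I^2/QI \cong K^2/K$ arising from $I = aK$ shows
$$
\m^{n-1}\calS_Q(I) \ne (0) \ \iff \ \m^{n-1}K^2 \not\subseteq K.
$$
Using $K:K = A$ (see \cite[Bemerkung 2.5]{HK}) and $K^2 = K^3$, one rewrites
$$
\m^{n-1}K^2 \not\subseteq K \ \iff \ \m^{n-1} \not\subseteq K:K^2 = (K:K):K = A:K = \fkc.
$$
Thus $\m^{n-1}\calS_Q(J) \ne (0)$ is equivalent to $\overline{\m}^{n-1} \ne (0)$ in $\overline{A} = A/\fkc$.

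Second I would connect this to $v(A/\fkc)$. Because $A$ is $n$-Goto, we have $\ell_A(A/\fkc) = n$. If $\overline{\m}^{n-1}\ne (0)$, then the chain $\overline{A} \supsetneq \overline{\m} \supsetneq \overline{\m}^2 \supsetneq \cdots \supsetneq \overline{\m}^{n-1} \supsetneq (0)$ has $n$ strict inclusions, forcing each composition factor to have length one; in particular $v(A/\fkc) = \dim_{A/\m}(\overline{\m}/\overline{\m}^2) = 1$. Conversely, if $v(A/\fkc) = 1$, each power $\overline{\m}^i/\overline{\m}^{i+1}$ is cyclic, and since $A$ is $n$-Goto we have $\ell_{\overline{A}}(\overline{\m}^i/\overline{\m}^{i+1}) = 1$ for $0 \le i < n$; in particular $\overline{\m}^{n-1} \ne (0)$. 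This is precisely the content of the lemma already established in Section \ref{sec4-1}.

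Finally, since $d = 1$ and $J = I$, Corollary \ref{11.9} applied with $J = I$ gives immediately the equivalence of the two conditions and the non-split short exact sequence in the ``when this is the case'' clause. The main technical point is the translation between $\m^{n-1}\calS_Q(J) \ne (0)$ and $v(A/\fkc) = 1$, which I expect to be straightforward given the preparatory material; the rest is a direct application of Corollary \ref{11.9} with no additional obstacles.
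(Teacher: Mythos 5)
Your proof is correct and follows the same route as the paper: the paper's justification for this corollary is exactly the preceding observation that, for a one-dimensional $n$-Goto ring, $\m^{n-1}\calS_Q(I)\ne(0)$ if and only if $v(A/\fkc)=1$ (via the identity $I^2/aI\cong K^2/K$, the equalities $K:K=A$ and $A:K=\fkc$, and the lemma in Section~\ref{sec4-1}), after which one applies Corollary~\ref{11.9} directly. Note only that the translation step relies implicitly on $n\ge 2$ (consistent with the lemma it cites), and that for $n=1$ the condition $v(A/\fkc)=1$ degenerates since $\fkc=\m$; but this is a feature of the statement, not a defect in your argument.
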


%\textcolor{red}{
%Question： semi-GorensteinとGoto ring with $\m^{n-1} \calS_Q(J) \ne (0)$との間の関係は?}

%\textcolor{red}{
%ここから, $n$-Goto ringの局所化が$n$-Gotoかどうかの議論ができるか？これがもしも正しいなら, Goto ringの定義は, $\m^{n-1} \calS_Q(J) \ne (0)$を付けたものが正しい定義な気がする。}

%\begin{rem}
%$\m^{n-1} \calS_Q(J) \ne (0)$という仮定は不自然では無いことを述べる。1次元2-AGLなら常に正しい。1次元$n$-Goto ringで正しい？（いや, 正しくない。反例があるはずだ。）
%\end{rem}

%$2$-Gotoの特徴付け。$2$-AGLの高次元化である。

\if0
\begin{cor}
%Let $(A, \m)$ be a Cohen-Macaulay local ring with $d=\dim A>0$, $I$ a canonical ideal of $A$. 
Let $Q$ be a parameter ideal of $A$ which satisfies the condition $(\sharp)$. 
Then $A$ is a $2$-Goto ring with respect to $Q$ if and only if there exist either an isomorphism $\calS_Q(J) \cong \calB(-1)^{\oplus 2}$, or an exact sequence 
$$
 0 \to \calB(-1) \to \calS_Q(J) \to \calB(-1) \to 0
$$
of graded $\calT$-modules. %\textcolor{red}{$2$-Goto ringで, $\m \calS_Q(J) \ne (0)$は正しいかどうかまだわからない。}
\end{cor}
\fi
%\begin{proof}
%\end{proof}

%\begin{ex}
%具体例を計算できるかな？できるとより良い。
%\end{ex}

%\textcolor{red}{
%1次元$n$-Goto ringsの特徴付け（CM typeの完全列の観点から）を高次元にする。
%}
%%%%%%%%%%%%%%%%%%%%%%%%%%%%%%%%%%%%%%%%%%%%%%%%%%%%%%%%%%%%%%%%%%%%%%%%%%%%%%%%%%%%%%%%%%

\section{Exact sequences and Goto rings}\label{sec11}

%\textcolor{red}{
%AGLのexact sequenceの定義との関連。$R$から$\rmK_R$への埋め込みのcokernelがUlrich modulesの直和になることを示す。}

We investigate the relation between Goto rings and the existence of a certain embedding of the ring into its canonical module. We begin with the setup. 

\begin{setup}
Let $(A, \m)$ be a Cohen-Macaulay local ring with $d=\dim A>0$ admitting a canonical module $\rmK_A$. Let $I$ be a canonical ideal of $A$ and $Q=(a_1, a_2, \ldots, a_d)$ a parameter ideal of $A$ with condition $(\sharp)$. We set $J = I+Q$ the extended canonical ideal of $A$.  
\end{setup}

Recall that, for a finitely generated $A$-module $M$ and an $\m$-primary ideal $\fka$ of $A$, we say that $M$ is an {\it Ulrich $A$-module with respect to $\fka$}, if $M$ is a Cohen-Macaulay $A$-module, $\rme^0_{\fka}(M) = \ell_A(M/\fka M)$, and $M/\fka M$ is free as an $A/\fka$-module, where $\rme^0_{\fka}(M)$ denotes the $0$-th Hilbert coefficient of $M$ with respect to $\fka$ (see \cite[Definition 2.1]{GK}, \cite[Definition 1.2]{GOTWY}, \cite[Definition 2.1]{GTT}).

With this notation we have the following. 

\begin{thm}\label{11.2a}
Let $n \ge 1$ be an integer.
Suppose that $A$ is an $n$-Goto ring with respect to $Q$ and $\m^{n-1} \calS_Q(J) \ne (0)$. Then there exist an exact sequence 
$$
0 \to A \to \rmK_A \to C \to 0
$$
of $A$-modules and, for each $1 \le i \le n$, an $\m$-primary ideal $\fka_i$ of $A$ with $\ell_A(A/\fka_i)=i$ such that $C \cong \bigoplus_{i=1}^n M_i$ and $M_n \ne (0)$, where $M_i$ denotes an Ulrich $A$-module with respect to $\fka_i$ for each $1 \le i \le n$. 
\end{thm}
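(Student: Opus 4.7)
The plan is to first settle the one-dimensional case via the structural decomposition supplied by Theorem~\ref{4.8}, then lift to arbitrary dimension using the super-regularity of the sequence $a_2,\ldots,a_d$ coming from Lemma~\ref{2.4}. Suppose first $d=1$. Set $K=I/a_1\subseteq\rmQ(A)$, so $A\subseteq K\cong\rmK_A$ and one has the tautological short exact sequence $0\to A\to K\to K/A\to 0$. The hypothesis $\m^{n-1}\calS_Q(J)\neq(0)$ translates into $\m^{n-1}K^2\not\subseteq K$, which together with the $n$-Goto assumption forces $v(A/\fkc)=1$ by the lemma preceding Lemma~\ref{4.5}. Theorem~\ref{4.8} then produces a minimal system of generators $x_1,\ldots,x_\ell$ of $\m$ with $\fkc=(x_1^{n},x_2,\ldots,x_\ell)$ and an isomorphism $K/A\cong\bigoplus_{i=1}^{n}(A/I_i)^{\oplus\ell_i}$ where $I_i=(x_1^{i},x_2,\ldots,x_\ell)$ and $\ell_n>0$. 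Taking $\fka_i:=I_i$ (so $\ell_A(A/\fka_i)=i$) and $M_i:=(A/I_i)^{\oplus\ell_i}$, each $M_i$ is zero-dimensional Cohen--Macaulay, annihilated by $\fka_i$, and free of rank $\ell_i$ over $A/\fka_i$; hence Ulrich with respect to $\fka_i$, and $M_n\neq(0)$.

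Now assume $d\ge 2$. By Lemma~\ref{2.4}(1), the sequence $a_2,\ldots,a_d\in\fkq:=(a_2,\ldots,a_d)$ is super-regular on $A$ with respect to $J$; being a subsystem of parameters on the maximal Cohen--Macaulay module $\rmK_A$, it is also regular on $\rmK_A$, giving $\rmK_A/\fkq\rmK_A\cong\rmK_{\overline A}$, where $\overline A:=A/\fkq$. Theorem~\ref{3.5} combined with Remark~\ref{3.5a} shows that $\overline A$ is a one-dimensional $n$-Goto ring with respect to $\overline Q=Q\overline A$ satisfying $\overline\m^{n-1}\calS_{\overline Q}(\overline J)\neq(0)$. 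The one-dimensional step applied to $\overline A$ delivers $0\to\overline A\xrightarrow{\overline\phi}\rmK_{\overline A}\to\overline C\to0$ with $\overline C\cong\bigoplus_{i=1}^{n}\overline M_i$, $\overline M_n\neq(0)$, and $\overline M_i$ Ulrich with respect to $\overline{\fka_i}\subseteq\overline A$ of colength $i$.

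Choose a lift $\xi\in\rmK_A$ of $\overline\phi(\overline 1)\in\rmK_{\overline A}$ and define $\phi:A\to\rmK_A$ by $\phi(a)=a\xi$. Since $\overline\phi(\overline 1)$ is a non-zerodivisor on $\rmK_{\overline A}$ and $a_2,\ldots,a_d$ is a maximal regular sequence on $\rmK_A$, the sequence $a_2,\ldots,a_d,\xi$ is regular on $\rmK_A$; permutability of regular sequences on Cohen--Macaulay modules gives that $\xi$ is a non-zerodivisor on $\rmK_A$. An associated-graded argument, using the identification $\gr_{\fkq}(\rmK_A)\cong\rmK_{\overline A}[T_2,\ldots,T_d]$ from the Koszul-regularity of $\fkq$ on $\rmK_A$, shows that $\Ann_A(\xi)\subseteq\bigcap_k\fkq^k=(0)$ by Krull intersection; hence $\phi$ is injective. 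The cokernel $C:=\Coker\phi$ satisfies $\depth_AC\ge d-1$ and $\dim C\le d-1$, so $C$ is Cohen--Macaulay of dimension $d-1$, $\fkq$ is a system of parameters on $C$, and $C/\fkq C\cong\overline C$.

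It remains to lift the direct-sum decomposition. Passing to the $\m$-adic completion $\widehat A$ if necessary (which preserves the $n$-Goto hypothesis by the completion corollary of Section~\ref{sec3} and supports idempotent lifting), $\fkq\End_A(C)$ lies inside the Jacobson radical of the module-finite $A$-algebra $\End_A(C)$, so the orthogonal idempotents of $\End_{\overline A}(\overline C)=\End_A(C)/\fkq\End_A(C)$ cutting out the summands $\overline M_i$ lift to orthogonal idempotents in $\End_A(C)$, producing a splitting $C\cong\bigoplus_{i=1}^{n} M_i$ with $M_i/\fkq M_i\cong\overline M_i$. Lift $\overline{\fka_i}$ to an $\m$-primary ideal $\fka_i\subseteq A$ containing $\fkq$ with $\ell_A(A/\fka_i)=i$; then $A/\fka_i\cong\overline A/\overline{\fka_i}$, and since $\overline{\fka_i}\cdot\overline M_i=(0)$ one has $\fka_iM_i=\fkq M_i$, so $\fkq$ is a reduction of $\fka_i$ on $M_i$ with reduction number zero. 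This yields $\rme^0_{\fka_i}(M_i)=\rme^0_{\fkq}(M_i)=\ell_A(M_i/\fkq M_i)=\ell_A(M_i/\fka_iM_i)$ and the freeness of $M_i/\fka_iM_i=\overline M_i/\overline{\fka_i}\overline M_i$ over $A/\fka_i=\overline A/\overline{\fka_i}$; consequently $M_i$ is Ulrich with respect to $\fka_i$, and $M_n\neq(0)$ follows from $\overline M_n\neq(0)$. The principal obstacle will be this last step: transferring both the direct-sum decomposition and the Ulrich multiplicity equalities across the reduction by $\fkq$, which hinges on the idempotent-lifting argument in $\End_A(C)$ and a careful bookkeeping of how parameter ideals interact with the Ulrich modules.
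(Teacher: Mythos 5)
Your $d=1$ reduction, the lift of $\psi(1)$ to $\xi\in\rmK_A$, and the verification that $C$ is Cohen--Macaulay of dimension $d-1$ with $C/\fkq C\cong\overline C$ all follow the paper's route; the associated-graded argument for injectivity, though phrased a bit loosely ($\xi$ is an element of $\rmK_A$, not of $A$, so ``regular sequence'' is the wrong word), can be made correct and is a legitimate substitute for the paper's citation of \cite[Lemma 3.1]{GTT}.

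The genuine gap is in the final decomposition step, and it is twofold. First, you assert $\End_{\overline A}(\overline C)=\End_A(C)/\fkq\End_A(C)$. What is automatic is only an \emph{injective} map $\End_A(C)/\fkq\End_A(C)\hookrightarrow\End_{\overline A}(C/\fkq C)$ (coming from $\fkq$ being $C$-regular); surjectivity would require, e.g., vanishing of the relevant $\Ext^1_A(C,C)$ terms, which is not established. Without surjectivity, the idempotents of $\End_{\overline A}(\overline C)$ that cut out the $\overline M_i$ need not lie in the image, so there is nothing to lift. Second, even granting that identification, the lifting of orthogonal idempotents along $\fkq\End_A(C)\subseteq\mathrm{rad}(\End_A(C))$ requires completeness (or Henselianness) — being in the Jacobson radical is not enough. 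You acknowledge this by passing to $\widehat A$, but the theorem asserts the existence of an exact sequence $0\to A\to\rmK_A\to C\to0$ of $A$-modules, $\m$-primary ideals $\fka_i\subseteq A$, and $A$-modules $M_i$; proving the decomposition for $\widehat C$ over $\widehat A$ does not descend, since a direct-sum decomposition of $C\otimes_A\widehat A$ need not be induced by one of $C$, and the ideals of $\widehat A$ you would produce need not be extended from $A$. The paper sidesteps both difficulties by taking an indecomposable decomposition $C=\bigoplus N_j^{(i)}$ over $A$ itself and then invoking Krull--Schmidt for the \emph{finite-length} module $C/\fkq C$ to match the reductions $N_j^{(i)}/\fkq N_j^{(i)}$ against the cyclic summands $A/\fka_k$ of $D$; no idempotent lifting and no completion are needed. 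You should replace the endomorphism-ring lifting with that matching argument, or else supply both the surjectivity of $\End_A(C)/\fkq\End_A(C)\to\End_{\overline A}(\overline C)$ and a descent argument from $\widehat A$ to $A$.
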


\begin{proof}
We may assume $n \ge 2$. 
Suppose $d=1$. We set $K = \frac{I}{a_1}$ inside of the ring $\rmQ(A)$. Note that $K$ is a fractional canonical ideal of $A$. Since $A$ is $n$-Goto, we get $K^2 = K^3$ and $Q:_AJ = Q:_AI = A:K = A:A[K]$. By setting $\fkc = A:A[K]$, we see that $v(A/\fkc) = 1$. As $\ell_A(A/\fkc) =n$, we can choose a minimal system $x_1, x_2, \ldots, x_{\ell}$ of generators of $\m$ such that $\fkc =(x_1^n, x_2, \ldots, x_{\ell})$, where $\ell = v(A)$. For each $1 \le i \le n$, we define
$$
I_i = (x_1^i, x_2, \ldots, x_{\ell}).
$$
Thanks to Theorem \ref{4.8}, there is an exact sequence
$$
0 \to A \to K \to C \to 0
$$
of $A$-modules such that 
$$
C\cong K/A \cong \bigoplus_{i=1}^{n} \left(A/I_i \right)^{\oplus \ell_i}
$$
where $\ell_n >0$, $\ell_i \ge 0~(1 \le i < n)$, and $\sum_{i=1}^{n} \ell_i = \rmr(A)-1$. The $A$-module $M_i= \left(A/I_i\right)^{\oplus \ell_i}$ is Ulrich with respect to $I_i$ and $\ell_A(A/I_i) = i$. Moreover, as $\ell_n >0$, we get $M_n \ne (0)$. 

It suffices to assume that $d \ge 2$. Let $\fkq = (a_2, a_3, \ldots, a_d)$ and $\overline{A} = A/\fkq$. We set $\overline{\m} = \m \overline{A}$, $\overline{J} = J\overline{A}$, and $\overline{Q} = Q\overline{A}$. Then $\overline{A}$ is $n$-Goto with respect to $\overline{Q}$ and $\overline{\m}^{n-1} \calS_{\overline{Q}}(\overline{J}) \ne (0)$. Let $x_1, x_2, \ldots, x_{\ell}$ be a minimal system of generators of $\overline{\m}$ such that $\overline{Q}:_{\overline{A}}\overline{J}=(x_1^n, x_2, \ldots, x_{\ell})$, where $\ell = v(\overline{A})$. For each $1 \le i \le n$, we consider the ideal $I_i = (x_1^i, x_2, \ldots, x_{\ell})$ in $\overline{A}$. Choose an element $X_i \in \m$ so that $x_i$ is the image of $X_i$ in $\overline{A}$. We set 
$$
J_i = (X_1^i, X_2, \ldots, X_{\ell}) \ \ \text{and} \  \ \fka_i = J_i + \fkq
$$
for each $1 \le i \le n$. Then 
$$
Q:_AJ = \fka_n \subsetneq \fka_{n-1} \subsetneq \cdots \subsetneq \fka_1 = \m
$$
forms a chain of $\m$-primary ideals in $A$ and $\ell_A(A/\fka_i) = i$. 
Remember that $\rmK_A/\fkq \rmK_A \cong \rmK_{\overline{A}}$ (\cite[Korollar 6.3]{HK}). Thus there is an exact sequence
$$
0 \to \overline{A} \overset{\psi}{\longrightarrow} \rmK_{\overline{A}} \to D \to 0
$$
of $\overline{A}$-modules such that 
$$
D \cong \bigoplus_{i=1}^n \left(\overline{A}/I_i\right)^{\oplus \ell_i} \cong \bigoplus_{i=1}^n \left(A/\fka_i\right)^{\oplus \ell_i}
$$
where $\ell_n >0$, $\ell_i \ge 0~(1 \le i < n)$, and $\sum_{i=1}^{n} \ell_i = \rmr(A)-1$. Let $\xi \in \rmK_A$ such that $\psi(1) = \overline{\xi}$, where $\overline{\xi}$ denotes the image of $\xi$ in $\rmK_A/\fkq \rmK_A \cong \rmK_{\overline{A}}$. We now consider the exact sequence 
$$
A \overset{\varphi}{\longrightarrow} \rmK_A \to C \to 0
$$
of $A$-modules with $\varphi(1) = \xi$. Then, because $\psi = \overline{A} \otimes \varphi$, we get $D \cong C/\fkq C$; whence $\dim_A C \le d-1 <d$. By \cite[Lemma 3.1 (1)]{GTT}, the homomorphism $\varphi$ is injective. Hence $C$ is a Cohen-Macaulay $A$-module with $\dim_AC = d-1$ (\cite[Lemma 3.1 (2)]{GTT}). This yields that $a_2, a_3, \ldots, a_d \in \fkq$ forms a regular sequence on $C$. Let 
$$
C = \bigoplus_{i=1}^t\left[\bigoplus_{j=1}^{s_i} N_j^{(i)}\right]
$$
be the indecomposable decomposition of the $A$-module $C$, where $t \ge 1$, $s_i \ge 0~(1 \le i <t)$, $s_t>0$, and $N_j^{(i)}$ is an indecomposable $A$-submodule of $C$. We then have the isomorphisms
$$
\bigoplus_{i=1}^n \left(A/\fka_i\right)^{\oplus \ell_i} \cong D \cong C/\fkq C \cong \bigoplus_{i=1}^t\left[\bigoplus_{j=1}^{s_i} N_j^{(i)}/\fkq N_j^{(i)}\right].
$$
As $C/\fkq C$ has finite length, after renumbering the indices if necessary, we have that
\begin{center}
$n=t$, \ $\ell_i = s_i$ for all $1 \le i \le n$, \ and  \ $N_j^{(i)}/\fkq N_j^{(i)} \cong A/\fka_i$ for all $1 \le j \le \ell_i$.
\end{center}
For each $1 \le i \le n$, the $\overline{A}$-module 
$$
M_i/\fkq M_i \cong \bigoplus_{j=1}^{s_i} N_j^{(i)}/\fkq N_j^{(i)} \cong \left(\overline{A}/I_i\right)^{\oplus \ell_i} 
$$
is Ulrich with respect to $I_i$, where $M_i = \bigoplus_{j=1}^{s_i} N_j^{(i)}$; hence $M_i$ is an Ulrich $A$-module with respect to $\fka_i = J_i + \fkq$ (\cite[Proposition 2.6 (4)]{GK}, \cite[Proposition 2.2 (5)]{GTT}). In particular, since $\ell_n >0$, we see that $M_n \ne (0)$. This completes the proof. 
\end{proof}

We close this paper by stating an example.

\begin{ex}
Let $T=k[[X, Y, Z, W]]$ stands for the formal power series ring over a field $k$. We set $A = T/\rmI_2
\left(\begin{smallmatrix}
X^2 & Y+W & Z \\
Y & Z & X^2
\end{smallmatrix}\right)
$ and denote by $x, y, w$ the images of $X, Y, W$ in $A$, respectively. The ring $A$ admits a canonical ideal $I=(x^2, y)$ and a parameter ideal $Q = (x^2, w)$. Then, by setting $J=I+Q$, we get $J^3 = QJ^2$ and $\ell_A(J^2/QJ) = 2$. This implies that $A$ is a $2$-Goto ring with respect to $Q$ and $\m \calS_Q(J) \ne (0)$. Let 
$$
0  \to A \overset{\varphi}{\longrightarrow} I \to C \to 0
$$
be an exact sequence of $A$-modules with $\varphi(1) = y$. Then $\fka C = w I$ and $C/\fka C \cong A/\fka$, where $\fka = Q:_AJ = (x^2, y, z, w)$. Hence $C$ is an Ulrich $A$-module with respect to $\fka$ and $\dim_AC=1$. 
\end{ex}

%%%%%%%%%%%%%%%%%%%%%%%%%%%%%%%%%%%%%%%%%%%%%%%%%%%%%%%%%%%%%%%%%%%%%%%%%%%%%%%%%%%%%%%%%%%%%%%%%%%%%%%%%%%%%

\begin{ac}
The author would like to thank S. Goto for his helpful and simultaneous discussions during this research. The author is grateful also to B. Ulrich, N. Matsuoka, K. Ozeki, R. Isobe, and S. Kumashiro for their valuable advices and comments.
%S. Iai and N. Matsuoka for their helpful and simultaneous discussions during this research. The authors are grateful also to H. Matsui for his valuable advice, which has led the authors to Corollary \ref{4.2}.
\end{ac}
%%%%%%%%

%%%%%%%%%%%%%%%%%%%%%%%%%%%%%%%%%%%%%%%%%%%%%%%%%%%%%%%%%%%%%%

%%%%%%%%%%%%%%%%%%%%%%%%%%%%%%%%%%%%%%%%%%%%

%%%%%%%%%%%%%%%%%%%%%%%%%%%%%%%%%%%%%%%%%%%%%


\begin{thebibliography}{20}

\bibitem{BF}
{\sc V. Barucci and R. Fr\"{o}berg}, One-dimensional almost Gorenstein rings, {\em J. Algebra}, {\bf 188} (1997), no. 2, 418--442.



%\bibitem{BHU}
%{\sc J. P. Brennan, J. Herzog, and B. Ulrich}, Maximally generated Cohen-Macaulay modules, {\em Math. Scand.}, {\bf 61} (1987), no. 2, 181--203.

\bibitem{CELW}
{\sc E. Celikbas, N. Endo, J. Laxmi, and J. Weyman}, Almost Gorenstein determinantal rings of symmetric matrices, {\em Comm. Algebra}, {\bf 50} (2022), no.12, 5449--5458.

\bibitem{CGKM}
{\sc T. D. M. Chau, S. Goto, S. Kumashiro, and N. Matsuoka}, Sally modules of canonical ideals in dimension one and $2$-AGL rings, {\em J. Algebra}, {\bf 521} (2019), 299--330.

\bibitem{marco}
{\sc M. D'Anna and M. Fontana}, An amalgamated duplication of a ring along an ideal: the basic properties, {\em J. Algebra Appl.},  {\bf 6} (2007), 443--459.

\bibitem{EN}
{\sc J. A. Eagon and D. G. Northcott}, Ideals defined by matrices and a certain complex associated with them, {\em Proc. Roy. Soc. Ser. A}, {\bf 269} (1962), 188--204.

\bibitem{ES}
{\sc P. Eakin and A. Sathaye}, Prestable ideals, {\em J. Algebra}, {\bf 41} (1976) 439--454.


\bibitem{EGGHIKMT}
{\sc N. Endo, L. Ghezzi, S. Goto, J. Hong, S.-i. Iai, T. Kobayashi, N. Matsuoka, and R. Takahashi}, Rings with $q$-torsionfree canonical modules, The Mathematical Legacy of Wolmer V. Vasconcelos, De Gruyter Proc. Math., De Gruyter, Berlin (to appear). 

\bibitem{EGI}
{\sc N. Endo, S. Goto, and R. Isobe}, Almost Gorenstein rings arising from fiber products, {\em Canad. Math. Bull.}, {\bf 64} (2021), no. 2, 383--400. 


%\bibitem{GGHV}
%{\sc L. Ghezzi, S. Goto, J. Hong, and W. V. Vasconcelos}, Invariants of Cohen-Macaulay rings associated to their canonical ideals, {\em J. Algebra}, {\bf 489} (2017), 506--528.

\bibitem{GGHV2}
{\sc L. Ghezzi, S. Goto, J. Hong, and W. V. Vasconcelos}, Sally modules and reduction numbers of ideals, {\em Nagoya Math. J.}, {\bf 226} (2017), 106--126.

\bibitem{GGHHV}
{\sc L. Ghezzi, S. Goto, J. Hong, H. L. Hutson, and W. V. Vasconcelos}, The bi-canonical degree of a Cohen-Macaulay ring, {\em J. Algebra}, {\bf 571} (2021), 55--74.


\bibitem{G}
{\sc S. Goto}, The divisor class group of a certain Krull domain, {\em J. Math. Kyoto Univ.}, {\bf 17} (1977), no.1, 47--50.

%\bibitem{GIK}
%{\sc S. Goto, R. Isobe, and S. Kumashiro}, The structure of chains of Ulrich ideals in Cohen-Macaulay local rings of dimension one, {\em Acta Math. Vietnam.}, {\bf 44} (2019), no. 1, 65--82.

%\bibitem{GIK2}
%{\sc S. Goto, R. Isobe, and S. Kumashiro}, Correspondence between trace ideals and birational extensions  with application to the analysis of the Gorenstein property of rings, {\em J. Pure Appl. Algebra}, {\bf 224} (2020), no. 2, 747--767.




\bibitem{GIKT}
{\sc S. Goto, R. Isobe, S. Kumashiro, and N. Taniguchi}, Characterization of generalized Gorenstein rings, arXiv:1704.08901.


\bibitem{GIT}
{\sc S. Goto, R. Isobe, and N. Taniguchi},  Ulrich ideals and 2-AGL rings, {\em J. Algebra}, {\bf 555} (2020), 96--130.

%\bibitem{GIW}
%{\sc S. Goto, S.-i. Iai, and K.-i. Watanabe}, Good ideals in Gorenstein local rings, {\em Tran. Amer. Math. Soc.}, {\bf 353} (2000), 2309--2346.

\bibitem{GK}
{\sc S. Goto and S. Kumashiro}, On generalized Gorenstein local rings, arXiv:2212.12762. 

%\bibitem{GKL}
%{\sc S. Goto, S. Kumashiro, and N. T.  H. Loan}, Residually faithful modules and the Cohen-Macaulay type of idealizations, {\em J. Math. Soc. Japan}, {\bf 71} (2019), no. 4, 1269--1291.

\bibitem{GMP}
{\sc S. Goto, N. Matsuoka, and T. T. Phuong},  Almost Gorenstein rings, {\em J. Algebra}, {\bf 379} (2013), 355--381.





\bibitem{GMTY1}
{\sc S. Goto, N. Matsuoka, N. Taniguchi, and K.-i. Yoshida}, The almost Gorenstein Rees algebras of parameters, {\em J. Algebra}, {\bf 452} (2016), 263--278.

\bibitem{GMTY2}
{\sc S. Goto, N. Matsuoka, N. Taniguchi, and K.-i. Yoshida}, The almost Gorenstein Rees algebras over two-dimensional regular local rings, {\em J. Pure Appl. Algebra}, {\bf 220} (2016), 3425--3436.




\bibitem{GMTY4}
{\sc S. Goto, N. Matsuoka, N. Taniguchi, and K.-i. Yoshida}, The almost Gorenstein Rees algebras of $p_g$-ideals, good ideals, and powers of the maximal ideals, {\em Michigan Math. J.}, {\bf 67} (2018), 159--174.

\bibitem{GMTY3}
{\sc S. Goto, N. Matsuoka, N. Taniguchi, and K.-i. Yoshida}, On the almost Gorenstein property in Rees algebras of contracted ideals, {\em Kyoto J. Math.}, {\bf 59} (2019), no. 4, 769--785.

\bibitem{GNO}
{\sc S. Goto, K. Nishida, and K. Ozeki}, Sally modules of rank one, {\em Michigan Math. J.}, {\bf 57} (2008), 359--381.

\bibitem{GNO2}
{\sc S. Goto, K. Nishida, and K. Ozeki}, The structure of Sally modules of rank one, {\em Math. Res. Lett.}, {\bf 15} (2008), no. 5, 881--892.


\bibitem{GOTWY}
{\sc S. Goto, K. Ozeki, R. Takahashi, K.-i. Watanabe, and K.-i. Yoshida}, Ulrich ideals and modules, {\em Math. Proc. Cambridge Philos. Soc.}, {\bf 156} (2014), no.1, 137--166.

%\bibitem{GOTWY2}
%{\sc S. Goto, K. Ozeki, R. Takahashi, K.-i. Watanabe, and K.-i. Yoshida}, Ulrich ideals and modules over two-dimensional rational singularities, {\em Nagoya Math. J.}, {\bf 221} (2016), no.1, 69--110.




\bibitem{GRTT}
{\sc S. Goto, M. Rahimi, N. Taniguchi, and H. L. Truong}, When are the Rees algebras of parameter ideals almost Gorenstein graded rings?, {\em Kyoto J. Math.}, {\bf 57} (2017), no.3, 655--666.





\bibitem{GTT}
{\sc S. Goto, R. Takahashi, and N. Taniguchi}, Almost Gorenstein rings -towards a theory of higher dimension, {\em J. Pure Appl. Algebra}, {\bf 219} (2015), 2666--2712.


%\bibitem{GTT2}
%{\sc S. Goto, R. Takahashi, and N. Taniguchi}, Ulrich ideals and almost Gorenstein rings, {\em Proc. Amer. Math. Soc.}, {\bf 144} (2016), 2811--2823.

\bibitem{GW}
{\sc S. Goto and K. Watanabe}, On graded rings I, {\em J. Math. Soc. Japan}, {\bf 30} (1978), no. 2, 179--213.

\bibitem{H}
{\sc J. Herzog}, Generators and relations of Abelian semigroups and semigroup rings, {\em Manuscripta Math.}, {\bf 3} (1970), 175--193.




\bibitem{HHS}
{\sc J. Herzog, T. Hibi, and D. I. Stamate}, The trace of the canonical module, {\em Israel J. Math.}, {\bf 233} (2019), 133--165.


\bibitem{HK}
{\sc J. Herzog and E. Kunz}, Der kanonische Modul eines Cohen-Macaulay-Rings, {\em Lecture Notes in Mathematics}, {\bf 238}, Springer-Verlag, 1971.

%\bibitem{HK2}
%{\sc J. Herzog and E. Kunz}, Die Wertehalbgruppe eines lokalen Rings der Dimension 1, {\em S.-B. Heidelberger Akad. Wiss.}, 1971, 2 Abh. (1971).

\bibitem{Higashitani}
{\sc A. Higashitani}, Almost Gorenstein homogeneous rings and their $h$-vectors, {\em J. Algebra}, {\bf 456} (2016), 190--206. 


\bibitem{Huneke}
{\sc C. Huneke}, Hilbert functions and symbolic powers, {\em Michigan Math. J.}, {\bf 34} (1987), 293--318.

\bibitem{KMN}
{\sc S. Kumashiro, N. Matsuoka, and T. Nakashima}, Nearly Gorenstein local rings defined by maximal minors of a $2 \times n$ matrix, arXiv:2308.04234. 


\bibitem{L}
{\sc J. Lipman}, Stable ideals and Arf rings, {\em Amer. J. Math.}, {\bf 93} (1971), 649--685. 

\bibitem{MM}
{\sc N. Matsuoka and S. Murai}, Uniformly Cohen-Macaulay simplicial complexes, {\em J. Algebra}, {\bf 455} (2016), 14--31. 

%\bibitem{Miyazaki}
%{\sc M. Miyazaki}, Almost Gorenstein Hibi rings, {\em J. Algebra}, {\bf 493} (2018), 135--149.

\bibitem{N} 
{\sc M. Nagata}, Local Rings, Interscience, 1962.

\bibitem{Northcott}
{\sc D.G. Northcott}, A note on the coefficients of the abstract Hilbert function, {\em J. London Math. Soc.}, {\bf 35} (1960),  209--214.

\bibitem{NR}
{\sc D. G. Northcott and D. Rees}, Reductions of ideals in local rings, {\em Proc. Cambridge Philos. Soc.}, {\bf 50} (1954), 145--158.

\bibitem{Ogoma}
{\sc T. Ogoma}, Fiber products of Noetherian rings, {\em Commutative algebra and combinatorics} (Kyoto, 1985), 173--182, {\em Adv. Stud. Pure Math.}, 11, North-Holland, Amsterdam, 1987.

\bibitem{Ooishi}
{\sc A. Ooishi}, Genera and arithmetic genera of commutative rings, {\em Hiroshima Math. J.}, {\bf 17} (1987), 361--372.

\bibitem{RG}
{\sc J. C. Rosales and P. A. Garc{\'i}a-S{\'a}nchez}, Numerical Semigroups, Developments in Mathematics, {\bf 20}, Springer, New York, 2009.


\bibitem{S1}
{\sc  J. Sally}, Numbers of generators of ideals in local rings, {\em Lecture Notes in Pure and Applied Mathematics}, {\bf 35} , Marcel Dekker, INC., 1978.



\bibitem{S2}
{\sc  J. Sally}, Cohen-Macaulay local rings of maximal embedding dimension, {\em J. Algebra}, {\bf 56} (1979), 168--183.


\bibitem{S3}
{\sc  J. Sally}, Hilbert coefficients and reduction number 2, {\em J. Alg. Geo. and Sing.}, {\bf 1} (1992), 325--333.


\bibitem{T}
{\sc N. Taniguchi}, On the almost Gorenstein property of determinantal rings, {\em Comm. Algebra}, {\bf 46} (2018), no.3, 1165--1178.

%\bibitem{greg}
%{\sc R. Takahashi}, On G-regular local rings, {\em Comm. Algebra}, {\bf 36} (2008), no. 12, 4472--4491.

\bibitem{VV}
{\sc P. Valabrega and G. Valla}, Form rings and regular sequences, {\em Nagoya Math. J.}, {\bf 72} (1978), 93--101.

\bibitem{V1}
{\sc W. V. Vasconcelos}, Hilbert functions, analytic spread, and Koszul homology, Commutative algebra: Syzygies, multiplicities, and birational algebra (South Hadley, 1992), {\em Contemp. Math.}, {\bf 159}, 410--422, {\em Amer. Math. Soc.}, Providence, RI, 1994.

\bibitem{Vasconcelos}
{\sc W. V. Vasconcelos}, Integral closure. Rees algebras, multiplicities, algorithms, 
{\em Springer Monogr. Math.}, Springer-Verlag, Berlin, 2005. %xii+519 pp.

%\bibitem{V2}
%{\sc V. W. Vasconcelos}, The Sally modules of ideals: a survey, {\em arXiv:1612.06261v1}.





%\bibitem{Y}
%{\sc Y. Yoshino}, Cohen-Macaulay modules over Cohen-Macaulay rings, {\em London Mathematical Society Lecture Notes Series}, {\bf 146}, Cmabridge University Press, 1990.




\end{thebibliography}
\end{document}